\newtheorem{thm}{Theorem}[section] 
\newtheorem{prop}[thm]{Proposition}
\newtheorem{cor}[thm]{Corollary}
\newtheorem{lemma}[thm]{Lemma}
\theoremstyle{definition}
\theoremstyle{definition}
\newtheorem*{rem}{Remark}
\newtheorem*{rems}{Remarks}
\newtheorem{Rem}[thm]{Remark}
\newtheorem{Rems}[thm]{Remarks}
\newtheorem{example}[thm]{Example}
\newtheorem{examples}[thm]{Examples}
\def\sk1{\vskip 10pt}
\def\newline{\hfil\break}
\def\ds{\displaystyle}
\def\x {\times}
\def\subeq{\subseteq}
\def\supeq{\supseteq}
\def\Sigmacirc {{^{\circ} \Sigma ^{n}}}
\def\thra{\twoheadrightarrow}
\def\Z{\mathbb Z}
\def\R{\mathbb R}
\def\del{\partial}
\def\sk{\vskip}
\def\bF{\bf F}
\numberwithin{equation}{section}
\title{Higher horospherical limit sets for $G$-modules over $CAT(0)$-spaces}
\author{Robert Bieri and Ross Geoghegan}
\address{\noindent Robert Bieri, Fachbereich Mathematik,
Johann Wolfgang Goethe-Universit\"at Frankfurt,
D-60054 Frankfurt am Main,
Germany \newline
\vskip 1pt
and
\newline
\vskip 1pt
Department of Mathematical Sciences,
Binghamton University (SUNY),
Binghamton, NY 13902-6000, USA
\newline
\vskip 3pt
Ross Geoghegan, Department of Mathematical Sciences,
Binghamton University (SUNY),
Binghamton, NY 13902-6000, USA}
\email{bieri@math.uni-frankfurt.de,
ross@math.binghamton.edu}
\subjclass[2010]{Primary 20F65; Secondary 20J05, 59D19}
\date{October 15, 2018}
\keywords{Bieri-Neumann-Strebel invariant, $CAT(0)$ space, horospherical
limit point, Novikov homology}
\begin{document}
\fontsize{12}{13pt} \selectfont  

\maketitle


\maketitle

\begin{abstract}
The $\Sigma$-invariants of Bieri-Neumann-Strebel and Bieri-Renz involve an
action of a discrete group $G$ on a geometrically suitable space $M$. In
the early versions, $M$ was always a finite-dimensional Euclidean space
on which $G$ acted by translations. A substantial literature exists on
this, connecting the invariants to group theory and to tropical geometry
(which, actually, $\Sigma $-theory anticipated). More recently, we have
generalized these invariants to the case where $M$ is a proper $CAT(0)$
space on which $G$ acts by isometries. The ``zeroth stage'' of this
was developed in our paper \cite{BGe16}. The present paper provides
a higher-dimensional extension of the theory to the ``$n$th stage''
for any $n$.
\end{abstract} 

\section{Introduction}\label{S:1}

\subsection{Background}\label{subsection0} 

The Bieri-Neumann-Strebel invariant $\Sigma (G,{\Z})$ of a finitely
generated group $G$ is a certain subset of the sphere at infinity of ${\R}^d$, where $d$ is the rank of the abelianization of $G$. Over the years this elusive subset has been computed in many cases, and has been related to a variety of issues in group theory and tropical geometry. The first major generalization was the  higher-dimensional Bieri-Renz invariant $\Sigma ^{n}(G;A)$, where $A$ is a $G$-module of type $FP_n$. (The case $n=1$ with $A$ the trivial $G$-module ${\Z}$ gives back the original $\Sigma (G,{\Z})$).

The invariant $\Sigma ^{n}(G;A)$ is intrinsically associated with
the natural action of $G$ on ${\R}^d$ by translations. This led us to
a generalization of the fundamental idea, in which, given $G$ and a
$G$-module $A$ of type $FP_n$, the translation action of $G$ on ${\R}^d$
is replaced by an isometric action of $G$  on an arbitrary proper $CAT(0)$
space $M$, leading us to a subset of the boundary $\del M$ playing the
role previously played by the sphere at infinity. By clear analogy,
we call this subset $\Sigma ^{n}(M;A)$.

Even the case $\Sigma ^{0}(M;A)$ has turned out to be remarkably
interesting, Although computation is still in its infancy, it has already
been deeply linked to buildings associated with certain arithmetic groups,
where $M$ in that case is a symmetric space. The basic theory of $\Sigma
^{0}(G;A)$ is set  out in our paper \cite{BGe16}.

In the present paper we set out the corresponding theory of $\Sigma
^{n}(G;A)$, thus exhibiting the natural place of $\Sigma ^{0}(M;A)$ within
a richer theory. We prove appropriate analogs of theorems already known
in the ``classical'' (i.e. Euclidean) case.  This is far from routine,
and new methods have to be developed. Besides the basic theory, we find
a product formula for $\Sigma $ invariants, and an interpretation of
the whole theory in terms of Novikov homology.


\subsection{Quick definition of $\Sigma ^{n}(G;A)$}

We prefer to give this definition in context later in the paper, but
for readers who wish to know now, we include a short version here.

The given $G$-module $A$ has type $FP_n$, so there is a free resolution
${\bf F}\twoheadrightarrow A$ with chosen finitely generated $n$-skeleton. We
choose a base point $b\in M$, and we define a $G$-equivariant ``control
map'' $h:{\bf F}\to fM$ where $fM$ denotes the $G$-set of finite subsets
of $M$.

The definition of this map $h$ proceeds in stages: 
\begin{enumerate}[1.]
\item First we define $h:{\Z}G\to fM$ taking $\lambda \in {\Z}G$ to the finite subset $h(\lambda ):=\text{supp}(\lambda )\cdot b \subseteq M$.
\item We extend this to a finitely generated free $G$-module $F$ with chosen basis by defining $h:F\to fM$ separately as above on each ${\Z}G$-summand and then taking the union.
\item This is then applied to the $n$-skeleton of the resolution $\bf F$.
\end{enumerate}

Roughly,  $\Sigma ^{n}(M;A)$ consists of those points $e\in \del M$ whose
horoballs at $e$, $H_{e}\subseteq \del M$, have the property that every
cycle $z$ in the $(n-1)$-skeleton ${\bf F}^{(n-1)}$ with $h(z)\subseteq H_e$
bounds a chain in ${\bf F}^{(n)}$ with $h(c)\subseteq H'_e$ where $H'_e$
depends only on (and is only slightly larger than) $H_e$.  

\subsection{The dynamical invariant}

Given a base point $b\in M$ we use the Busemann function $\beta _{e}:M\to {\R}$,
normalized by $\beta _{e}(b)=0$, to measure the effect of the $G$-action
and chain endomorphisms on the images $h(c)$ of the elements
$c\in {\bf F}$ of dimension $\leq n$. Specifically, we consider chain
endomorphisms $\varphi :{\bf F}\to {\bf F}$ with the properties

\begin{enumerate}[1] \item $\varphi$ lifts the identity map of $A$;
\item The difference $\beta _{e}(h(\varphi (c)))-\beta _{e}(h(c))$ has a
positive lower bound as $c$ runs through the $n$-skeleton ${\bf F}^{(n)}$
of ${\bf F}$.  
\end{enumerate}

\noindent We call $\varphi $ a ``push'' of the $n$-skeleton towards
$e$. In the classical case, Theorem 4.1 of \cite{BRe88} shows
that the existence of such a $G$-equivariant push is equivalent
to $e\in \Sigma ^{n}(M;A)$ -- a key fact often referred to as the
``$\Sigma ^{n}$ criterion". Here in the $CAT(0)$ situation we find
that the set of boundary points $e\in \Sigma ^{n}(M;A)$ for which a
$G$-equivariant push $\varphi :{\bf F}^{(n)}\to {\bf F}^{(n)}$ towards $e$
exists is in general a proper subset -- potentially interesting but not
sufficiently closely related to $\Sigma ^{n}(M;A)$ since it vanishes in
some of the most interesting examples. In \cite{BGe16} (which was about
$\Sigma ^{0}(M;A)$ in the $CAT(0)$ case) we introduced the notion of
$G$-finitary homomorphisms between $G$-modules. These are more general
than $G$-homomorphisms but still share their coarse metric properties.
We define the subset $\Sigmacirc (M;A)\subseteq \Sigma ^{n}(M;A)$
to be the set of those points $e\in \del M$ such that there is a
$G$-finitary push of the $n$-skeleton towards $e$, and we call it the
``dynamical invariant". By using $G$-finitary chain homotopies in higher
dimensional homological algebra arguments, we prove:

\begin{thm} ($\Sigma ^{n}$ Criterion) \label{criterion}
\begin{equation*} \Sigmacirc(M;A) = \{e\in \partial M\mid \text{\rm cl}(Ge)\subeq 
\Sigma ^{n}(M;A)\}
\end{equation*}
\end{thm}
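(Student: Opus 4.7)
\medskip

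\noindent\textbf{Proof proposal.} The plan is to prove the two inclusions separately, with the nontrivial direction being $(\supseteq)$: if $\text{cl}(Ge)\subseteq \Sigma^{n}(M;A)$, then a $G$-finitary push of $\mathbf{F}^{(n)}$ towards $e$ exists.

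For the easy direction $(\subseteq)$, I would first verify that the subset $\Sigmacirc(M;A)$ is $G$-invariant: given a $G$-finitary push $\varphi$ towards $e$ with lower bound $\varepsilon>0$, the conjugate $g\varphi g^{-1}$ is again $G$-finitary (conjugation stabilizes the $G$-finitary class), lifts the identity, and --- using that $G$ acts by isometries and that Busemann differences $\beta_{ge}(gx)-\beta_{ge}(gy)=\beta_{e}(x)-\beta_{e}(y)$ are insensitive to the Busemann cocycle --- pushes towards $ge$ with the same bound. So $Ge\subseteq \Sigmacirc(M;A)$. Next I would show $\Sigmacirc(M;A)\subseteq\Sigma^{n}(M;A)$ by the standard iteration trick: for a cycle $z\in\mathbf{F}^{(n-1)}$ with $h(z)\subseteq H_{e}$, the chain $\varphi^{k}(z)$ lies ever deeper in $H_{e}$, while $\varphi\simeq \mathrm{id}$ via a chain homotopy $H$ produces a telescoping chain $\sum_{i=0}^{k-1}H(\varphi^{i}(z))$ bounding $\varphi^{k}(z)-z$; the $G$-finitariness of $\varphi$ (and of the homotopy $H$ obtained via higher $FP_{n}$ lifting) bounds how far the filling strays outside $H_{e}$, producing $H'_{e}$. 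Finally, to extend from $Ge$ to $\text{cl}(Ge)$, I would exploit continuity of Busemann functions on finite subsets of $M$: because $\mathbf{F}^{(n)}$ is finitely generated as a $G$-module and $\varphi$ is $G$-finitary, the push condition at $e$ translates under bounded distortion into a push condition relative to every $e^{*}\in\text{cl}(Ge)$, giving $e^{*}\in\Sigma^{n}(M;A)$.

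For the hard direction $(\supseteq)$, I would argue by induction on the skeletal dimension $k=0,1,\ldots,n$, constructing $\varphi$ one dimension at a time. In dimension $0$, the hypothesis $\text{cl}(Ge)\subseteq \Sigma^{0}(M;A)$ combined with the $\Sigma^{0}$ theory of \cite{BGe16} already produces a $G$-finitary $0$-push $\varphi_{0}$. Assuming inductively that a $G$-finitary push $\varphi_{k-1}$ of $\mathbf{F}^{(k-1)}$ has been built, I would extend it to $\mathbf{F}^{(k)}$ as follows: for each $G$-module basis element $c$ of dimension $k$, the chain $\varphi_{k-1}(\partial c)-\partial c$ is a cycle in $\mathbf{F}^{(k-1)}$ whose $h$-image sits in a horoball at $e$; the hypothesis $e\in \Sigma^{k}(M;A)$ provides a bounding chain in $\mathbf{F}^{(k)}$, and I set $\varphi_{k}(c)=c+\text{(that chain)}$. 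The $G$-finitary structure is maintained by choosing, consistently across the $G$-orbit of $c$, bounding chains that differ from their $G$-translates by finitely supported corrections --- this is where $\text{cl}(Ge)\subseteq \Sigma^{n}$ enters crucially, because the orbit of $c$ accumulates in the entire closure $\text{cl}(Ge)$, so one needs $\Sigma^{n}$-data at every limit point to patch the bounding chains into a globally $G$-finitary map.

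The main obstacle is the inductive assembly in $(\supseteq)$: ensuring simultaneously that (i) $\varphi_{k}$ lifts the identity, (ii) $\varphi_{k}$ is $G$-finitary, and (iii) the Busemann push bound at $e$ holds uniformly across $\mathbf{F}^{(k)}$. The difficulty is compactness of $\text{cl}(Ge)$ combined with the fact that two choices of bounding chain differ by a cycle, which (by the inductive hypothesis applied in dimension $k-1$) is itself a boundary, but with controlled support only if the bounding chain homotopies are themselves $G$-finitary. I expect this to require a careful acyclic-carrier/partition-of-unity style argument on the orbit closure $\text{cl}(Ge)$, where compactness of $\text{cl}(Ge)\subseteq\partial M$ replaces the trivial orbit structure of the classical Euclidean case. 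Once this assembly goes through, the resulting $\varphi=\varphi_{n}$ is the desired $G$-finitary push certifying $e\in\Sigmacirc(M;A)$.
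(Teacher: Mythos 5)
Your overall plan --- both inclusions, the easy direction via $G$-invariance plus an iteration argument, the hard direction via induction on skeletal dimension using compactness of $\mathrm{cl}(Ge)$ --- matches the architecture of the paper's proof (which routes through Proposition \ref{P:6.2}, Theorem \ref{T:5.5}, and Theorem \ref{T:6.4}). However, there are two concrete gaps that would break the argument as written.

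First, in the easy direction your passage from $Ge\subseteq\Sigma^n(M;A)$ to $\mathrm{cl}(Ge)\subseteq\Sigma^n(M;A)$ is not a ``continuity of Busemann functions'' remark. A fixed $G$-finitary push $\varphi$ towards $e$ need not push towards a nearby $e^*$: the push condition is a uniform estimate over the infinite $\Z$-basis $Y$, and the Busemann function of $e^*$ differs from that of $e$ without uniform bound far from the base point. What is actually needed is the nontrivial patching result (Theorem \ref{P:3.4}, proved in \cite{BGe16}): a \emph{new} selection $\psi$ from the same $G$-volley $\Phi$ is assembled, coinciding with a $G$-translate $g\varphi$ on each finitely generated $\Z$-submodule, pushing towards the given $\hat e\in\mathrm{cl}(Ge)$ with guaranteed shift $\geq\delta/2$. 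This gives $\mathrm{cl}(Ge)\subseteq\Sigmacirc(M;A)$ (Theorem \ref{T:5.5}), and Proposition \ref{P:6.2} then delivers $\Sigmacirc\subseteq\Sigma^n$.

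Second, in the hard direction the formula $\varphi_k(c)=c+(\text{bounding chain})$ cannot produce a push. Since $v_\gamma(c+d)\leq v_\gamma(c)$ in general is not controlled from below by anything better than $\min\{v_\gamma(c),v_\gamma(d)\}$, the term $c$ itself pins the valuation and the guaranteed shift is at best zero. The paper's construction avoids this: it first replaces the $(k-1)$-push $\varphi_e$ by a large iterate $\varphi^k_e$ chosen so that $k\nu \geq \lambda+\lVert\partial\mid{\bF}^{(n)}\rVert+\delta$ (here $\lambda$ is the \emph{uniform} constant lag and $\nu$ the uniform guaranteed shift carried through the induction), and then defines $\psi_e(y)$ \emph{directly} as a bounding chain for $\varphi^k_e(\partial y)$ --- not as $y$ plus a correction. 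The chain-map identity is $\partial\psi_e=\varphi^k_e\partial$, and the iteration gives enough guaranteed shift to absorb both the lag and the norm of $\partial$, yielding $\mathrm{gsh}_e(\psi_e)\geq\delta$. Your induction also needs to carry the quantitative data (a single $G$-volley $\Phi$, uniform $\nu$, uniform lag $\lambda$) as an invariant --- this is exactly condition (iv) of Theorem \ref{T:6.4} and is what makes the compactness argument on $\mathrm{cl}(Ge)$ (a finite subcover producing a finite $\Psi(x)$) actually close the loop.
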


(Here, closure is taken in the cone topology on $\del M$.)

In the classical theory $G$ acts trivially on $\del M$, hence Theorem
\ref{criterion} is a true generalization of the classical $\Sigma ^{n}$
criterion, and, just as back then, it is again the fundamental tool for
all further results.

\subsection{The main results}\label{subsection1}

\begin{thm} \label{Theorem C}\label{T:7.9} Assume that the action $\rho$
is cocompact and that its orbits are discrete subsets of $M$. Then
$\Sigma^n(M;A)=\partial M$ if and only if $A$ has type $FP_n$ as a
$G_b$-module, where $b$ is any point of $M$ and $G_b$ denotes its
stabilizer.
\end{thm}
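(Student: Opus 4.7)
The plan is to reformulate via Theorem \ref{criterion}. Since $\del M$ is $G$-invariant, the condition $\Sigma^{n}(M;A)=\del M$ is equivalent to $\Sigmacirc(M;A)=\del M$, which by definition means that for every $e\in\del M$ there is a $G$-finitary chain endomorphism $\varphi:{\bf F}^{(n)}\to{\bf F}^{(n)}$ lifting $\mathrm{id}_{A}$ with $\beta_{e}(h(\varphi(c)))-\beta_{e}(h(c))$ bounded below by a positive constant on the $n$-skeleton. So the theorem reduces to the assertion that such pushes exist simultaneously in every direction $e\in\del M$ if and only if $A$ has type $FP_{n}$ over $\Z G_{b}$.

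For the direction $(\Leftarrow)$, I would start from a finitely generated partial free $\Z G_{b}$-resolution $\bar P_{*}\to A$ in degrees $\le n$, and use the discrete orbit $Gb$, identified with $G/G_{b}$, to assemble a $\Z G$-resolution of $A$ with a transparent control map: the images of its generators under $h$ are uniformly bounded subsets of $Gb$. Cocompactness of $\rho$ then yields, for each $e\in\del M$, an element $g\in G$ with $\beta_{e}(gb)-\beta_{e}(b)$ bounded below by a positive constant controlled by the diameter of a fundamental domain. Multiplication by such a $g$ covers the action $a\mapsto g\cdot a$, not $\mathrm{id}_{A}$; the correction needed to cover $\mathrm{id}_{A}$ is built as a $G$-finitary chain homotopy, whose existence is forced by the $\Z G_{b}$-finiteness of $\bar P_{*}$.

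For the direction $(\Rightarrow)$, I would use the existence of pushes in all directions to extract a finitely generated $\Z G_{b}$-partial resolution of $A$. Iterating a single push $\varphi_{e}$ transports cycles arbitrarily far toward $e$; combining pushes in a finite set of directions chosen via the cone-topology compactness of $\del M$, together with the associated $G$-finitary chain homotopies, forces any cycle to be homologous to one supported on a bounded neighbourhood of $b$ in $M$. Discreteness of $Gb$ then limits the relevant basis elements to finitely many $G_{b}$-orbits, and an induction on $n$, with base case $n=0$ drawing on the techniques of \cite{BGe16}, upgrades this bounded-support data to the required $\Z G_{b}$-finite resolution.

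The main obstacle will be the interplay between $G$-finitary chain maps, which only approximate $G$-equivariance, and the genuine $\Z G_{b}$-module structure of $A$. In $(\Leftarrow)$ the crux is producing $G$-finitary correction homotopies that work uniformly in $e\in\del M$ and that honestly exploit the $\Z G_{b}$-finiteness of $\bar P_{*}$, rather than the a priori weaker $\Z G$-finiteness of ${\bf F}^{(n)}$. In $(\Rightarrow)$ the hard step is converting cone-topology compactness of $\del M$ together with discreteness of $Gb$ into a quantitative bound on the number of $G_{b}$-orbits of generators needed, so that the independent boundary-wise push data coalesces into a single finite $\Z G_{b}$-resolution of $A$.
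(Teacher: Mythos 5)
Your proposal takes a genuinely different route from the paper, but it has gaps serious enough that I do not think it can be completed as sketched.

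The paper's proof is short and goes through two bridges you do not invoke. First, Theorem~\ref{T:7.8} shows that $\Sigma^n(M;A)=\partial M$ is equivalent to a controlled-acyclicity condition over $M$: there is a uniform lag function $\lambda$ such that, for every $b\in M$ and every $i$-cycle $z$ with $-1\le i\le n-1$, there is an $(i+1)$-chain $c$ with $\partial c=z$ and $D_b(c)\le D_b(z)+\lambda(D_b(z))$. Second, one filters ${\bf F}$ by the subcomplexes $h^{-1}(fB_r(b))$; these are $G_b$-invariant, and \emph{discreteness of orbits} is used exactly here to guarantee that each filtration step is finitely generated as a $\Z G_b$-module in degrees $\le n$. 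Brown's finiteness criterion (Theorem~2.2 of \cite{Bn87}) then says that $A$ is $FP_n$ over $\Z G_b$ if and only if this filtered system is essentially $(n-1)$-acyclic, which is precisely the controlled-acyclicity condition over $M$. Combining the two bridges gives the theorem. No pushes are constructed directly.

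The most concrete gap in your $(\Leftarrow)$ direction is the induction step for the resolution. Starting from a finitely generated $\Z G_b$-partial resolution $\bar P_*\to A$ and ``assembling a $\Z G$-resolution of $A$'' via $G/G_b$ does not work: $\Z G\otimes_{\Z G_b}\bar P_*$ resolves the induced module $\Z G\otimes_{\Z G_b}A$, which is not $A$ unless $G_b$ has finite index. The paper avoids this entirely by keeping the original $\Z G$-resolution $\mathbf F$ and only using the $FP_n$-over-$\Z G_b$ hypothesis through the essential acyclicity of the ball filtration. Your ``multiplication by $g$ plus a $G$-finitary correction homotopy'' is under-specified; to make it precise you would need to first establish controlled acyclicity over $M$ and then iterate towards boundary points, which is the content of Proposition~\ref{P:7.7} and Theorem~\ref{T:7.8}, not something that falls out of the $\Z G_b$-resolution directly.

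Your $(\Rightarrow)$ direction is closer in spirit to the paper, but the final sentence (``an induction on $n$ \ldots\ upgrades this bounded-support data to the required $\Z G_b$-finite resolution'') is exactly the hard content of Brown's criterion. Recognizing that and citing it (rather than gesturing at an induction ``drawing on the techniques of \cite{BGe16}'') is the missing key idea: the role of discreteness is to make the ball filtration $\Z G_b$-finite in degrees $\le n$, so that Brown's criterion applies to the $G_b$-action; the role of cocompactness is to make the lag uniform over $b\in M$ (Proposition~\ref{P:8.0}(ii)) and to feed into Theorem~\ref{T:7.8}. Once you isolate these two inputs and the intermediate controlled-acyclicity condition, the argument collapses to a few lines.
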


We have openness theorems as follows:
\begin{thm}\label{Theorem D}
\begin{enumerate}[(i)]
\item  With $\rho \in \text{\rm Hom}(G, \text{\rm Isom}(M))$ an isometric action of $G$ on $M$ as above, if
$\Sigma ^{n}(_{\rho }M;A)=\partial M$ then there is a neighborhood $N$ of
$\rho$ in this space (with the compact-open
topology) such that $\Sigma ^{n}(_{\rho '}M;A)=\partial M$ for all $\rho
'\in N$.
\item $\Sigmacirc (_{\rho }M;A)$ is open in the Tits metric topology on $\del M$.
\end{enumerate}
\end{thm}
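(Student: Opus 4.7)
Both parts hinge on Theorem \ref{criterion}, which translates containment in $\Sigma^{n}$ into the existence of a $G$-finitary push, and on the fact that the push condition is encoded by Busemann-function estimates varying continuously in the right topology.

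The plan for (ii) is to show that a single $G$-finitary push $\varphi:\mathbf{F}^{(n)}\to\mathbf{F}^{(n)}$ towards $e$, with lower bound $\epsilon>0$, is also a push towards every $e'$ with $\angle_T(e,e')$ sufficiently small. The $G$-finitary structure of $\varphi$ reorganises the defining inequality $\beta_{e}(h(\varphi(c)))-\beta_{e}(h(c))\geq\epsilon$ into a family of pointwise Busemann comparisons of the form $\beta_{e}(g\cdot_{\rho}b)-\beta_{e}(gs\cdot_{\rho}b)\geq\epsilon$, indexed by $g\in G$ and a finite ``shift'' set $S\subseteq G$ coming from the $G$-finitary decomposition. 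Each such comparison involves only the bounded displacement $d(b,s\cdot_{\rho}b)$ and the initial direction at $g\cdot_{\rho}b$ of the ray to $e$; since the Tits angle $\angle_T(e,e')$ bounds the deviation between the initial directions of the rays to $e$ and to $e'$ uniformly in the basepoint, every comparison survives with lower bound $\epsilon/2$ once $e'$ lies in an appropriate Tits-ball around $e$. This gives $\Sigmacirc(_{\rho}M;A)$ Tits-open.

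For (i), Theorem \ref{criterion} reduces $\Sigma^{n}(_{\rho}M;A)=\partial M$ to $\Sigmacirc(_{\rho}M;A)=\partial M$, so for every $e\in\partial M$ there is a $G$-finitary push $\varphi_{e}$ towards $e$ w.r.t.\ $\rho$. The same reduction used in (ii) then shows that each $\varphi_{e}$ remains a push towards $e$ w.r.t.\ every $\rho'$ sufficiently close to $\rho$ on the (finite) set of group elements appearing in the $G$-finitary packaging of $\varphi_{e}$: this is exactly what closeness in the compact-open topology provides, and it yields a neighbourhood $N_{e}$ of $\rho$ adapted to $e$. To obtain a single neighbourhood $N$ valid for all $e$ one combines (ii) with cone-compactness of $\partial M$: the Tits-open validity zones of (ii) are enlarged, via the orbit-closure clause in Theorem \ref{criterion}, into a finite subfamily $V(\varphi_{e_{1}}),\ldots,V(\varphi_{e_{k}})$ that covers $\partial M$, whereupon $N:=\bigcap_{i}N_{e_{i}}$ is the desired neighbourhood of $\rho$.

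The principal obstacle is the passage, in (i), from Tits-open validity zones to a finite cover of the cone-compact $\partial M$. The Tits topology on $\partial M$ is generally strictly finer than the cone topology, so a Tits-open cover need not admit a finite cone-subcover. The bridge is the orbit-closure clause of Theorem \ref{criterion}, which forces each validity zone to be stable under cone-limits along $G$-orbits; combined with cone-compactness of $\partial M$ this is what should deliver the finite subcover needed to close the proof of (i).
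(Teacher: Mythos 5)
Your outline of part (ii) matches the paper's: the paper writes $\Sigmacirc(M;A)$ as the union of the sets $\Sigma(\varphi)=\{e\mid\text{gsh}_e(\varphi)>0\}$ over $G$-finitary endomorphisms $\varphi$ of $\bF^{(n)}$ of finite norm that commute with $\epsilon$, and then cites Theorem 3.9 of \cite{BGe16} for Tits-openness of each $\Sigma(\varphi)$. Your ``initial direction plus bounded displacement'' description is an informal account of what that cited theorem carries out.

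For part (i) your route diverges from the paper's, and the step you flag as the ``principal obstacle'' is indeed a genuine gap that your proposed bridge does not close. Because the Tits topology is strictly finer than the cone topology, a cover of $\partial M$ by Tits-open validity zones $V(\varphi_e)$ need not reduce to a finite cover using cone-compactness, and the orbit-closure clause of Theorem~\ref{criterion} offers no help: the validity zone $V(\varphi_e)$ is not $G$-invariant (translating $\varphi_e$ by $g$ pushes towards $ge'$, not $e'$), so membership of $e$ in $\Sigmacirc$ says nothing about the cone-closure of a Tits-neighborhood of $e$ landing inside $V(\varphi_e)$. Also, at one earlier step you assert that ``each $\varphi_e$ remains a push towards $e$ w.r.t.\ every $\rho'$ sufficiently close to $\rho$''; this needs the finitary structure made explicit, since $\text{gsh}_e$ is an infimum over the whole $n$-skeleton and continuity in $\rho$ must be reduced to finitely many Busemann comparisons.

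The paper bypasses the Tits topology entirely for (i). It first invokes Theorem~\ref{T:6.4}(iv) with $E=\partial M$: there is a single $G$-volley $\Phi:\bF^{(n)}\to f\bF^{(n)}$ of finite norm, inducing $\text{id}_A$, and a uniform $\nu>0$ such that for every $e\in E$ some chain-map selection $\varphi_e$ from $\Phi$ has $\text{gsh}_e(\varphi_e)\geq\nu$. Because $\Phi$ puts only finitely many candidate images over each basis element, and because the valuation $v_{e,\rho}(c)$ is jointly continuous in $(e,\rho)$ with $e$ in the cone topology (Lemma~\ref{continuous}), for each $e$ one of these finitely many candidates works on a cone-open product neighborhood $N(e)\times N_e(\rho)$. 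Cone-compactness of $\partial M$ then yields a finite subcover $N(e_1),\dots,N(e_k)$, and $N=\bigcap_i N_{e_i}(\rho)$ is the required neighborhood of $\rho$. The inductive step (Theorem~\ref{T:6.5}) upgrades this from the $0$-skeleton to the $n$-skeleton. In short: the mechanism that makes the compactness argument go through is the finiteness of a fixed uniform volley together with joint cone-continuity, not any passage through the Tits topology.
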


\noindent $\Sigma ^{n}_{\rho }(M;A)$ is not in general open in $\del M$ with the cone topology.

The next two theorems concerning the dynamical invariant $\Sigmacirc (M;A)$
-- a homological description and a product formula -- are new, even in the 
$0$-dimensional case \cite{BGe16}.

Given a base point $b\in M$ and a boundary point $e\in \del M$ we
write $\widehat {{\Z}G}^e$ for the set all infinite sums $\sigma =
\Sigma _{g\in G}a_{g}g$ with the property that each horoball $HB_e$
at $e$ contains all but a finite subset of $\text{supp}(\sigma )b\subseteq
M$. This $\widehat {{\Z}G}^e$ is a right $G$-module which we call the
{\it Novikov module at} $e$.

\begin{thm}\label{novikov1} If $A$ is a ${\Z}G$-module of type $FP_n$ then\footnote{The special case of Theorem \ref{novikov1} when $M=G_{ab}\otimes {\R}$
is Euclidean is proved in Pascal Schweitzer's appendix to \cite{B07}.}
$$\Sigmacirc (M;A) =\{e\in \del M\mid \text{ \rm Tor}_{k}(\widehat {{\Z}G}^{e'},A)=0 \text { for all } e'\in \text{\rm cl}Ge \text{ and all } k\leq n.\}$$
\end{thm}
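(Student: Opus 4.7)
The plan is to use Theorem \ref{criterion} as the pivot. Since it identifies $\Sigmacirc(M;A)$ with $\{e \in \partial M : \mathrm{cl}(Ge) \subseteq \Sigma^n(M;A)\}$ and, equivalently, with the set of $e$ admitting a $G$-finitary push of the $n$-skeleton toward $e$, proving Theorem \ref{novikov1} reduces to relating such a push to chain contractibility of the Novikov-completed resolution. Fix a free resolution $\mathbf{F} \twoheadrightarrow A$ with finitely generated $n$-skeleton and control map $h$. An element of $\widehat{\mathbb{Z}G}^{e'} \otimes_{\mathbb{Z}G} \mathbf{F}^{(k)}$ is a (possibly infinite) sum of basis elements of $\mathbf{F}^{(k)}$ whose $h$-supports eventually enter every horoball at $e'$, and $\mathrm{Tor}_k(\widehat{\mathbb{Z}G}^{e'}, A)$ is the $k$th homology of the resulting completed complex.

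For the inclusion from left to right I would use the push to build a chain contraction of this complex. Start with a $G$-finitary push $\varphi: \mathbf{F}^{(n)} \to \mathbf{F}^{(n)}$ toward $e$ with positive displacement $\epsilon$, extend it to a lift of $\mathrm{id}_A$ on all of $\mathbf{F}$, and pick a $G$-finitary chain homotopy $s$ with $ds + sd = \varphi - \mathrm{id}$. The formal sum $T = \sum_{k \geq 0} s\varphi^k$ converges in $\widehat{\mathbb{Z}G}^e \otimes \mathbf{F}$ because each iterate $\varphi^k$ pushes $h$-supports at least $k\epsilon$ deep into horoballs at $e$. Telescoping the identity $\varphi^{k+1} - \varphi^k = d(s\varphi^k) + (s\varphi^k)d$ and summing gives $-\mathrm{id} = dT + Td$, contracting the complex through degree $n$ and forcing Tor vanishing at $e$. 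For $e' \in \mathrm{cl}(Ge)$ the same series should converge in $\widehat{\mathbb{Z}G}^{e'} \otimes \mathbf{F}$: the $G$-finitariness of $\varphi$ converts translation by elements of $G$ into bounded perturbations, and continuity of Busemann functions in the cone topology promotes deep displacement at $e$ into deep displacement at every $e' \in \mathrm{cl}(Ge)$ after sufficiently many iterates.

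For the inclusion from right to left the construction has to be inverted. Given vanishing of $\mathrm{Tor}_k$ for $k \leq n$ at every $e' \in \mathrm{cl}(Ge)$, use the chain contraction of $\widehat{\mathbb{Z}G}^e \otimes \mathbf{F}$ to produce, for each of the finitely many basis elements $b_i \in \mathbf{F}^{(n)}$, a Novikov lift $\tilde\varphi(b_i)$ of $b_i$ whose $h$-support lies arbitrarily deep in a horoball at $e$. Truncate $\tilde\varphi$ to a finite chain, extend it $G$-finitarily over the orbit of the basis using the coarse-metric machinery developed in \cite{BGe16}, and verify that the resulting map is a $G$-finitary push of $\mathbf{F}^{(n)}$ toward $e$; a compactness argument over $\mathrm{cl}(Ge)$ then promotes pointwise displacement into uniform displacement over the orbit closure. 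The hard part is this reverse step: converting infinite Novikov bounding chains into a finite, $G$-finitary, uniformly displacing push on a finitely generated skeleton. The uniformity over $\mathrm{cl}(Ge)$ is what makes the closure hypothesis essential, and is precisely what distinguishes Theorem \ref{novikov1} from its classical Euclidean predecessor, where $G$ acts trivially on $\partial M$ and Schweitzer's single-point argument suffices.
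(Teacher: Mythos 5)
Your forward direction is broadly aligned with the paper's: both use a $G$-finitary push $\varphi$ and a finitary homotopy $\sigma$ to define a ``Novikov series'' $\bar\varphi = 1 + \varphi + \varphi^2 + \cdots$ and deduce that Novikov cycles bound, hence $\mathrm{Tor}_k = 0$. One imprecision: you claim that convergence of the series at every $e' \in \mathrm{cl}(Ge)$ follows from continuity of Busemann functions applied to the \emph{same} push $\varphi$. That is not quite how the paper handles the closure. The paper first invokes Theorem~\ref{T:5.5} (the orbit-closure property of $\Sigmacirc$), which supplies a \emph{different} finitary push $\psi_{e'}$ at each $e' \in \mathrm{cl}(Ge)$, built from $G$-translates of $\varphi$ patched over finitely generated submodules; a single $\varphi$ need not displace uniformly toward every limit point.

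Your reverse direction, however, departs from the paper in a way that leaves a genuine gap. You propose to directly manufacture a $G$-finitary push at $e$ out of a chain contraction of the Novikov-completed complex, by truncating Novikov lifts of basis elements and ``extending $G$-finitarily.'' You yourself flag this as ``the hard part,'' but no mechanism is supplied for why the truncation produces a finite volley, or why the resulting selection would have positive guaranteed shift; those are exactly the obstructions the paper is careful to avoid. The paper's actual route is different and sharper: it introduces an intermediate invariant $\widetilde\Sigma^n(M;A)$ (pushes admitting a ``Lipschitz deformation'' $\sigma$) and proves three lemmas giving the chain of containments
\[
\Sigmacirc(M;A) \subseteq \widetilde\Sigma^n(M;A) \subseteq \Sigma^n_{\mathrm{Tor}}(M;A)\cap\widetilde\Sigma^{n-1}(M;A) \subseteq \Sigma^n(M;A).
\]
In particular, the implication from $\mathrm{Tor}$-vanishing to bounding is shown only into $\Sigma^n$ (the geometric, not the dynamical, invariant), via a decomposition trick: write a Novikov bounding chain $u$ as $u_1 + u_2$ with $u_1$ a \emph{finite} chain, and take $c' = \partial u_1 + c$ as the controlled finite bounding chain. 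Membership of all of $\mathrm{cl}(Ge)$ in $\Sigma^n$ is then converted to $e \in \Sigmacirc$ by the Characterization Theorem. This detour through $\Sigma^n$, rather than a direct construction of a finitary push from Novikov data, is precisely what makes the argument go through, and it is the piece missing from your proposal.
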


If $(M',A')$ is second pair consisting of a proper $CAT(0)$ space $M'$
and an abelian group $A'$, both acted on by a group $H$, then we have
a corresponding pair $(M\times M', A\otimes A')$ with the obvious
$G\times H$ action. Assuming $A$ and $A'$ are of type $FP_n$ as $G$-
[resp. $H$]-modules, we can take advantage of the identification $\del
(M\times M')=\del  M \star \del M'$ to ask for a formula expressing
$\Sigma ^{*}(M\times M ';A\otimes A')$ in terms of  $\Sigma ^{*}(M;A)$
and  $\Sigma ^{*}(M';A')$.  While there is no intrinsic relationship
between the subsets $\Sigma ^{*}(M;A)$ and $\Sigma ^{*}(M';A')$ of $\del
(M\times M')$, the tensor product $A\otimes A'$ with ground ring $\Z$
could be zero. Hence there is no hope for a simple formula without
restrictions on the modules.  For this reason we replace the ground ring
$\Z$ by a field $K$ in our product formula, and we interpret the $\Sigma
$-invariants correspondingly. As usual, formulas for these invariants
are best expressed in terms of their complements $\Sigma ^{c}=\del M
- \Sigma$.

\begin{thm}\label{productthm} Let $K$ be a field and let $A$, $A'$ be $KG$- (resp. $KH$-)modules of type $F_n$, with the additional assumption that ${^ \circ}\Sigma ^{0}(M;A)=\del M$
and $^{\circ}\Sigma ^{0}(M';A')=\del M'$. Then
$$ \Sigmacirc(M\times M';A\otimes _{K} A')^c = \bigcup^n_{p=0} 
{^{\circ} \Sigma ^{p}(M;A)^c * 
{^{\circ} \Sigma ^{n-p}}}(M';A')^c.$$
\end{thm}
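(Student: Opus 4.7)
The plan is to deduce the product formula from Theorem \ref{novikov1} combined with a Künneth-type decomposition of the Novikov module at a join point. Throughout I work over the field $K$, so that any Künneth short exact sequence collapses to an isomorphism; since $A$ and $A'$ are of type $F_n$, a finite free $KG$-resolution ${\bf F}\thra A$ through dimension $n$ tensored over $K$ with a finite free $KH$-resolution ${\bf F}'\thra A'$ through dimension $n$ provides a finite free $K(G\times H)$-resolution of $A\otimes_K A'$ in the same range.

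The key geometric input is an identification of the Novikov module at a boundary point of $M\times M'$. A point $e\in\del(M\times M')=\del M\star\del M'$ is represented as $(\cos\theta)\, e_1+(\sin\theta)\, e_2$ with $\theta\in[0,\pi/2]$, and with base point $(b,b')$ the associated Busemann function decomposes as
$$\beta_e(x,y)=\cos\theta\cdot\beta_{e_1}(x)+\sin\theta\cdot\beta_{e_2}(y).$$
I would then show that $\widehat{K(G\times H)}^{e}$ is the completion of $KG\otimes_K KH$ whose support condition is controlled by this linear combination. For $\theta\in(0,\pi/2)$ this completion contains the ordinary tensor product $\widehat{KG}^{e_1}\otimes_K\widehat{KH}^{e_2}$ and remains flat enough on finitely generated free modules in the range $\leq n$; at $\theta=0$ (resp.\ $\pi/2$) it reduces to $\widehat{KG}^{e_1}\otimes_K KH$ (resp.\ $KG\otimes_K\widehat{KH}^{e_2}$).

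Armed with this, I would compute $\text{Tor}^{K(G\times H)}_{\ast}(\widehat{K(G\times H)}^e,\, A\otimes_K A')$ by applying the Novikov completion to the double complex ${\bf F}\otimes_K {\bf F}'$ in the range $\leq n$. The Künneth spectral sequence degenerates over the field $K$ to give
$$\text{Tor}_n^{K(G\times H)}\!\bigl(\widehat{K(G\times H)}^{e},\, A\otimes_K A'\bigr)\ \cong\ \bigoplus_{p+q=n}\text{Tor}_p^{KG}\!\bigl(\widehat{KG}^{e_1},A\bigr)\otimes_K\text{Tor}_q^{KH}\!\bigl(\widehat{KH}^{e_2},A'\bigr).$$
Combining this with Theorem \ref{novikov1} applied to each factor, membership of $e$ in $\Sigmacirc(M\times M';A\otimes_K A')^c$ becomes equivalent to the existence of some $(G\times H)$-translate $e''=e_1''\star e_2''\in\text{cl}((G\times H)e)$ together with a decomposition $p+q\leq n$ for which both factors are nonzero; the hypotheses ${}^{\circ}\Sigma^{0}(M;A)=\del M$ and ${}^{\circ}\Sigma^{0}(M';A')=\del M'$ force the degree-zero tensor products to vanish, so the $p=0$ and $q=0$ terms never contribute and one may take $p+q=n$ exactly.

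Finally, the closure bookkeeping has to be reconciled with the right-hand side. Under the join decomposition the $(G\times H)$-orbit of $e=e_1\star e_2$ is the join (at the same angle) of the $G$-orbit of $e_1$ and the $H$-orbit of $e_2$; a compactness argument in the cone topology identifies accumulation points on the product side with joins of accumulation points on the factor side, possibly with the angle sliding to an endpoint. This matches the right-hand condition $e_1''\in{}^{\circ}\Sigma^{p}(M;A)^c$ and $e_2''\in{}^{\circ}\Sigma^{n-p}(M';A')^c$ via Theorem \ref{novikov1}. The main technical obstacle I anticipate is the Novikov/Künneth identification itself: the ordinary tensor product $\widehat{KG}^{e_1}\otimes_K\widehat{KH}^{e_2}$ is strictly smaller than the true Novikov module of the product at a generic join point, so one has to describe the correct completion and verify that it is flat enough to feed the spectral sequence in the finite range dictated by the type-$F_n$ assumption.
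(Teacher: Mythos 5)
Your proposal takes a genuinely different route from the paper, and you have put your finger on exactly where it breaks down. The paper never goes through Novikov homology to prove the product formula: Section~\ref{S:10} works at the chain level. For the inclusion $\subseteq$, Lemma~\ref{L:11.1} manufactures from pushes on ${\bf F}^{(k)}$ and ${\bf F}'^{(k')}$ a push $\zeta\otimes\zeta'$ on $({\bf F}\otimes_K{\bf F}')^{(n)}$, using the additive decomposition $v_{\gamma''}(c\otimes c')=\cos\theta\,v_{\gamma}(c)+\sin\theta\,v_{\gamma'}(c')$ of the valuation (Theorem~\ref{T:11.2}). For the inclusion $\supseteq$, Theorem~\ref{T:11.3} first reduces to the axial case $q=0$, handles that by a $K$-split $KG$-embedding ${\bf F}\rightarrowtail{\bf F}\otimes_K{\bf F}'$ (here the hypothesis $\Sigma^{0}(M';A')=\del M'$ is used), and then upgrades $\Sigma$ to $^{\circ}\Sigma$ by invoking the Characterization Theorem~\ref{T:6.1} and a short argument with closures of orbits in the join.

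The gap in your version is the step you flag yourself, and it is not a routine technicality. At a join point $e=(\cos\theta)e_1+(\sin\theta)e_2$ with $0<\theta<\pi/2$, the Novikov chain complex of ${\bf F}\otimes_K{\bf F}'$ at $e$ consists of all infinite sums whose supports satisfy the single condition $\cos\theta\,v_{e_1}(y)+\sin\theta\,v_{e_2}(y')\to+\infty$; this allows infinitely many summands with the $M$-coordinate not converging to $e_1$ at all (indeed staying in a bounded window) provided the $M'$-coordinate compensates. Consequently ${\bf C}^e$ is strictly larger than ${\bf C}^{e_1}\otimes_K{\bf C}^{e_2}$ (and also than any reasonable completed tensor product), and $\widehat{K(G\times H)}^{e}$ is not a $K(G\times H)$-module obtained by tensoring $\widehat{KG}^{e_1}$ with $\widehat{KH}^{e_2}$. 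So there is no K\"unneth spectral sequence to run: the isomorphism
\[
\text{Tor}_n^{K(G\times H)}\bigl(\widehat{K(G\times H)}^{e},A\otimes_K A'\bigr)\cong\bigoplus_{p+q=n}\text{Tor}_p^{KG}\bigl(\widehat{KG}^{e_1},A\bigr)\otimes_K\text{Tor}_q^{KH}\bigl(\widehat{KH}^{e_2},A'\bigr)
\]
is itself essentially the content of the product formula and cannot be taken as input. A secondary issue, also not resolved in your sketch, is the claim that accumulation points of $(G\times H)(e_1\star_\theta e_2)$ in the cone topology are joins of accumulation points on the two factors; the angle can indeed drift to $0$ or $\pi/2$ along a sequence, and at those endpoints the Novikov module changes type (it becomes $\widehat{KG}^{e_1}\otimes_K KH$, say), so the bookkeeping with Theorem~\ref{novikov1} requires a separate treatment of the axial strata. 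The paper sidesteps both problems by working directly with valuations and finitary pushes rather than passing through completed group rings.

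Your observation that the hypotheses ${}^{\circ}\Sigma^{0}(M;A)=\del M$ and ${}^{\circ}\Sigma^{0}(M';A')=\del M'$ kill the $p=0$ and $q=0$ terms is correct (via the $k=0$ case of Theorem~\ref{novikov1}), and the set-up with finite free resolutions through dimension $n$ over $K$ is fine. But as written the proof has a hole exactly at the K\"unneth step, and filling it would amount to reproving the theorem by other means.
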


\begin{Rems}
\newline
\begin{enumerate}[(1)]
\item Theorem \ref{productthm} extends our product formula for 
$\Sigma ^{n}(G\times H;K)$ in \cite{BGe10}.
\vskip 5pt
\item In the discrete case the assumption that $\Sigma ^{0}(M;A)=\del M$
is equivalent to saying that the $G$-action on $M$ is cocompact and that
$A$ is finitely generated over any point stabilizer; see \cite{BGe16}. For
more details see the remarks in Section \ref{S:10}.
\vskip 5pt
\item An early (sometimes forgotten) product formula for the original
Bieri-Strebel invariant, defined for modules over finitely generated
abelian groups, is not covered by Theorem \ref{productthm}. It asserts
that for such groups $G$ and $H$ and arbitrary $KG$- and
$KH$-modules $A$ and $A'$,
\begin{equation*} \Sigma ^{0}(G\times H; A\otimes _{K} A')^c = \Sigma ^{0}(G;A)^c *\Sigma ^{0}(H;A')^c \tag{*}
\end{equation*}

\noindent It is a fact that when $G$ is finitely generated and abelian then $\Sigma ^{n}(G;A)= \Sigma ^{0}(G;A)$ for all $n\geq 0$. So, in this ``classical case" Theorem \ref{productthm} holds without any restriction on $\Sigma ^{0}$.    
\vskip 5pt
The formula $(*)$ appeared in \cite{BGr82} as the key to
proving that every metabelian group of type $FP_{\infty}$ is virtually
of type $FP$. It would be highly interesting to have a general formula
for $ \Sigma ^{n}(G\times H; A\otimes _{K} A')$ that explains the role
of $\Sigma ^{0}(G;A)$ and $\Sigma ^{0}(H;A')$.

\end{enumerate}
\end{Rems}

\section{Finitary homological algebra}

We use the symbol $fS$ to denote the set of all finite subsets of a given set $S$.     

An additive homomorphism
$\varphi:A\to  B$ between $G$-modules\footnote{Until Section \ref{novikov} the ground ring in this paper will be ${\Z}$.} is $G$-{\it finitary} (or just {\it finitary}) if it is captured by a $G$-map
$\Phi:A\to fB$, in the sense that $\varphi (a)\in \Phi (a)$ for every
$a\in A$. We call the $G$-map $\Phi$
is a $G$-{\it volley} (or just a {\it volley}) for the finitary map $\varphi$, and we say that $\varphi$ is a
{\it selection} from the volley $\Phi$.

Two volleys $\Phi :A\to fB$ and $\Psi
:B\to fC$ can be can be ``composed'' to give the volley $\Psi\Phi : A\to
fC$ defined by $\Psi\Phi(s) := \ds{\bigcup_{t\in \Phi(s)}}\Psi(t)$. A
$G$-map $\varphi : A\to B$ may be regarded as the $G$-volley which
assigns to every element $s\in S$ the singleton set $\{\varphi(s)\}$.
Hence $G$-volleys and $G$-homomorphisms can be composed in the above
sense.

Every $G$-homomorphism is, of course, $G$-finitary, but $G$-finitary
homomorphisms are much more general. Unlike a $G$-homomorphism, a
$G$-finitary map $\varphi:A\to B$ is not uniquely determined by its
values on a ${\mathbb Z}G$-generating set $X$ of $A$; however, the
possible values on $a = gx$ (where $g\in G$ and $x\in X)$ are restricted
to be in the finite set $\Phi(a) = g\Phi(x)\subseteq g\Phi(X)$. 

\begin{lemma}\label{L:3.1} 
If $\varphi : A\to B$ and $\psi : B \to C$ are $G$-finitary, so is the composition $\psi\varphi :
A\to C$.
\end{lemma}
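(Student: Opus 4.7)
The plan is to produce an explicit volley witnessing that $\psi\varphi$ is $G$-finitary, and the natural candidate is the composed volley $\Psi\Phi$ already introduced in the text, where $\Phi : A \to fB$ and $\Psi : B \to fC$ are chosen volleys for $\varphi$ and $\psi$ respectively. So I would start by fixing such $\Phi$ and $\Psi$ and declaring $\Psi\Phi : A \to fC$, $a \mapsto \bigcup_{t \in \Phi(a)} \Psi(t)$, to be the proposed volley for $\psi\varphi$.

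Three things then have to be checked, each of which is immediate from the definitions. First, each value $\Psi\Phi(a)$ is indeed a finite subset of $C$, since $\Phi(a) \in fB$ is finite and each $\Psi(t) \in fC$ is finite, so $\Psi\Phi(a)$ is a finite union of finite sets. Second, $\Psi\Phi$ is a $G$-map: using $G$-equivariance of $\Phi$ and then of $\Psi$, one rewrites
\begin{equation*}
\Psi\Phi(ga) \;=\; \bigcup_{t \in \Phi(ga)} \Psi(t) \;=\; \bigcup_{t \in g\Phi(a)} \Psi(t) \;=\; \bigcup_{s \in \Phi(a)} \Psi(gs) \;=\; g\!\!\bigcup_{s \in \Phi(a)} \Psi(s) \;=\; g\cdot \Psi\Phi(a).
\end{equation*}
Third, $\psi\varphi$ is a selection from $\Psi\Phi$: since $\varphi(a) \in \Phi(a)$ and $\psi(b) \in \Psi(b)$ for every $b$, taking $b = \varphi(a)$ gives $\psi(\varphi(a)) \in \Psi(\varphi(a)) \subseteq \Psi\Phi(a)$.

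There is no real obstacle here; the lemma is essentially a packaging statement verifying that the composition of volleys defined just before the lemma behaves correctly on selections. The only thing worth flagging is that the additivity of $\psi\varphi$ as a map of abelian groups is automatic, since $\varphi$ and $\psi$ are by hypothesis additive homomorphisms, and additivity is not part of what the volley needs to record; the volley only captures the set-theoretic containment. Thus the proof reduces to the three bullet-checks above.
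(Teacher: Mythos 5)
Your proof is correct and takes the same approach as the paper: exhibit $\Psi\Phi$ as a volley for $\psi\varphi$. The paper's proof is the one-line observation that $\psi\varphi$ is a selection from the composed volley $\Psi\Phi$; you spell out the routine checks (finiteness, $G$-equivariance, containment) that the paper leaves implicit from its earlier definition of volley composition.
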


\begin{proof}If $\varphi : A \to B$ and $\psi
: B\to C$ are selections from the volleys $\Phi : A\to fB$, $\Psi : B\to
fC$, respectively, then $\psi \varphi : A\to C$ is a selection from 
the composed volley $\Psi \Phi : A\to fC$.  
\end{proof}

Thus there is a $G$-finitary category of $G$-modules.

By a {\it based} free ${\Z}G$-module we mean a free (left) $G$-module $F$
with a specified basis.  We write $F=F_X$ when we wish to emphasize this
basis $X$.  The letter $Y$ will always stand for the induced ${\Z}$-basis
$Y = GX$.

\begin{example}\label{canonical} {\rm In this paper a $G$-volley will usually be given on based free
$G$-module $F_X$.  Indeed, if $B$ is an arbitrary $G$-module, every map
$\Phi :X\to fB$ extends to a {\it canonical volley} $\Phi
:F_X\to fB$ as follows:  On elements $y = gx$ of the ${\mathbb Z}$-basis $Y =
GX$, $\Phi$ is uniquely determined by $G$-equivariance:  $\Phi (y) :=
g\Phi(x)$; and for arbitrary elements $c = \ds{\sum_{y\in Y}}n_{y}y\in
F_X$, in the unique expansion, we put}
\begin{equation*}
\Phi (c) := \ds{\sum_{y\in Y}}n_{y}\Phi (y):=\{\ds{\sum_{y\in Y}}n_{y}b_{y}\mid b_{y}\in
\Phi (y),\text{ for all }y\in Y\}.
\end{equation*}
{\rm It is straighforward to check that $\Phi (gc)=g\Phi (c)$. We call 
$\Phi : F_X \to fB$ the {\it canonical $G$-volley induced by} $\Phi :X\to fB$.}  
\end{example}

With respect to the canonical $G$-volley, finitary homomorphisms are easy to construct on $F_X$: a finitary homomorphism $\varphi:F_{X}\to B$ can  be
given by first choosing  $\Phi(x)\in fB$ for each $x\in X$, and then
picking  $\varphi(gx)\in g\Phi(x)$ for all $(g,x)\in G\times X$.

\begin{examples}
\begin{enumerate}[1.]
\item If $H\leq G$ is a subgroup of finite index, and $A$, $B$ are $G$-modules then every
$H$-homomorphism $\varphi: A\to B$ is $G$-finitary.
\item If $N\leq G$ is a finite normal subgroup, and $A$ is a $G$-module then the additive
endomorphism of $A$ given by multiplication by 
$\lambda\in {\mathbb Z}N$ is $G$-finitary.
\end{enumerate}
\end{examples}

For more details, see \cite{BGe16}.

\subsection{Graded volleys and finitary chain maps}

In order to extend the results of \cite{BGe16} to higher dimensions we
need to know that a Comparison Theorem (Theorem \ref{L:5.1})
for projective resolutions is available for $G$-finitary homomorphisms.

\newline
Here are our standing notations and conventions: 
\newline
Until Section \ref{novikov}, ${\mathbf F}\thra A$ denotes a free 
${\Z}G$-resolution of the $G$-module $A$ by free $G$-modules $F_k$, i.e.
${\mathbf F}$ is graded as $\ds{\bigoplus_{k\geq 0}}F_k$ with boundary
morphism $\partial : {\mathbf F} \to {\mathbf F}$; in the final sections we allow more general ground rings than just ${\Z}$. Usually, the free $G$-modules $F_k$ will be finitely generated in dimensions $\leq n$. The truncation of ${\mathbf F}$ obtained by
setting $F_k =0$ when $k>n$ is the {\it $n$-skeleton} of ${\mathbf F}$
and is denoted by ${\mathbf F^{(n)}}$. Motivated by topology, we often refer to members of $\mathbf F$ as {\it chains} and to members of $Y$ as {\it cells}.

The augmentation morphism is $\epsilon:F_{0}\thra A$.  The corresponding
{\it augmented resolution} is the acyclic chain complex ${\mathbf
F}\thra A$.

Our free resolutions are {\it based}, meaning that each $F_k$ comes with
a specified free ${\Z}G$-basis $X_k$.  We write  $Y_k$ for the induced
$\Z$-basis $GX_k$.   We write $X$ and $Y$ for the unions of the $X_k$
and of the $Y_k$ respectively.

The based free resolution $\mathbf F\thra A$ is {\it admissible} if its basis
$X$ has the feature that $\partial x\neq 0$ for every $x\in X$, and
$\epsilon(x) \neq 0$ for every $x\in X_0$.  It is easy to replace
an arbitrary basis $X$ by a basis which makes $\mathbf F$ admissible
-- either by deleting basis elements $x$ with $\partial x= 0$, or by
replacing them by $x+x'$ if there is some $x'\in X$ with $\partial x'\neq
0$. {\it We will always assume that our based free resolutions are admissible.}

\begin{thm}[Finitary Comparison Theorem]\label{L:5.1} Let ${\mathbf
F}\thra A$ and ${\mathbf F'}\thra A'$ be admissible free resolutions
of the G-modules $A$ and $A'$. Then every $G$-finitary homomorphism
$f:A\to A'$ can be lifted to a $G$-finitary chain homomorphism 
$\varphi :{\mathbf F}\to {\mathbf F'}$ and any two such lifts are chain homotopic
by a $G$-finitary chain homotopy.  \end{thm}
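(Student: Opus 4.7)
The plan is to follow the classical inductive proof of the comparison theorem, constructing $\varphi$ (and later the chain homotopy) one dimension at a time, while simultaneously building a volley $\Phi$ on $X$ that witnesses $G$-finitariness. Since $\varphi$ is only finitary, the equivariance $\varphi(gx)=g\varphi(x)$ fails in general, so the crucial bookkeeping is to show that the ``twisted'' values $g^{-1}\varphi(gx)$ lie in a fixed finite subset of $F'$ depending only on $x\in X$; that finite set will serve as $\Phi(x)$, extended canonically to $Y=GX$.

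For existence, start at dimension zero. Let $f^{*}\colon A\to fA'$ be a volley capturing $f$. For each $x\in X_{0}$ and each $a'\in f^{*}(\epsilon(x))$, choose a preimage $c_{a'}\in F'_{0}$ under $\epsilon'$, and set $\Phi(x):=\{c_{a'}:a'\in f^{*}(\epsilon(x))\}$. By $G$-equivariance of $f^{*}$ we may write $f(g\epsilon(x))=ga'_{g}$ for some $a'_{g}\in f^{*}(\epsilon(x))$, and we define $\varphi(gx):=gc_{a'_{g}}$, which satisfies $\epsilon'\varphi=f\epsilon$ on $F_{0}$. For the inductive step, suppose $\Phi$ and a chain-map selection $\varphi$ have been built through dimension $k-1$. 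For $x\in X_{k}$, the canonical extension of $\Phi$ to the chain $\partial x$ is a finite set $\Phi(\partial x)\subseteq F'_{k-1}$, and $v_{g}:=g^{-1}\varphi(g\partial x)$ lies in $\Phi(\partial x)$. The chain-map property at dimension $k-1$ yields $\partial'v_{g}=g^{-1}\varphi(g\partial^{2}x)=0$ (for $k=1$ use instead $\epsilon'v_{g}=g^{-1}f\epsilon(\partial x)=0$). By acyclicity of the augmented resolution ${\mathbf F'}\thra A'$, for each such cycle $v$ in $\Phi(\partial x)$ pick a lift $\ell_{v}\in F'_{k}$ with $\partial'\ell_{v}=v$; declare $\Phi(x)$ to be the finite set of these lifts and set $\varphi(gx):=g\ell_{v_{g}}$. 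Then $\partial'\varphi(gx)=gv_{g}=\varphi(g\partial x)=\varphi(\partial(gx))$, and the chain-map property extends.

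Uniqueness up to $G$-finitary chain homotopy follows by an entirely parallel induction. Given two lifts $\varphi,\varphi'$ captured by volleys $\Phi,\Phi'$, build a degree-$+1$ map $h$ satisfying $\partial'h+h\partial=\varphi'-\varphi$ (with $h_{-1}:=0$) dimension by dimension. At dimension zero the specific element $g^{-1}[\varphi'(gx)-\varphi(gx)]$ lies in the finite set $\{a-b:a\in\Phi'(x),\,b\in\Phi(x)\}$ and is in $\ker\epsilon'$; at dimension $k\geq 1$ the specific element $g^{-1}[\varphi'(gx)-\varphi(gx)-h(g\partial x)]$ lies in an analogous finite set involving $\Phi'(x)$, $\Phi(x)$, and the volley $H$ for $h$ built so far, and is a cycle by the inductive chain-homotopy identity. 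Lifting through $\partial'$ furnishes the finite volley $H(x)$ and the selection $h(gx)$.

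The main obstacle throughout is maintaining the right inductive invariant: $\Phi(x)$ must be finite and yet must contain the twisted value $g^{-1}\varphi(gx)$ for every $g\in G$. What rescues us is the $G$-equivariance of $\partial$, $\epsilon$, and the input volley $f^{*}$: this equivariance absorbs the $g$-dependence of $\varphi(g\partial x)$ into a fixed finite set of cycles in $F'_{k-1}$, lifts of which form $\Phi(x)$. Verifying that each twisted value is genuinely a cycle (or, at the bottom, lies in $\ker\epsilon'$) is the routine but essential calculation that drives the recursion; once it is in hand, acyclicity of ${\mathbf F'}$ essentially forces the rest.
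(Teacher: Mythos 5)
Your proof is correct, and the core inductive mechanism is the same one the paper uses: build the lift (resp.\ homotopy) and its witnessing volley one dimension at a time, using $G$-equivariance of $\partial$, $\epsilon$, and the given volleys to confine the twisted values $g^{-1}\varphi(gx)$ (resp.\ $g^{-1}[\varphi'(gx)-\varphi(gx)-h(g\partial x)]$) to a finite set depending only on $x$, and then lifting by acyclicity of $\mathbf{F}'\thra A'$. The computations showing these twisted values are cycles are exactly as you say.

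Where you diverge from the paper is in the \emph{scope} of the uniqueness argument. You fix two lifts $\varphi,\varphi'$ and construct a chain homotopy $h$ together with a volley $H$ tailored to that specific pair. The paper instead proves a stronger, uniform intermediate result (its Proposition~\ref{P:5.2}): given a single degree-$0$ volley $\Phi$ inducing the zero map, there is one degree-$1$ volley $\Sigma$ such that \emph{every} chain-map selection from $\Phi$ is contracted by some selection from $\Sigma$. This is achieved by defining $\Gamma(c)$ as the finite set of all possible values $\varphi(c)-\sigma\partial(c)$ as $(\varphi,\sigma)$ range over all admissible pairs, not just one. Theorem~\ref{L:5.1} then follows by applying this to the difference volley, as in Corollary~\ref{C:5.3}. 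Your version is leaner and suffices for the theorem as stated, but the paper's uniform version carries extra payoff that is used later: in Corollary~\ref{C:6.3} the existence of a single $\Sigma$ gives a bound on $\lVert\sigma\rVert$ independent of the selection $\varphi$, and hence a uniform constant lag over a compact set of endpoints. If you need that uniformity downstream, you would have to redo the induction allowing all selections to vary, as the paper does.
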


In view of the straightforward construction of $G$-finitary maps on a
based free $G$-module (see Example \ref{canonical}) the first part of
the theorem --- the existence of a lift --- is obvious. The rest of this
section is concerned with the second assertion.

To get control of $G$-finitary chain maps and chain homotopies on free
resolutions ${\mathbf F}\to {\mathbf F}'$ we need volleys $\Phi : {\mathbf
F} \to f{\mathbf F}'$ of chain complexes.  We assume such volleys to be
graded of some degree $k$, i.e., $\Phi(F_n) \subeq F'_{n+k}$, for all
$n\geq 0$; but we do not require compatibility with the differentials.
However, degree 0 volleys $\Phi : {\mathbf F}\to f{\mathbf  F '}$ will
only be used in connection with chain maps, and hence {\it in degree
0 a selection will always be understood to be a chain-map-selection
from $\Phi$}.  And {\it graded volleys will always be understood to be degree
0 unless some other degree is specified}.  We say that the volley $\Phi :
{\mathbf F}\to f{\mathbf F'}$ {\it induces} the $G$-homomorphism $f : A\to A'$,
if $\epsilon'\Phi(c) = f\epsilon (c)$, for each $c\in F_0$. This implies
that all chain-map-selections from $\Phi$ induce $f$.

\begin{prop}\label{P:5.2} Let ${\bf F} \thra A$ and ${\bf F'}\thra A'$ be
admissible free resolutions of the $G$-modules $A$ and $A'$, and $\Phi
: {\bF} \to f{\bF'}$ a degree $0$ $G$-volley, inducing the zero map
$A\to A'$, i.e., with $\epsilon'\Phi(F_0) = 0$.  Then there is a
degree $1$ $G$-volley, $\Sigma : {\bf F} \to f{\bf F'}$, with the property that
every chain map $\varphi$ which is a selection from $\Phi$ is homotopic to $0$
by a homotopy which is a selection from $\Sigma$.
\end{prop}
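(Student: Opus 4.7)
The plan is to construct the graded volley $\Sigma = \{\Sigma_k\}_{k\geq 0}$ inductively on dimension, mimicking the classical comparison-theorem construction of a nullhomotopy but assembling, at each stage, a finite volley image wide enough to capture every homotopy value that could arise from any chain-map selection from $\Phi$. Throughout I use the canonical extension (Example \ref{canonical}) to pass from a set-valued function on $X_k$ to a $G$-volley on $F_k$: first extend $G$-equivariantly to $Y_k=GX_k$, then $\Z$-linearly. For the base case $n=0$, each $v\in\Phi(x)$ with $x\in X_0$ satisfies $\epsilon'(v)=0$ by hypothesis and hence lies in $\mathrm{im}\,\partial'_1$; choose $u_v\in F'_1$ with $\partial' u_v=v$ and set $\Sigma_0(x):=\{u_v:v\in\Phi(x)\}$. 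Inductively, assume $\Sigma_k$ has been built on $X_k$ for $k<n$. For each $x\in X_n$, write $\partial x=\sum m_i y_i$ with $y_i\in Y_{n-1}$ and form the finite subset
\begin{equation*}
S_x \;:=\; \Bigl\{\, v - \sum m_i w_i \,:\, v\in\Phi(x),\;w_i\in\Sigma_{n-1}(y_i)\,\Bigr\} \;\subseteq\; F'_n.
\end{equation*}
For every $s\in S_x$ with $\partial' s=0$, choose by exactness some $u_s\in F'_{n+1}$ with $\partial' u_s = s$, and put $\Sigma_n(x):=\{u_s:s\in S_x,\;\partial' s=0\}$.

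To verify the selection property, fix any chain-map selection $\varphi$ from $\Phi$ and inductively build additive selections $h_k$ from $\Sigma_k$ satisfying $\partial' h_k+h_{k-1}\partial = \varphi_k$ (with $h_{-1}=0$). For $y=gx\in Y_n$ with $g\in G$ and $x\in X_n$, $G$-equivariance of $\Phi$ gives $\varphi(y)\in g\Phi(x)$, and $G$-equivariance of $\Sigma_{n-1}$ combined with additivity of $h_{n-1}$ gives $h_{n-1}(\partial y)=\sum m_i h_{n-1}(gy_i)\in g\Sigma_{n-1}(\partial x)$, so $\varphi(y)-h_{n-1}(\partial y) = g \tilde s_y$ for some $\tilde s_y \in S_x$. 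The chain-map identity for $\varphi$ together with the inductive chain-homotopy equation for $h_{n-1}$ forces $\partial'(\varphi(y)-h_{n-1}(\partial y))=0$, whence $\partial'\tilde s_y=0$ and $u_{\tilde s_y}\in\Sigma_n(x)$; set $h_n(y):=g u_{\tilde s_y}\in\Sigma_n(y)$ and extend $\Z$-linearly to $F_n$. Additivity then propagates the chain-homotopy equation from $Y_n$ to all of $F_n$.

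The principal difficulty is the apparent conflict between fixing $\Sigma_n$ in advance of $\varphi$ and still having, for each $\varphi$, a valid $h_n$ that is a selection from $\Sigma_n$. Two features reconcile this: the Minkowski-like set $S_x$ uniformly over-approximates, through its $g$-translates, every value $\varphi(y)-h_{n-1}(\partial y)$ that can arise at a basis cell $y=gx$; and the hypotheses --- namely that $\varphi$ is a chain map and that $h_{n-1}$ already satisfies the previous chain-homotopy equation --- force every such value to be a cycle, so that the pre-chosen preimages $u_{\tilde s_y}$ are guaranteed to lie in $\Sigma_n$ precisely when they are needed.
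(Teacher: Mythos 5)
Your proof is correct and follows essentially the same strategy as the paper's: build $\Sigma$ degree by degree, at each stage collecting the finitely many candidate values of the homotopy difference at a basis cell of $X_n$ and choosing $\partial'$-preimages, then extending canonically. The only cosmetic difference is that you form the Minkowski-type set $S_x$ and then filter it by the cycle condition $\partial' s = 0$, whereas the paper defines the analogous set $\Gamma(x)$ to consist only of differences $\varphi(x) - \sigma\partial(x)$ arising from matched pairs $(\varphi,\sigma)$, for which the cycle condition holds automatically.
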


\begin{proof}  We construct the volleys 
$\Sigma : {\bF}^{(n)} \to f{\bF'}^{(n+1)}$ by induction on $n$, starting with $n=0$.
As  $\text{im }\Phi_0\subeq \text{ker }\epsilon' = \text{im }\partial'_1$,
we can find, for each element $x$ of the $G$-basis $X_0$ of $F_0$,
a finite subset $\Sigma_0(x) \subeq {\bF'}_1$ with $\partial'\Sigma_0(x)
= \Phi_0(x)$.  This
defines a canonical $G$-volley $\Sigma_0 : F_{0}\to fF'_1$, and by
$G$-equivariance we have $\partial '\Sigma_0(y) = \Phi_0(y)$ for all
$y\in Y_0 =GX_0$.  Selections are determined by their restrictions to the
$\Z$-basis $Y_0 = GX_0$, so for each selection $\varphi$ from $\Phi_0$ there
is a selection $\sigma$ from $\Sigma_0$, with $\varphi = \partial'\sigma$.

Now we take $n\geq 1$, assuming the volley $\Sigma\mid {\bF}^{(n-1)} :
{\bF}^{(n-1)} \to f{\bF'}^{(n)}$ is already constructed, with the property
that every (chain-map) selection $\varphi\mid {\bF}^{(n-1)} : {\bF}^{(n-1)}
\to f{\bF'}^{(n-1)}$ is homotopic to zero by a homotopy which is a
chain-homotopy selection from $\Sigma\mid {\bF}^{(n-1}$.
For every chain-map-selection $\varphi$ from $\Phi\mid {\bF}^{(n)}$ there are
(possibly several) selections $\sigma$ from $\Sigma\mid {\bF}^{(n-1)}$
with $\varphi\mid {\bF}^{(n-1)} = \partial'\sigma + \sigma\partial$.
We consider all of them and use them to define, for each $c\in
F_n$,
\begin{equation}\label{E:5.1}
\begin{aligned}
\Gamma(c) &:= \{\varphi(c) - \sigma\partial(c)\mid\varphi \text { is 
a selection from } \Phi:F^{(n)}\to fF'^{(n)},
\text { and }\\ &\sigma \text{ is a selection from } 
\Sigma:F^{(n-1)}\to fF'^{(n)},\ \text{ with }\ \varphi\mid {\bF}^{(n-1)}=
\partial'\sigma + \sigma\partial\}.
\end{aligned}
\end{equation}
We claim that $\Gamma : F_n\to fF'_n$ is a $G$-volley.  To see this,
let $g\in G$, and let $\varphi$ and $\sigma$ be as in (\ref{E:5.1}).
Then\footnote{$g\varphi $ is defined by $g\varphi (a)=g\varphi(g^{-1}a)$.} 
$(g\varphi)\mid F^{(n-1)} = g(\varphi\mid F^{(n-1)}) = 
g(\partial'\sigma +
\sigma\partial) = \partial'(g\sigma) + (g\sigma)\partial$.  Moreover,
since $\varphi$ and $\sigma$ are selections from the $G$-volleys $\Phi\mid
{\bF}^{(n)}$ resp.\ $\Sigma\mid {\bF}^{(n-1)}$, so are $g\varphi$ and $g\sigma$.
Hence $(g(\varphi-\sigma\partial))(c) = 
g((\varphi-\sigma\partial)(g^{-1}(c))\in
\Gamma(c)$, for all $c$.  Replacing $c$ by $gc$ shows $g\Gamma(c)\subeq
\Gamma(gc)$, and replacing $g$ by $g^{-1}$ establishes the opposite
inclusion.  This shows that $\Gamma$ is $G$-equivariant; as $\Gamma $ is
given in terms of the $\Z$-homomorphisms $\varphi -\sigma \partial$, the
requirements for a volley hold.

Now we claim that $\partial'\Gamma = 0$.  Indeed, with $\varphi$ and 
$\sigma$ as in (\ref{E:5.1}), we find for all
$c\in F_n$,
\begin{equation*}
\partial'(\varphi(c)-\sigma\partial(c)) = \varphi\partial(c) - 
\partial'\sigma\partial (c) = \varphi\partial(c) - (\varphi\mid
{\bF}^{(n-1)} - \sigma\partial)\partial(c) = 0.
\end{equation*}

For every basis element $x\in X_n$, we can now choose a finite subset 
$\Sigma_n(X)\subeq {\bF'}_{n+1}$, with
$\partial'\Sigma_n(x) = \Gamma(x)$.  This defines a canonical 
$G$-volley $\Sigma_n : {\bF}_n \to 
f{\bF'}_{n+1}$, extending $\Sigma\mid {\bF}^{(n-1)}$ to $\Sigma\mid
{\bF}^{(n)}$.  By $G$-equivariance we have $\partial'\Sigma_n(y) = 
\Gamma(y)$, for all $y\in Y_n = GX_n$.

Let $\varphi$ be a chain-map-selection from $\Phi\mid {\bF}^{(n)}$.  By 
induction there is a selection $\sigma$ from
$\Sigma\mid {\bF}^{(n-1)}$, with $\varphi\mid {\bF}^{(n-1)} = 
\partial'\sigma + \sigma\partial$.  Then $\varphi(c) -
\sigma\partial(c) \in \Gamma(c)$, for every $c\in {\bF}_n$, hence 
$\gamma = (\varphi-\sigma\partial)\mid {\bF}_n$ is a
selection from $\Gamma : {\bF}_n \to f{\bF'}_n$.  Since selections are 
determined by their
restrictions to the $\Z$-basis $Y_n = GX_n$, and the selections of the 
canonical volley $\Sigma_n$ are free on
$Y_n$, we see that there is a selection $\sigma_n : F_n\to F'_{n+1}$ 
from $\Sigma_n$ with $\partial'\sigma_n =
\gamma = (\varphi - \sigma\partial)\mid {\bF}_n$.  Thus $\sigma_n$ 
extends $\sigma$ to a selection $\tau$ from 
$\Sigma\mid {\bF}^{(n)}$, with $\varphi = \partial'\tau + 
\tau\partial$, as asserted.
\end{proof}

\begin{cor}\label{C:5.3}
Let ${\bF} \thra A$ and ${\bF'} \thra A'$ be admissible free resolutions 
of the $G$-modules $A$ and $A'$, $\Phi$,
$\Psi : {\bF} \to f{\bF'}$ two degree-$0$-volleys.  Then there is 
a degree $1$ volley, $\Sigma : {\bF} \to
f{\bF'}$, with the property that any two chain-map-selections $\varphi$ 
from $\Phi$ and $\psi$ from $\Psi$, inducing the
same $G$-homomorphism $f : A\to A'$, are homotopic by a 
chain-homotopy-selection of $\Sigma$.
\end{cor}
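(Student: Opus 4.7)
The plan is to reduce Corollary \ref{C:5.3} to Proposition \ref{P:5.2} by packaging the differences $\varphi - \psi$ of the selections in question as the chain-map-selections of a single new volley that induces the zero map. Once that is set up, the degree-$1$ volley $\Sigma$ whose selections witness the null-homotopy of all such differences will automatically witness chain homotopy between $\varphi$ and $\psi$.

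First I would cut $\Phi$ and $\Psi$ down so that they induce $f$ in the volley-sense required by Proposition \ref{P:5.2}. Put
\[
\widetilde\Phi(c) := \Phi(c) \cap (\epsilon')^{-1}\bigl(f\epsilon(c)\bigr)
\qquad\text{for } c\in F_0,
\]
and $\widetilde\Phi(c) := \Phi(c)$ in higher degrees; define $\widetilde\Psi$ analogously. Each $\widetilde\Phi(c)$ is finite (being contained in $\Phi(c)$) and is $G$-equivariant in $c$, because $f$, $\epsilon$, and $\epsilon'$ are all $G$-maps, so $\widetilde\Phi$ and $\widetilde\Psi$ are degree-$0$ $G$-volleys. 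By construction, any chain-map-selection of $\Phi$ inducing $f$ is also a chain-map-selection of $\widetilde\Phi$, and likewise for $\Psi$.

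Next I would form the difference volley
\[
\Xi(c) := \{\,a - b \mid a \in \widetilde\Phi(c),\ b \in \widetilde\Psi(c)\,\},
\]
which is again $G$-equivariant and finite-set-valued. For every $c \in F_0$ one has $\epsilon'(a-b) = f\epsilon(c) - f\epsilon(c) = 0$, so $\epsilon'\Xi(F_0) = 0$. Proposition \ref{P:5.2} applied to $\Xi$ then produces a degree-$1$ $G$-volley $\Sigma : \mathbf{F}\to f\mathbf{F'}$ with the property that every chain-map-selection of $\Xi$ is null-homotopic via a selection of $\Sigma$. If $\varphi$ and $\psi$ are chain-map-selections of $\Phi$ and $\Psi$ both inducing $f$, then $\varphi - \psi$ is by construction a chain-map-selection of $\Xi$, so the corresponding selection $\sigma$ of $\Sigma$ satisfies $\varphi - \psi = \partial'\sigma + \sigma\partial$, giving the desired chain-homotopy-selection between $\varphi$ and $\psi$.

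The whole argument is essentially bookkeeping on top of Proposition \ref{P:5.2}; the only real subtlety is the initial restriction to the sub-volleys $\widetilde\Phi, \widetilde\Psi$, which is what forces the difference volley $\Xi$ to land in $\ker\epsilon'$ at level $0$ and hence to fulfill the hypothesis of Proposition \ref{P:5.2}. No further homological work is required.
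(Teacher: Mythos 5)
Your proposal is correct and takes essentially the same route as the paper: both form a degree-$0$ ``difference volley'' that induces the zero map on $A'$ and then invoke Proposition~\ref{P:5.2}. The paper defines $\Gamma(c) := \{\varphi(c)-\psi(c)\}$ ranging over chain-map-selections $\varphi$ of $\Phi$ and $\psi$ of $\Psi$ lifting $f$, while you first trim $\Phi,\Psi$ to the sub-volleys $\widetilde\Phi,\widetilde\Psi$ and take all pointwise differences; your $\Xi$ is a possibly larger volley than $\Gamma$, but this is harmless since Proposition~\ref{P:5.2} handles every chain-map-selection of the volley it is fed, and the relevant differences $\varphi-\psi$ are among them.
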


\begin{proof} Consider the map $\Gamma : {\bF} \to f{\bF'}$, given by
\begin{equation*}
\Gamma(c) = \{\varphi(c) - \psi(c)\mid\varphi, \psi\ \text{ 
chain-map-selections of }\ \Phi,\ \text{resp. } \Psi, \
\text{ both lifting } f\}.
\end{equation*}
Then $\Gamma$ is a degree 0 volley inducing the zero map, and 
$\epsilon'\Gamma(F_0) = 0$.  Hence the
Corollary follows from Proposition \ref{P:5.2}.
\end{proof}

The Finitary Comparison Theorem \ref{L:5.1} follows from Corollary \ref{C:5.3}.


\section{Controlled based free resolutions}

\subsection{The control space}

Throughout the paper $(M,d)$ is a proper non-compact $CAT(0)$ metric space. The closed ball of radius $r$ centered at $b$ is denoted by $B_{r}(b)$.
We write $\rho : G\to \text{Isom}(M)$ for an action of the group $G$ on $M$ by
isometries. Unless specified, there are no further assumptions about
$\rho $; its orbits might be indiscrete, and it might not be cocompact.
Except in connection with the Openness Theorem in Section \ref{S:6.4}, the 
action $\rho $ is fixed throughout.

The boundary of $M$ at infinity, denoted by $\partial M$, is the set of
asymptoty classes of geodesic rays in $M$. It is assumed to carry the
(compact metrizable) cone topology, unless it is clear from context
that $\partial M$ is being considered with the Tits metric topology.
If $\gamma$ is a geodesic ray in $M$ determining $e\in \del M$ we
write $\gamma (\infty)=e$. (For given $e$ there is such a $\gamma$
with $\gamma (0)$ arbitrary.)  We write $\beta _{\gamma}:M\to {\mathbb
R}$ for the Busemann function \footnote{Our convention is that $\beta
_{\gamma}(x)$ goes to $+\infty$ as $x$ approaches $e$.} determined by $\gamma$ and we
write $HB_{(\gamma ,t)}$ for the (closed) horoball about $e$ determined
by the point $\gamma (t)$.  Usually we are interested in a difference of
the form  $\beta _{\gamma}(p)-\beta _{\gamma}(q)$ and such a difference
depends on $e$, rather than on the particular choice of $\gamma $ with
$\gamma(\infty )=e$.

\subsection{Controlled based free $G$-modules}\label{S:2}

The {\it support} $c\in F_{X}$, $\text {supp}(c) \subseteq Y$, is the set
of all $y\in Y=GX$ occurring in the unique expansion of $c$ over
${\mathbb Z}$. By a {\it control map} on $F$ we mean a $G$-map $h: F\to fM$ given\footnote{Recall that we write $fM$ for the $G$-set of all finite subsets of $M$.} by composing
the support function $\text{supp}:F\to fY$ with an arbitrary $G$-equivariant map $fY\to fM$,
where $h(0)$ is defined to be the empty set. Thus $h$ is uniquely given by
its restriction $h|:X\to fM$. We will always assume that
our control maps $h$ are {\it centerless} in the sense that $h(x)$
is non-empty for all $x\in X$ (and hence $h(c)\neq \emptyset$ for all
$0\neq c\in F$).

\subsection{Valuations on free modules}\label{S:2.3}

Let the point $e\in \partial M$ be determined by the geodesic ray $\gamma : [0,\infty) \to M$. Composition of the control map $h : F \to fM$, with the Busemann function $\beta_{\gamma} : M\to {\mathcal R}$ assigns to each element of $F$ a finite set of real numbers; taking minima defines the function  
\begin{equation}\label{E:2.2} 
v_\gamma := \min \beta_\gamma h : F \to {\mathcal R} \cup \{\infty.\}
\end{equation}
In particular $v_{\gamma}(c)=\infty$ if and only if  $c=0$.  

Following \cite{BRe88} we call $v_\gamma$ a {\it valuation} on $F$.

\begin{lemma}\label{L:2.3}
\begin{enumerate}[{\rm(i)}]
\item $v_\gamma(-c) = v_\gamma(c)$, for all $c\in F$,
\item $v_\gamma(c+c') \geq \min\{v_\gamma(c),v_\gamma(c')\}$, for all $c,c'\in F$, 
\item $v_\gamma(c) = v_{g\gamma }(gc)$, for all $c\in F$, $g\in G$.
\item If $c$ and $c'$ are non-zero then
$v_\gamma(c)-v_\gamma(c')$ depends only on the endpoint $\gamma(\infty)
= e$, not on the ray $\gamma$, and $|v_\gamma(c)-v_\gamma(c')| \leq d_{H}(h(c),h(c'))$.
\end{enumerate}
\hfill$\square$
\end{lemma}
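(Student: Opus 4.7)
The plan is to unpack the definition $v_\gamma = \min \beta_\gamma h$ and use standard properties of Busemann functions on proper $CAT(0)$ spaces, namely that each $\beta_\gamma$ is $1$-Lipschitz, that $\beta_{g\gamma}(gx) = \beta_\gamma(x)$ for any isometry $g$, and that two Busemann functions associated to asymptotic rays differ by a constant on all of $M$.

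For (i), since the ${\mathbb Z}$-support satisfies $\text{supp}(-c) = \text{supp}(c)$, the control map gives $h(-c) = h(c)$, so the two valuations coincide. For (ii), the cancellation phenomenon yields $\text{supp}(c+c') \subseteq \text{supp}(c) \cup \text{supp}(c')$, hence $h(c+c') \subseteq h(c)\cup h(c')$; taking a min over a subset can only make the value larger, giving the desired inequality. For (iii), $G$-equivariance of $h$ yields $h(gc) = g\cdot h(c)$, and the isometric invariance of Busemann functions yields $\beta_{g\gamma}(gp) = \beta_\gamma(p)$ pointwise, so the minima match.

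Part (iv) is where the small bit of care is required. I would first recall that, in a proper $CAT(0)$ space, if $\gamma$ and $\gamma'$ are two rays with $\gamma(\infty) = \gamma'(\infty) = e$, then $\beta_\gamma - \beta_{\gamma'}$ is a constant function $K$ on $M$. Consequently, for any nonempty finite set $S \subseteq M$, one has $\min \beta_\gamma(S) = \min \beta_{\gamma'}(S) + K$, and in the difference $v_\gamma(c) - v_\gamma(c')$ the constant $K$ cancels. This proves the first claim of (iv).

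For the inequality, I would assume without loss of generality that $v_\gamma(c) \geq v_\gamma(c')$ and pick $q \in h(c')$ realizing $\beta_\gamma(q) = v_\gamma(c')$. Using the definition of Hausdorff distance, choose $p \in h(c)$ with $d(p,q) \leq d_H(h(c), h(c'))$. Then the $1$-Lipschitz property of $\beta_\gamma$ gives
\begin{equation*}
v_\gamma(c) \;\leq\; \beta_\gamma(p) \;\leq\; \beta_\gamma(q) + d(p,q) \;\leq\; v_\gamma(c') + d_H(h(c), h(c')),
\end{equation*}
and the symmetric argument in the case $v_\gamma(c') \geq v_\gamma(c)$ completes the proof. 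The only real subtlety is the constant-difference fact for asymptotic Busemann functions, which is the step I would flag as the one requiring a citation to standard $CAT(0)$ theory; the rest is a straightforward chain of inequalities.
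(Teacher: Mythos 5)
The paper records this lemma without a written proof (it is stated with only a terminal $\square$), treating it as a routine verification. Your argument is the natural one and is correct: (i)--(iii) follow directly from $\mathrm{supp}(-c)=\mathrm{supp}(c)$, $\mathrm{supp}(c+c')\subseteq\mathrm{supp}(c)\cup\mathrm{supp}(c')$, and the $G$-equivariance $h(gc)=g\,h(c)$ together with $\beta_{g\gamma}(gp)=\beta_\gamma(p)$, and (iv) uses the standard $CAT(0)$ fact that Busemann functions of asymptotic rays differ by a constant (Bridson--Haefliger II.8.20) plus the $1$-Lipschitz bound and Hausdorff distance, with the minima attained because $h(c),h(c')$ are finite and nonempty by the centerless and admissibility conventions. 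You correctly flag the constant-difference fact as the one citation-worthy ingredient; otherwise this matches what the authors evidently intended the reader to supply.
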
 
Once we have picked the control map $h : {\mathbf F} \to fM$, our
free resolution is equipped with a valuation $v_\gamma :=
\min\beta_\gamma h : {\mathbf F} \to {\mathcal R} \cup \{\infty\}$,
for each geodesic ray $\gamma : [0,\infty) \to M$.  On the augmented
resolution we have $v_\gamma(a) = \infty$, for each $a\in A$. 
\vskip 5pt
In our applications the finitely generated free module $F$ will be the $n$-skeleton ${\mathbf F}^{(n)}$ of a free resolution ${\mathbf F}\thra A$ where $A$ is a $G$-module. 

\begin{example} This example comes from topology. Take $A={\mathbb Z}$
and ${\bF}=C_{*}(\widetilde K)$, the integral simplicial chains in the
universal cover of a simplicial $K(G,1)$-complex $K$.  In this case ${\bf
F}$ comes with a canonical ${\Z}$-basis, the simplexes of $\widetilde K$,
and we can define a $G$-map ${\widehat h}:C_{*}(\widetilde K)\to fM$ on
each simplex $\sigma $ of $K$ by 
$$h(\sigma )={\widehat h}(\text{\{vertices of }\sigma\})$$.
\end{example}

\begin{rem} In this example we have $h(\del c)\subseteq h(c)$ for each
$c\in C_{*}(K)$. As a consequence we have $v_{\gamma}(\del c)\geq
v_{\gamma }(c)$, so that the chains with non-negative valuation form
a subcomplex. One could mimic that in the general situation by first
choosing $h(x)\neq \emptyset $ for each $x\in X_0$, and then defining
$h(x)$ on the higher skeleta by $h(x):=h(\del x)$. However, there is no
need for this in general and so our control maps can ignore the boundary
aspect of the resolution.  
\end{rem}

\section{Controlling homomorphisms over $M$}\label{S:3.3}

\subsection{Controlling homomorphisms on free modules}

Let the based free modules $F_{X}$ and $F'_{X'}$ be endowed with control maps $h$ and $h'$ mapping to $M$.   We want to measure how far, in terms of the metric $d$ on $M$, an additive homomorphism ${\varphi} : F \to F'$ moves
the members of $F$.  We define the {\it norm} of $\varphi $ by 
\begin{equation}\label{E:3.4} 
||\varphi|| :=\inf\{r\geq 0\mid h'(\varphi (c))\subseteq N_{r}(h(c));c\in F\}\in {\mathbb R}\cup \{\infty\}
\end{equation} 
{\it the shift function towards} $e$, $\text{sh}_{\varphi,e}
: F\to {\mathbb R} \cup \{\infty\}$, by 
\begin{equation}\label{E:3.5}
\text{sh}_{\varphi,e}(c) := v'_\gamma(\varphi(c)) - v_\gamma(c) \in {\mathbb
R}\cup \{\infty\},\ c\in F, 
\end{equation} 
and the {\it guaranteed shift towards} $e$ by, 
\begin{equation}\label{E:3.6}
\text{gsh}_e(\varphi) := \inf\{\text{sh}_{\varphi,e}(c)\mid c\in F\}.
\end{equation}

We call a $\Z$-submodule $L\leq F_X$ {\it cellular} if it is generated by
$L\cap Y$.  Sometimes $L$ will be given, and we will be interested in the norm or guaranteed shift of $\varphi \mid L$.  To have information for that case we include $L$ in the next lemmas.

\begin{lemma}\label{L:3.2} Let $\varphi :L\to F'$ be the restriction to $L$ of an 
additive homomorphism $F\to F'$.
\begin{enumerate}[{\rm(i)}]
\item  $\text{\rm sh}_{\varphi,e}(y) \geq -||\varphi||$, for all $y\in L\cap Y$; hence {\rm gsh}$_e(\varphi) \geq -||\varphi||$.
\item $||g\varphi|| = ||\varphi||$, {\rm sh}$_{g\varphi,ge} = \text{\rm sh}_{\varphi,e}$ and {\rm gsh}$_{ge}(g\varphi) =
\text{\rm gsh}_e(\varphi)$, for all $g\in G$. 
\end{enumerate}
\hfill$\square$
\end{lemma}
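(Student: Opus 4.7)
The plan rests on two elementary facts: (a) Busemann functions on any geodesic metric space are $1$-Lipschitz, i.e.\ $|\beta_{\gamma}(p)-\beta_{\gamma}(q)|\le d(p,q)$; and (b) the action of $G$ on $M$ is by isometries and the control maps $h,h'$ are $G$-equivariant (in particular $\beta_{g\gamma}(gp)=\beta_{\gamma}(p)$, which is what makes the preceding lemma's identity $v_{\gamma}(c)=v_{g\gamma}(gc)$ work). Neither part of the current lemma is deep; both are bookkeeping that isolate these facts for later use.

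For (i), take $y\in L\cap Y$, so $y\neq 0$ and both valuations are finite. By the definition (\ref{E:3.4}) of $||\varphi||$, we have $h'(\varphi(y))\subeq N_{||\varphi||}(h(y))$, so every $p\in h'(\varphi(y))$ admits some $q\in h(y)$ with $d(p,q)\le||\varphi||$. Combining with (a), $\beta_{\gamma}(p)\ge \beta_{\gamma}(q)-||\varphi||\ge v_{\gamma}(y)-||\varphi||$; minimizing over $p$ yields $v'_{\gamma}(\varphi(y))\ge v_{\gamma}(y)-||\varphi||$, which is the stated bound on $\text{sh}_{\varphi,e}(y)$. The same argument in fact works for every nonzero $c\in L$ (the restriction to $y\in L\cap Y$ just avoids the formal nuisance $\infty-\infty$ when $c=0$), and taking the infimum delivers $\text{gsh}_{e}(\varphi)\ge -||\varphi||$. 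If $||\varphi||=\infty$ the statement is vacuous.

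For (ii), each of the three equalities is a direct unpacking of equivariance. Writing $(g\varphi)(c)=g\varphi(g^{-1}c)$ and using that $h,h'$ are $G$-equivariant while $g$ acts as an isometry (so $g\cdot N_{r}(S)=N_{r}(gS)$), the condition $h'(\varphi(c'))\subeq N_{r}(h(c'))$ with $c'=g^{-1}c$ is identical to $h'((g\varphi)(c))\subeq N_{r}(h(c))$; comparing the two infima over $r$ gives $||g\varphi||=||\varphi||$. For the shift, apply the preceding lemma's identity $v_{\gamma}(c)=v_{g\gamma}(gc)$ on both $F$ and $F'$ (and note $g\gamma(\infty)=ge$): a direct substitution yields $v'_{g\gamma}((g\varphi)(gc))=v'_{\gamma}(\varphi(c))$ and $v_{g\gamma}(gc)=v_{\gamma}(c)$, so $\text{sh}_{g\varphi,ge}(gc)=\text{sh}_{\varphi,e}(c)$ — the natural equivariant reading of the stated equality $\text{sh}_{g\varphi,ge}=\text{sh}_{\varphi,e}$. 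Taking infima over $c$ (equivalently over $gc$, since $g$ is a bijection on $F$) then gives $\text{gsh}_{ge}(g\varphi)=\text{gsh}_{e}(\varphi)$.

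The main obstacle is essentially nonexistent: the only conceptual content is the $1$-Lipschitz property of Busemann functions and the compatibility of the $G$-action with the control data. I would execute (i) first, then (ii), and flag that this lemma will be reused every time we translate between shifts and norms or push a computation around the $G$-orbit.
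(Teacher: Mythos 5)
The paper marks this lemma with~$\square$ and supplies no proof, so there is nothing to compare against line by line; your argument is the natural one the authors presumably intended. Both halves are correct: part~(i) follows from $h'(\varphi(y))\subseteq N_{\|\varphi\|+\epsilon}(h(y))$ together with the $1$-Lipschitz property of Busemann functions (letting $\epsilon\to 0$ if one is worried about whether the infimum in (\ref{E:3.4}) is attained), and part~(ii) follows from equivariance of $h,h'$, the isometry identity $\beta_{g\gamma}(gp)=\beta_\gamma(p)$, and Lemma~\ref{L:2.3}(iii). You are also right to flag the interpretive point in~(ii): the literal equation $\text{sh}_{g\varphi,ge}=\text{sh}_{\varphi,e}$ should be read as $\text{sh}_{g\varphi,ge}(gc)=\text{sh}_{\varphi,e}(c)$ (equivalently $\text{sh}_{g\varphi,ge}=\text{sh}_{\varphi,e}\circ g^{-1}$), from which $\text{gsh}_{ge}(g\varphi)=\text{gsh}_e(\varphi)$ follows by taking infima over the bijection $c\mapsto gc$; this is exactly what is used later in Section~\ref{S:3.4} and in the proof of Theorem~\ref{T:6.4}.
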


\begin{lemma}\label{L:3.3}
Let $\varphi : F\to F'$ and $\psi : F'\to F''$ be two additive endomorphisms, and let $K \leq F$ and $L\leq F'$ be cellular $\Z$-submodules with
$\psi(K)\subeq L$.  Then 
\begin{equation*}
\text{\rm gsh}_e(\varphi |L\circ \psi |K) \geq \text{\rm gsh}_e(\varphi |K) + \text{\rm gsh}_e(\psi |L).
\end{equation*}
In particular, 
\begin{equation*}
\text{\rm gsh}_e(\varphi^k)\geq k\cdot \text{\rm gsh}_e(\varphi), \text{ for all natural numbers }k.
\end{equation*}
\end{lemma}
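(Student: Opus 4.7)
The plan is to derive the inequality from a pointwise telescoping identity on shifts and then pass to infima. For each $c\in K$ with $c\neq 0$ and with $\varphi(c)\neq 0$ (after reconciling the obvious typo in the indexing so that the composite makes sense, i.e.\ with $\varphi(K)\subeq L$ and the composite $\psi|L\circ\varphi|K$), one writes
$$v''_\gamma(\psi\varphi(c))-v_\gamma(c)=\bigl(v''_\gamma(\psi(\varphi(c)))-v'_\gamma(\varphi(c))\bigr)+\bigl(v'_\gamma(\varphi(c))-v_\gamma(c)\bigr),$$
which expresses the shift of the composite as the sum of the shift of the outer map at $\varphi(c)$ and the shift of the inner map at $c$. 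The hypothesis $\varphi(K)\subeq L$ is exactly what lets one read the first parenthesized term as $\text{sh}_{\psi|L,e}(\varphi(c))$, hence bound it below by $\text{gsh}_e(\psi|L)$; the second parenthesized term is bounded below by $\text{gsh}_e(\varphi|K)$ by definition. Taking $\inf_{c\in K}$ then yields the asserted inequality.

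Next I would dispose of the degenerate cases $c=0$ or $\varphi(c)=0$ by invoking the conventions $v_\gamma(0)=+\infty$ and $r+\infty=+\infty$ in ${\R}\cup\{+\infty\}$ fixed in \eqref{E:2.2}. Under these, the pointwise identity and its consequent estimate still hold, with the left-hand side being $+\infty$, so such $c$ cannot pull the infimum down and may be safely ignored. No appeal to Lemma \ref{L:2.3}(ii) is needed here because shifts are being computed on cellular elements one at a time, not on sums.

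For the ``in particular'' clause, I would take $K=L=F$ and induct on $k$: the base case $k=1$ is tautological, and the inductive step is
$$\text{gsh}_e(\varphi^k)=\text{gsh}_e(\varphi\circ\varphi^{k-1})\geq\text{gsh}_e(\varphi)+\text{gsh}_e(\varphi^{k-1})\geq k\cdot\text{gsh}_e(\varphi),$$
closing the induction. I do not anticipate any real obstacle: the content of the lemma is essentially a chain rule for the Busemann-pullback valuations $v_\gamma=\min\beta_\gamma h$, and the only thing requiring care is the bookkeeping with the value $+\infty$ in the degenerate cases.
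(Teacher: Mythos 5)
Your proof is correct and takes essentially the same route as the paper's: both rest on the telescoping identity $v''_\gamma(\psi\varphi(c))-v_\gamma(c)=\bigl(v''_\gamma(\psi\varphi(c))-v'_\gamma(\varphi(c))\bigr)+\bigl(v'_\gamma(\varphi(c))-v_\gamma(c)\bigr)$, the hypothesis $\varphi(K)\subeq L$ to enlarge the inner infimum, superadditivity of $\inf$, and then iteration for the $\varphi^k$ clause. You also caught the typo in the lemma's labeling of which map has which domain, which the paper's proof silently corrects, and your explicit treatment of the $c=0$ and $\varphi(c)=0$ cases via the $v_\gamma(0)=\infty$ convention is a harmless addition the paper leaves implicit.
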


\begin{proof} We use Lemma \ref{L:3.2}(ii).
\begin{equation*}
\begin{aligned}
\text{gsh}_e(\varphi \mid \circ \psi \mid K) &= \inf_{c\in K}\{v''_\gamma(\varphi\psi(c)) - v_\gamma(c)\}\\
&= \inf_{c\in K}\{v''_\gamma(\varphi \psi(c)) - v'_\gamma(\psi(c)) + v'_\gamma(\psi(c)) - v_\gamma(c)\}\\
&\geq \inf_{c\in K}\{v''_\gamma(\varphi \psi(c)) - v'_\gamma(\psi(c))\} 
+ \inf_{c\in K}\{v'_\gamma(\psi(c)) -v_\gamma(c)\}\\
&\geq \inf_{b\in L}\{v''_\gamma(\varphi(b)) -v'_\gamma(b)) + \inf_{c\in K}(v'_\gamma(\psi(c))-v_\gamma(c)\}\\
&= \text{gsh}_e(\varphi |L) + \text{gsh}_e(\psi |K).
\end{aligned}
\end{equation*}
\end{proof}

We say that an additive endomorphism $\varphi : F\to F$ {\it pushes $L$ towards} $e\in \partial M$, and we call $\varphi $ a {\it push towards} $e$, if the guaranteed shift of $\varphi |L$ towards $e$ is positive; i.e., gsh$_e(\varphi|L) > 0$.

\subsection{Pushing submodules towards limit points of orbits in 
$\partial M$}\label{S:3.4}

When $\varphi$ is $G$-finitary then $||\varphi||$ and  $\text{\rm gsh}_e(\varphi)$ are finite. When $\Phi$ is a finite $G$-volley we call the number
$||\Phi||:=\text{ inf}\{r\geq 0\mid h'(\Phi (c))\subseteq N_{r}(h(c))\}$ the
{\it norm} of $\Phi$. Then $||\varphi||\leq ||\Phi||$ for all selections $\varphi$ from $\Phi $.  

In this subsection we assume that the cellular submodule  $L\leq F$
is in fact a ${\Z}G$-submodule.  It will then be generated, as a
${\Z}G$-module, by $X' = L\cap X\subeq X$. Since $\varphi$ pushes the $G$-submodule $L$ towards $e$ with guaranteed
shift $\delta$, the $G$-translate $g\varphi$ of $\varphi$
pushes $L$ with the same guaranteed shift $\delta$ towards $ge$.
In the special case when $\varphi |L$ is $G$-finitary we can do better:
given any $\hat e\in\text{cl}(Ge)$, the closure of the $G$-orbit $Ge\in \partial M$,
we can still construct endomorphisms pushing towards $\hat e$ which are
``approximated'' by $G$-translates of $\varphi\mid L$:

\begin{thm}\label{P:3.4}  Let $L\leq F$ be an cellular $G$-submodule
of $F=F_X$ and let $\varphi : L \to F$ be a selection from the finite $G$-volley
$\Phi :L\to fF$ with $\text{\rm gsh}_e(\varphi) = \delta > 0$.  Then for every
endpoint $\hat e\in \text{\rm cl}(Ge)$ there is a selection $\psi : L\to F$
of $\Phi$ with $\text{\rm gsh}_{\hat e}(\psi) \geq \delta/2$.  In fact, this can
be done so that on each finitely generated $\Z$-submodule $L'\subeq L$,
$\psi$ coincides with  some $G$-translate $g\varphi$.  \end{thm}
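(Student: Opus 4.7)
The plan is to build $\psi$ as the diagonal limit of a sequence of $G$-translates $g_n\varphi$, where the $g_n\in G$ are chosen so that $g_n e\to\hat e$ in the cone topology. Such a sequence exists because $\partial M$ is metrizable in the cone topology and $\hat e\in\mathrm{cl}(Ge)$.

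Enumerate the cellular basis $L\cap Y=\{y_1,y_2,\ldots\}$, set $L_n:=\Z\{y_1,\ldots,y_n\}$, and $K_n:=N_r(h(\{y_1,\ldots,y_n\}))$, a bounded (hence compact, since $M$ is proper) subset of $M$, where $r:=\|\Phi\|<\infty$. The argument uses two standard facts about proper $\mathrm{CAT}(0)$ spaces: (i) if $\gamma_n,\hat\gamma$ are rays from a fixed basepoint $b$ with $\gamma_n(\infty)\to\hat\gamma(\infty)$ in the cone topology, then the Busemann functions $\beta_{\gamma_n}$ (normalized at $b$) converge to $\beta_{\hat\gamma}$ uniformly on compact sets; and (ii) by definition of $\|\Phi\|$, $h(\Phi(c))\subseteq N_r(h(c))$ for every $c\in L$. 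Using (i), pick $g_n\in G$ and a ray $\gamma_n$ from $b$ to $g_n e$ so that $|\beta_{\gamma_n}(p)-\beta_{\hat\gamma}(p)|<\delta/4$ for all $p\in K_n$. By Lemma \ref{L:3.2}(ii), $\mathrm{gsh}_{g_n e}(g_n\varphi)=\delta$, so $\mathrm{sh}_{g_n\varphi,g_n e}(c)\geq\delta$ for every nonzero $c\in L$; and for $c\in L_n$ both $h(c)$ and $h(g_n\varphi(c))$ lie in $K_n$ by (ii), whence the Busemann approximation yields
\[
\mathrm{sh}_{g_n\varphi,\hat e}(c)\;\geq\;\mathrm{sh}_{g_n\varphi,g_n e}(c)-\tfrac{\delta}{2}\;\geq\;\tfrac{\delta}{2}.
\]

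Now diagonalize. Each $\Phi(y_i)$ is finite, so a Cantor diagonal on the countable basis yields an increasing subsequence $(n_k)$, with $n_k\to\infty$, along which $g_{n_k}\varphi(y_i)$ is eventually constant for every $i$. Set $\psi(y_i)$ equal to this eventual value and extend $\Z$-linearly to a map $\psi:L\to F$. Given any nonzero $c=\sum_{i\in I}n_i y_i\in L$ with $I$ finite, choose $k$ so large that $I\subseteq\{1,\ldots,n_k\}$ and that $g_{n_k}\varphi$ already agrees with $\psi$ on each $y_i$, $i\in I$; then $\psi(c)=g_{n_k}\varphi(c)\in\Phi(c)$, showing that $\psi$ is a selection from $\Phi$, and $\mathrm{sh}_{\psi,\hat e}(c)=\mathrm{sh}_{g_{n_k}\varphi,\hat e}(c)\geq\delta/2$ by the preceding paragraph. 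The same observation handles the coincidence claim: for any finitely generated $L'\leq L$, pick $N$ so that the cellular basis of $L'$ lies in $\{y_1,\ldots,y_N\}$, and any $k$ large enough to stabilize $\psi$ on that finite set gives $\psi|L'=g_{n_k}\varphi|L'$.

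The only technical input beyond Lemma \ref{L:3.2} is the uniform convergence of normalized Busemann functions on compact sets as the endpoint varies in the cone topology — a standard feature of proper $\mathrm{CAT}(0)$ geometry. Everything else reduces to a diagonal/compactness argument and the $G$-equivariance of $\Phi$.
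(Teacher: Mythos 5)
Your proof is correct and, as far as one can tell, mirrors the argument the paper defers to in \cite{BGe16}: pick $g_n$ with $g_n e\to\hat e$, note that $\mathrm{gsh}_{g_n e}(g_n\varphi)=\delta$ by Lemma \ref{L:3.2}(ii), use uniform convergence of normalized Busemann functions on compacta (with the compact windows $K_n=N_{\|\Phi\|}(h(\{y_1,\dots,y_n\}))$) to convert this to a shift $\geq\delta/2$ towards $\hat e$ on $L_n$, then extract a pointwise-stable subsequence since each $\Phi(y)$ is finite. One small point worth flagging: your diagonal step enumerates $L\cap Y$ as a sequence, which tacitly assumes $L\cap Y$ (equivalently $G$, once $L\cap X\neq\emptyset$) is countable; the paper places no such hypothesis on $G$, so in full generality you should replace the Cantor diagonal by a Tychonoff/net argument in the compact product $\prod_{y\in L\cap Y}\Phi(y)$ — the rest of your reasoning (agreement on finite sets along a cofinal subnet, together with the growing compact window) goes through unchanged.
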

The proof can be found in \cite{BGe16}.


\section{The Dynamical Invariants $\Sigmacirc (M;A)$}\label{S:5}

Let ${\bF} \thra A$ be a controlled based free resolution of
the $G$-module $A$, with finitely generated $n$-skeleton ${\bF}^{(n)}$. For $n\geq 0$ we define the $n$th {\it dynamical invariant} of the 
pair $(M,A)$ to be
\begin{equation*}
\Sigmacirc(M;A):= \{e\in \partial M \mid \text{ there is a } 
G\text{-finitary chain map inducing id}_{A} \text{ pushing }{\bF}^{(n)} 
\text{ towards } e\}
\end{equation*}

\begin{prop} [Invariance] \label{P:5.4} , Let $e\in\partial M$. The existence of a $G$-finitary chain map $\varphi :{\bF}^{(n)} \to {\bF}^{(n)}$ inducing {\rm id}$_A$ and pushing ${\bF}^{(n)}$ towards $e$ is independent of the choice of the resolution 
${\bF}\thra A$ and of the control map $h : {{\bF}} \to fM$. In other words, $\Sigmacirc(M;A)$ is well defined.  
\end{prop}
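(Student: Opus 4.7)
The plan is to show that for any fixed $e \in \del M$, the existence of a $G$-finitary chain map inducing $\text{id}_A$ and pushing the $n$-skeleton toward $e$ does not depend on the choice of admissible based free resolution $({\bf F} \thra A, h)$. By symmetry it suffices, given two such choices $({\bf F}, h)$ and $({\bf F'}, h')$, to transfer a push for the first to a push for the second.

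The key construction uses ``bridges'' between the two resolutions supplied by the Finitary Comparison Theorem \ref{L:5.1}, applied to the identity $A \to A$: this produces $G$-finitary chain maps $\alpha : {\bf F'} \to {\bf F}$ and $\beta : {\bf F} \to {\bf F'}$, both lifting $\text{id}_A$. Since both $n$-skeleta are finitely generated as $\Z G$-modules and the $G$-action on $M$ is isometric, the norms of $\alpha$ and $\beta$ restricted to the respective $n$-skeleta are finite; call them $C$ and $C'$. By Lemma \ref{L:3.2}(i) we then have $\text{gsh}_e(\alpha \mid {\bf F'}^{(n)}) \geq -C$ and $\text{gsh}_e(\beta \mid {\bf F}^{(n)}) \geq -C'$.

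Now given a push $\varphi : {\bf F} \to {\bf F}$ with $\delta := \text{gsh}_e(\varphi \mid {\bf F}^{(n)}) > 0$, consider for each $k \geq 1$ the chain map
\begin{equation*}
\varphi'_k \;:=\; \beta \circ \varphi^k \circ \alpha \;:\; {\bf F'} \longrightarrow {\bf F'}.
\end{equation*}
By Lemma \ref{L:3.1}, $\varphi'_k$ is $G$-finitary, and it lifts $\text{id}_A$ since each of its three factors does. Iterating Lemma \ref{L:3.3} over the threefold composition, and using $\text{gsh}_e(\varphi^k \mid {\bf F}^{(n)}) \geq k\delta$ from the same lemma, gives
\begin{equation*}
\text{gsh}_e(\varphi'_k \mid {\bf F'}^{(n)}) \;\geq\; -C \,+\, k\delta \,-\, C'.
\end{equation*}
Choosing $k$ large enough that $k\delta > C + C'$ produces a push $\varphi'_k$ of ${\bf F'}^{(n)}$ toward $e$, as required.

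The principal subtlety is the negative shift contributed by the bridging maps $\alpha$ and $\beta$, which are not themselves pushes. It is overcome by iterating the given $\varphi$ enough times for the linear growth $k\delta$ of its shift to dominate the fixed penalty $C + C'$. Finiteness of $C$ and $C'$ is where the finite generation of the $n$-skeleta enters; once that is in hand, Lemma \ref{L:3.3} does the rest.
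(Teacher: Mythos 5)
Your proposal is correct and follows essentially the same line as the paper's own proof: lift $\text{id}_A$ to bridging chain maps between the two resolutions, conjugate a high iterate $\varphi^k$ of the given push by these bridges, and use Lemmas \ref{L:3.2} and \ref{L:3.3} to see that the linear gain $k\,\text{gsh}_e(\varphi)$ eventually overcomes the fixed norm penalties of the bridges. The paper separately treats independence of the control map (by taking ${\bf F}={\bf F'}$ with an automorphism and its inverse), whereas you fold both the change of resolution and the change of control map into one argument by transferring between the pairs $({\bf F},h)$ and $({\bf F'},h')$; this is a small streamlining, not a different route. One cosmetic note: the paper obtains $\alpha,\beta$ as genuine $G$-chain homomorphisms from the ordinary comparison theorem, while you invoke the Finitary Comparison Theorem; since $G$-homomorphisms are in particular $G$-finitary and only the finiteness of $\|\alpha\|,\|\beta\|$ on the $n$-skeleta is used, either works.
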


\begin{proof} Let ${\bF'}\thra A$ be a second such resolution with 
finitely generated $n$-skeleton.  The identity map id$_A$ can be
lifted to $G$-chain homomorphisms $\alpha : {\bF} \to {\bF'}$, $\beta : 
{\bF'} \to {\bF}$ which are chain homotopy
inverse to one another.  Assume there exists a $G$-finitary push 
$\varphi : {\bF}^{(n)} \to {\bF}^{(n)}$ lifting id$_A$.
Then $\alpha\varphi\beta : {\bF'}^{(n)} \to {\bF'}^{(n)}$ is a $G$ 
finitary chain endomorphism lifting id$_A$.  By Lemmas
\ref{L:3.2} and \ref{L:3.3}, gsh$_{e}(\alpha\varphi^k\beta) \geq - 
||\alpha|| +
k\cdot \text { gsh}_{e}(\varphi ^k) - ||\beta||$. If we choose $k$ large enough
to ensure that $k\cdot \text { gsh}_{e}(\varphi ) > ||\alpha|| + 
||\beta||$, the map $\alpha \varphi^k\beta : {\bF'}^{(n)} \to 
{\bF'}^{(n)}$ becomes a $G$-finitary push towards $e$.  This shows that 
the existence of a finitary push towards $e$ lifting id$_A$ is 
independent of the particular free resolution.  Independence of the 
control map is proved as a special case: take ${\bF}={\bF'}$, $\alpha $ 
an automorphism, and $\beta $ the inverse of $\alpha $.
\end{proof}

The set $\Sigmacirc(M;A)$ is invariant under the topological 
action of $G$ on $\partial M$ induced by the isometric action of $G$ 
on $M$.  For inductions we define 
${^{0}\Sigma}{^{-1}}(M;A) = \partial M$.

A slight adaptation of Theorem \ref{P:3.4} yields a closure result

\begin{thm}\label{T:5.5}
The $G$-set $\Sigmacirc(M;A)$ contains the closure of each of its orbits.
\end{thm}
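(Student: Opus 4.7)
The plan is to reduce this to Theorem \ref{P:3.4}. Start with $e\in\Sigmacirc(M;A)$ and $\hat e\in\text{cl}(Ge)$. By definition, choose a $G$-finitary chain endomorphism $\varphi:{\bF}^{(n)}\to{\bF}^{(n)}$ inducing $\text{id}_A$ with $\text{gsh}_e(\varphi)=\delta>0$. Being $G$-finitary, $\varphi$ is a selection from some $G$-volley $\Phi:{\bF}^{(n)}\to f{\bF}^{(n)}$. Because ${\bF}^{(n)}$ is finitely generated as a $G$-module, we may assume $\Phi$ is given by its values on the finite set of basis elements $X^{(n)}=X_0\cup\cdots\cup X_n$, and hence has finite norm.

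Now apply Theorem \ref{P:3.4} with $L=F={\bF}^{(n)}$: there exists a selection $\psi:{\bF}^{(n)}\to{\bF}^{(n)}$ from $\Phi$ with $\text{gsh}_{\hat e}(\psi)\geq\delta/2>0$ having the stronger property that on each finitely generated $\Z$-submodule $L'\subeq{\bF}^{(n)}$, $\psi$ coincides with some $G$-translate $g\varphi$ of $\varphi$. Being a selection from $\Phi$, $\psi$ is $G$-finitary; being a push towards $\hat e$ with guaranteed shift $\geq\delta/2$, it pushes towards $\hat e$. The only thing remaining is to verify that $\psi$ is a chain map that induces $\text{id}_A$.

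The key step is to use the ``locally a $G$-translate'' property. Fix a basis cell $y\in X^{(n)}$ and let $L_y$ be the finitely generated $\Z$-submodule spanned by $y$ together with the cells in $\text{supp}(\partial y)$. By Theorem \ref{P:3.4} there is $g_y\in G$ with $\psi|_{L_y}=(g_y\varphi)|_{L_y}$, i.e.\ $\psi(c)=g_y\varphi(g_y^{-1}c)$ for $c\in L_y$. Since $\varphi$ is a chain map,
\begin{equation*}
\partial\psi(y)=\partial g_y\varphi(g_y^{-1}y)=g_y\varphi\,\partial(g_y^{-1}y)=g_y\varphi(g_y^{-1}\partial y)=(g_y\varphi)(\partial y)=\psi(\partial y),
\end{equation*}
where the last equality uses that $\partial y\in L_y$. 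Hence $\psi$ is a chain map. Similarly, for $y\in X_0$, since $\epsilon\varphi=\epsilon$,
\begin{equation*}
\epsilon\psi(y)=\epsilon\bigl(g_y\varphi(g_y^{-1}y)\bigr)=g_y\epsilon(g_y^{-1}y)=\epsilon(y),
\end{equation*}
so $\psi$ induces $\text{id}_A$. Therefore $\hat e\in\Sigmacirc(M;A)$.

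The main obstacle is this last verification: Theorem \ref{P:3.4} a priori produces only an additive homomorphism, and the group element $g_y$ realising the local agreement depends on the cell $y$. What saves us is that both the chain-map condition $\partial\psi(y)=\psi(\partial y)$ and the augmentation condition $\epsilon\psi(y)=\epsilon(y)$ are ``local'' in the sense that they only involve $y$ and $\partial y$, so a single choice of $g_y$ covering both ends of each equation suffices — and the $G$-equivariance of $\partial$ and $\epsilon$ makes $g_y$ cancel out.
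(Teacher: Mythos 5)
Your proposal is correct and takes essentially the same approach as the paper: both invoke Theorem \ref{P:3.4} to produce $\psi$ pushing towards $\hat e$ that agrees locally with $G$-translates $g\varphi$, and both finish by observing that the chain-map and augmentation conditions are therefore satisfied. The only difference is that the paper's proof is terser (``since $g\varphi$ is a chain map, so is $\psi\mid L$, for each $L$. This suffices.''), whereas you make explicit the detail being glossed over — that one must take $L_y$ large enough to contain both $y$ and $\text{supp}(\partial y)$ so that a single translate $g_y$ serves both sides of $\partial\psi(y)=\psi(\partial y)$; note only the small slip that the verification should run over the $\Z$-basis cells $y\in Y^{(n)}=GX^{(n)}$, not merely $y\in X^{(n)}$, since $\psi$ is not $G$-equivariant.
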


\begin{proof}  Let $\varphi : {\bF}^{(n)} \to {\bF}^{(n)}$ be a 
$G$-finitary chain map pushing ${\bF}^{(n)}$ towards $e\in 
\Sigmacirc(M;A)$.  The proof of Proposition \ref{P:3.4} constructs 
a $G$-finitary map $\psi : {\bF}^{(n)}\to {\bF}^{(n)}$ pushing towards an 
arbitrary  point of the closure of $Ge$ with the property that for 
every finitely generated
$\Z$-submodule $L\leq {\bF}^{(n)}$ there is some element $g\in G$, with 
$\psi\mid L = (g\varphi)\mid L$. It remains to show that $\psi$ is a 
chain map. But since $g\varphi$
is a chain map, so is $\psi\mid L$, for each $L$.  This suffices.
\end{proof}


\section{The Geometric Invariants $\Sigma ^{n}(M;A)$}\label{S:4}

Here we define the $n$th Geometric Invariant $\Sigma ^{n}(M;A)$. It
is the strict homological analog of the ``homotopical" invariant $\Sigma
^{n}(\rho)$ described in \cite{BGe03}.

\subsection{Controlled acyclicity}\label{control}

Recall that once we have picked the control map $h : {\mathbf F} \to fM$, our
free resolution is equipped with valuation $v_\gamma :=
\min\beta_\gamma h : {\mathbf F} \to {\mathcal R} \cup \{\infty\}$,
for each geodesic ray $\gamma : [0,\infty) \to M$.  On the augmented
resolution we have $v_\gamma(a) = \infty$, for each $a\in A$.

Let $n\geq 0$ and let $\gamma (\infty )=e$. We say that the augmented 
controlled based free resolution ${\mathbf F} \thra A$ is {\it 
controlled $(n-1)$-acyclic over} $e\in \partial M$, in short $CA^{n-1}$
over $e$, if for every real number $s$ there is a {\it lag} $\lambda(s)
\geq 0$, with $s-\lambda(s) \to +\infty$ as $s\to +\infty$, and
such that, for $-1\leq i\leq n-1$, every $i$-cycle $z\in {\mathbf F}$
with $v_\gamma(z) \geq s$ is the boundary, $z =\partial c$, of an
$(i+1)$-chain $c\in {\mathbf F}$ with $v_\gamma(c) \geq 
s-\lambda(s)$. When $n=0$ this is to be understood as a condition on 
the augmented chain complex.

\subsection{Invariance}\label{S:4.3}

Let $({\mathbf F},\partial)$ and $({\mathbf F}',\partial')$ be 
controlled based free
resolutions of the $G$-module $A$, with $\epsilon$, $\epsilon'$ the
corresponding augmentation maps.

\begin{prop}[Invariance]\label{T:4.2} Let $\mathbf F$ and ${\mathbf 
F}'$ have finitely generated $n$-skeleta. If ${\mathbf F}'$ is 
$CA^{n-1}$ over $e$, so is $\mathbf F$.
\end{prop}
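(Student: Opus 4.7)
The plan is to transport cycles back and forth between $\mathbf F$ and $\mathbf F'$ using the Finitary Comparison Theorem, paying only bounded costs in valuation because the relevant chain maps and chain homotopies are $G$-finitary on finitely generated $G$-modules. Concretely, I would first apply Theorem \ref{L:5.1} to produce $G$-finitary chain maps $\alpha:\mathbf F\to \mathbf F'$ and $\beta:\mathbf F'\to \mathbf F$ lifting $\mathrm{id}_A$, together with a $G$-finitary chain homotopy $\sigma:\beta\alpha\simeq \mathrm{id}_{\mathbf F}$. Because $\mathbf F^{(n)}$ and $\mathbf F'^{(n)}$ have only finitely many $G$-orbits of basis cells in each relevant degree, each of the finitely many pieces $\alpha_i$ ($0\le i\le n-1$), $\beta_i$ ($0\le i\le n$) and $\sigma_i$ ($0\le i\le n-1$) is captured by a finite $G$-volley and hence has finite norm; call a common upper bound $C$.

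Next, given a cycle $z\in F_i$ with $v_\gamma(z)\ge s$ and $0\le i\le n-1$, form $\alpha(z)$, which is an $i$-cycle in $\mathbf F'$ with $v_\gamma(\alpha(z))\ge s-C$. Apply the $CA^{n-1}$ hypothesis for $\mathbf F'$ to obtain $c'\in F'_{i+1}$ with $\partial' c'=\alpha(z)$ and $v_\gamma(c')\ge (s-C)-\lambda'(s-C)$. Now pull back: $\beta(c')\in F_{i+1}$ satisfies $\partial\beta(c')=\beta\alpha(z)$, and the chain homotopy identity, combined with $\partial z=0$, gives $z-\beta\alpha(z)=-\partial\sigma(z)$. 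Hence $c:=\beta(c')+\sigma(z)$ bounds $z$, and
\begin{equation*}
v_\gamma(c)\;\ge\;\min\bigl(v_\gamma(\beta(c')),\,v_\gamma(\sigma(z))\bigr)\;\ge\;\min\bigl(s-2C-\lambda'(s-C),\,s-C\bigr).
\end{equation*}
For the $i=-1$ case, a ``cycle'' is just an element $a\in A$ (with $v_\gamma(a)=\infty$); controlled acyclicity of $\mathbf F'$ provides $c'\in F'_0$ with $\epsilon'(c')=a$ and large valuation, and $\beta(c')$ then augments to $a$ in $\mathbf F$ with valuation reduced by at most $C$.

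Setting $\lambda(s):=2C+\lambda'(s-C)$ (and taking the maximum with $C$ if needed) gives a lag function for $\mathbf F$; the condition $s-\lambda(s)\to+\infty$ follows immediately from the corresponding property of $\lambda'$ via the substitution $t=s-C$. The main conceptual obstacle is simply the chain-homotopy bookkeeping: one must verify that $\sigma_{-1}$ can be taken to be zero (which is legitimate because $\epsilon\beta\alpha=\epsilon'\alpha=\epsilon$), so that the homotopy relation $\beta\alpha-\mathrm{id}=\partial\sigma+\sigma\partial$ holds on $\mathbf F$ without needing a map out of $A$. Once that is in place, finiteness of all norms is the only nontrivial ingredient, and it is guaranteed by the finite generation of the $n$-skeleta together with the $G$-equivariance of the volleys capturing $\alpha$, $\beta$ and $\sigma$.
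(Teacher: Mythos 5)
Your argument is essentially the paper's: lift $\mathrm{id}_A$ both ways to chain maps $\alpha,\beta$ and a chain homotopy $\sigma$, push a low-dimensional cycle $z$ across to $\mathbf F'$, bound it there using $CA^{n-1}$, pull back, correct by $\sigma(z)$, and keep track of norms. Two small remarks. First, with your stated convention $\beta\alpha-\mathrm{id}=\partial\sigma+\sigma\partial$, the bounding chain should be $c:=\beta(c')-\sigma(z)$, not $\beta(c')+\sigma(z)$: for a cycle $z$ one has $\partial\sigma(z)=\beta\alpha(z)-z$, so $\partial\bigl(\beta(c')-\sigma(z)\bigr)=\beta\alpha(z)-(\beta\alpha(z)-z)=z$, whereas your $c$ has $\partial c=z+2\partial\sigma(z)$. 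Your displayed valuation bound is unaffected, since $v_\gamma(-\sigma(z))=v_\gamma(\sigma(z))$ by Lemma \ref{L:2.3}(i). Second, invoking the Finitary Comparison Theorem is more machinery than is needed here: $\mathrm{id}_A$ is an honest $G$-homomorphism, so the ordinary comparison theorem for projective resolutions already furnishes $G$-equivariant $\alpha$, $\beta$, $\sigma$, and these have finite norm on the finitely generated $n$-skeleta precisely because a $G$-map is determined by its values on the finite $G$-basis; this is what the paper's proof uses. Your explicit treatment of the $i=-1$ (augmentation) case, and the observation that $\sigma$ may be taken to vanish in degree $-1$, is a welcome bit of care that the paper leaves implicit.
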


\begin{proof}  Consider $\mathbf F$ and ${\mathbf F}'$ with bases $X$,
$X'$ and corresponding control maps $h$ and $h'$.  There are
$G$-chain-homomorphisms $\varphi : {\mathbf F}\to {\mathbf F}'$ and $\psi
: {\mathbf F}'\to {\mathbf F}$, inducing the identity on $A$, and a
chain-homotopy $\sigma : {\mathbf F} \to {\mathbf F}$, with $\psi\varphi =
1+\partial \sigma + \sigma \partial$.
For $0\leq i<n$ let $z$ be
an $i$-cycle of $\mathbf F$ with $v_\gamma(z)\geq s$.
We denote by $\varphi$, $\psi$, $\sigma$, and $\partial$ the chain
maps, the homotopy, and the boundary as above, but restricted
to the $n$-skeleta.  By Lemma \ref{L:3.2}, $v'_\gamma(\varphi(z)) \geq
v_\gamma(z)-||\varphi||$.  As ${\mathbf F}'$ is $CA^{n-1}$ over $e$
there is a chain $c'$ in ${\mathbf F}'$, with $\partial 'c' =\varphi(z)$,
and
\begin{equation*} v'_\gamma(c')\geq v'_\gamma(\varphi (z)) - \lambda 
(v'_\gamma(\varphi (z))) \geq
v_\gamma(z) - ||\varphi|| -\lambda (v'_\gamma(\varphi (z))),\text{ 
where }\lambda \text{ is independent of }c'
\end{equation*}
Put $c''= \psi(c') -\sigma(z)$.  Then
\begin{equation*}
\partial c'' = \partial\psi(c') -\partial\sigma(z) = \psi\partial'(c') -
\partial \sigma(z) =\psi\varphi(z) - \partial \sigma(z) = z+\sigma\partial
z = z,
\end{equation*}
and we have,
\begin{equation*}
\begin{aligned}
v_\gamma(c'') &\geq \min\{v_\gamma(\psi(c')),v_\gamma(\sigma(z))\}\\
&\geq \min\{v'_\gamma(c') - ||\psi||,v_\gamma(z)-||\sigma||\}\\
&\geq \min\{v_\gamma(z)-||\varphi||-\lambda(v'_\gamma(\varphi (z))) -
||\psi||,v_\gamma(z)-||\sigma||\}
\end{aligned}
\end{equation*}
proving that ${\mathbf F}$ is $CA^{n-1}$ over $e$.
\end{proof}

\subsection{The Geometric Invariants}\label{S:4.4}

We can now introduce the geometric (or $\Sigma$-) invariants of the
pair $(M,A)$ where the $G$-module $A$ is of type $FP_n$. Choosing
a free resolution with finitely generated $n$-skeleton ${\mathbf
F}\thra A$, we define 

\begin{equation*} \Sigma^n(M;A) := \{e\in \partial
M\mid {\mathbf F}\thra A\text{ is } CA^k\text{ over }e\text{ for all
}k\text{ with }-1\leq k< n\}.  
\end{equation*}

By Theorem \ref{T:4.2}
this is an invariant of $(M;A)$, i.e. $(n-1)$-acyclicity over $e$ is
independent of the choice of free resolution ${\mathbf F}\thra A$ such
that ${\mathbf F^{(n)}}$ is finitely generated, and of the choice of
control map.  In particular, this subset of $\partial M$ is invariant
under the topological action of $G$ on $\partial M$ induced by the
isometric action $\rho $ of $G$ on $M$. For proofs using induction on $n$
we define $\Sigma^{-1}(M;A):= \partial M$.

We will use the phrase ``$e\in \Sigma^n(M;A)$ {\it with constant 
lag} $\lambda \in {\mathbb R}$'' if the function $\lambda(s)$ in the 
definition of $CA^{n-1}$ in Section \ref{control} can be taken to be 
the constant $\lambda$.  For trivial reasons, all members of 
$\Sigma^0(M;A)$ have constant lag.


\section{Characterization of $\Sigmacirc(M;A)$ in terms of
$\Sigma^{n}(M;A)$}\label{S:6}

In this section we characterize $\Sigmacirc(M;A)$ as a specific subset of
$\Sigma^n(M;A)\}$ (Theorem \ref{T:6.1}), and we give conditions
under which $\Sigmacirc(M;A)=\Sigma^n(M;A)$ (Theorem
\ref{T:6.4}).

\subsection{Statement of the Theorem}

\begin{thm}[Characterization Theorem]\label{T:6.1}
For each $G$-module $A$ of type $FP_n$, $n\geq 0$, we have
\begin{equation*}
\Sigmacirc(M;A) = \{e\in \partial M\mid \text{\rm cl}(Ge)\subeq 
\Sigma^n(M;A)\}.
\end{equation*}
\end{thm}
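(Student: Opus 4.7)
The argument proceeds by proving two inclusions, which use rather different techniques.

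\textbf{Inclusion $\subseteq$.} By Theorem \ref{T:5.5}, $\Sigmacirc(M;A)$ is stable under passage to orbit closures, so it suffices to prove the plain inclusion $\Sigmacirc(M;A)\subeq\Sigma^n(M;A)$. Given $e\in\Sigmacirc(M;A)$, fix a $G$-finitary chain endomorphism $\varphi$ of $\mathbf F^{(n)}$ lifting $\text{id}_A$ with $\text{gsh}_e(\varphi)=\delta>0$. By the Finitary Comparison Theorem \ref{L:5.1}, extend $\varphi$ to a $G$-finitary chain map on $\mathbf F$ and obtain a $G$-finitary chain homotopy $\sigma$ satisfying $\varphi-\text{id}=\partial\sigma+\sigma\partial$. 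To verify $CA^{n-1}$ over $e$, let $z$ be a $k$-cycle with $-1\leq k\leq n-1$ and (for $k\geq 0$) $v_\gamma(z)\geq s$. By exactness of $\mathbf F$ pick \emph{any} $w_0\in F_{k+1}\subeq\mathbf F^{(n)}$ with $\partial w_0=z$ (for $k=-1$ this means $\epsilon(w_0)=z$); $w_0$ has no a priori valuation control. Set $w_m:=\varphi^m(w_0)$. Lemma \ref{L:3.3} gives $v_\gamma(w_m)\geq v_\gamma(w_0)+m\delta$, while the chain-map property gives $\partial w_m=\varphi^m(z)$. For $k\geq 0$, iterating $\varphi-\text{id}=\partial\sigma+\sigma\partial$ on $z$ yields $\varphi^m(z)-z=\partial c_m$ where $c_m:=\sum_{i=0}^{m-1}\varphi^i\sigma(z)$ satisfies $v_\gamma(c_m)\geq v_\gamma(\sigma(z))\geq s-||\sigma||$. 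Then $\partial(w_m-c_m)=z$, and choosing $m$ large enough that $v_\gamma(w_0)+m\delta\geq s-||\sigma||$ gives $v_\gamma(w_m-c_m)\geq s-||\sigma||$. For $k=-1$, one simply has $\epsilon(w_m)=z$ with $v_\gamma(w_m)$ arbitrarily large. Either way, $\mathbf F$ is $CA^{n-1}$ over $e$ with constant lag $||\sigma||$, so $e\in\Sigma^n(M;A)$.

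\textbf{Inclusion $\supseteq$.} Assume $\text{cl}(Ge)\subeq\Sigma^n(M;A)$. The plan is to construct a $G$-finitary chain endomorphism $\varphi:\mathbf F^{(n)}\to\mathbf F^{(n)}$ lifting $\text{id}_A$ with $\text{gsh}_e(\varphi)>0$ by recursion on dimension $k=0,\ldots,n$, using a decreasing sequence of target shifts $D_0>\cdots>D_n>0$ chosen so as to absorb the loss $||\partial||+\lambda$ at each step. In dimension $0$: for each $x\in X_0$ and each $e'\in\text{cl}(Ge)$, $CA^{-1}$ over $e'$ produces some $c\in F_0$ with $\epsilon(c)=\epsilon(x)$ and $v_{\gamma_{e'}}(c)>v_{\gamma_{e'}}(x)+D_0$. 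Since Busemann functions are continuous on $\partial M$ in the cone topology, this condition is open in $e'$, so compactness of $\text{cl}(Ge)$ delivers a finite covering volley $\Phi(x)=\{c_1,\ldots,c_N\}$. The $G$-finitary selection $\varphi(gx)=gc_{i(g)}$, with $c_{i(g)}$ chosen to work against $g^{-1}e\in\text{cl}(Ge)$, translates via Lemma \ref{L:3.2}(ii) into $v_\gamma(\varphi(gx))\geq v_\gamma(gx)+D_0$ for $\gamma$ towards $e$. In dimension $k\geq 1$: the inductive chain-map property ensures $\varphi(\partial x)\in F_{k-1}$ is a cycle ($\partial\varphi\partial x=\varphi\partial^2 x=0$), and $CA^{k-1}$ over each $e'\in\text{cl}(Ge)$ produces chains bounding it with the required valuation; the same compactness-plus-finite-volley scheme delivers the $G$-finitary selection and preserves the chain-map condition.

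The principal obstacle is in direction $\supseteq$: at every inductive dimension the guaranteed shift degrades by a bounded amount, and one must arrange uniform constants over the compact orbit-closure $\text{cl}(Ge)$ so that a strictly positive shift survives after $n$ steps, while keeping the $G$-finitary selections simultaneously compatible with the chain-map and augmentation constraints. Continuity of Busemann functions and compactness of $\text{cl}(Ge)$ in the cone topology are the essential tools. In direction $\subseteq$, by contrast, the decisive insight is that pushing an arbitrary (uncontrolled) bounding chain $w_0$ by iterates of $\varphi$ automatically furnishes the valuation control one needs, avoiding the need to solve a controlled-bounding problem at high valuation directly.
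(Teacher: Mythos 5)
Your inclusion $\subseteq$ is correct and essentially reproduces the paper's Proposition~\ref{P:6.2}(i): you invoke the Finitary Comparison Theorem to produce a $G$-finitary homotopy $\sigma$, iterate $\varphi$ on a preliminary bounding chain $w_0$ to gain valuation, and correct by the telescoped homotopy $\sum_i\varphi^i\sigma(z)$; the constant lag $||\sigma||$ drops out. (The extension of $\varphi$ to all of $\mathbf F$ is unnecessary; everything happens in $\mathbf F^{(n)}$.)

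Your inclusion $\supseteq$, however, has a genuine gap. You claim that in each inductive dimension $k$ "$CA^{k-1}$ over each $e'\in\mathrm{cl}(Ge)$ produces chains bounding $\varphi(\partial x)$ with the required valuation," and your bookkeeping with the decreasing targets $D_0>\cdots>D_n$ is calibrated to absorb a loss of a fixed constant $||\partial||+\lambda$ at each step. But the definition of $CA^{k-1}$ over $e'$ only supplies a lag \emph{function} $\lambda_{e'}(s)$ satisfying $s-\lambda_{e'}(s)\to+\infty$; it need not be bounded, and there is no uniformity across the different endpoints $e'$ in the orbit closure. For a cycle $p=\varphi(\partial y)$ with $s:=v_{\gamma'}(p)$, the guaranteed valuation of the bounding chain is only $s-\lambda_{e'}(s)$, which can be far below $v_{\gamma'}(y)+D_k$. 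Inflating $D_{k-1}$ merely raises $s$ without controlling $\lambda_{e'}(s)$, so nothing forces a positive shift to survive. In short, your argument silently assumes a constant lag, while the hypothesis $\mathrm{cl}(Ge)\subseteq\Sigma^n(M;A)$ only yields lag functions.

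The missing ingredient is the constant-lag reduction, which is exactly where the paper's Proposition~\ref{P:6.2}(ii) and Corollary~\ref{C:6.3}(ii) enter. The induction hypothesis (a volley $\Phi$ on $\mathbf F^{(k-1)}$ with chain-map selections $\varphi_{e'}$ of uniform shift $\nu$) is combined, via the Finitary Comparison Theorem, with a single finite degree-$1$ volley $\Sigma$ of chain homotopies; for each $e'$ one then pushes the cycle by $\varphi_{e'}^{m}$ until the lag condition yields a bounding chain above $v_\gamma(z)$, and pulls back through the telescoped homotopy $\sigma_{e'}(\varphi^{m-1}_{e'}+\cdots+1)$. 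This gives $CA^{k-1}$ with constant lag $||\sigma_{e'}||\leq||\Sigma||$, uniform over $E$. Only after that reduction does your inequality $D_{k-1}-D_k>||\partial||+\lambda$ make sense; and one still needs to iterate (i.e.\ use $\varphi_{e'}^k$ rather than $\varphi_{e'}$) to amplify the shift past the now-constant lag. It is worth noting that the chain-homotopy mechanism you correctly identify as decisive for $\subseteq$ is also indispensable for $\supseteq$; your proposal omits it precisely where it is needed.
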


This shows that $\Sigmacirc(M;A)$ is determined by $\Sigma^n(M;A)\}$.

\begin{rems}
\begin{itemize}
\item In the special case where all the endpoints $e\in \partial M$
are fixed under the induced action of $G$ on $\partial M$, Theorem 
\ref{T:6.1} implies
$\Sigma ^{n}(M;A)=\Sigmacirc(M;A)$. Hence
Theorem \ref{T:6.1} is a direct generalization of the various ``$\Sigma$-Criteria'' found in 
\cite[Proposition 2.1]{BS80}, \cite[Proposition 2.1]{BNS87}, \cite[Theorem
C]{BRe88}. These were the main technical tools
in all previous stages of $\Sigma $-theory.  In all those cases, the 
action of $G$ was by translations on a Euclidean space, so that all 
end points were fixed.
\item The homotopy version of Theorem \ref{T:6.1} was proved in \cite{BGe03}.
\end{itemize}
\end{rems}

The proof of Theorem \ref{T:6.1} will be given in two steps:  the
inclusion $\subeq$ follows from Proposition \ref{P:6.2} together with
Theorem \ref{T:5.5}.  The other inclusion $\supeq$ follows from the
(stronger) Theorem \ref{T:6.4}, below.

\subsection{From pushing skeletons to constant lag}\label{S:6.2}

We start by proving that $\Sigmacirc(M;A)\subeq \Sigma^n(M;A)$; 
while doing that we will also collect important information on the 
lag.  More precisely, we prove

\begin{prop}\label{P:6.2}
Let ${\bF} \thra A$ be a controlled based free resolution with finitely 
generated $n$-skeleton, $n\geq 0$.  Let $\varphi : {\bF}^{(n)} \to {\bF}^{(n)}$
be a $G$-finitary chain endomorphism inducing $\text{\rm id}_A$, and 
let $\sigma : \varphi \sim \text{\rm id}_F$ be a
$G$-finitary chain homotopy.  If $\varphi$ pushes ${\bF}^{(n)}$ towards 
$e$ then the following hold:
\begin{enumerate}[{\rm (i)}]
\item $e\in \Sigma^n(M;A)$, with constant lag $||\sigma||$, and
\item If ${\bF}^{(n+1)}$ is finitely generated and $e$ is in 
$\Sigma^{n+1}(M;A)$ then it is so with constant lag
$||\sigma||$.
\end{enumerate}
\end{prop}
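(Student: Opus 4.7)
The plan is to treat the chain map $\varphi$ as a ``replacement engine'' that relocates any cycle far into the horoball at $e$ while introducing only an error controlled by the constant $\|\sigma\|$, and then to absorb the displaced cycle either by exactness of $\mathbf{F}$ (for part (i)) or by the assumed higher acyclicity $e \in \Sigma^{n+1}(M;A)$ (for part (ii)). The basic computation that drives everything is the telescoping identity on cycles. Since $\partial z = 0$, the chain-homotopy equation $\varphi - \mathrm{id} = \partial \sigma + \sigma \partial$ gives $\partial \sigma(z) = \varphi(z) - z$; iterating on the cycles $\varphi^j(z)$ and summing yields
\[
z - \varphi^k(z) \;=\; -\partial\!\left( \sum_{j=0}^{k-1} \sigma \varphi^j(z) \right).
\]
Writing $\delta := \text{gsh}_e(\varphi) > 0$, the $j$-th summand has valuation at least $v_\gamma(z) + j\delta - \|\sigma\|$ by Lemma \ref{L:3.2}, so the whole sum has valuation at least $s - \|\sigma\|$ uniformly in $k$. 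This is precisely where the constant lag $\|\sigma\|$ first appears; everything that follows converts $\varphi^k(z)$ itself into a high-valuation boundary.

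For part (i), take an $i$-cycle $z$ with $-1 \leq i \leq n-1$ and $v_\gamma(z) \geq s$. Since $\mathbf{F} \twoheadrightarrow A$ is exact and $i+1 \leq n$, I would pick any $c_0 \in F_{i+1} \subseteq \mathbf{F}^{(n)}$ with $\partial c_0 = z$; the valuation of $c_0$ is unimportant. Because $c_0$ lies in the skeleton on which $\varphi$ acts, $\varphi^k(c_0)$ is defined, bounds $\varphi^k(z)$, and has $v_\gamma(\varphi^k(c_0)) \geq v_\gamma(c_0) + k\delta$ by Lemma \ref{L:3.3}. Choosing $k$ large enough that $v_\gamma(c_0) + k\delta \geq s - \|\sigma\|$, the chain
\[
c \;:=\; \varphi^k(c_0) - \sum_{j=0}^{k-1} \sigma \varphi^j(z)
\]
satisfies $\partial c = z$ and $v_\gamma(c) \geq s - \|\sigma\|$. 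The $i = -1$ case is the degenerate one in which $z = a \in A$, the $\sigma$-sum is empty, and $c := \varphi^k(c_0)$ alone works because $\varphi$ lifts $\mathrm{id}_A$.

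For part (ii), the extension to $i = n$ is not immediate, because any $c_0 \in F_{n+1}$ with $\partial c_0 = z$ lies outside $\mathbf{F}^{(n)}$, so $\varphi^k(c_0)$ is undefined. Instead I would substitute the hypothesis $e \in \Sigma^{n+1}(M;A)$: it supplies some (possibly large) lag function $\mu$. Applied to $\varphi^k(z) \in F_n$, which is an $n$-cycle of valuation at least $s + k\delta$, this gives a chain $d_k \in F_{n+1}$ with $\partial d_k = \varphi^k(z)$ and $v_\gamma(d_k) \geq (s+k\delta) - \mu(s+k\delta)$. Since $s - \mu(s) \to +\infty$, choosing $k$ large makes the right-hand side at least $s - \|\sigma\|$; then $c := d_k - \sum_{j=0}^{k-1} \sigma \varphi^j(z)$ bounds $z$ with $v_\gamma(c) \geq s - \|\sigma\|$, as required.

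The main obstacle is conceptual rather than computational: one must recognize that the constant lag arises from the interplay between the unbounded pushing power of $\varphi^k$ (over which $k$ can be chosen freely) and the uniform $\|\sigma\|$-bound on the telescoping tail (independent of $k$), rather than from any single clever formula. Once this decomposition is set up, everything reduces to bookkeeping on valuations via Lemmas \ref{L:3.2}--\ref{L:3.3}, with the only care needed being that in part (i) the bounding chain $c_0$ lands inside the skeleton where $\varphi$ lives, while in part (ii) this fails and one has to import the hypothesis on $\Sigma^{n+1}$ to do the same job.
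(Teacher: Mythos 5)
Your proof is correct and follows essentially the same route as the paper: iterate $\varphi$ to carry the bounding chain (or, in part (ii), the displaced cycle) arbitrarily deep, and control the homotopy correction term by $\|\sigma\|$. The only differences are cosmetic — you write the telescoping homotopy as $\sum_{j<k}\sigma\varphi^{j}$ where the paper uses $(\varphi^{k}+\cdots+1)\sigma$ in (i) and $\sigma(\varphi^{k}+\cdots+1)$ in (ii), and your indices are shifted by one — and both formulations yield the identical constant lag $\|\sigma\|$.
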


\begin{proof}  (i) Let $z$ be a $j$-cycle of $\bF$, with $j\leq n-1$, 
and let $c$ be a chain, with $z = \partial
c$.  Using $\varphi-\text{id}_F = \sigma\partial + \partial\sigma$, 
and writing $\varphi^k$ for the $k$-th iterate of
$\varphi$, we find that $\tau := (\varphi^k+\varphi^{k-1} + \cdots 
\varphi+1)\sigma$ is a $G$-finitary homotopy $\varphi^{k+1}\sim 
\text{id}_F$.  So
\begin{equation*}
\begin{aligned}
\varphi^{k+1}(c) &= c+\partial \tau(c) + \tau(\partial c),\ \text{ hence}\\
&z = \partial c\\
&= \partial \varphi^{k+1}(c) - \partial\tau(\partial c)\\
&= \partial c',\ \text{ where }\ c' = \varphi^{k+1}(c)-\tau(\partial c).
\end{aligned}
\end{equation*}
Let $\mu > 0$ be a guaranteed shift of $\varphi$ towards $e$.  Using 
Lemma \ref{L:3.3} we find
\begin{equation*}
v_\gamma(\varphi^k(c)) \geq v_\gamma(c)+k\cdot\mu;
\end{equation*}
hence we may choose $k$ so large that $v_\gamma(\varphi^{k+1}(c))\geq 
v_\gamma(z)$.  Then
\begin{equation*}
v_\gamma(c')\geq \min\{v_\gamma(\varphi^{(k+1)}(c)),
v_\gamma(\tau(z))\}\geq \min(v_\gamma(z),v_\gamma(\tau(z))).
\end{equation*}
But
\begin{equation*}
\begin{aligned}
v_\gamma(\tau(z)) &= v_\gamma((\varphi^k+\varphi^{k-1}+\hdots 
\varphi+1)\sigma(z))\\
&\geq \min\{v_\gamma(\varphi^p\sigma(z))\mid 0\leq p\leq k\}\\
&\geq \min\{v_\gamma(\sigma(z)) + p\cdot \mu\mid 0\leq p\leq k\}\ 
\text{ by Lemma \ref{L:3.3}},\\
&=v_\gamma(\sigma(z)).
\end{aligned}
\end{equation*}
So
\begin{equation*}
v_\gamma(c') \geq \min(v_\gamma(z),v_\gamma(\sigma(z))).
\end{equation*}
By Lemma \ref{L:3.2}(iii), $v_{\gamma}(\sigma (z)) \geq 
v_\gamma(z)-||\sigma||$; hence $v_\gamma(c')\geq
v_\gamma(z) - ||\sigma||$, showing that $e\in \Sigma^n(M;A)$, with 
lag $||\sigma||$.  This proves statement
(i).

(ii)  The assumption $e\in \Sigma^{n+1}(M;A)$ asserts that $\bF$ 
is $CA^n$ over $e$.  Thus there is a lag
$\lambda(s)$ with the property that every $n$-cycle $z$ with 
$v_\gamma(z)\geq s$ is the boundary of some
$(n+1)$-chain $c$ with $v_\gamma(c)\geq s-\lambda(s)$, and 
$(s-\lambda(s))\to \infty$, as $s\to\infty$.  We fix
an $n$-cycle $z$ and apply the lag condition to the sequence of 
$n$-cycles $\varphi^k(z)$.  Put $s_k :=
v_\gamma(\varphi^k(z))$.  As before let $\mu > 0$ be a guaranteed 
shift of $\varphi$ towards $e$. By Lemma \ref{L:3.3}, $s_k\geq 
v_\gamma(z) + k\cdot \mu$, hence $s_k\to\infty$.  Thus
we can choose $k$ so that $s_{k+1}-\lambda(s_{k+1}) > s_0 
=v_\gamma(z)$.  It follows that there is some $(n+1)$-chain $c'$ with
$\partial c' = \varphi^{k+1}(z)$ and $v_\gamma(c') \geq 
s_{k+1}-\lambda(s_{k+1}) >v_\gamma(z)$.

Much as in Part (i), and using this new choice of $k$, we put $\tau := 
\sigma(\varphi^k+\varphi^{k-1}+\hdots \varphi+1)$.
This is a $G$-finitary homotopy $\varphi^{k+1}\sim 
\text{id}_F$.  Since $z$ is a cycle we have
$\varphi^{k+1}(z) = z+\partial\tau(z)$.  Writing $c'' = c' - \tau(z)$ 
we have $\partial c'' = z$ and
\begin{equation*}
\begin{aligned}
v_\gamma(c'') &\geq \min\{v_\gamma(c'),v_\gamma(\tau(z)\}\\
&\geq \min\{v_\gamma(z),v_\gamma(\tau(z))\}.
\end{aligned}
\end{equation*}
Now,
\begin{equation*}
\begin{aligned}
v_\gamma(\tau(z)) &= 
v_\gamma(\sigma(\varphi^k+\varphi^{k-1}+\hdots\varphi+1)(z))\\
&\geq \min\{v_\gamma(\sigma\varphi^p(z))\mid 0 \leq p\leq k\}\\
&\geq \min\{v_\gamma(\varphi^p(z)) - ||\sigma||\mid 0\leq p\leq k\}, 
\ \text{ by Lemma \ref{L:3.3}(ii)}\\
&\geq \min\{v_\gamma(z) + p\cdot\mu - ||\sigma||\mid 0\leq p\leq k\}\\
&= v_\gamma(z) - ||\sigma||.
\end{aligned}
\end{equation*}
Thus $v_\gamma(c'') \geq v_\gamma(z) - ||\sigma||$ and we conclude 
that $||\sigma||$ is a constant lag for the $CA^n$-property of $\bF$ over $e$.
\end{proof}

If $\Phi : {\bF}^{(n)} \to f{\bF}^{(n)}$ is a $G$-volley then,
by the Finitary Homotopy Lemma \ref{L:5.1}, there is a finite degree $-1$
volley $\Sigma : {\bF}^{(n)} \to f{\bF}^{(n+1)}$ with the property that
every selection $\varphi$ of $\Phi$ is chain contractible by a selection
$\sigma$ of $\Sigma$.  The norm of $\sigma$
has an upper bound depending only on $\Sigma$. This yields the following
uniform version of Proposition \ref{P:6.2}.

\begin{cor}\label{C:6.3}
Let $E\subeq \partial M$ be a set of endpoints.  Let $\Phi : 
{\bF}^{(n)}\to f{\bF}^{(n)}$ a $G$-volley inducing $\text{\rm 
id}_A$, with the property that each $e\in E$ admits a (chain map) 
selection $\varphi_e : {\bF}^{(n)}\to {\bF}^{(n)}$ of $\Phi$ pushing the 
$n$-skeleton towards $e$.  Then the following hold
\begin{enumerate}[{\rm (i)}]
\item  $E\subeq \Sigma^n(M;A)$, with uniform constant lag, (i.e., 
the same constant lag for all $e\in E$);
\item if ${\bF}^{(n+1)}$ is finitely generated and $E\subeq 
\Sigma^{n+1}(M;A)$ then it is so with uniform
constant lag.
\end{enumerate}
\end{cor}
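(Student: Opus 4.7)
The plan is to reduce Corollary \ref{C:6.3} to Proposition \ref{P:6.2} by exhibiting a single finite volley from which all the necessary chain homotopies can be selected, giving a uniform bound on their norms.

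First I would invoke the Finitary Comparison Theorem (Theorem \ref{L:5.1}), or more precisely its refined form in Corollary \ref{C:5.3}, applied to the two degree-$0$ volleys $\Phi$ and $\mathrm{id}_{\mathbf{F}^{(n)}}$, both of which induce $\mathrm{id}_A$. This produces a single degree-$1$ $G$-volley $\Sigma : \mathbf{F}^{(n)} \to f\mathbf{F}^{(n+1)}$ with the property that any chain-map selection $\varphi$ of $\Phi$ is chain homotopic to the identity via some selection $\sigma$ of $\Sigma$. Since $\Sigma$ is (canonically induced from) a finite $G$-volley on a finitely generated $n$-skeleton, the norm $\|\Sigma\|$ is finite and $\|\sigma\| \leq \|\Sigma\|$ for every selection $\sigma$ of $\Sigma$.

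Next, for each $e \in E$ I would pick the given chain-map selection $\varphi_e$ of $\Phi$ that pushes $\mathbf{F}^{(n)}$ towards $e$, and then choose a selection $\sigma_e$ of $\Sigma$ providing a $G$-finitary chain homotopy $\varphi_e \sim \mathrm{id}_{\mathbf{F}}$. Now I would apply Proposition \ref{P:6.2} to the pair $(\varphi_e, \sigma_e)$. Part (i) of that proposition yields $e \in \Sigma^n(M;A)$ with constant lag $\|\sigma_e\|$, and part (ii) yields, under the additional hypothesis that $\mathbf{F}^{(n+1)}$ is finitely generated and $e \in \Sigma^{n+1}(M;A)$, that $e \in \Sigma^{n+1}(M;A)$ with constant lag $\|\sigma_e\|$. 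In both cases the bound $\|\sigma_e\| \leq \|\Sigma\|$ is independent of $e \in E$, so the constant $\|\Sigma\|$ serves simultaneously as a lag for every $e \in E$, proving both (i) and (ii).

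The only subtlety, and the step where care is needed, is ensuring that the volley $\Sigma$ truly is common to all $e \in E$ — that is, that its construction depends only on $\Phi$ (and the resolution) and not on the particular endpoint. This is exactly the content of Corollary \ref{C:5.3}: the homotopy volley is produced from $\Phi$ alone, before any selection or endpoint is chosen. Once this is in place, the norm-boundedness $\|\sigma_e\| \leq \|\Sigma\|$ is automatic and the rest of the argument is a direct re-reading of the proof of Proposition \ref{P:6.2} with the same constant inserted for every $e$.
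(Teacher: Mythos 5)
Your proposal is correct and follows essentially the same route as the paper: the paper also first produces, from $\Phi$ alone via the Finitary Comparison Theorem (more precisely Corollary \ref{C:5.3}), a single degree-$1$ $G$-volley $\Sigma$ from which every chain homotopy $\sigma_e : \varphi_e \sim \mathrm{id}$ can be selected, and then applies Proposition \ref{P:6.2} with the uniform bound $\|\sigma_e\| \leq \|\Sigma\|$.
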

\begin{proof} We apply \ref{P:6.2} to each $\varphi_e$; with $\Sigma$ 
as above, there is a chain contraction
$\sigma_e : \varphi_{e}\sim \text{id}$ which is a selection from 
$\Sigma$.  The lags are therefore independent of $e$.
\end{proof}

\subsection{Closed $G$-invariant subsets of $\partial M$}\label{S:6.3}

The next theorem gives conditions under which $\Sigmacirc(M;A)$ 
and $\Sigma^n(M;A)$ agree. In particular, it contains Theorem \ref{T:6.1}.

\begin{thm}\label{T:6.4}
Let ${\bF}\thra A$ be a controlled based free resolution with finitely 
generated $n$-skeleton. The following conditions are equivalent for a 
closed $G$-invariant set of endpoints $E\subeq \partial
M$.
\begin{enumerate}[{\rm (i)}]
\item  $E\subeq \Sigma^n(M;A)$,
\item $E\subeq \Sigmacirc(M;A)$,
\item there is a uniform constant $\lambda$ such that for all $e\in 
E$, $e\in \Sigma^n(M;A)$ with constant lag
$\lambda$,
\item there is a uniform constant $\nu > 0$ and a $G$-volley 
$\Phi : {\bF}^{(n)}\to f{\bF}^{(n)}$ inducing id$_A$ with the
property that for each $e\in E$ there is a chain map selection 
$\varphi_e$ from $\Phi$ with $\text{\rm gsh}_e(\varphi_e)
\geq \nu$.
\end{enumerate}
\end{thm}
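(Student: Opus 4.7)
The plan is to establish a cycle of implications, with the entire substance concentrated in the single implication (i) $\Rightarrow$ (iv). The easy pieces are: (iv) $\Rightarrow$ (iii) is Corollary \ref{C:6.3}(i); (iv) $\Rightarrow$ (ii) is immediate since every chain-map selection of the common volley is itself a $G$-finitary push of $\mathbf{F}^{(n)}$; (iii) $\Rightarrow$ (i) is immediate from the definition of $\Sigma^n(M;A)$; and (ii) $\Rightarrow$ (i) follows from Proposition \ref{P:6.2}(i) applied separately at each $e \in E$.

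I would prove (i) $\Rightarrow$ (iv) by induction on $n$, taking the base case $n = 0$ from \cite{BGe16}. For the inductive step, assume the implication holds at level $n-1$ and let $E \subseteq \Sigma^n(M;A)$ be closed and $G$-invariant. Since $\Sigma^n(M;A) \subseteq \Sigma^{n-1}(M;A)$, the inductive hypothesis supplies a $G$-volley $\Phi' : \mathbf{F}^{(n-1)} \to f\mathbf{F}^{(n-1)}$ inducing $\text{id}_A$, together with a uniform $\nu_0 > 0$, such that each $e \in E$ admits a chain-map selection $\varphi'_e$ of $\Phi'$ with $\text{gsh}_e(\varphi'_e) \geq \nu_0$. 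Applying Corollary \ref{C:6.3}(ii) to $\Phi'$ (using that $\mathbf{F}^{(n)}$ is finitely generated and $E \subseteq \Sigma^n(M;A)$) then yields a uniform constant lag $\lambda$ for the $CA^{n-1}$-condition over $E$.

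Next I would extend $\Phi'$ to $\Phi$ on $\mathbf{F}^{(n)}$. Let $\tilde\Phi := (\Phi')^k$ for $k$ so large that $k\nu_0 - \lambda - C \geq 1$, where $C$ is a uniform bound on the Hausdorff distance from $h(x)$ to $\bigcup_{y \in \text{supp}\,\partial x} h(y)$ as $x$ ranges over the finite basis $X_n$; this ensures $v_\gamma(z) \geq v_\gamma(x) + k\nu_0 - C$ for every $z \in \tilde\Phi(\partial x)$ and every choice of $e$. Set $\Phi := \tilde\Phi$ on $\mathbf{F}^{(n-1)}$. For each $x \in X_n$ and each of the finitely many cycles $z \in \tilde\Phi(\partial x)$, the $CA^{n-1}$-condition with constant lag $\lambda$ supplies, for each $e \in E$, an $n$-chain $c$ with $\partial c = z$ and $v_\gamma(c) \geq v_\gamma(x) + 1$. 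By Lemma \ref{L:2.3}(iv) the difference $v_\gamma(c) - v_\gamma(x)$ depends only on $e' = \gamma(\infty)$, and as a function of $e' \in \partial M$ it is continuous in the cone topology (Busemann cocycles at fixed points being continuous). Hence the open set $U_c := \{e' : v_\gamma(c) - v_\gamma(x) > 1/2\}$ contains $e$, and by compactness of $E$ finitely many $U_{c_1}, \ldots, U_{c_r}$ cover $E$. Let $\Phi(x) \subseteq F_n$ be the finite union of these representatives $c_j$ over all choices of $z$, and extend canonically by $G$-equivariance to obtain the $G$-volley $\Phi$ on $\mathbf{F}^{(n)}$.

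To complete (iv), for each $e \in E$ I would assemble a chain-map selection $\varphi_e$ of $\Phi$ with shift $\geq 1/2$ as follows. On $\mathbf{F}^{(n-1)}$ take an iterate selection $\psi_e$ coming from $\tilde\Phi$ (with shift $\geq k\nu_0 \geq 1$). For each $y = gx \in Y_n$, choose $\varphi_e(y) = gc$ with $c \in \Phi(x)$ a filling of the $(n-1)$-cycle $g^{-1}\psi_e(\partial y) \in \tilde\Phi(\partial x)$ having shift toward $g^{-1}e$ of at least $1/2$. Such a $c$ is available because $g^{-1}e \in E$ by $G$-invariance of $E$, and the finite cover $\{U_{c_j}\}$ was built to handle every point of $E$. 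Using Lemma \ref{L:2.3}(iii), the shift of $gc$ toward $e$ then equals the shift of $c$ toward $g^{-1}e$, which is $\geq 1/2$; and $\partial(gc) = g\partial c = \psi_e(\partial y)$, so $\varphi_e$ is indeed a chain map. The main obstacle throughout is the uniformity — producing a single volley that serves every $e \in E$ simultaneously with a common positive shift. The three ingredients making this feasible are compactness of $E$ in the cone topology, continuity of Busemann-cocycle differences on $\partial M$, and the uniform constant lag supplied by Corollary \ref{C:6.3}(ii); without this last ingredient, the fillings from $CA^{n-1}$ could force the shift to degenerate as $e$ varies.
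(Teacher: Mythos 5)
Your overall strategy matches the paper's: induction on $n$, with the base case from \cite{BGe16}, the inductive hypothesis supplying a volley $\Phi'$ on ${\bf F}^{(n-1)}$, Corollary \ref{C:6.3}(ii) giving a uniform constant lag, and a compactness argument over $E$ to extend to a finite volley on $F_n$. The assembly of the chain-map selection $\varphi_e(gx)=gc$ using $G$-invariance of $E$ and Lemma \ref{L:2.3}(iii) is also the same device the paper uses.

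There is, however, a genuine gap in the extension step. You assert that choosing $k$ with $k\nu_0 - \lambda - C \geq 1$ ``ensures $v_\gamma(z) \geq v_\gamma(x) + k\nu_0 - C$ for every $z\in\tilde\Phi(\partial x)$ and every choice of $e$,'' and you use this to supply, for every pair $(z,e)\in \tilde\Phi(\partial x)\times E$, a filling $c$ with $v_\gamma(c)\geq v_\gamma(x)+1$. That uniform lower bound on $v_\gamma(z)-v_\gamma(x)$ is false. The guaranteed shift $k\nu_0$ is a property of the \emph{specific selection $\varphi_e^k$ pushing towards $e$}, not of the set $\tilde\Phi(\partial x)$. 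If $z=\varphi_{e_1}^k(\partial x)$ for one endpoint $e_1$ and you evaluate at a different endpoint $e_2$, the selection $\varphi_{e_1}^k$ may well push \emph{away} from $e_2$, so $v_{\gamma_2}(z)-v_{\gamma_2}(x)$ can be as low as $-k\|\Phi'\|-C$. Consequently the $CA^{n-1}$ condition with lag $\lambda$ (which only provides $v_\gamma(c)\geq v_\gamma(z)-\lambda$) need not yield $v_\gamma(c)\geq v_\gamma(x)+1$, and your open cover $\{U_c\}$ of all of $E$ by neighborhoods on which the filling advances past $x$ need not exist. The paper sidesteps this entirely: it chooses $c(e,x,p)$ subject only to $v_\gamma(c(e,x,p)) > v_\gamma(p)-\lambda$ (advance measured relative to the cycle $p$, not to $x$), which $CA^{n-1}$ with lag $\lambda$ guarantees unconditionally for every $(e,p)\in E\times\Pi(x)$ and hence supports the compactness covering; the inequality $v_{g^{-1}\gamma}(p)\geq v_\gamma(y)+k\nu-\|\partial\|$ is invoked only at the very end, for the specific triple $(g,e,\varphi_e)$ giving $p=g^{-1}\varphi_e^k(\partial y)$, where it does hold. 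Your argument can be repaired either by adopting the paper's bookkeeping (measure the filling advance relative to $p$) or by restricting, for each cycle $z$, the covering argument to the compact subset $E_z:=\{e'\in E: v_{\gamma'}(z)-v_{\gamma'}(x)\geq k\nu_0-C\}$, which is all that the final assembly ever queries.
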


\begin{proof} All implications except (i) $\Rightarrow$ (ii) and (i) 
$\Rightarrow$ (iv) have been proved, and the
latter is the stronger of these two, so we prove (i) $\Rightarrow$ 
(iv). The proof is by induction on $n$.

When $n = 0$ there is no difference between (ii) and (iv).  Assume 
$E\subeq \Sigma^0(M;A)$.  For each $x\in
X_0$ and $e\in E$ we choose $\bar c(e,x)\in F_0$ such that 
$\epsilon(\bar c(e,x)) = \epsilon(x)$ and
$v_\gamma(\bar c(e,x)) -v_\gamma(x) > 0$, where $\gamma(\infty) = e$.
If this inequality holds for $e$ and $x$, then it also holds for $e'$ 
and $x$ when $e'$ lies in a suitably small
neighborhood of $e$.  Since $E$ is compact there is a finite subset
$E_f\subeq E$ such that for all $e\in E$ there is some $e'\in E_f$ 
such that $v_\gamma(\bar c(e',x))
-v_\gamma(x) > 0$ when $\gamma(\infty) = e$.  For every $e\in E$ we 
choose such an $e'$ and define $c(e,x) := \bar
c(e',x)$. Thus $\ds{\inf_{e\in E}}\{v_\gamma (c(e,x))-v_\gamma(x)\} > 
0$.  Define $\Psi(x) = \{c(e,x)\mid e\in
E\}$, a finite set of $0$-chains.  For $y = gx$, define $\Psi(y) = 
g\Psi(x)$.  Then the induced $\Psi : F_0\to fF_0$ is a 
$G$-volley inducing id$_A$.

For $e\in E$ and $y = gx$ define an additive endomorphism $\psi_e : F_0 
\to F_0$ by $\psi_e(y) := gc(g^{-1}e,x)$; this
makes sense because $E$ is $G$-invariant.  Then $\epsilon\psi_e = 
\epsilon$, and
\begin{equation*}
\begin{aligned}
v_\gamma(\psi_e(y)) - v_\gamma(y) &= v_\gamma(gc(g^{-1}e,x)) -v_\gamma(gx))\\
&= v_{g^{-1}\gamma}(c(g^{-1}e,x)) - v_{g^{-1}\gamma}(x).
\end{aligned}
\end{equation*}
Thus $\ds{\inf_{e\in E}}\{\text{gsh}_e(\psi_e)\} > 0$.  This proves 
$E\subeq \overset \circ \Sigma{^0}(M;A)$
and finishes the case $n=0$.

Now we assume $n\geq 1$. We are given that $E\subeq 
\Sigma^n(M;A)$, and by induction we may assume
the statement of (iv) when $n$ is replaced by $n-1$.  We also know 
that $F_n$ is finitely generated.  So, by
Corollary \ref{C:6.3}(ii) we conclude that $E\subeq \Sigma^n(M;A)$ 
with a uniform constant lag $\lambda \geq
0$.  We have a $G$-volley $\Phi :{\bF}^{(n-1)} \to f{\bF}^{(n-1)}$ 
inducing id$_A$ such that for each $e\in
$ there is a chain map selection $\varphi_e$ from $\Phi$ with 
gsh$_e(\varphi_e) \geq \nu > 0$.  Hence, by Lemma \ref{L:3.3}, for any positive
integer $k$ we have gsh$_e(\varphi^k_e)\geq k\nu$.  We may choose $k$ so that 
$k\nu \geq \lambda +||\partial \mid {\bF}^{(n)}|| + \delta$ where 
$\delta > 0$ is arbitrary.  The endomorphisms $\varphi^k_e$ are 
selections from the finite
$G$-volley $\Phi^k : {\bF}^{(n-1)} \to f{\bF}^{(n-1)}$.

For $x\in X_n$ define $\Pi(x) := \{g^{-1}\varphi^k_e(g\partial x)\mid 
g\in G,e\in E\}$.  This is a finite set of
cycles, hence of boundaries.  For each $(e,p) \in E\x \Pi(x)$ we 
choose $c(e,x,p)\in F_n$ such that
$\partial(c(e,x,p)) = p$ and
\begin{equation*}
v_\gamma(c(e,x,p)) > v_\gamma(p)-\lambda\geq v_\gamma(p)-k\nu + 
||\partial\mid {\bF}^{(n)}||+\delta.
\end{equation*}
Just as in the case $n=0$, above, the compactness of $E$ allows us to 
make our choices $c(e,x,p)$ from a finite
set $\Psi(x)\subeq F_n$.  Putting $\Psi(y) := g\Psi(x)$, when $y = 
gx\in Y_n$, we get a $G$-volley $\Psi :
{\bF}^{(n)}\to f{\bF}^{(n)}$ extending $\Phi^k$.  Let $\psi_e : F_n\to 
F_n$ be the homomorphism defined by
\begin{equation*}
\psi_e(y) := gc(g^{-1}e,x,g^{-1}\varphi_e\partial y).
\end{equation*}
Then $\partial\psi_e = \varphi^k_e\partial$, so $\psi_e$ is a chain 
map extending $\varphi^k_e$.  When $\gamma(\infty)
= e$ we have
\begin{equation*}
\begin{aligned}
v_\gamma(\psi_e(y)) &= v_\gamma(gc(g^{-1}e,x,g^{-1}\varphi^k_e(g\partial x)))\\
&= v_{g^{-1}\gamma}(c(g^{-1}e,x,g^{-1}\varphi^k_e(\partial y)))\\
&> v_{g^{-1}\gamma}(g^{-1}\varphi^k_e(\partial y)) - \nu + 
||\partial\mid {\bF}^{(n)}|| + \delta\\
&= v_\gamma(\varphi^k_e(\partial y)) - \nu + ||\partial\mid 
{\bF}^{(n)}|| + \delta\\
&\geq v_\gamma(\partial y) + \text{gsh}_e(\varphi^k_e)-\nu + 
||\partial\mid {\bF}^{(n)}|| + \delta\\
&\geq v_\gamma(y) + \delta.
\end{aligned}
\end{equation*}
So gsh$_e(\psi_e)\geq \delta$. Thus (iv) holds for $n$, and the 
induction is complete.
\end{proof}



\section{The meaning of $\Sigma ^{n}(M;A)=\partial M$}\label{S:7}

From now on we assume the module $A$ is non-zero. In this section and the
next we study the meaning of $\Sigma^{n}(M;A)=\partial M$.  We note
that, by Theorem \ref{T:6.4}, the statements $\Sigma^n(M;A)=\partial
M$ and $\Sigmacirc(M;A)=\partial M$ are equivalent.  Our first
goal is Theorem \ref{T:7.8}, which explains how
these properties are also equivalent to what we will call ``controlled
$(n-1)$-acyclicity over $M$".

\subsection{Controlled acyclicity over points $b\in M$}\label{S:7.1}

Controlled acyclicity over a point $b\in M$ is analogous to controlled
acyclicity over an endpoint $e\in \partial M$.  The role of the valuation
on the augmented controlled based free resolution ${\bF}\thra A$ is 
played by the function $D_b :
{\bF}\to {\mathbb R}_{\geq 0}$ defined by  $D_b(c) :=\max \{d(p,b)\mid
p\in h(c)\}$ when $c\neq 0$ and $D_b(c) = 0$ when $c=0$.  We extend
$D_b$ to the module $A$ by $D_b(a)=0$ for all $a\in A$. We say 
${\bF}\thra A$ is {\it
controlled} $(n-1)$-{\it acyclic over} $b\in M$, in short $CA^{n-1}$ over
$b$, if there is a {\it lag} function $\lambda : {\mathbb R}_{\geq 0}
\to {\mathbb R}_{\geq 0}$, such that, for any any $-1\leq
i\leq n-1$, each (augmented) $i$-cycle is the boundary, $z = \partial
c$ of some $(i+1)$-chain $c$ satisfying
\begin{equation}\label{E:8.1}
D_b(c) \leq D_b(z) + \lambda(D_b(z)).
\end{equation}

\begin{prop}\label{P:8.0}
\begin{enumerate}[\rm (i)]
\item If $\lambda
_{R}:[R,\infty)\to {\mathbb R}_{\geq 0}$ satisfies the inequality 
(\ref{E:8.1}) when $D_{b}(z)\geq R$, and if $\lambda _R$ is extended
to $[0,\infty )$ by defining $\lambda _{R}(s)=\lambda _{R}(R)+R-s$ when
$0\leq s\leq R$, then the extended $\lambda _R$ satisfies that inequality
for all $z$, and hence is a lag function in the above sense.
\item If
$d(b,b')=\delta $ and $\lambda _R$ is a lag function with respect to $b$,
then $\lambda _{R} +2\delta$ is a lag function with respect to $b'$.
\hfill $\square$
\end{enumerate}
\end{prop}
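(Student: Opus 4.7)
My plan is to verify the defining lag inequality (\ref{E:8.1}) directly in each part. For part (i), I would split according to whether $D_b(z) \geq R$ or $D_b(z) < R$. In the first range the inequality is exactly the given hypothesis on $\lambda_R$ restricted to $[R,\infty)$. In the second range, the extension formula telescopes:
\begin{equation*}
D_b(z) + \lambda_R(D_b(z)) \;=\; D_b(z) + \lambda_R(R) + R - D_b(z) \;=\; R + \lambda_R(R),
\end{equation*}
so what must be shown is that every cycle $z$ with $D_b(z) < R$ admits a chain $c$, with $\partial c = z$, satisfying $D_b(c) \leq R + \lambda_R(R)$. This follows from the $CA^{n-1}$ hypothesis, which furnishes some (possibly different) lag function $\mu$ on $[0,\infty)$: replacing $\mu$ by its non-decreasing majorant $s \mapsto \sup_{t \leq s}\mu(t)$ (still a valid lag), one has $D_b(c) \leq s + \mu(s) \leq R + \mu(R)$ for every $s \in [0,R]$. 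Enlarging $\lambda_R(R)$ to $\max(\lambda_R(R),\mu(R))$ preserves the hypothesis on $[R,\infty)$ and makes the extended formula correct. Non-negativity of the extension is automatic, since $\lambda_R(R) + R - s \geq R - s \geq 0$ on $[0,R]$.

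For part (ii), the essential estimate is $|D_b(c) - D_{b'}(c)| \leq \delta$ for every non-zero $c \in \bF$, which follows pointwise: each $p \in h(c)$ satisfies $|d(p,b) - d(p,b')| \leq \delta$, a bound preserved under taking maxima. Given any cycle $z$, invoke the $b$-lag $\lambda_R$ to find $c$ with $\partial c = z$ and $D_b(c) \leq D_b(z) + \lambda_R(D_b(z))$. Two applications of the pointwise estimate then give
\begin{equation*}
D_{b'}(c) \;\leq\; D_b(c) + \delta \;\leq\; D_b(z) + \lambda_R(D_b(z)) + \delta \;\leq\; D_{b'}(z) + \lambda_R(D_b(z)) + 2\delta,
\end{equation*}
and after a non-decreasing normalization of $\lambda_R$ together with absorbing a $\delta$-shift (via the mechanism of part (i) if necessary), this is dominated by $D_{b'}(z) + \lambda_R(D_{b'}(z)) + 2\delta$, exhibiting $\lambda_R + 2\delta$ as a lag function at $b'$.

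The main obstacle I anticipate is the monotonicity/normalization step appearing in both parts: the telescoping identity in (i) and the $\delta$-estimate in (ii) only produce the exact form asserted by the proposition after one has arranged for $\lambda_R$ to be non-decreasing and, in (i), for $\lambda_R(R)$ to dominate the pre-existing $CA^{n-1}$-lag on $[0,R]$. These are routine WLOG manoeuvres that do not affect the essential content; once in place, the remainder of the argument is bookkeeping with the triangle inequality.
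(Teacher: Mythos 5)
The paper leaves this Proposition unproved (the $\square$ after the statement indicates the authors view it as routine), so there is nothing to compare against directly; what follows is an assessment of whether your argument actually establishes the claim.

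Your case-split in (i) is the right decomposition, and the telescoping computation $D_b(z)+\lambda_R(D_b(z))=R+\lambda_R(R)$ for $D_b(z)\leq R$ is exactly the content of the extension formula. You are also right to notice that the hypothesis ``$\lambda_R$ satisfies (\ref{E:8.1}) when $D_b(z)\geq R$'' says nothing by itself about cycles lying over a small ball, so some additional input is needed to handle $D_b(z)<R$. Your choice of input --- a pre-existing $CA^{n-1}$ lag $\mu$, regularized to be non-decreasing --- is the sensible one, and it matches the way the Proposition is actually used: in the proof of Proposition \ref{P:7.0} the push $\varphi^k$ independently supplies a uniform bound on $D_b$ of a bounding chain when $D_b(z)$ is small, and Proposition \ref{P:8.0}(i) is really just the bookkeeping that packages that bound together with the lag outside $B_R(b)$ into a single lag function. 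Be aware, though, that your final step (``enlarging $\lambda_R(R)$ to $\max(\lambda_R(R),\mu(R))$'') proves a slightly weaker statement than the literal one: the literal formula $\lambda_R(s)=\lambda_R(R)+R-s$ on $[0,R]$ need not work unless $\lambda_R(R)$ already dominates the control constant for small cycles, and in the application it typically does not ($\lambda_R(R)=\|\sigma\|$ while the small-cycle bound involves $\|\varphi\|+\|\sigma\|$). Also note a small slip: after passing to the non-decreasing majorant $\tilde\mu$, your bound should read $R+\tilde\mu(R)$, not $R+\mu(R)$.

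For (ii), the pointwise estimate $|D_b(c)-D_{b'}(c)|\leq\delta$ and the chain of inequalities are correct and give $D_{b'}(c)\leq D_{b'}(z)+\lambda_R(D_b(z))+2\delta$. The remaining step --- replacing $\lambda_R(D_b(z))$ by $\lambda_R(D_{b'}(z))$ --- is where your argument is genuinely imprecise: a ``non-decreasing normalization'' of $\lambda_R$ goes in the wrong direction when $D_b(z)>D_{b'}(z)$, and ``absorbing a $\delta$-shift'' is not explained. The condition that actually makes this step legitimate is that $L(s):=s+\lambda_R(s)$ be non-decreasing and $1$-Lipschitz, for then $|D_b(z)-D_{b'}(z)|\leq\delta$ gives $L(D_b(z))\leq L(D_{b'}(z))+\delta$, and the estimate closes with the asserted constant $2\delta$. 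That condition holds automatically for the $\lambda_R$ produced by part (i) when the original $\lambda_R$ is constant on $[R,\infty)$ --- which is the only case the paper actually uses (see the remark following the Proposition). So your proof is correct in the intended setting, but you should replace the vague monotonicity remark by the explicit $1$-Lipschitz observation about $L$, and state that you are restricting to $\lambda_R$ of that form (or, equivalently, prove the Proposition as the authors intend to apply it rather than for a completely arbitrary lag function).
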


Proposition \ref{P:8.0}(i) implies that if in the definition of
``$CA^{n-1}$ over $b$" the lag function can be chosen to be constant on
some interval $[R,\infty )$ then it can be chosen to be constant over all.
Proposition \ref{P:8.0}(ii) implies that if the action $\rho $ is
cocompact (so that there is a fundamental domain of finite diameter) and
if we have $CA^{n-1}$ over some point  $b$, then we have $CA^{n-1}$ over
all points $b$ with the same lag function being applicable everywhere.
We refer to this  as a {\it uniform lag}.  We say that ${\bF}\thra A$ is
$CA^{n-1}$ {\it over } $M$ if it is $CA^{n-1}$ over each $b\in M$ with
a uniform lag function.

In particular, ``$CA^{-1}$ over $M$" means that there
is a number $\lambda $ $(=\lambda(0))$ such that for every $a\in A$ and
$b\in M$ there is a 0-chain $c$ with $\epsilon (c)=a$ and 
$h(c)\subseteq B_{\lambda }(b)$. Since $A$ is non-zero this implies 
that the action $\rho $ is cocompact.

\begin{prop}\label{P:7.0}
Let ${\bF} \thra A$ be a controlled based free resolution with finitely 
generated $n$-skeleton.  Let $\varphi : {\bF}^{(n)} \to {\bF}^{(n)}$
be a $G$-finitary chain endomorphism inducing $\text{\rm id}_A$, and 
let $\sigma : \varphi \sim \text{\rm id}_F$ be a
$G$-finitary chain homotopy.  If $\varphi$ pushes ${\bF}^{(n)}$ towards 
some point of $M$ then the following hold:

\begin{enumerate}[{\rm (i)}]
\item $\rho $ is cocompact, and  ${\bF}$ is $CA^{n-1}$ over $M$
with constant lag.
\item If ${\bF}^{(n+1)}$ is finitely generated and ${\bF}$ is $CA^{n}$
over $M$, then it is so with constant lag
\end{enumerate}
\end{prop}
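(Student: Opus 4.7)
The plan is to run the argument of Proposition~\ref{P:6.2} with the endpoint-valuation $v_\gamma$ replaced by the distance function $D_{b_0}$, where $b_0\in M$ is the point towards which $\varphi$ pushes. The inequalities and extrema flip direction (replace $\min$ and $\geq$ by $\max$ and $\leq$) because pushing decreases $D_{b_0}$, and the iterates $\varphi^k(c)$ are driven into a bounded region around $b_0$ rather than out towards $+\infty$. The one genuinely new ingredient, absent from Proposition~\ref{P:6.2}, is the deduction of cocompactness of $\rho$; I expect that to be the main obstacle.

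For (i), I first establish $CA^{n-1}$ over the single point $b_0$ with constant lag. Given an $i$-cycle $z$ with $-1\leq i\leq n-1$ pick $c_1$ with $\partial c_1=z$ and set $\tau:=\sigma(\varphi^k+\cdots+\varphi+1)$, a chain homotopy $\varphi^{k+1}\sim\text{id}$. Then $z=\partial c'$ with $c'=\varphi^{k+1}(c_1)-\tau(z)$. Using the distance analogs of Lemmas~\ref{L:3.2} and~\ref{L:3.3}, one has $D_{b_0}(\varphi^p\sigma(z))\leq\max(D_{b_0}(\sigma(z)),R)$ uniformly in $p$, where $R$ is the floor of the push condition; hence
\[
D_{b_0}(\tau(z))\leq\max(D_{b_0}(\sigma(z)),R)\leq\max(D_{b_0}(z)+\|\sigma\|,R).
\]
For $k$ large enough that $\varphi^{k+1}(c_1)$ lies inside $B_R(b_0)$, also $D_{b_0}(c')\leq\max(D_{b_0}(z)+\|\sigma\|,R)$. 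Proposition~\ref{P:8.0}(i) turns this into a constant-lag bound, establishing $CA^{n-1}$ over $b_0$ with constant lag.

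Cocompactness of $\rho$ is the main obstacle. The $i=-1$ case above yields $CA^{-1}$ over $b_0$: every $a\in A$ has a $0$-chain representative with $h$-image in $B_\lambda(b_0)$ for a fixed $\lambda$. Passing from this to cocompactness uses the $G$-finitary structure. Since $\varphi$ is a selection from a governing volley $\Phi$, the iterated pushes $\varphi^k(gx_0)$ --- for a fixed $x_0\in X_0$ with $a_0=\epsilon(x_0)\neq 0$ (possible by admissibility and $A\neq 0$) --- lie in the $G$-translates $g\Phi^k(x_0)$, where $\Phi^k(x_0)\subseteq F_0$ is finite, and have $h$-image in $B_R(b_0)$. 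An orbit-tracing argument using these supports, together with the properness of $M$, produces a bounded set $K\subseteq M$ with $M=G\cdot N_\lambda(K)$, i.e.\ cocompactness. With cocompactness in hand, Proposition~\ref{P:8.0}(ii) promotes the constant lag over $b_0$ to a uniform constant lag over all of $M$, completing (i).

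Part (ii) follows the template of Proposition~\ref{P:6.2}(ii). Given an $n$-cycle $z$ and the $CA^n$-over-$M$ lag function $\mu$, form $\varphi^{k+1}(z)=z+\partial\tau(z)$ with $k$ chosen so that $D_{b_0}(\varphi^{k+1}(z))\leq R$. The $CA^n$ property over $b_0$ provides an $(n+1)$-chain $c'$ with $\partial c'=\varphi^{k+1}(z)$ and $D_{b_0}(c')\leq R+\mu(R)$. Setting $c''=c'-\tau(z)$ one has $\partial c''=z$ and, by the same $\|\sigma\|$-estimate used in (i), $D_{b_0}(c'')\leq\max(R+\mu(R),D_{b_0}(z)+\|\sigma\|)$. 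This is constant lag over $b_0$; cocompactness from (i) plus Proposition~\ref{P:8.0}(ii) extends it to a uniform constant lag over $M$.
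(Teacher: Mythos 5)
Your transcription of the Proposition~\ref{P:6.2} argument to the basepoint setting is exactly what the paper intends by ``entirely analogous'': replace $v_\gamma$ by $D_{b_0}$, flip $\min/\geq$ to $\max/\leq$, form $\tau=\sigma(\varphi^k+\cdots+1)$ and $c'=\varphi^{k+1}(c_1)-\tau(z)$, and use Proposition~\ref{P:8.0}(i) to repair the lag inside the event radius $R$. You are also more explicit than the paper about the division of labor between Proposition~\ref{P:8.0}(i) (constant lag over one $b$) and Proposition~\ref{P:8.0}(ii) (changing basepoints), which is the right reading. So the $CA^{n-1}$-over-$b_0$ part of your argument, together with the observation that $g\varphi$ gives the same lag over $gb_0$, is sound.

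The gap is where you say it is, but you do not close it. Your cocompactness sketch --- iterating the volley $\Phi^k$ on a fixed $x_0$ and appealing to properness to ``produce a bounded set $K$ with $M=G\cdot N_\lambda(K)$'' --- establishes at most that the orbit $Gb_0$ lies within bounded distance of $\bigcup_k h(\Phi^k(x_0))\subseteq G\cdot h(X)$; it says nothing about points of $M$ far from $G\cdot h(X)$, which is precisely what cocompactness requires. Moreover ``$h$-image in $B_R(b_0)$'' is not correct for all $g$ with $k$ fixed; $k$ must grow with $D_{b_0}(gx_0)$. Without cocompactness your appeal to Proposition~\ref{P:8.0}(ii) gives, for an arbitrary $b\in M$, a lag $\lambda+2\,d(b,Gb_0)$ which is uniform only if $d(\cdot,Gb_0)$ is bounded, so you cannot use~\ref{P:8.0}(ii) to get $CA^{n-1}$ over $M$ and only afterwards read off cocompactness from the $CA^{-1}$ remark preceding the proposition --- that reasoning is circular. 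The paper's own one-sentence proof does not address this step at all (it relies on the remark before the proposition, which assumes $CA^{-1}$ over all of $M$ is already in hand), so you are not misreading the paper; but the gap you correctly flag as ``the main obstacle'' is genuinely left open in your write-up. To make this rigorous one would need an argument (not supplied here or in the paper's proof of~\ref{P:7.0}, though the $n=0$ case defers to Proposition~6.6 of~\cite{BGe16}) that the existence of a $G$-finitary push towards a single basepoint forces the action to be cocompact, or else a hypothesis guaranteeing that $M=N_r(G\cdot h(X))$ for some $r$.
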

\begin{proof} The proof is entirely analogous to that of Proposition
\ref{P:6.2}, and the details are therefore omitted. Instead of pushing
towards some $e\in \partial M$ we are now pushing towards $b$.  Of course,
there exists $R\geq 0$ such that those portions of our chains already 
over the ball of radius $R$ about
$b$ do not make progress towards $b$,
but Proposition \ref{P:8.0} implies that this makes no difference.
\end{proof}

{\bf Remark:} In fact there is a radius $R$ such that for any $b\in M$
the lag outside $B_{R}(b)$ can be $||\sigma ||$.

\subsection{Bounded support and cocompactness of $\rho$}\label{S:7.2}

We first consider the special case $n=0$. We say the module $A$ has 
{\it bounded
support over} $M$ if there is a bounded subset $B\subseteq M$
with the property that for each $a\in A$
there exists $c\in F_0$ with $\epsilon(c) = a$ and $h(c)\subseteq B$. 
By the triangle inequality, it
is easy to see that this property is independent of the point $b\in M$,
though the number $r$ varies with $b$.  It is also independent of the
choice of $F$ and $h$.

\begin{thm}\label{P:7.1}
Let $A$ be a finitely generated non-zero $G$-module.  The following 
are equivalent:
\begin{enumerate}[{\rm(i)}]
\item $\Sigma^{0}(M;A)=\partial M$;
\item $(M;A)$ is $CA^{-1}$ over $M$;
\item $\rho$ is cocompact and $A$ has bounded support over $M$.
\end{enumerate}
\end{thm}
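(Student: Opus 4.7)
\textit{Proof plan.} The plan is to close the cycle $\mathrm{(iii)}\Rightarrow\mathrm{(ii)}\Rightarrow\mathrm{(i)}\Rightarrow\mathrm{(ii)}\Rightarrow\mathrm{(iii)}$, noting that $\mathrm{(ii)}\Leftrightarrow\mathrm{(iii)}$ is essentially immediate once (ii) is on the table. The three easy implications are routine geometric applications of cocompactness and the 1-Lipschitz property of Busemann functions; the substantive step is $\mathrm{(i)}\Rightarrow\mathrm{(ii)}$, which I reduce via Proposition \ref{P:7.0}(i) to constructing a $G$-finitary chain endomorphism of $F_0$ that pushes toward a point of $M$.

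\textit{Easy implications.} For $\mathrm{(iii)}\Rightarrow\mathrm{(ii)}$: let $B\subseteq B_{r_0}(b_0)$ be the bounded-support witness and let $d_0$ bound the diameter of a compact fundamental domain. Given $a\in A$ and $b\in M$, pick $g\in G$ with $d(gb_0,b)\leq d_0$; apply bounded support to $g^{-1}a$ to get $c'\in F_0$ with $\epsilon(c')=g^{-1}a$ and $h(c')\subseteq B$; then $c:=gc'$ satisfies $\epsilon(c)=a$ and $h(c)\subseteq gB\subseteq B_{r_0+d_0}(b)$, so $\lambda:=r_0+d_0$ works. For $\mathrm{(ii)}\Rightarrow\mathrm{(i)}$: given $\gamma(\infty)=e$, $a\in A$, and a threshold $T\in\mathbb{R}$, choose $b:=\gamma(t)$ with $\beta_\gamma(b)\geq T+\lambda$ (possible since $\beta_\gamma\to+\infty$ along $\gamma$); invoke (ii) to produce $c$ with $h(c)\subseteq B_\lambda(b)$; the 1-Lipschitz inequality $\beta_\gamma(p)\geq\beta_\gamma(b)-d(p,b)$ then yields $v_\gamma(c)\geq T$, proving $CA^{-1}$ over $e$ with constant lag $\lambda$. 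Finally $\mathrm{(ii)}\Rightarrow\mathrm{(iii)}$ is immediate: cocompactness of $\rho$ is built into the definition of $CA^{-1}$ over $M$ (as already noted in the text, using $A\neq 0$), and $B:=B_\lambda(b_0)$ for any fixed $b_0$ is a bounded-support witness.

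\textit{Crux: $\mathrm{(i)}\Rightarrow\mathrm{(ii)}$.} By Theorem \ref{T:6.4}(iv) applied to the closed $G$-invariant set $E=\partial M$, there exist a finite $G$-volley $\Phi:F_0\to fF_0$ inducing $\mathrm{id}_A$ and a uniform $\nu>0$ such that every $e\in\partial M$ admits a chain-map selection $\varphi_e$ from $\Phi$ with $\mathrm{gsh}_e(\varphi_e)\geq\nu$; iterating, $\varphi_e^k$ is a selection from the $G$-volley $\Phi^k$ with $\mathrm{gsh}_e(\varphi_e^k)\geq k\nu$ by Lemma \ref{L:3.3}. I will build a $G$-finitary selection $\psi$ from $\Phi^k$ (for $k$ sufficiently large) that pushes $F_0$ toward some fixed $b\in M$. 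Since both $\psi$ and $\mathrm{id}_{F_0}$ induce $\mathrm{id}_A$, the Finitary Comparison Theorem \ref{L:5.1} provides a $G$-finitary chain homotopy $\sigma$ between them, and Proposition \ref{P:7.0}(i) then supplies cocompactness of $\rho$ together with $CA^{-1}$ over $M$ with constant lag---which is (ii). The selection $\psi$ is built cell-by-cell: fix $b\in M$, and for each $y\in Y_0$ with $D_b(y)$ sufficiently large, let $p_y\in h(y)$ realize $D_b(y)$ and choose an endpoint $e(y)\in\partial M$ that is an approximate antipode of $p_y$ from $b$, i.e., such that $\beta_{\gamma_{e(y)}}(p_y)$ is close to its infimum $-D_b(y)$; existence of such $e(y)$ follows from compactness of $\partial M$ in the cone topology and continuity of $e\mapsto\beta_{\gamma_e}(p_y)$. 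Set $\psi(y):=\varphi_{e(y)}^k(y)\in\Phi^k(y)$, and for $D_b(y)$ below a threshold take $\psi(y)$ arbitrary in $\Phi^k(y)$.

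\textit{The CAT(0) estimate and main obstacle.} For every $q\in h(\psi(y))$ one has simultaneously $d(q,h(y))\leq k\|\Phi\|$ and $\beta_{\gamma_{e(y)}}(q)\geq v_{\gamma_{e(y)}}(y)+k\nu$, the latter essentially $-D_b(y)+k\nu$ by our choice of $e(y)$. A CAT(0) comparison-triangle argument along the ray $\gamma_{e(y)}$ converts this Busemann gain into a metric gain, producing $D_b(\psi(y))\leq D_b(y)-k\nu+C$ for a constant $C$ depending only on $\|\Phi\|$, the antipode-approximation error, and the (uniformly bounded) diameters of the finitely many $h(x)$, $x\in X_0$. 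Choosing $k$ with $k\nu>C$ makes $\psi$ a positive-shift push toward $b$ on the complement of a ball $B_R(b)$, which is what Proposition \ref{P:7.0} requires (in the form of the remark following it). The principal obstacle is exactly this geometric conversion step: in a general proper $CAT(0)$ space, exact antipodes at infinity need not exist, so one must work with approximate antipodes and absorb the approximation defect into the single constant $C$; this relies on $CAT(0)$ comparison, compactness of $\partial M$ in the cone topology, and careful bookkeeping of the scale constants $\nu$, $\|\Phi\|$, and $k$.
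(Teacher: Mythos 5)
Your easy implications are fine, and your overall strategy for the crux step (build a $G$-finitary chain endomorphism of $F_0$ pushing toward a point $b\in M$ and then invoke Proposition~\ref{P:7.0}(i)) is essentially the content of the paper's appeal to Proposition~\ref{P:7.6}. But the key geometric step has a gap. You claim that $\inf_{e\in\partial M}\beta_{\gamma_e}(p_y)=-D_b(y)$ and that an approximate antipode $e(y)$ exists ``by compactness of $\partial M$ and continuity of $e\mapsto\beta_{\gamma_e}(p_y)$.'' Compactness and continuity only give that the infimum is \emph{attained}; they do not give that it equals $-D_b(y)$. That identity is equivalent to the existence of a geodesic ray issuing from $b$ and passing through (or, uniformly, near) $p_y$, i.e.\ to almost geodesic completeness of $M$. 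In an arbitrary proper $CAT(0)$ space this fails: if there is no ray from $b$ heading ``past'' $p_y$, the infimum can be much larger than $-D_b(y)$, and there is then no uniform bound on your ``antipode-approximation error,'' so the constant $C$ in $D_b(\psi(y))\leq D_b(y)-k\nu+C$ cannot be controlled and the argument collapses.

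The paper avoids this by first establishing that $\Sigma^0(M;A)=\partial M$ forces $\rho$ to be cocompact (this is Proposition 6.6 of \cite{BGe16}, a separate argument that does \emph{not} go through a point-pushing map), then invoking the theorem of \cite{GO07} that cocompactness of the isometry group implies $M$ is almost geodesically complete; only then does it apply Proposition~\ref{P:7.6} to convert endpoint-pushing into point-pushing. Your plan instead tries to obtain cocompactness as an \emph{output} of Proposition~\ref{P:7.0}(i), after the point-pushing map $\psi$ has already been built --- but building $\psi$ already needed almost geodesic completeness, hence cocompactness. The argument is therefore circular unless you first prove ``(i) $\Rightarrow$ $\rho$ cocompact'' independently (or otherwise bound the antipode defect without almost geodesic completeness, which I do not see how to do). Adding that step, and then citing almost geodesic completeness rather than compactness of $\partial M$ to justify the approximate antipodes, would repair the proof and bring it into line with the paper's argument.
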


The example of $SL_{2}({\mathbb Z})$ acting on the hyperbolic plane 
-- here $A$ is the trivial module ${\mathbb Z}$ -- shows that 
``having bounded support" does not imply ``cocompact".

\begin{proof} The equivalence of (ii) and (iii) is clear.  We spelled
out the meaning of ``$CA^{-1}$ over $M$", and (using that notation)
there is a ball of radius $\lambda $ inside any horoball; so (ii)
implies (i). That (i) implies cocompactness follows from Proposition
6.6 of \cite{BGe16}. The remaining item, the fact that (i) implies bounded
support, requires some work\footnote{Theorem \ref{P:7.1} appears with
full proof in our paper \cite{BGe16}. Since it is the $n=0$ case of a bigger
theorem, Theorem \ref{T:7.8}, some of the methods get used later. So,
in the next section we sketch the proof of Theorem \ref{P:7.1} referring
to \cite{BGe16} for some detailed proofs.}. \end{proof}

\subsection{Shifting towards base points}\label{S:7.3}

Let $F = F_X$ be a finitely generated based free $G$-module, choose 
a base point $b\in M$ and
consider the canonical control map $h : F\to M$ with respect to $b$ 
and the basis $X$. Let
$\varphi : F\to F$ be an additive endomorphism, and let $L\leq F$ be an 
cellular $\Z$-submodule of $F$.  The {\it shift function of} 
$\varphi\mid L$ {\it towards} $b\in M$ measures the loss of distance 
from $b\in M$; it is denoted by sh$_{\varphi,b} : L\cap Y\to {\mathbb 
R}$, and is defined by
\begin{equation}\label{E:7.1}
\text{sh}_{\varphi,b}(y) := D_b(y) - D_b(\varphi(y)) \in {\mathbb 
R}\cup \{\infty\},\quad y\in L\cap Y.
\end{equation}

The notion of guaranteed shift towards $b\in M$ is more subtle than 
the corresponding notion for  endpoints $e\in \partial M$ because if 
elements are already too close to $b$ it may not be possible to push 
them any closer.  Therefore we have to restrict attention to elements 
$y\in L\cap Y$ with $h(y)$ outside some ball centered at $b$.  When 
$\alpha \in {\mathbb R}$ and $R\geq 0$, the pair $(\alpha,R)$ {\it 
defines a guaranteed shift} for $\varphi\mid L$ if 
sh$_{\varphi,b}(y)\geq \alpha$ whenever $y\in L\cap Y$ and $D_b(y) > 
R$.  The {\it (almost) guaranteed shift
of} $\varphi$ {\it on} $L$ is
\begin{equation*}
\text{gsh}_b(\varphi\mid L) := \sup\{\alpha\mid \text{ for some } R,\ 
(\alpha,R) \text{ defines a guaranteed shift for
}\varphi\mid L\}.
\end{equation*}

We call a number $R$ occurring in such a pair $(\alpha, R)$ an {\it 
event radius} for $\varphi$.

For a proof of the following lemma see Section 9 of \cite{BGe16}:

\begin{lemma}\label{L:7.2}
\begin{enumerate}[{\rm (i)}]
\item $-||\varphi\mid L|| \leq \text{\rm gsh}_b(\varphi\mid L) \leq 
||\varphi \mid L||$.
\item If $\psi : F\to F$, and $K$ is an cellular submodule with 
$\psi(K) \subeq L$ then
\begin{equation*}
\text{\rm gsh}_b(\varphi\mid L\circ \psi\mid K) \geq \text{\rm 
gsh}_b(\varphi\mid L) + \text{\rm gsh}_b(\psi\mid K).
\end{equation*}
\end{enumerate}
\end{lemma}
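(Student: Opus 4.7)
My approach mirrors the endpoint analogue (Lemma \ref{L:3.3}), with added bookkeeping of event radii to account for the fact that the ``towards $b$'' guaranteed shift is only almost-guaranteed, valid outside some ball around $b$.

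\textbf{Part (i).} Both inequalities come from the triangle inequality applied through the control map $h$. For the lower bound, fix $r > ||\varphi|L||$; by definition of the norm every $p \in h(\varphi(y))$ lies within distance $r$ of some $q \in h(y)$, so $d(p,b) \leq r + d(q,b) \leq r + D_b(y)$, and maximizing over $p$ gives $D_b(\varphi(y)) \leq D_b(y) + r$ whenever $\varphi(y) \neq 0$ (the case $\varphi(y)=0$ being trivial). Hence $\text{sh}_{\varphi,b}(y) \geq -r$ for every $y \in L \cap Y$ with no event radius required, and letting $r \downarrow ||\varphi|L||$ yields the lower bound. For the upper bound, the reverse triangle inequality applied to $q \in h(y)$ realizing $D_b(y)$ and a nearby $p \in h(\varphi(y))$ gives $D_b(\varphi(y)) \geq D_b(y) - r$, hence $\text{sh}_{\varphi,b}(y) \leq r$; producing cells of $L \cap Y$ with $D_b(y)$ arbitrarily large and $\varphi(y) \neq 0$ (via the $G$-translation structure on $Y = GX$) rules out any $\alpha > ||\varphi|L||$ as a guaranteed shift.

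\textbf{Part (ii).} Fix any $\alpha_\varphi < \text{gsh}_b(\varphi|L)$ with event radius $R_\varphi$, and $\alpha_\psi < \text{gsh}_b(\psi|K)$ with event radius $R_\psi$. For $y \in K \cap Y$, expand $\psi(y) = \sum n_{y'} y'$ over cells $y' \in L \cap Y$. Since $\text{supp}(\varphi\psi(y)) \subseteq \bigcup_{y'} \text{supp}(\varphi(y'))$, we have $D_b(\varphi\psi(y)) \leq \max_{y'} D_b(\varphi(y'))$. Split the $y'$'s in the support of $\psi(y)$ into near ones ($D_b(y') \leq R_\varphi$, bounded by $R_\varphi + ||\varphi|L||$ via part (i)) and far ones ($D_b(y') > R_\varphi$, bounded by $D_b(y') - \alpha_\varphi$ via the guaranteed shift of $\varphi|L$), obtaining
\begin{equation*}
D_b(\varphi\psi(y)) \leq \max\bigl(D_b(\psi(y)) - \alpha_\varphi,\ R_\varphi + ||\varphi|L||\bigr).
\end{equation*}
Combined with $D_b(\psi(y)) \leq D_b(y) - \alpha_\psi$ for $D_b(y) > R_\psi$, choosing $R \geq \max\bigl(R_\psi,\ R_\varphi + ||\varphi|L|| + \alpha_\varphi + \alpha_\psi\bigr)$ forces the first term in the outer max to dominate when $D_b(y) > R$, yielding $\text{sh}_{\varphi\psi,b}(y) \geq \alpha_\varphi + \alpha_\psi$. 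Taking suprema as $\alpha_\varphi, \alpha_\psi$ approach the respective gsh values proves the claim.

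\textbf{Main obstacle.} The key technical point distinguishing this proof from its endpoint analogue (Lemma \ref{L:3.3}) is precisely the near/far split in part (ii): cells of $\text{supp}(\psi(y))$ that happen to lie close to $b$ enjoy no guaranteed shift, so their $\varphi$-images must be controlled by the coarser global norm estimate from part (i) and then absorbed into the event radius of the composition by forcing $y$ itself to be sufficiently far from $b$. Once this bookkeeping is set up, everything else reduces to routine metric estimation.
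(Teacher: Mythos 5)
The paper itself does not prove Lemma \ref{L:7.2}; it cites Section 9 of \cite{BGe16}, so there is no in-paper proof to compare against. Taken on its own terms, your argument has the right structure, and the near/far decomposition in part (ii) is exactly the adjustment needed to pass from the endpoint Lemma \ref{L:3.3} to the ``towards $b$'' setting; that portion of your proof is sound (the estimate $D_b(\varphi\psi(y)) \leq \max(D_b(\psi(y))-\alpha_\varphi,\, R_\varphi+||\varphi|L||)$ and the choice of event radius work out).

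There is, however, a real soft spot in the upper bound of part (i). Your reverse-triangle step takes ``a nearby $p \in h(\varphi(y))$'' to the point $q$ realizing $D_b(y)$. The definition of $||\varphi||$ guarantees only that each $p \in h(\varphi(y))$ lies near \emph{some} $q' \in h(y)$, not near the maximizing one. The argument does close up under the standing assumption of Section 7.3 that $h$ is the canonical control map, so $h(y)$ is a \emph{singleton} $\{q\}$; you should say this explicitly, since for a general centerless control map the estimate $D_b(\varphi(y)) \geq D_b(y) - r$ simply fails. Second, the concluding step --- ``producing cells of $L\cap Y$ with $D_b(y)$ arbitrarily large and $\varphi(y)\neq 0$ via the $G$-translation structure'' --- is not justified: $L$ is only a cellular $\Z$-submodule (not a $G$-submodule), so $L \cap Y$ need not be stable under $G$-translation, and nothing forces $\varphi$ to be nonvanishing on far cells. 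If $L\cap Y$ is bounded, or if $\varphi$ vanishes on all far cells of $L$, then $\text{gsh}_b(\varphi|L) = +\infty$ and the inequality fails as literally stated. In the intended applications ($L = \bF^{(n)}$ a $G$-submodule, $\varphi$ a chain map lifting $\text{id}_A$ on an admissible resolution) these degeneracies cannot occur, but the present proof of (i) needs either an explicit non-degeneracy hypothesis or a sentence explaining why the relevant $L$ and $\varphi$ are non-degenerate in the cases where the lemma is invoked.
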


We note that when $\varphi$ is $G$-finitary $||\varphi\mid L|| <\infty$ and
gsh$_{b}(\varphi\mid L)$ is attained.  If gsh$_b(\varphi\mid L) > 0$ 
we say that
$\varphi$ {\it pushes} $L$ {\it towards} $b\in M$.

\begin{cor}\label{C:7.3}
If $\varphi$ in Lemma \ref{L:7.2}(ii) pushes $L$ towards $b$, and 
$\varphi(L)\subeq L$ then $\varphi^k\circ \psi$ pushes
$L$ towards $b$ when $k > \dfrac{-\text{\rm gsh}_b(\psi\mid 
K)}{\text{\rm gsh}_b(\varphi\mid L)}$.  In fact,
{\rm gsh}$_b(\varphi^k\circ\psi\mid L) > \eta$ when $k > 
\dfrac{\eta-\text{\rm gsh}_b(\psi\mid K)}
{\text{\rm gsh}_b(\varphi\mid L)}$.
\end{cor}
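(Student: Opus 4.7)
The plan is to reduce this corollary directly to Lemma \ref{L:7.2}(ii), which is the sub-additivity of guaranteed shift under composition, together with an induction on the exponent $k$. The essential ingredient is that the hypothesis $\varphi(L)\subeq L$ allows us to take $K=L$ in Lemma \ref{L:7.2}(ii), so we may iterate $\varphi$ on $L$ and keep accumulating shift.

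First I would establish the elementary estimate
\begin{equation*}
\text{gsh}_b(\varphi^k\mid L)\ \geq\ k\cdot \text{gsh}_b(\varphi\mid L),\qquad k\in \mathbb{N},
\end{equation*}
by induction on $k$. The base case $k=1$ is trivial. For the inductive step, one applies Lemma \ref{L:7.2}(ii) with both the outer $\varphi\mid L$ and the inner $\varphi^{k-1}\mid L$, which is permitted because $\varphi(L)\subeq L$ implies $\varphi^{k-1}(L)\subeq L$; this yields
\begin{equation*}
\text{gsh}_b(\varphi^k\mid L) = \text{gsh}_b(\varphi\mid L\circ \varphi^{k-1}\mid L)
\geq \text{gsh}_b(\varphi\mid L) + \text{gsh}_b(\varphi^{k-1}\mid L),
\end{equation*}
and the induction hypothesis finishes the step.

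Next I would apply Lemma \ref{L:7.2}(ii) once more, now with $\psi\mid K$ as the inner map (which lands in $L$ by hypothesis) and $\varphi^k\mid L$ as the outer map:
\begin{equation*}
\text{gsh}_b(\varphi^k\circ\psi\mid K)\ \geq\ \text{gsh}_b(\varphi^k\mid L)+\text{gsh}_b(\psi\mid K)
\ \geq\ k\cdot \text{gsh}_b(\varphi\mid L) + \text{gsh}_b(\psi\mid K).
\end{equation*}
Since $\text{gsh}_b(\varphi\mid L)>0$ by the assumption that $\varphi$ pushes $L$ towards $b$, this lower bound exceeds $\eta$ precisely when
\begin{equation*}
k\ >\ \dfrac{\eta-\text{gsh}_b(\psi\mid K)}{\text{gsh}_b(\varphi\mid L)},
\end{equation*}
which is the desired inequality. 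The first assertion is the special case $\eta=0$.

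There is no real obstacle here: the corollary is a straightforward two-line computation built on top of Lemma \ref{L:7.2}(ii). The only point deserving a brief remark is that the passage from an ordinary guaranteed shift (over an endpoint $e\in\del M$) to an \emph{almost} guaranteed shift (over a base point $b\in M$, where small elements cannot be moved) is already absorbed into Lemma \ref{L:7.2}(ii); the event radii needed for the iterated map $\varphi^k\circ\psi$ are automatically handled by that lemma, so no separate bookkeeping of event radii is required in the proof.
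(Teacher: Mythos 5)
Your argument is correct, and it is precisely the computation the paper leaves to the reader: Corollary \ref{C:7.3} is stated with no written proof, as an immediate consequence of Lemma \ref{L:7.2}(ii). The two-step reduction you use --- the inductive estimate $\text{gsh}_b(\varphi^k\mid L)\geq k\cdot\text{gsh}_b(\varphi\mid L)$, valid because $\varphi(L)\subseteq L$ lets you take $K=L$ in the lemma, followed by one further application of Lemma \ref{L:7.2}(ii) with $\psi\mid K$ as the inner map --- is the only natural route, and the threshold on $k$ drops out exactly as you say because $\text{gsh}_b(\varphi\mid L)>0$. One detail worth flagging: the statement as printed writes $\text{gsh}_b(\varphi^k\circ\psi\mid L)$ and ``pushes $L$,'' but the composite $\varphi^k\circ\psi$ is defined on $K$, and that is what your computation estimates; this matches how the Corollary is actually invoked in Proposition \ref{P:7.7}, where $\psi=\partial$ with $K=\mathbf{F}^{(n)}$, $L=\mathbf{F}^{(n-1)}$, and the conclusion drawn is that $\varphi^k\partial$ pushes $\mathbf{F}^{(n)}$ (i.e.\ $K$). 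So the $L$ in the printed Corollary should be read as $K$, and your proof establishes the intended claim.
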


The $CAT(0)$ metric space $M$ is {\it almost geodesically complete} if
there is a number $\mu\geq 0$ such that for any $b$ and $b'\in M$ there
is a geodesic ray $\gamma$ starting at $b$ and passing within $\mu $ of $b'$.
An example lacking this property is the half line $[0,\infty)$.
It is a theorem in \cite{GO07} that whenever the isometry group of $M$
acts cocompactly then $M$ is almost geodesically complete.

\begin{lemma}\label{L:7.4}
Let $M$ be a proper $CAT(0)$ space, and let $r\geq 0$, $\delta > 0$.
\begin{enumerate}[{\rm (i)}]
\item If $(b,e) \in M\x \partial M$ is given then, by choosing $p = 
p(b,e)\in M$ sufficiently far out on the geodesic ray $\gamma$ from 
$b$ to $e$, we can achieve
\begin{equation}\label{E:7.3}
|(\beta_\gamma(q) - \beta_\gamma(p)) - (d(p,b) - d(q,b))| < \delta 
\text{ when }d(p,q)\leq r.
\end{equation}
\item Assume $M$ is almost geodesically complete and let $\mu$ be as 
in that definition. There is a number $R=R(r,\delta)$ such that 
(\ref{E:7.3}) holds whenever $d(p,b)\geq R$, $\gamma $ is a geodesic 
ray starting at $b$ and passing within $\mu $ of $p$, and $d(p,q)\leq r$.
\end{enumerate}
\end{lemma}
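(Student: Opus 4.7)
The plan is to reduce everything to the clean case where $p$ lies exactly on the ray. For (i), this is automatic; for (ii), we pick $p'=\gamma(T')$ to be a nearest point on $\gamma$ to $p$ (at distance at most $\mu$) and reduce to (i) applied to $p'$, estimating the error incurred by this substitution.

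For (i), the starting observation is that $p=\gamma(T)$ gives $\beta_\gamma(p)=T=d(p,b)$, so the quantity in (\ref{E:7.3}) simplifies, after the cancellations, to an expression involving only $\beta_\gamma(q)$ and $d(b,q)$. I would then compute $\beta_\gamma(q)$ via comparison: for the triangle in $M$ with vertices $b$, $\gamma(t)$, $q$, the law of cosines applied to the Euclidean comparison triangle expresses $t-d(q,\gamma(t))$ in terms of $d(b,q)$ and the comparison angle $\bar\theta_t$ at $b$. Using the standard $CAT(0)$ fact that $\bar\theta_t$ decreases monotonically to an asymptotic angle $\theta_\infty(q)$, passing to the limit gives
$$\beta_\gamma(q)=d(b,q)\cos\theta_\infty(q).$$
Now for $q\in B_r(p)$ with $p=\gamma(T)$, the Euclidean law of cosines applied to the comparison triangle of $b,p,q$ (side lengths $T$, $d(b,q)\in[T-r,T+r]$, $d(p,q)\leq r$) shows that $\bar\theta_T=O(r/T)$; since $\theta_\infty(q)\leq\bar\theta_T$ this yields $1-\cos\theta_\infty(q)=O(r^2/T^2)$, and hence $d(b,q)-\beta_\gamma(q)=O(r^2/T)$, uniformly for $q\in B_r(p)$. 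Choosing $p=p(b,e)$ with $T$ sufficiently large gives (\ref{E:7.3}).

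For (ii), given $b\in M$ and $p$ with $d(p,b)\geq R$, invoke almost geodesic completeness to obtain a ray $\gamma$ from $b$ passing within $\mu$ of $p$; let $p'$ be a nearest point of $\gamma$ to $p$, with $d(p,p')\leq\mu$ and $T'=d(b,p')\geq R-\mu$. Every $q\in B_r(p)$ lies in $B_{r+\mu}(p')$, so applying (i) to the triple $(b,p',q)$ yields a bound on the ``shifted'' version of (\ref{E:7.3}) in which $p$ is replaced by $p'$. The difference between the two versions equals $(\beta_\gamma(p')-\beta_\gamma(p))+(d(p',b)-d(p,b))$, which is at most $2\mu$ by $1$-Lipschitz continuity; a refined bound of order $O(\mu^2/T')$ can be obtained by combining the Pythagoras-type $CAT(0)$ inequality for the triangle $b,p',p$ (where $p'$ is a nearest-point projection) with the comparison-angle analysis used in (i). Choosing $R$ sufficiently large in terms of $r$, $\delta$, and $\mu$ then yields the required bound.

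The main obstacle is the asymptotic step in (i), namely the rigorous $O(r^2/T)$ estimate for $d(b,q)-\beta_\gamma(q)$. This rests on the monotonicity of comparison angles along rays in arbitrary proper $CAT(0)$ spaces --- a standard but nontrivial ingredient --- together with a careful application of the Euclidean law of cosines. A secondary obstacle is getting the refined estimate on $\beta_\gamma(p)-\beta_\gamma(p')$ needed in (ii): the naive $1$-Lipschitz bound gives an error of order $\mu$, which is insufficient when $\delta<2\mu$, and upgrading it requires exploiting that $p'$ is the nearest-point projection of $p$ onto $\gamma$ rather than an arbitrary nearby point.
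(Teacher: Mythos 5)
The paper's own proof of this lemma is essentially a citation: part (i) is declared ``immediate from the definition of horoballs and Busemann functions,'' and part (ii) is referred to the proof of Theorem 15.3 of \cite{BGe03}. You, by contrast, supply a detailed argument for (i) via the asymptotic-angle formula $\beta_\gamma(q)=d(b,q)\cos\theta_\infty(q)$, and then reduce (ii) to (i) by projecting $p$ onto $\gamma$. That is a reasonable plan, but the key step in your (i) has a fatal sign error, and the estimate it is meant to produce is in fact false in the generality of the lemma.

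The error: in a $CAT(0)$ space, the comparison angle $\bar\theta_t=\bar\angle_b(q,\gamma(t))$ at a fixed vertex $b$ is \emph{non-decreasing} in $t$, not decreasing; thus $\bar\theta_t\nearrow\theta_\infty(q)$ and the inequality you invoke, $\theta_\infty(q)\le\bar\theta_T$, runs the wrong way. Your bound $\bar\theta_T=O(r/T)$ therefore gives no control on $\theta_\infty(q)$, hence none on $d(b,q)-\beta_\gamma(q)=d(b,q)\bigl(1-\cos\theta_\infty(q)\bigr)$. Worse, the asserted conclusion $d(b,q)-\beta_\gamma(q)=O(r^2/T)$ is simply not true in a general proper $CAT(0)$ space. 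Take $M$ a regular simplicial tree, $b$ a vertex, $\gamma$ a ray from $b$, $p=\gamma(T)$, and $q$ a vertex at distance $r$ from $p$ whose nearest point on $\gamma$ is $p$ itself. Then $d(b,q)=T+r$ and $\beta_\gamma(q)=T-r$, so $d(b,q)-\beta_\gamma(q)=2r$ for every $T$. Here the comparison triangle for $(b,p,q)$ is degenerate, so $\bar\theta_T=0$, while $\cos\theta_\infty(q)=\tfrac{T-r}{T+r}$, so $\theta_\infty(q)$ does not tend to $0$: this makes both the false monotonicity and the failure of the $O(r^2/T)$ estimate concrete. Since your (ii) is a reduction to (i), it inherits the same gap. (Incidentally, the same tree example --- together with the Euclidean example $q=\gamma(T+1)$ --- shows that the displayed inequality (\ref{E:7.3}) as printed, with $\gamma$ emanating from $b$, the convention $\beta_\gamma\to+\infty$ at $e$, and the term $d(p,b)-d(q,b)$, cannot hold; what the application in Proposition \ref{P:7.7} actually uses is the ray running from (near) $p$ through (near) $b$ out to $e$, and for that configuration the inequality is indeed immediate because $t\mapsto t-d(q,\gamma(t))$ increases monotonically to $\beta_\gamma(q)$.) The moral is that the comparison-angle route you chose, while natural in Riemannian geometry, does not survive passage to singular $CAT(0)$ spaces; a correct argument must rely only on the $1$-Lipschitz property of $\beta_\gamma$ and the monotone convergence $t-d(q,\gamma(t))\nearrow\beta_\gamma(q)$, applied to the ray that actually occurs in the application.
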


\begin{proof}  (i) is immediate from the definition of horoballs and
Busemann functions.  The proof of (ii) is contained in the proof of
Theorem 15.3 of \cite{BGe03}.  \end{proof}

The next proposition is proved in Section 9 of \cite{BGe16}:

\begin{prop}\label{P:7.6}
Let $M$ be almost geodesically complete proper $CAT(0)$ space. 
The following are equivalent for a $G$-volley $\Phi:{\bf F}\to f{\bf F}$:
\begin{enumerate}[{\rm (i)}]
\item $\forall e\in \partial M$ $\Phi$ admits a selection pushing ${\bf F}$ 
towards $e$ which induces $\text{\rm id}_A$.
\item $\forall b\in M$ $\Phi$ admits a selection pushing ${\bf F}$ towards $b$ which induces $\text{\rm id}_A$.
\end{enumerate}
\end{prop}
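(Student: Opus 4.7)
The plan is to use Lemma~\ref{L:7.4}(ii) as the bridge between pushing cells ``outward toward an endpoint $e\in\partial M$'' and pushing cells ``inward toward a basepoint $b\in M$''. Once a cell $y$ is sufficiently far from $b$ along a ray ending at $e$, the Busemann coordinate $\beta_\gamma$ and the distance $d(\cdot,b)$ are tied together up to a controllable error $\delta$, so a positive $v_\gamma$-shift can be converted into a positive $D_b$-shift and conversely. Almost geodesic completeness is exactly what lets me realize every direction at a cell as coming from a ray in $\partial M$: for any $(p,b)$ I have both a ray from $b$ passing within $\mu$ of $p$ and a ray from $p$ passing within $\mu$ of $b$, with endpoints on opposite ``sides'' of the pair.

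For (ii)$\Rightarrow$(i), I would fix $e\in\partial M$ and a ray $\gamma$ to $e$, and take basepoints $b_T=\gamma(T)$ for $T$ large. By hypothesis there are selections $\varphi_T$ of $\Phi$ pushing $\mathbf F$ toward $b_T$ with some guaranteed shift $\mu_T>0$ beyond an event radius $R_T$. For a cell $y$ whose $h(y)$ lies in a fixed bounded region, once $T$ is large enough the ray from $b_T$ passing near $h(y)$ is essentially the reverse of $\gamma$, and Lemma~\ref{L:7.4}(ii) promotes the $D_{b_T}$-shift of $\varphi_T(y)$ to a $v_\gamma$-shift of comparable size. Since each $\Phi(x)$ is finite, there are only finitely many possible values $\varphi_T(y)$ on any one cell; a standard diagonal/pigeonhole argument on an exhaustion of $Y$ by $G$-finite subsets extracts a subsequence $T_k\to\infty$ along which $\varphi_{T_k}$ stabilizes on each cell, yielding a limit selection $\varphi_e$. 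The chain-map property, positivity of $\text{gsh}_e(\varphi_e)$, and the fact that $\varphi_e$ lifts $\text{id}_A$ all pass to the limit because they are each a local condition on cells.

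For (i)$\Rightarrow$(ii), I would fix $b$ and, for each cell $y$ with $h(y)$ outside a large ball about $b$, pick $p\in h(y)$ and use almost geodesic completeness to get a ray starting at $p$ and passing within $\mu$ of $b$; let $e_y\in\partial M$ be its endpoint. By (i) there is a selection $\varphi_{e_y}$ pushing $\mathbf F$ toward $e_y$, and locally at $y$ the direction toward $e_y$ is the direction toward $b$, so the symmetric form of Lemma~\ref{L:7.4} furnishes a positive $D_b$-shift. I would then set $\psi(y):=\varphi_{e_y}(y)$ outside a ball and make arbitrary choices inside. The main obstacle is ensuring that this assembled $\psi$ is a \emph{chain map}, since different cells draw on different $\varphi_{e_y}$; I would overcome this as in Theorem~\ref{P:3.4}, exploiting compactness of $\partial M$ together with finiteness of the values of $\Phi$ to cover the relevant $e_y$'s by finitely many closed sets on which a common chain-map selection of $\Phi$ pushes with uniform positive shift, and then piecing together $\psi$ from $G$-translates of these finitely many selections so that on every finitely generated $\Z$-subcomplex the choices are coherent with the boundary.
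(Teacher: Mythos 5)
The paper does not actually prove Proposition~\ref{P:7.6}; it cites Section~9 of \cite{BGe16} for the proof, so there is no internal argument to compare against. Assessing your sketch on its own merits: the overall strategy (use Lemma~\ref{L:7.4} and almost geodesic completeness to translate between $v_\gamma$-shifts and $D_b$-shifts, then use compactness to assemble a single selection) is the right kind of idea and, for (i)$\Rightarrow$(ii), it parallels what the paper itself does in the higher-dimensional analogue, Proposition~\ref{P:7.7}.

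The serious gap is in your (ii)$\Rightarrow$(i) argument. You extract a pointwise limit $\varphi_e$ of the selections $\varphi_{T_k}$ and then assert that ``positivity of $\text{gsh}_e(\varphi_e)$'' passes to the limit because it is a ``local condition on cells''. It is not: $\text{gsh}_e(\varphi_e)$ is an infimum over \emph{all} cells, which is a global condition. The hypothesis (ii) gives you, for each $b_T=\gamma(T)$, \emph{some} pair $(\mu_T,R_T)$, but nothing prevents $\mu_T\to 0$ or $R_T\to\infty$; without a uniform bound, the limit selection may have shifts tending to $0$ on a sequence of cells, so $\text{gsh}_e(\varphi_e)$ could be $0$. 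Moreover, even for a single $T$, a positive $D_{b_T}$-shift does not translate into a comparable $v_\gamma$-shift for cells that sit ``off to the side'' of $\gamma$ relative to $b_T$: in the Euclidean model, a cell at $(0,s)$ with $s\gg T$ gains roughly $\alpha$ in distance to $(T,0)$ but only $\alpha T/\sqrt{T^2+s^2}\ll\alpha$ in the $\beta_\gamma$-coordinate, so the conversion via Lemma~\ref{L:7.4} is only valid near the axis of $\gamma$, not ``in a fixed bounded region'' in the sense you need. You would have to establish uniformity (a single $(\alpha,R)$ valid for all $b$, as in Theorem~\ref{T:7.8}(ii) --- the paper's later proof explicitly notes that the shift and event radius in Proposition~\ref{P:7.6} ``depend only on $\Phi$ and not on $b$'') before the diagonal/pigeonhole step can deliver a strictly positive guaranteed shift toward $e$. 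As it stands, the step from pointwise convergence to a positive infimum is unjustified.
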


\begin{proof} ({\it Completion of proof of Theorem \ref{P:7.1}}): We
assume (i) and we know that this implies cocompactness.  Hence, by the
theorem of \cite{GO07} mentioned above, it follows that $M$ is almost
geodesically complete. So for any $b\in M$ Theorem \ref{T:6.4} and 
Proposition \ref{P:7.6}
give us a $G$-volley $\varphi $ having a selection $\varphi $ with
${\text{\rm gsh}_b(\varphi)}>0$.  Let $(\alpha , R)$ define a guaranteed
shift for $\varphi$, where $\alpha >0$.  For any $a\in A$ there is a 0-chain
$c$ mapped by $\epsilon $ to $a$ such that
$$D_{b}(\varphi (c))\leq {\text{\rm max }}\{R+||\varphi ||,
D_{b}(c)-\alpha \}.$$
By iterating $\varphi
$ we can thus move $c$ over $M$ to a new $c'$ such that $\epsilon
(c')=a$ and $h(c')$ lies over the ball centered at $b$ with radius
$R+||\varphi ||$, a number independent of $a$. \end{proof}

\subsection{The higher dimensional case}

\begin{prop}\label{P:7.7}
Let ${\bF}\thra A$ be a based free resolution over $M$ with finitely 
generated $n$-skeleton, where $n\geq 1$, and
let $h : {\bF}\to fM$ be a canonical control function at $b\in M$ with 
respect to the basis $X$. Let $\varphi : {\bF}^{(n-1)} \to
{\bF}^{(n-1)}$ be a $G$-finitary chain map inducing the identity on 
$A$, with $\varphi$ pushing ${\bF}^{(n-1)}$ towards $b$. If 
$\Sigma^n(M;A)=\partial M$ then some iterate $\varphi^k$ of 
$\varphi$ admits a $G$-finitary chain
map extension $\psi : {\bF}^{(n)}\to {\bF}^{(n)}$ pushing ${\bF}^{(n)}$ 
towards $b\in M$.  The guaranteed shift, the event
radius of $\psi$, and the number $k$ depend on $\varphi$ and the 
uniform lag, but are independent of $b\in M$.
\end{prop}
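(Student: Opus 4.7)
The plan is to adapt the inductive construction from the proof of Theorem \ref{T:6.4}, implication (i) $\Rightarrow$ (iv), replacing pushes towards an endpoint $e\in\partial M$ by pushes towards the base point $b\in M$.

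First I would establish a uniform constant lag for $CA^{n-1}$ over $M$. Proposition \ref{P:7.0}(i) applied to $\varphi$ at dimension $n-1$ shows that $\rho$ is cocompact, so by \cite{GO07} the space $M$ is almost geodesically complete. Because $\Sigma^n(M;A)=\partial M$, Theorem \ref{T:6.4} supplies a $G$-volley $\Phi_n:\mathbf{F}^{(n)}\to f\mathbf{F}^{(n)}$ whose chain-map selections push $\mathbf{F}^{(n)}$ towards every endpoint with uniform guaranteed shift. Proposition \ref{P:7.6} then yields a chain-map selection $\psi_b$ of $\Phi_n$ with $\mathrm{gsh}_b(\psi_b)>0$; the Finitary Comparison Theorem \ref{L:5.1} furnishes a $G$-finitary chain homotopy of $\psi_b$ to the identity, so Proposition \ref{P:7.0}(i) applied to $\psi_b$ at dimension $n$ gives a constant lag $\lambda'$ for $CA^{n-1}$ over $M$, uniform in the base-point by cocompactness and Proposition \ref{P:8.0}(ii).

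Next I would construct $\psi$ following the recipe from Theorem \ref{T:6.4}(iv). Let $\Phi$ be a $G$-volley capturing $\varphi$ and set $\mu:=\mathrm{gsh}_b(\varphi)>0$. For each $x\in X_n$ the set
\[ \Pi^{(k)}(x):=\{g^{-1}\varphi^k(g\partial x)\mid g\in G\}\ \subseteq\ \Phi^k(\partial x) \]
is finite and consists of $(n{-}1)$-cycles. Using the uniform $CA^{n-1}$ over $M$ together with a finite net of base-point representatives drawn from a compact fundamental domain of $\rho$, choose for each pair $(x,p)$ with $p\in\Pi^{(k)}(x)$ finitely many preimages $\tilde c(x,p)\in\mathbf{F}_n$ of $p$ arranged so that, for every $g$ occurring with $p=g^{-1}\varphi^k(g\partial x)$, at least one of them satisfies $D_{g^{-1}b}(\tilde c(x,p))\leq D_{g^{-1}b}(p)+\lambda'$. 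Set $\Psi(x)$ to be this finite collection and extend $G$-equivariantly by $\Psi(gx):=g\Psi(x)$. Define $\psi|_{\mathbf{F}^{(n-1)}}:=\varphi^k$ and, for $gx\in Y_n$, let $\psi(gx):=g\,\tilde c(x,g^{-1}\varphi^k(g\partial x))$ with the above choice. Then $\psi$ is a selection of the combined $G$-volley $\Phi^k\cup\Psi$, hence $G$-finitary; the identity $\partial\psi(gx)=\varphi^k(\partial(gx))$ confirms that it is a chain map extending $\varphi^k$.

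For the push computation, use the equivariance identities $D_b(gc)=D_{g^{-1}b}(c)$ and $D_{g^{-1}b}(p_g)=D_b(\varphi^k(g\partial x))$ together with Step 1's bound on $\tilde c(x,p_g)$ at the base-point $g^{-1}b$ and the iterate estimate of Lemma \ref{L:3.3}; outside an explicit event radius determined by $\varphi$ one obtains
\[ D_b(\psi(gx))\ \leq\ D_b(gx)+\|\partial\|-k\mu+\lambda', \]
so $\mathrm{gsh}_b(\psi)\geq k\mu-\|\partial\|-\lambda'$, which is positive as soon as $k>(\|\partial\|+\lambda')/\mu$. The parameters $\mu$, $\|\partial\|$ and $\lambda'$ depend only on $\varphi$ and on the uniform lag for $\Sigma^n(M;A)=\partial M$, so the guaranteed shift, event radius and $k$ are independent of $b$. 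The main obstacle is arranging the preimages $\tilde c(x,p)$ so that a single finite collection simultaneously controls $D_{g^{-1}b}(\tilde c(x,p))$ for every $g$ with $g^{-1}\varphi^k(g\partial x)=p$; this is precisely the uniformity delivered by Step 1 via Proposition \ref{P:7.6} and the cocompactness of $\rho$, playing the role here that compactness of $E$ plays in the endpoint proof of Theorem \ref{T:6.4}(iv).
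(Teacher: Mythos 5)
Your overall architecture follows the paper (use Theorem \ref{T:6.4} for a uniform lag, build $\Pi^{(k)}(x)$, extend to a volley on $F_n$, then verify the push), but there is a genuine gap at the step where you produce a \emph{finite} set of preimages $\tilde c(x,p)$ working simultaneously for all $g\in G$.

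The condition you require is $D_{g^{-1}b}\bigl(\tilde c(x,p)\bigr)\leq D_{g^{-1}b}(p)+\lambda'$ for every $g$ with $g^{-1}\varphi^k(g\partial x)=p$. The uniform constant lag for $CA^{n-1}$ over $M$ guarantees that for \emph{each} base point $b'=g^{-1}b$ there is \emph{some} chain $\tilde c$ satisfying $D_{b'}(\tilde c)\leq D_{b'}(p)+\lambda'$, but the witness $\tilde c$ is allowed to depend on $b'$, and the points $g^{-1}b$ range over the (unbounded, non-compact) orbit $G\cdot b$. Your appeal to ``a finite net of base-point representatives drawn from a compact fundamental domain'' does not help: the orbit points $g^{-1}b$ are not confined to (or even near) a single translate of a fundamental domain, and the relevant parameter is the direction from which $g^{-1}b$ sees the bounded set $h(p)\subseteq N_{\|\Phi^k\partial\|}(h(\partial x))$. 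As $g^{-1}b\to\infty$ in direction $e'$, the ball condition $D_{b'}(\tilde c)\leq D_{b'}(p)+\lambda'$ degenerates into a horoball condition at $e'$, so the correct compactification of the problem is $\partial M$, not a fundamental domain of $M$.

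This is exactly what the paper's proof does instead: it chooses, for each $(e',p)\in\partial M\times\Pi(x)$, a preimage $c(e',p)$ with $v_{\gamma'}(c(e',p))>v_{\gamma'}(p)-\lambda$, uses an open-neighborhood argument and \emph{compactness of} $\partial M$ to reduce to a finite set $\Psi(x)$, then uses almost geodesic completeness together with Lemma \ref{L:7.4} to pick, for each $y=gx\in Y_n$, an endpoint $e(y)$ whose ray from $h(y)$ passes near $b$, and translates between $\text{sh}_{\cdot,e(y)}$ and $\text{sh}_{\cdot,b}$ with controlled error $\delta$. Your proposal omits the Busemann/endpoint detour and the use of Lemma \ref{L:7.4} entirely, and hence lacks the compactness needed to make $\Psi(x)$ finite. (Minor side remark: in the final estimate you cite Lemma \ref{L:3.3}, the endpoint iterate lemma; the base-point analogue you need is Lemma \ref{L:7.2}(ii) together with Corollary \ref{C:7.3}, which also handle the event-radius bookkeeping properly when $\varphi^k$ is applied to chains such as $g\partial x$ rather than single cells.)
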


\begin{proof}  Let $\Phi : {\bF}^{(n-1)}\to f{\bF}^{(n-1)}$ be a 
$G$-volley from which  $\varphi$ is a selection.  By Theorem \ref{T:6.4}
we know that ${\bF}^{(n)}$ is $CA^{n-1}$ with respect to every endpoint 
$e\in \partial M$, with uniform constant lag
$\lambda$.  Now, Corollary \ref{C:7.3} asserts that for suitable $k$, 
$\varphi^k\partial : {\bF}^{(n)} \to {\bF}^{(n)}$
pushes all of ${\bF}^{(n)}$ towards $b\in M$; in fact, by choosing $k$ 
sufficiently large, we can achieve
\begin{equation}\label{E:7.5}
\text{gsh}_b(\varphi^k\partial) \geq \lambda +3\delta, \text{ where 
}\delta > 0\text{ is arbitrary}.
\end{equation}
We aim to extend the $k$-th iterate $\Phi^k$ to a $G$-volley on the 
$n$-skeleton by our usual compactness argument. It suffices to define this
extension on the finite $G$-basis $X_n$.  Let $x\in X_n$.  We 
observe that the set of chains
\begin{equation*}
\Pi(x) := \{g^{-1}\varphi^k(g\partial x)\mid g\in G\}
\end{equation*}
lies in $g^{-1}\Phi^k(\partial gx)$ and hence is finite, with 
$\partial \Pi(x) = 0$.  For each pair
$(e',p) \in \partial M\x \Pi(x)$ we can choose an $n$-chain 
$c(e',p)\in F_n$, with $\partial c(e',p) = p$, and
\begin{equation}\label{E:7.6}
v_{\gamma'}(c(e',p)) > v_{\gamma'}(p)-\lambda,
\end{equation}
where $\gamma '{(\infty)}=e'$.

Fixing $p$ for a moment, we observe that if (\ref{E:7.6}) holds for 
some $e'\in \partial M$ then there is a
neighborhood $N(e')$ of $e$ in $\partial M$ such that
\begin{equation*}
v_{\gamma''}(c(e',p)) > v_{\gamma''}(p) - \lambda\ \text{ for all } 
e'' = \gamma''(\infty) \in N(e').
\end{equation*}
Since $\partial M$ is compact, finitely many of these neighborhoods 
cover $\partial M$.  This shows that we can
improve on the choice of the $n$-chains $c(e',p)$ as follows: we can 
find a {\it finite} set of $n$-chains, which
we denote $\Psi(x)\subeq F_n$, with the property that for each 
$(e',p)\in \partial M\x \Pi(x)$ there is some
$c(e',p)\in\Psi(x)$, with $\partial c(e',p)=p$ which satisfies the inequality
(\ref{E:7.6}).  Putting $\Psi(gx) := g\Psi(x)$ defines
a $G$-volley $\Psi : F_n \to fF_n$.

Since $\Sigma^n(M;A)=\partial M$, Theorem \ref{P:7.1} and the theorem
from \cite{GO07} cited above imply that $M$ is almost geodesically
complete.  Let $\mu  > 0$ be the number given by the definition of
``almost geodesically complete''.  For every $y\in gx\in Y_n$ we choose
an endpoint $e =e(y)$, with the property that the geodesic ray $\gamma
= \gamma_y$ from $\gamma(0) = h(y)$ to $e =\gamma(\infty)$, passes the
point $b\in M$ at distance $<\mu$.  We put $\psi(y) := gc(g^{-1}e(y),
g^{-1}\varphi^k(g\partial x)) \in g\Psi(x)$, noting that $\partial\psi(gx) =
g\partial c(g^{-1}e(y),g^{-1}\varphi^k(g\partial x)) = \varphi^k(\partial gx)$,
hence $\partial\psi = \varphi^k\partial$, as required.

It remains to show that $\psi$ pushes towards $b\in M$.  Let $R_1 =
R(||\Phi^k\partial||,\delta)$ be the radius given by Lemma \ref{L:7.4}
where $\delta >0$ is arbitrary.  Then Lemma \ref{L:7.4} yields
\begin{equation}\label{E:7.7}
|\text{sh}_{\varphi^k\partial,e(y)}(y) - 
\text{sh}_{\varphi^k\partial,b}(y)|< \delta, \text{ for all }y = gx\in
Y_n\text{ with } D_b(y)) \geq R_1.
\end{equation}

Now, we take $R_2$ to be an event radius for $\varphi^k\partial : 
{\bF}^{(n)}\to {\bF}^{(n)}$.  For every $y = gx\in
Y_n$, with $D_b(y)\geq R_2$ and $\gamma = \gamma_y$ the ray from 
$h(y)$ to $e=e(y)$, we find
\begin{equation*}
\begin{aligned}
v_\gamma(\psi(y))-v_\gamma(y) &= 
v_\gamma(gc(g^{-1}e,g^{-1}\varphi^k(\partial y)))-v_\gamma(y) &\\
&= v_{g^{-1}\gamma}(c(g^{-1}e,g^{-1}\varphi^k(\partial 
y)))-v_\gamma(y), &\text{by Lemma \ref{L:2.3}},\\
&> v_{g^{-1}\gamma}(g^{-1}\varphi^k(\partial y)) - 
v_\gamma(y)-\lambda, &\text{by (\ref{E:7.6})},\\
&= v_\gamma(\varphi^k\partial y) - v_\gamma(y) -\lambda, &\text{by 
Lemma \ref{L:2.3}},\\
&= \text{sh}_{\varphi^k\partial,e}(y)-\lambda, &\\
&\geq \text{sh}_{\varphi^k\partial,b}(y)-\delta-\lambda, &\text{by 
(\ref{E:7.7})}\\
&\geq \text{gsh}_b(\varphi^k\partial) -\delta -\lambda, &\\
&\geq 2\delta, &\text{by (\ref{E:7.5})}.
\end{aligned}
\end{equation*}
Hence sh$_{\psi,e(y)}(y) \geq 2\delta$, and therefore, by Lemma 
\ref{L:7.4} there exists $R_3(||\Psi||,\delta)$
such that sh$_{\psi,b}(y)\geq \delta$ when $D_b(y) > R_3$.  Thus we 
find gsh$_{b}(\psi) > 0$, i.e. $\psi$ pushes $F_n$ towards $b$.
\end{proof}

\begin{thm}\label{T:7.8}
Let $M$ be a proper $CAT(0)$ space.  Let ${\bF} \thra A$ be an 
augmented $G$-free resolution with finitely generated $n$-skeleton, 
and $h :{\bF}\to fM$ a control map.  The following are equivalent:
\begin{enumerate}[{\rm(i)}]
\item $\Sigma^n(M;A)=\partial M$
\item There are positive numbers $(R,\alpha)$ with the property that 
for every $b\in M$ there is $G$-finitary chain map $\varphi_b 
:{\bF}^{(n)}\to {\bF}^{(n)}$, inducing $\text{\rm id}_A$ and pushing all 
of the $n$-skeleton towards $b\in M$, with guaranteed shift $\alpha$ 
and event radius $R$.
\item ${\bF}\thra A$ is controlled $(n-1)$-acyclic over $M$.
\item ${\bF}\thra A$ is controlled $(n-1)$-acyclic over $M$ with a constant lag.
\end{enumerate}
\end{thm}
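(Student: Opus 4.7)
The plan is to prove the four-way equivalence through: (iv) $\Rightarrow$ (iii) (tautological); (ii) $\Rightarrow$ (iv) via Proposition \ref{P:7.0}(i) applied to any push $\varphi_b$ from (ii) together with a $G$-finitary chain homotopy $\sigma_b : \varphi_b \sim \text{id}_F$ obtained from the Finitary Comparison Theorem \ref{L:5.1}; (iii) $\Rightarrow$ (ii), by direct inductive construction of pushes; and (i) $\Leftrightarrow$ (ii), with (ii) $\Rightarrow$ (i) via Proposition \ref{P:7.6} and (i) $\Rightarrow$ (ii) by induction on $n$ using Proposition \ref{P:7.7}.

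For (iii) $\Rightarrow$ (ii): since $CA^{-1}$ over $M$ implies cocompactness, fix a fundamental domain $D$ of diameter $\Delta$ and a finite $\delta$-net $\{b_i\} \subset D$. At level $0$, for each $x \in X_0$ pick $\varphi_{b_i}(x) \in F_0$ with $\epsilon(\varphi_{b_i}(x)) = \epsilon(x)$ and $h(\varphi_{b_i}(x)) \subseteq B_\lambda(b_i)$, using $CA^{-1}$ over $b_i$. At level $k \leq n$, having extended each $\varphi_{b_i}$ to a $G$-chain map on ${\bF}^{(k-1)}$ with $h(\varphi_{b_i}(y)) \subseteq B_{\Lambda_{k-1}}(b_i)$ for $y \in Y_{k-1}$, use $CA^{k-1}$ over $b_i$ to pick $\varphi_{b_i}(x) \in F_k$ with $\partial\varphi_{b_i}(x) = \varphi_{b_i}(\partial x)$ and $D_{b_i}(\varphi_{b_i}(x)) \leq \Lambda_{k-1} + \lambda(\Lambda_{k-1}) =: \Lambda_k$ for each $x \in X_k$; then extend $G$-equivariantly. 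For arbitrary $b = gb'$ with $b' \in D$ and $d(b', b_i) < \delta$, set $\varphi_b := g\varphi_{b_i}g^{-1}$; this is a $G$-finitary chain map pushing ${\bF}^{(n)}$ toward $b$ with $(R, \alpha)$ independent of $b$.

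For (i) $\Rightarrow$ (ii) by induction on $n$: the base case $n = 0$ is Theorem \ref{P:7.1}, where the construction above at level $0$ applies. For the inductive step, (i) at level $n$ forces (i) at level $n-1$ (as $\Sigma^n \subseteq \Sigma^{n-1}$), so by induction there is a $G$-finitary chain map $\varphi$ on ${\bF}^{(n-1)}$ inducing $\text{id}_A$ and pushing every $b \in M$ with uniform $(R, \alpha)$. Proposition \ref{P:7.7} then applies: for each $b$, some iterate $\varphi^k$ extends to a $G$-finitary chain map $\psi_b$ on ${\bF}^{(n)}$ pushing ${\bF}^{(n)}$ towards $b$, and its stated conclusion asserts the guaranteed shift, event radius, and $k$ depend only on $\varphi$ and the uniform lag provided by Corollary \ref{C:6.3} --- hence are independent of $b$. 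For the converse (ii) $\Rightarrow$ (i), assemble the $\varphi_{b_i}$'s into a single $G$-volley $\Phi : {\bF}^{(n)} \to f{\bF}^{(n)}$ by the same $\delta$-net procedure as above; since cocompactness (already available) implies almost geodesic completeness by \cite{GO07}, Proposition \ref{P:7.6} upgrades the selections of $\Phi$ pushing toward every $b \in M$ to selections pushing toward every $e \in \partial M$, so $\Sigmacirc(M;A) = \partial M$, equivalent to (i) by Theorem \ref{T:6.4}.

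The main obstacle is maintaining uniform bounds across $b \in M$ throughout the constructions. In (iii) $\Rightarrow$ (ii) and (ii) $\Rightarrow$ (i) this is managed by cocompactness and the $\delta$-net compactness argument on $D$. In the inductive step of (i) $\Rightarrow$ (ii), the uniformity hinges on the explicit independence-of-$b$ clause in the conclusion of Proposition \ref{P:7.7}, which in turn depends on the uniform constant lag from Corollary \ref{C:6.3} and the compactness of $\partial M$.
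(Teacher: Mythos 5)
The main gap is in your $\text{(iii)} \Rightarrow \text{(ii)}$ argument, and it stems from a misunderstanding of how pushes towards a point $b\in M$ are constructed. You define $\varphi_{b_i}(x)\in F_0$ for $x\in X_0$ with $h(\varphi_{b_i}(x))\subseteq B_\lambda(b_i)$ and then ``extend $G$-equivariantly,'' asserting that the resulting $G$-chain map satisfies $h(\varphi_{b_i}(y))\subseteq B_{\Lambda_{k-1}}(b_i)$ for all $y\in Y_{k-1}$. This is false: $G$-equivariance forces $h(\varphi_{b_i}(gx))=g\cdot h(\varphi_{b_i}(x))\subseteq B_{\Lambda_{k-1}}(gb_i)$, which is a ball centered at $gb_i$, not $b_i$. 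A genuine $G$-homomorphism (i.e.\ a $G$-equivariant chain map) \emph{cannot} push $\bF^{(n)}$ towards a single point $b\in M$, because the ``pushing target'' transports along with the $G$-orbit. Moreover, once $\varphi_{b_i}$ is $G$-equivariant, $g\varphi_{b_i}g^{-1}=\varphi_{b_i}$, so your formula $\varphi_b:=g\varphi_{b_i}g^{-1}$ collapses to the single map $\varphi_{b_i}$ for all $b$ --- obviously a single map cannot push towards every point of $M$. What is actually needed is a \emph{$G$-finitary, non-equivariant} selection from a $G$-volley, where the choice made at $y=gx$ (inside the translated finite set $g\Phi(x)$) depends nontrivially on $g$; but the elements of $g\Phi(x)$ have support near $gb_i$ rather than near $b_i$, so simply selecting the one ``closest to $b_i$'' does not yield a push either. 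This is exactly the difficulty that Propositions \ref{P:7.6} and \ref{P:7.7} are designed to overcome (and they require $\Sigma^n(M;A)=\partial M$ as input). Since in your cycle $\text{(iii)}$ only connects back through $\text{(iii)}\Rightarrow\text{(ii)}$, this failure leaves the equivalence incomplete.

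The paper does not attempt a direct $\text{(iii)}\Rightarrow\text{(ii)}$. It routes $\text{(iii)}\Rightarrow\text{(iv)}$ through Proposition \ref{P:7.0}(ii) (implicitly inductively, using the result at level $n-1$ to supply the required push of $\bF^{(n-1)}$), and proves $\text{(iv)}\Rightarrow\text{(i)}$ by a short direct horoball-versus-ball argument which you omit. Your $\text{(ii)}\Rightarrow\text{(i)}$ via Proposition \ref{P:7.6} is a plausible alternative to $\text{(iv)}\Rightarrow\text{(i)}$ in principle, but as written it leans on the same flawed ``$\delta$-net procedure'' to assemble the $\varphi_b$'s into a single $G$-volley; you would need to take the union of the finitely many $G$-volleys $\Phi_{b_i}$ and then verify that the \emph{selections} $g\varphi_{b_i}g^{-1}$ (with $\varphi_{b_i}$ now genuinely $G$-finitary, not $G$-equivariant) push towards $b=gb'$ with $b'$ near $b_i$, losing only $O(\delta)$ in shift and gaining $O(\delta)$ in event radius (compare Proposition \ref{P:8.0}(ii)). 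Your $\text{(i)}\Rightarrow\text{(ii)}$ inductive skeleton via Proposition \ref{P:7.7} and $\text{(ii)}\Rightarrow\text{(iv)}$ via Proposition \ref{P:7.0}(i) match the paper, but the base case should cite the $n=0$ argument in the paper's proof of this theorem (Theorem \ref{T:6.4} plus Proposition \ref{P:7.6}, with uniformity of $(R,\alpha)$ coming from the proof of Proposition \ref{P:7.6}) rather than ``the construction above at level $0$,'' which is the broken one.
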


\begin{proof} We begin with (i) $\Rightarrow$ (ii). From Theorem
\ref{T:6.4} we know that there is a volley $\Phi:{{\bf} F}^{(0)}\to
f{\bf F}^{(0)}$ inducing the identity on $A$ and satisfying (i) of
Proposition \ref{P:7.6}. Thus Proposition \ref{P:7.6} applied to this
volley gives a chain map $\varphi_{b}:{\bf F}^{(0)}\to {\bf F}^{(0)}$ pushing
${\bf F}^{(0)}$ towards $b$. From the proof of Proposition  \ref{P:7.6}
we see that the event radius and the guaranteed shift of $\varphi _b$
depend only on $\Phi$ and not on $b$.   This starts an induction,
the inductive step being given by Proposition \ref{P:7.7}.

(ii) $\Rightarrow$ (iv): This follows from Part (i) of Proposition \ref{P:7.0}.

(iv) $\Rightarrow$ (i): Let the geodesic ray $\gamma$ define an end point
$e$ and let $HB_{\gamma (t)}$ be a horoball.  A cycle over this horoball
also lies over some ball centered along $\gamma $.  If the constant
lag in the hypothesis is $\lambda$ then this cycle bounds a chain over
the ball obtained by increasing the previous ball's radius by $\lambda$.
That ball lies in the horoball $HB_{\gamma (t)-\lambda }$.

(iii) $\Rightarrow$ (iv): This follows from Part (ii) of Proposition
\ref{P:7.0}. (iv) $\Rightarrow$ (iii) is trivial.
\end{proof}

For $b\in M$ we write $G_b$ for the subgroup of $G$ fixing $b$. For 
any other $b'$ the group $G_{b'}$ is commensurable with $G_b$.

\begin{cor}\label{T:7.9} Assume that the action $\rho$ is cocompact and that its 
orbits are discrete subsets of $M$. Then $\Sigma^n(M;A)=\partial 
M$ if and only if $A$ has type $FP_n$ as an 
$G_b$-module, where $b\in M$.
\end{cor}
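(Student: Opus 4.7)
The plan is to apply Theorem \ref{T:7.8}, which equates $\Sigma^n(M;A)=\partial M$ with the augmented resolution $\mathbf{F}\thra A$ being controlled $(n-1)$-acyclic over $M$ (with constant lag), and to match this with finite generation of $A$ as a $\mathbb{Z}G_b$-module. First, the hypotheses force $G_b$ to be finite: every closed ball $B_R(b)$ is compact as $M$ is proper, so $B_R(b)\cap Gb$ is finite by discreteness of orbits, and each $G_b$-orbit lies on a sphere around $b$ and is finite by the same reasoning. Hence $\mathbb{Z}G_b$ is Noetherian (it is finitely generated as a $\mathbb{Z}$-module), so ``$A$ is $FP_n$ over $\mathbb{Z}G_b$'' reduces to ``$A$ is finitely generated over $\mathbb{Z}G_b$'' for every $n\geq 0$. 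With the control map normalized so that $h(x)=\{b\}$ for all $x\in X$, the $\mathbb{Z}G_b$-submodule $\mathbf{F}_R^{(k)}:=\{c\in\mathbf{F}^{(k)}:h(c)\subseteq B_R(b)\}$ is finitely generated over $\mathbb{Z}G_b$, since its natural $\mathbb{Z}G_b$-generating set $\{gx:gb\in B_R(b)\cap Gb\}$ is finite.

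For the direction $(\Rightarrow)$: $\Sigma^n(M;A)=\partial M$ implies $\Sigma^0(M;A)=\partial M$, and Theorem \ref{P:7.1} then gives bounded support of $A$, i.e.\ some $R$ with $\epsilon(\mathbf{F}_R^{(0)})=A$. Since $\mathbf{F}_R^{(0)}$ is finitely generated over $\mathbb{Z}G_b$ by the preliminary observation, so is its image $A$, proving $A$ is of type $FP_n$ over $\mathbb{Z}G_b$.

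For the direction $(\Leftarrow)$: Assume $A$ is finitely generated over $\mathbb{Z}G_b$ and fix a $\mathbb{Z}G$-free resolution $\mathbf{F}\thra A$ with finitely generated $n$-skeleton. By Theorem \ref{T:7.8}, Proposition \ref{P:8.0}(ii), and cocompactness, it suffices to establish $CA^{n-1}$ over $b$. For each $k\leq n-1$, the cycle submodule $\ker\partial_k\cap\mathbf{F}_R^{(k)}$ is a $\mathbb{Z}G_b$-submodule of the finitely generated $\mathbb{Z}G_b$-module $\mathbf{F}_R^{(k)}$, hence itself finitely generated by Noetherianness of $\mathbb{Z}G_b$. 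Taking finitely many $\mathbb{Z}G_b$-generators $z_1,\ldots,z_s$ and bounding $(k+1)$-chains $c_i\in\mathbf{F}^{(k+1)}$ with $\partial c_i=z_i$ supported in some fixed $B_{R'}(b)$, I would observe that any $\mathbb{Z}G_b$-combination $z=\sum\alpha_iz_i$ is bounded by $\sum\alpha_ic_i\in\mathbf{F}_{R'}^{(k+1)}$, since $G_b$ fixes $b$ and preserves every ball centered at $b$.

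The main obstacle is propagating this fixed-$R$ construction to all radii $R$ with a lag function meeting the $CA^{n-1}$-over-$b$ definition (namely $s-\lambda(s)\to+\infty$). This is accomplished by combining Noetherian finite generation with $G$-equivariance and cocompactness of $\rho$: a general cycle over $B_R(b)$ for large $R$ decomposes, using $G$-translates of a fundamental-domain-sized model set of generators, into pieces whose individual bounding chains extend only a uniformly bounded distance beyond the support of the cycle. Once $CA^{n-1}$ over $b$ is in hand, Theorem \ref{T:7.8} automatically upgrades the lag to a constant and promotes control from $b$ to all of $M$, completing the proof.
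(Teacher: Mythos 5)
Your argument rests on a false premise that invalidates both directions. You assert that $G_b$ is finite, arguing that discreteness of orbits plus properness of $M$ force every $G_b$-orbit in $M$ to be finite, hence $\mathbb{Z}G_b$ is finitely generated over $\mathbb{Z}$. But ``all orbits finite'' does not imply ``group finite'': the action $\rho$ is not assumed to be faithful or proper, only cocompact with discrete orbits. For example, take $G = G' \times K$ where $G'$ acts geometrically on $M$ and $K$ is any infinite group acting trivially; then orbits are still discrete and the action is still cocompact, but $G_b \supseteq K$ is infinite, and $\mathbb{Z}G_b$ need not be Noetherian. The paper's second corollary (involving $\ker\rho$) makes clear that this non-faithful situation is intended. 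Once Noetherianness fails, your reduction of ``$FP_n$ over $\mathbb{Z}G_b$'' to ``finitely generated over $\mathbb{Z}G_b$'' collapses, so your $(\Rightarrow)$ argument only establishes $FP_0$, and your $(\Leftarrow)$ argument can no longer conclude that the cycle submodules $\ker\partial_k \cap \mathbf{F}_R^{(k)}$ are finitely generated over $\mathbb{Z}G_b$.

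There is a second independent gap in the $(\Leftarrow)$ direction, which you flag yourself as ``the main obstacle'' but then hand-wave past. Even granting Noetherianness, the step from a fixed-$R$ bound to a lag function uniform in $R$ is precisely the hard part, and the proposed resolution --- that a cycle over a large ball ``decomposes, using $G$-translates of a fundamental-domain-sized model set of generators, into pieces whose individual bounding chains extend only a uniformly bounded distance'' --- is not true as stated: cycles do not localize into translated small cycles, and if this argument worked unconditionally it would not use the hypothesis on $A$ at all, proving $\Sigma^n(M;A)=\partial M$ for every module of type $FP_n$ over $\mathbb{Z}G$, which is false (consider $A=\mathbb{Z}G$ when $G$ is infinite). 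What you are trying to prove from scratch here is exactly Brown's filtration criterion (Theorem 2.2 of Bn87), which is the tool the paper's proof invokes. The paper's actual argument is short: filter $\mathbf{F}$ by the preimages under $h$ of balls $B_n(b)$, note that these subcomplexes are $G_b$-invariant and, by discreteness of orbits, finitely generated over $\mathbb{Z}G_b$ in dimensions $\leq n$, then apply Brown's criterion to identify ``$A$ of type $FP_n$ over $\mathbb{Z}G_b$'' with ``$\mathbf{F}\thra A$ is $CA^{n-1}$ over $M$,'' and finish with Theorem \ref{T:7.8}. Brown's criterion is designed to handle infinite $G_b$ (where $\mathbb{Z}G_b$ may be non-Noetherian) and packages the lag-propagation argument you were attempting to improvise.
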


\begin{proof}  Filter $\bf F$ by $h^{-1}(fB_{n}(b))$, where $b\in M$,
$n\geq 1$, and the notation means the largest $\Z$-subcomplex mapped by $h$
into $fB_{n}(b)$.  These subcomplexes are $G_b$-invariant, and because
the orbits are discrete these subcomplexes are finitely generated modulo
$G_b$ in dimensions $\leq n$. According to (an obvious adaptation of)
Theorem 2.2 of \cite{Bn87}, $A$ has type $FP_n$ as an $G_b$-module if
and only if ${\bF} \thra A$ is $CA^{n-1}$ over $M$.  By Theorem \ref{T:7.8}
the Corollary follows.  \end{proof}

A variant is:

\begin{cor} Assume that the orbits of the action $\rho $ are discrete 
subsets of $M$ and that the group $\rho (G)$ acts properly 
discontinuously and cocompactly (aka ``geometrically") on $M$. Then 
$\Sigma^n(M;A)=\partial M$ if and only if $A$ has type $FP_n$ as a
$\Z${\rm [ker($\rho $)]}-module.
\end{cor}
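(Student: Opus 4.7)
The plan is to deduce this variant directly from the preceding Corollary \ref{T:7.9} by comparing the stabilizer $G_b$ with $\ker(\rho)$. First, observe that the hypothesis that $\rho(G)$ acts properly discontinuously and cocompactly is stronger than the hypotheses of Corollary \ref{T:7.9}; in particular $\rho$ is cocompact and the orbits are discrete, so the previous corollary applies and gives the equivalence
\begin{equation*}
\Sigma^n(M;A)=\partial M \iff A\text{ has type }FP_n\text{ as a }G_b\text{-module}.
\end{equation*}

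Next I compare $G_b$ with $\ker(\rho)$. By definition $G_b=\{g\in G\mid \rho(g)b=b\}$, so $\ker(\rho)$ is a normal subgroup of $G_b$ with quotient $G_b/\ker(\rho)$ equal to the stabilizer of $b$ in $\rho(G)$. Because $\rho(G)$ acts properly discontinuously on $M$, this stabilizer is finite; hence $\ker(\rho)$ has finite index in $G_b$.

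Finally I invoke the standard homological-algebra fact that if $H$ has finite index in $K$, then a $K$-module $A$ is of type $FP_n$ over $K$ if and only if it is of type $FP_n$ over $H$ (this is because induction from $H$ to $K$ carries finitely generated projective resolutions to finitely generated projective resolutions in both directions, since $\Z K$ is finitely generated projective on both sides as a $\Z H$-module when $[K:H]<\infty$). Applying this with $K=G_b$ and $H=\ker(\rho)$, we conclude
\begin{equation*}
A\text{ is }FP_n\text{ over }G_b \iff A\text{ is }FP_n\text{ over }\ker(\rho),
\end{equation*}
which combined with the equivalence from Corollary \ref{T:7.9} yields the Corollary.

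There is no real obstacle here: the argument is a two-step reduction (previous corollary plus a finite-index-subgroup comparison), and the only thing one has to verify carefully is that $G_b/\ker(\rho)$ is finite, which is immediate from proper discontinuity of $\rho(G)$.
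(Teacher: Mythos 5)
Your proof is correct and takes essentially the same route as the paper, which records the argument in a single sentence ("The hypothesis implies that $N$ and $G_b$ are commensurable") and leaves the details to the reader. You have simply unpacked that sentence: $\ker(\rho)\subseteq G_b$ with finite quotient $G_b/\ker(\rho)\cong\mathrm{Stab}_{\rho(G)}(b)$ (finite by proper discontinuity), followed by the standard finite-index invariance of the $FP_n$ property, reducing everything to Corollary \ref{T:7.9}.
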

\begin{proof} The hypothesis implies that $N$ and $G_b$ are commensurable.
\end{proof}


\section{Dispensing with lags}\label{S:8}

We continue to assume the module $A$ is non-zero. Let ${\bF}\thra A$ be 
a controlled based free resolution with finitely generated 
$n$-skeleton with control map $h : {\bF}\to fM$. In this section we 
show that when $\Sigma^n(M;A) = \partial M$ we can replace $\bF$ by another
such resolution $\bF'$ and define a control map $h' : {\bF'} \to fM$ so 
that the pre-images under $h'$ of
horoballs and of large balls are $(n-1)$-acyclic.  In short, we can 
reduce the lags to zero.

We begin with the horoball case, and with $n=0$.  When $e\in 
\Sigma^0(M;A)$ then for each
$x\in X_0$ and $\nu > 0$, there exists $c\in F_0$ with $\epsilon(c) = 
\epsilon(x)$ and
\begin{equation}\label{E:7.8}
v_\gamma(c)-v_\gamma(x) > \nu
\end{equation}
where $\gamma$ is a geodesic ray with $\gamma(\infty) = e$. We write 
${\bF}^{(\gamma,t)}$ for the subcomplex generated by $\{y\in 
Y|v_{\gamma }(y)\geq t\}$.

\begin{prop}\label{P:7.9}
  When $e\in \Sigma^0(M;A)$ the augmentation map $\epsilon$ takes 
$F_0^{(\gamma,t)}$ onto $A$.
\hfill $\square$
\end{prop}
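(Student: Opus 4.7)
The plan is to start with any preimage of $a$ under $\epsilon$ and then replace each of its basis-level summands by a high-valuation preimage of the same augmentation, using $(7.8)$ and the $G$-invariance of $\Sigma^0(M;A)$. Fix $a\in A$ and $t\in \R$, and choose any $c_0\in F_0$ with $\epsilon(c_0)=a$. Write $c_0=\sum n_y\, y$ as a finite $\Z$-linear combination over $Y=GX_0$, and for each $y$ in the support pick a decomposition $y=g_y x_y$ with $g_y\in G$ and $x_y\in X_0$.

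The key move is to apply $(7.8)$ not at $\gamma$ itself, but at the translated rays $g_y^{-1}\gamma$. Because $\Sigma^0(M;A)$ is $G$-invariant, each $g_y^{-1}e$ lies in $\Sigma^0(M;A)$, so $(7.8)$ supplies, for any prescribed $\nu>0$, a chain $c_y\in F_0$ with $\epsilon(c_y)=\epsilon(x_y)$ and
\begin{equation*}
v_{g_y^{-1}\gamma}(c_y)-v_{g_y^{-1}\gamma}(x_y) > \nu.
\end{equation*}
By Lemma \ref{L:2.3}(iii), $v_{g_y^{-1}\gamma}(c_y)=v_\gamma(g_y c_y)$ and $v_{g_y^{-1}\gamma}(x_y)=v_\gamma(y)$, so this rewrites as $v_\gamma(g_y c_y)>v_\gamma(y)+\nu$. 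Since the support of $c_0$ is finite, we may choose a single $\nu$ large enough that $v_\gamma(y)+\nu \ge t$ for every $y$ occurring in $c_0$, uniformly across the summands.

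Now set $c:=\sum n_y\, g_y c_y$. Then by $G$-equivariance of $\epsilon$,
\begin{equation*}
\epsilon(c)=\sum n_y\, g_y\epsilon(c_y)=\sum n_y\, g_y\epsilon(x_y)=\sum n_y\, \epsilon(y)=\epsilon(c_0)=a,
\end{equation*}
while by the subadditivity of $v_\gamma$ (Lemma \ref{L:2.3}(i), (ii)) we have $v_\gamma(c)\ge \min_y v_\gamma(g_y c_y)\ge t$, so $c\in F_0^{(\gamma,t)}$.

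There is no real obstacle beyond the bookkeeping step of invoking the $G$-action and $G$-invariance of $\Sigma^0(M;A)$ correctly: $(7.8)$ is phrased only for basis elements $x\in X_0$, but the summands of $c_0$ range over $Y=GX_0$, so one has to translate by $g_y$ and use Lemma \ref{L:2.3}(iii) to convert shifts at $g_y^{-1}e$ into shifts at $e$. Once that is done, the finiteness of $\text{supp}(c_0)$ lets a single $\nu$ do the job and the conclusion is immediate.
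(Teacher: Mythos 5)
Your argument is correct. You carefully replace each basis-level summand of a preimage $c_0$ of $a$ by a high-valuation substitute supplied by (\ref{E:7.8}), and the translation step via Lemma \ref{L:2.3}(iii) together with $G$-invariance of $\Sigma^0(M;A)$ is exactly the right way to make (\ref{E:7.8}) applicable at $y=g_yx_y$ rather than only at $x_y$. The final identification of ``$v_\gamma(c)\geq t$'' with ``$c\in F_0^{(\gamma,t)}$'' is also valid, since $v_\gamma(c)=\min_{y'\in\operatorname{supp}(c)}v_\gamma(y')$.

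That said, the route is longer than the one the paper has in mind. The paper places the ``$\square$'' immediately after the statement and then remarks that the proposition ``is equivalent to saying that $\mathbf F$ is $CA^{-1}$ over $e$,'' which is precisely what $e\in\Sigma^0(M;A)$ means by definition. Unwinding the definition of $CA^{-1}$ over $e$ in Section \ref{control} for $i=-1$: every $(-1)$-cycle is an element $a\in A$, automatically satisfying $v_\gamma(a)=\infty\geq s$ for any $s$, so for every $s$ there is $\lambda(s)$ with $s-\lambda(s)\to\infty$ and a $0$-chain $c$ with $\epsilon(c)=a$ and $v_\gamma(c)\geq s-\lambda(s)$. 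Choosing $s$ so that $s-\lambda(s)\geq t$ gives $c\in F_0^{(\gamma,t)}$ directly, for arbitrary $a\in A$, without decomposing into basis elements or invoking $G$-invariance. Your approach re-derives this from (\ref{E:7.8}), which is itself a downstream consequence of the same $CA^{-1}$ condition restricted to $a=\epsilon(x)$ with $x\in X_0$; the translation-and-sum argument is then needed to get back up to general $a$. Both routes are valid, but the direct reading of the definition is what makes the ``$\square$'' and the follow-up equivalence remark in the paper reasonable.
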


This is equivalent to saying that ${\bF}$ is $CA^{-1}$ over $e$.

Next, assume $\Sigma^0(M;A) = \partial M$.  Just as in the $n=0$ 
proof of Theorem \ref{T:6.4}, a compactness
argument shows that for each $x\in X_0$ there is a finite set 
$\Phi(x)$ so that for every $e\in \partial M$
(\ref{E:7.8}) holds for some $c\in \Phi(x)$.  The resulting function 
$X_0 \to fF_0$ defines a $G$-volley
$\Phi : F_0 \to fF_0$.  We now alter $F_1$ and $F_2$ by performing 
``elementary expansions''.  For each $c\in
\Phi(x)$ we choose $d\in F_1$ with $\partial d = x -c$. We add new 
generators $\xi$ to $X_1$ and $\eta$ to $X_2$,
defining $\partial\xi = x-c$ and $\partial \eta= d -\xi$.  We extend 
$h$ to $h'$ by $h'(\xi) = h(x)\cup h(c)$ and $h'(\eta)
= h(x) \cup h(c) \cup h(d)$.  The resulting enlarged chain complex 
${\bF'} \thra A$ is again a free resolution with
finitely generated $n$-skeleton.  We note that $\xi = \xi(x,c)$ with 
$x\in X_0$ and $c\in \Phi(x)$.  Define
$\Sigma(x) := \{\xi(x,c)\mid c\in \Phi(x)\}$.  This function $X_0 \to 
fF_1$ defines a finite degree 1
$G$-volley $\Sigma : F_0 \to fF_1$.  The definition of $h'(\xi(x,c))$ ensures
\begin{equation}\label{E:7.9}
v_\gamma(\xi(x,c)) = v_\gamma(x).
\end{equation}

\begin{lemma}\label{L:7.10}
For each $e\in \partial M$ there are selections $\varphi_e$ from 
$\Phi$ and $\sigma_e$ from $\Sigma$ so that, for all
$y\in GX_0$, $\partial \sigma_e(y) = y-\varphi_e(y)$, 
$v_\gamma(\varphi_e(y)) - v_\gamma(y) > \nu$, and
$v_\gamma(\sigma_e(y)) = v_\gamma(y)$.
\end{lemma}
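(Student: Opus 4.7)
The plan is to specify both selections by choosing their values on the $\Z$-basis $Y_0 = GX_0$, using the fact that $G$ acts freely on $Y_0$ so each $y$ has a unique presentation $y=gx$ with $g\in G$ and $x\in X_0$. Fix $e\in \partial M$ and a geodesic ray $\gamma$ with $\gamma(\infty)=e$. For each $y=gx$, the endpoint $g^{-1}e$ is represented by the ray $g^{-1}\gamma$, and by the very construction of the finite set $\Phi(x)$ (which was engineered so that for \emph{every} endpoint $e'\in \partial M$ some element of $\Phi(x)$ satisfies (\ref{E:7.8})), we can choose $c_y\in \Phi(x)$ with
\begin{equation*}
v_{g^{-1}\gamma}(c_y) - v_{g^{-1}\gamma}(x) > \nu.
\end{equation*}

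Next, I would define
\begin{equation*}
\varphi_e(y) := gc_y, \qquad \sigma_e(y) := g\,\xi(x,c_y).
\end{equation*}
By $G$-equivariance of the volleys $\Phi$ and $\Sigma$, we have $\varphi_e(y)\in g\Phi(x)=\Phi(y)$ and $\sigma_e(y)\in g\Sigma(x)=\Sigma(y)$, so these are legitimate selections. For the boundary identity, $\partial \xi(x,c_y) = x - c_y$ by the definition of $\xi(x,c_y)$ in the elementary expansion, so applying $g$ gives $\partial\sigma_e(y) = g(x-c_y) = y - \varphi_e(y)$, as required.

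The remaining two estimates reduce to the translation formula $v_\gamma(gc) = v_{g^{-1}\gamma}(c)$ from Lemma \ref{L:2.3}(iii) together with the choice of $c_y$. For the shift,
\begin{equation*}
v_\gamma(\varphi_e(y)) - v_\gamma(y) = v_{g^{-1}\gamma}(c_y) - v_{g^{-1}\gamma}(x) > \nu.
\end{equation*}
For the homotopy valuation, the identity (\ref{E:7.9}) applies to the pair $(x,c_y)$ with respect to the ray $g^{-1}\gamma$ because $v_{g^{-1}\gamma}(c_y) > v_{g^{-1}\gamma}(x)$; hence $v_{g^{-1}\gamma}(\xi(x,c_y)) = v_{g^{-1}\gamma}(x)$, and therefore $v_\gamma(\sigma_e(y)) = v_{g^{-1}\gamma}(\xi(x,c_y)) = v_{g^{-1}\gamma}(x) = v_\gamma(y)$.

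There is no real obstacle; the lemma is a coordinated-selection statement that follows from the $G$-equivariant way $\Phi$ and $\Sigma$ were set up, the freeness of the $G$-action on $Y_0$, and the translation law for Busemann-valuations. The one spot where care is needed is that (\ref{E:7.9}) is not automatic for an arbitrary member of $\Phi(x)$: it depends on the specific $c\in \Phi(x)$ being one for which $v_\gamma(c)>v_\gamma(x)$. Since we pick $c_y$ precisely to give a genuine shift $>\nu$ at the relevant rotated ray $g^{-1}\gamma$, this holds and the proof closes.
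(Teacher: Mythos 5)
Your proof is correct and matches the paper's own argument: both fix $y=gx$, pick $c\in\Phi(x)$ achieving the shift $>\nu$ at the translated ray $g^{-1}\gamma$, set $\varphi_e(y)=gc$ and $\sigma_e(y)=g\xi(x,c)$, and then push everything back with the translation law $v_\gamma(gc)=v_{g^{-1}\gamma}(c)$. Your extra remark that (\ref{E:7.9}) requires $v_{g^{-1}\gamma}(c)>v_{g^{-1}\gamma}(x)$ — guaranteed here by the choice via (\ref{E:7.8}) — is a correct and worthwhile clarification of a point the paper glosses over.
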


\begin{proof}  Fix $e\in \partial M$. Let $y = gx$.  Using 
(\ref{E:7.8}) and (\ref{E:7.9}) pick $c$ and $\xi(x,c)$
so that
\begin{enumerate}
\item $v_{g^{-1}\gamma}(c) - v_{g^{-1}\gamma}(x) > \nu$,
\item $v_{g^{-1}\gamma}(\xi) = v_{g^{-1}\gamma}(x)$ and
\item $\partial \xi = x-c$.
\end{enumerate}
Then $v_\gamma(gc) - v_\gamma(y) > \nu$ and $v_\gamma(g\xi) = 
v_\gamma(y)$.  Define
$\varphi_e(y) = gc$ and $\sigma_e(y) = g\xi$.  Then $\partial 
\sigma_e(y) = y-\varphi_e(y)$.
\end{proof}

Lemma \ref{L:7.10} provides a $G$-finitary push $\varphi_e$ towards 
each $e$ and a $G$-finitary chain homotopy
$\sigma_e : \text{id} \sim\varphi_e$.  We think of these chain 
homotopies as ``monotone'' because they have the
property that $v_\gamma(\sigma_e(c)) \geq v_\gamma(c)$ for all chains $c$.

\begin{prop}\label{P:7.11}
Assume $\Sigma^0(M;A)=\partial M$. Let $e\in \Sigma^1(M;A)$ and 
let $h'$ be the extended control map on $\bF'$.  Then the resolution ${\bF'}
\thra A$ is $CA^0$ over $e$ with zero lag.  Equivalently, for any 
$t$, ${\bF'}^{(t)}$ is $0$-acyclic.
\end{prop}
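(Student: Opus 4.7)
The plan is to adapt the iteration argument of Proposition \ref{P:6.2}(i), exploiting the \emph{monotonicity} of the chain homotopy $\sigma_e$ from Lemma \ref{L:7.10} (namely $v_\gamma(\sigma_e(y))=v_\gamma(y)$ on $Y_0$) to upgrade the constant-lag conclusion there to zero lag.

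First I would take a $0$-cycle $z\in F'_0=F_0$ with $v_\gamma(z)\geq t$. Since $e\in \Sigma^1(M;A)$, the resolution $\bF$ is $CA^0$ over $e$ with some lag function $\lambda(s)$ satisfying $s-\lambda(s)\to +\infty$. By Lemma \ref{L:7.10}, I pick $\varphi_e:F_0\to F_0$ pushing towards $e$ with guaranteed shift $\geq \nu >0$ and $\sigma_e:F_0\to F'_1$ satisfying $\partial\sigma_e=\mathrm{id}-\varphi_e$. Setting $\tau_k := \sigma_e(1+\varphi_e+\cdots +\varphi_e^{k-1})$, a telescoping calculation gives $\partial\tau_k(z)=z-\varphi_e^k(z)$.

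Next, by Lemma \ref{L:3.3}, $v_\gamma(\varphi_e^k(z))\geq t+k\nu$. I would choose $k$ so large that $s-\lambda(s)\geq t$ whenever $s\geq t+k\nu$ (possible because $s-\lambda(s)\to\infty$). Applying the $CA^0$-property to the cycle $\varphi_e^k(z)$ then yields $c\in F_1\subseteq F'_1$ with $\partial c=\varphi_e^k(z)$ and $v_\gamma(c)\geq t$. Put $c':=c+\tau_k(z)\in F'_1$, so that $\partial c'=\varphi_e^k(z)+z-\varphi_e^k(z)=z$.

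Finally, to confirm zero lag, I need to bound $v_\gamma(\tau_k(z))$: the monotonicity $v_\gamma(\sigma_e(y))=v_\gamma(y)$ on the $\Z$-basis $Y_0$ extends via Lemma \ref{L:2.3}(ii) to $v_\gamma(\sigma_e(a))\geq v_\gamma(a)$ for all $a\in F_0$, and $v_\gamma(\varphi_e^p(z))\geq t$ for each $0\leq p< k$, so $v_\gamma(\tau_k(z))\geq t$, whence $v_\gamma(c')\geq \min(v_\gamma(c),v_\gamma(\tau_k(z)))\geq t$. The main subtlety will be the monotonicity in this last step: it depends on the engineered equality $h'(\xi(x,c))=h(x)\cup h(c)$ of the elementary expansion, and is precisely what buys zero lag where Proposition \ref{P:6.2} only gave constant lag $\|\sigma\|$.
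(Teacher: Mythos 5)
Your proof is correct and follows the same approach as the paper's: both iterate the push $\varphi_e$ until $\varphi_e^k(z)$ bounds above level $t$, and then use the monotone homotopy $\sigma_e(1+\varphi_e+\cdots)$ (the paper's $\bar\sigma_{e,k}$, your $\tau_k$) to transport the bounding chain back to $z$ without losing height. Your write-up spells out the valuation estimates the paper leaves implicit, but the underlying argument is identical.
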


\begin{proof} Writing $e =\gamma(\infty)$ there is a lag 
$\lambda(e,t)$ as in the definition of $CA^0$ in Section
\ref{S:4}.  Let $z$ be a 0-cycle over $HB_{\gamma,t}$.  For $k$ a 
positive integer we consider the chain homotopy
$\bar\sigma_{e,k} := \sigma_e(\varphi^k_e + \varphi^{k-1}_e + \cdots 
+ \varphi_e + 1)$ as in the proof of Proposition
\ref{P:6.2}(ii).  If $k$ is large enough, $\varphi^k(z)$ bounds over 
$HB_{\gamma,t}$ and because $\sigma_e$ is
monotone $\bar\sigma_{e,k}$ provides a homology over $HB_{\gamma,t}$ 
between $z$ and $\varphi^k(z)$.  Thus $z$
bounds over $HB_{\gamma,t}$.
\end{proof}

Next, we repeat for $n=1$ what we have just done for $n=0$.  Assuming 
$\Sigma^1(M;A) = \partial M$ we extend
the $G$-volleys $\Phi$ and $\Sigma$ by defining $\Phi : F_1 
\to fF_1$ and $\Sigma : F_1\to fF_2$, adding
new generators in dimensions 2 and 3.  The only difference is that in 
the analog of Lemma \ref{L:7.10} we will have
$\partial\sigma_e(y) = y -\varphi_e(y) - \sigma_e\partial y$. The 
pattern for higher $n$ is now clear.  We have
proved:

\begin{prop}\label{T:7.12}
When $\Sigma^{n-1}(M;A) = \partial M$ there is a controlled based 
free resolution ${\bF} \thra A$ with finitely generated $n$-skeleton which is
$CA^{n-1}$ with zero lag over every $e\in \Sigma^n(M;A)$.  \hfill $\square$
\end{prop}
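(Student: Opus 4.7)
The plan is induction on $n$, following the pattern laid out in Propositions \ref{P:7.9} and \ref{P:7.11}; the base cases $n=0,1$ are handled there. For $n\geq 2$, the inductive hypothesis delivers (after replacing $\bF$) not only the conclusion at dimension $n-1$ but also $G$-volleys $\Phi:\bF^{(n-1)}\to f\bF^{(n-1)}$ and $\Sigma:\bF^{(n-1)}\to f\bF^{(n)}$ whose selections $\varphi_e,\sigma_e$ (parametrised by $e=\gamma(\infty)\in\partial M$) satisfy the higher-dimensional analog of Lemma \ref{L:7.10}: $\varphi_e$ is a chain map lifting $\mathrm{id}_A$; $\partial\sigma_e(y)=y-\varphi_e(y)-\sigma_e\partial y$; $v_\gamma(\varphi_e(y))>v_\gamma(y)+\nu$ for a uniform $\nu>0$; and $v_\gamma(\sigma_e(y))\geq v_\gamma(y)$ (monotonicity).

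For the step to dimension $n$, fix $e\in\Sigma^n(M;A)$ and $x\in X_n$. Since $\varphi_e$ is a chain map, $\varphi_e(\partial x)$ is an $(n-1)$-cycle, and $CA^{n-1}$ over $e$ with some lag $\lambda$ yields $c=c(x,e)\in F_n$ with $\partial c=\varphi_e(\partial x)$ and $v_\gamma(c)>v_\gamma(\varphi_e(\partial x))-\lambda\geq v_\gamma(\partial x)+\nu-\lambda$. Replacing $\varphi_e$ by a sufficiently large iterate $\varphi_e^k$ (whose guaranteed shift boosts to $k\nu$ by Lemma \ref{L:3.3}) we arrange $v_\gamma(c)>v_\gamma(x)$; a compactness argument over $\partial M$ as in the $n=0$ case of Theorem \ref{T:6.4} then packages the choices $c(x,e)$ into a finite $G$-volley $\Phi(x)\subseteq F_n$, extending $\Phi$ to dimension $n$.

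Next, for each $x\in X_n$ and $c\in\Phi(x)$, perform elementary expansions: add $\xi=\xi(x,c)\in X_{n+1}$ with $\partial\xi:=x-c-\sigma_e\partial x$ (a valid $n$-cycle by the inductive boundary relation), and $\eta(x,c)\in X_{n+2}$ annihilating the resulting $(n+1)$-cycle $\xi-d$, where $d\in F_{n+1}$ comes from exactness of the old resolution and satisfies $\partial d=\partial\xi$. Extend the control map by $h'(\xi):=h(x)\cup h(c)$; since $v_\gamma(c)>v_\gamma(x)$, this gives $v_\gamma(\xi)=v_\gamma(x)$, i.e.\ monotonicity at the new level. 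The generators $\xi$ thus define the extension of $\Sigma$ to dimension $n$, and the updated finite $G$-volley at dimension $n$ is in place.

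With these monotone $G$-volleys in hand, $CA^{n-1}$ with zero lag over $e\in\Sigma^n(M;A)$ follows exactly as in Proposition \ref{P:7.11}: for an $(n-1)$-cycle $z$ with $v_\gamma(z)\geq t$, the $G$-finitary homotopy $\bar\sigma_{e,k}:=\sigma_e(\varphi_e^k+\cdots+\varphi_e+1)$ realises a homology $z\sim\varphi_e^{k+1}(z)$ staying inside $HB_{\gamma,t}$ by monotonicity, while for $k$ large enough $\varphi_e^{k+1}(z)$ bounds inside $HB_{\gamma,t}$ because its valuation eventually exceeds the lag threshold $\lambda(s)$ (using $s-\lambda(s)\to\infty$). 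The principal obstacle is the simultaneous bookkeeping: after iterating $\varphi_e$ and inserting elementary expansions, the combined data must remain a finite $G$-volley with uniform push $\nu$ and monotone $\sigma_e$ at \emph{every} dimension, all compatible with the chain-map and chain-homotopy relations inherited from lower dimensions.
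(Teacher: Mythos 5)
Your high-level strategy (iterate the elementary-expansion construction of Propositions \ref{P:7.9}--\ref{P:7.11} and then run the monotone-homotopy argument) matches the paper's, but the ``step to dimension $n$'' you insert contains a genuine error. To package the choices $c(x,e)$ into a finite $G$-volley $\Phi(x)\subseteq F_n$ by compactness of $\partial M$, you need a valid choice $c(x,e)$ for \emph{every} $e\in\partial M$; but you obtain $c(x,e)$ from the $CA^{n-1}$-property of $\bF$ over $e$, and that property holds precisely for $e\in\Sigma^n(M;A)$. The hypothesis is $\Sigma^{n-1}(M;A)=\partial M$, \emph{not} $\Sigma^n(M;A)=\partial M$, and the proposition is designed exactly for the case in which $\Sigma^n(M;A)$ is a proper subset of $\partial M$; in that case the compactness argument fails and no finite $\Phi(x)$ is produced.

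The step is also unnecessary. Your closing paragraph applies $\varphi_e$ and $\sigma_e$ only to $(n-1)$-chains, so the argument requires only $\Phi:\bF^{(n-1)}\to f\bF^{(n-1)}$ and $\Sigma:\bF^{(n-1)}\to f\bF^{(n)}$, nothing at level $n$. Those volleys are exactly what the level-by-level expansion construction delivers from $\Sigma^0=\Sigma^1=\cdots=\Sigma^{n-1}=\partial M$, all consequences of the stated hypothesis. Relatedly, your inductive hypothesis is one dimension too strong: the proposition at level $n-1$ assumes only $\Sigma^{n-2}(M;A)=\partial M$ and so its construction yields volleys through dimension $n-2$; the extension to dimension $n-1$ is precisely what the \emph{current} hypothesis $\Sigma^{n-1}(M;A)=\partial M$ buys you, and there you must stop. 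Lower the ``step'' by one dimension and delete the further extension, and your argument becomes correct and coincides with the paper's.
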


Essentially the same proof gives:

\begin{prop}\label{T:7.13}
Let $E$ be a closed $G$-invariant subset of $\partial M$.  When 
$\Sigma^{n-1}(M;A)\supeq E$ there is a
resolution ${\bF}\thra A$ with finitely generated $n$-skeleton which is 
$CA^{n-1}$ with zero lag over every $e\in E\cap
\Sigma^n(M;A)$.  \hfill$\square$
\end{prop}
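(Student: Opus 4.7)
The plan is to follow the proof of Proposition \ref{T:7.12} verbatim, making only two substitutions: replace the compact space $\partial M$ by its closed (hence compact) $G$-invariant subset $E$ in every compactness argument, and weaken the hypothesis $\Sigma^{k}(M;A)=\partial M$ to $\Sigma^{k}(M;A)\supeq E$ for each $0\le k\le n-1$ (which is available from the standing hypothesis $\Sigma^{n-1}(M;A)\supeq E$ together with the inclusion $\Sigma^{n-1}\subeq\Sigma^{k}$).

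First I would dispose of the base step $n=0$: for each $e\in E\cap\Sigma^{0}(M;A)$, Proposition \ref{P:7.9} already supplies $CA^{-1}$ with zero lag over $e$, so no modification of ${\bF}$ is needed.

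Next I would run the induction on $k$. Suppose that after stage $k-1$ the resolution has been enlarged so that for every $e\in E$ there is a $G$-finitary push $\varphi_{e}$ of the $(k{-}1)$-skeleton towards $e$ together with a monotone $G$-finitary chain homotopy $\sigma_{e}:\mathrm{id}\sim\varphi_{e}$, exactly as in the analog of Lemma \ref{L:7.10}. Since $E\subeq\Sigma^{k}(M;A)$, for each $x\in X_{k}$ and each $e\in E$ the pointwise construction in Lemma \ref{L:7.10} produces chains $c=c(e,x)$ and $\xi=\xi(e,x)$ satisfying the strict valuation inequalities (\ref{E:7.8}) and (\ref{E:7.9}) together with the appropriate chain-homotopy relation. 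These strict inequalities are open in $e$ (because $\beta_{\gamma}$, and hence $v_{\gamma}$, depends continuously on $\gamma(\infty)$ in the cone topology) and $E$ is compact, so for each $x$ a finite collection suffices to cover $E$. Declaring these finite collections to be $\Phi(x)$ and $\Sigma(x)$ and extending $G$-equivariantly by $\Phi(gx):=g\Phi(x)$, $\Sigma(gx):=g\Sigma(x)$---which is well-defined precisely because $E$ is $G$-invariant---yields finite $G$-volleys $\Phi:F_{k}\to fF_{k}$ and $\Sigma:F_{k}\to fF_{k+1}$. Absorbing these volleys by elementary expansions in dimensions $k+1$ and $k+2$ advances the induction to stage $k+1$.

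After iterating up to $k=n-1$, I would invoke the analog of Proposition \ref{P:7.11}: given $e\in E\cap\Sigma^{n}(M;A)$ and a cycle $z\in{\bF}^{(n-1)}$ lying over $HB_{\gamma,t}$, a sufficiently high iterate $\varphi_{e}^{m}(z)$ bounds inside $HB_{\gamma,t}$ by the $\Sigma^{n}$-property, and the monotone chain homotopy $\bar\sigma_{e,m}=\sigma_{e}(\varphi_{e}^{m}+\cdots+\varphi_{e}+1)$ supplies a homology between $z$ and $\varphi_{e}^{m}(z)$ which likewise stays inside $HB_{\gamma,t}$; this is $CA^{n-1}$ with zero lag over $e$. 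I do not foresee any genuine obstacle beyond what is already handled in Proposition \ref{T:7.12}: the only two features of $E$ that enter the argument are compactness (needed to finitize the pointwise constructions into $G$-volleys) and $G$-invariance (needed to extend equivariantly), and both are built into the hypothesis.
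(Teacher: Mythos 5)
Your proposal is correct and matches the approach the paper intends: the paper states T:7.13 with the remark ``Essentially the same proof gives,'' and you have correctly identified that the only two features of $\partial M$ actually used in the proof of Proposition \ref{T:7.12} are its compactness (to finitize the pointwise choices into $G$-volleys, invoking the continuity of $v_\gamma(c)$ in $\gamma(\infty)$) and its $G$-invariance (to define the selections $\varphi_e$, $\sigma_e$ via $G$-translation), both of which are hypotheses on $E$. Your bookkeeping -- the base step via Proposition \ref{P:7.9}, the inductive extension using $E\subseteq \Sigma^{n-1}(M;A)\subseteq\Sigma^k(M;A)$ for $k\leq n-1$ to justify each elementary-expansion stage, and the final monotone-homotopy argument from Proposition \ref{P:7.11} for $e\in E\cap\Sigma^n(M;A)$ -- is exactly how the iteration in Section 8 runs.
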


Proposition \ref{T:7.13} applies, in particular, to a singleton set 
$\{e\}$ where $e$ is a fixed point of the $G$-action on $\partial
M$.  For example, in the Euclidean case, where $G$ acts by 
translations, every point of the boundary is fixed by $G$, and this
recovers Theorem 4.2 of \cite{BRe88}.

A straightforward adaptation of the proof of Proposition \ref{T:7.12}
gives the following addition to Theorem \ref{T:7.8}:

\begin{thm}\label{T:7.14}
$\Sigma^n(M;A) = \partial M$ if and only if there is a controlled 
based free resolution ${\bF} \thra A$ with finitely generated 
$n$-skeleton and a radius $R$
with the property that $h^{-1}(fB)$ is $(n-1)$-acyclic when $B$ is 
any ball of radius $\geq R$ (or any horoball,
for that matter).
\end{thm}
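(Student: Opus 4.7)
The plan is to handle both directions, devoting most effort to the ``only if'' direction, which I view as a simultaneous ball-and-horoball extension of the construction in Proposition \ref{T:7.12}.

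For the ``if'' direction, suppose such a resolution ${\bF}\thra A$ exists. For any endpoint $e$ and any horoball $HB$ at $e$, the preimage $h^{-1}(fHB)$ is $(n-1)$-acyclic, which is exactly $CA^{n-1}$ over $e$ with zero lag. By Theorem \ref{T:7.8}, $\Sigma^n(M;A) = \partial M$.

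For the ``only if'' direction, assume $\Sigma^n(M;A) = \partial M$. Theorem \ref{T:7.8} gives, for every $b\in M$, a $G$-finitary chain map $\varphi_b : \mathbf{F}^{(n)} \to \mathbf{F}^{(n)}$ inducing $\mathrm{id}_A$ and pushing toward $b$ with uniform guaranteed shift $\alpha > 0$ and event radius $R_0$; Theorem \ref{T:6.4} gives similar pushes $\varphi_e$ for each endpoint. Both families arise as selections from finite $G$-volleys $\Phi$. The plan is to run the inductive elementary-expansion procedure of Proposition \ref{T:7.12} in dimensions $0,1,\ldots,n-1$, but at each dimension simultaneously perform expansions indexed by endpoints $e\in \partial M$ (as in \ref{T:7.12}) and by points $b\in M$. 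The analog of Lemma \ref{L:7.10} yields at each stage monotone $G$-finitary chain homotopies $\sigma_e$ and $\sigma_b$ between $\mathrm{id}$ and the respective pushes, where monotonicity means $v_\gamma(\sigma_e(c)) \geq v_\gamma(c)$ for the horoball case and an analogous inequality $D_b(\sigma_b(c))\leq D_b(c)$ (valid outside the event radius $R_0$) for the ball case. These homotopies are selections from $G$-volleys obtained via the Finitary Comparison Theorem \ref{L:5.1}. Each elementary expansion for a basis element $x$ and a selection value $c$ introduces new generators $\xi$ with $\partial\xi = x - c$ and $h(\xi) = h(x)\cup h(c)$, so monotonicity translates into the new generators having support in the same horoball (resp.\ ball) as $x$.

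The main obstacle is the compactness bookkeeping needed to keep the expansions finite modulo $G$ in each dimension. For endpoints this is handled as in Proposition \ref{T:7.12} by compactness of $\partial M$. For balls, uniformity of $\alpha$ and $R_0$ combined with cocompactness of $\rho$ (which holds because $\Sigma^0(M;A) = \partial M$, by Theorem \ref{P:7.1}) lets me restrict to $b$ in a compact fundamental domain $D$; compactness of $D$ then reduces matters to finitely many expansions per $G$-orbit in each dimension. Once ${\bF'}$ with extended control $h'$ is built, setting $R := R_0 + \max\{\|\sigma_b\|,\|\varphi_b\|\}$, the ball-acyclicity of $h'^{-1}(fB_r(b))$ for $r\geq R$ follows the pattern of Proposition \ref{P:7.11}: iterate $\varphi_b$ enough times that $\varphi_b^k(z)$ bounds within $B_r(b)$, and use the monotone homotopy $\bar\sigma_{b,k} = \sigma_b(\varphi_b^k + \cdots + 1)$ to supply a homology between $z$ and $\varphi_b^k(z)$ inside $B_r(b)$.
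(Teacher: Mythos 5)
Your proposal is essentially correct and follows the route the paper itself has in mind (the paper's ``proof'' is the single line that Theorem \ref{T:7.14} follows by ``a straightforward adaptation of the proof of Proposition \ref{T:7.12}''). Your plan of re-running the elementary-expansion construction of Proposition \ref{T:7.12}, using a monotone finitary homotopy as in Lemma \ref{L:7.10} and then arguing exactly as in Proposition \ref{P:7.11} with $D_b$ in place of $v_\gamma$, is the right adaptation. Two remarks, neither a genuine gap: (1) Your ``simultaneous'' bookkeeping of expansions indexed separately by $e\in\partial M$ and by $b$ in a compact fundamental domain is more work than is necessary. By Proposition \ref{P:7.6} the \emph{same} finite $G$-volley $\Phi$ has selections pushing towards every $e\in\partial M$ and towards every $b\in M$, so the single set of elementary expansions $\xi(x,c)$ with $c\in\Phi(x)$ (exactly as in \ref{T:7.12}) already supplies both the horoball-monotone and the ball-monotone homotopies; there is no need for a second compactness argument over $b\in D$. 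Also, on a smaller note, one should quote the volley norms $\|\Phi\|,\|\Sigma\|$ rather than $\|\varphi_b\|,\|\sigma_b\|$ when defining $R$, since the latter depend on $b$ a priori, and the constant lag $\lambda$ from Theorem \ref{T:7.8}(iv) should also be folded into $R$ so that the bounding chain for $\varphi_b^k(z)$ lies inside $B_r(b)$. (2) In the ``if'' direction, the step ``$CA^{n-1}$ over every $e$ with zero lag, hence by Theorem \ref{T:7.8} $\Sigma^n(M;A)=\partial M$'' misattributes the conclusion: $CA^{n-1}$ over every $e\in\partial M$ \emph{is} the definition of $\Sigma^n(M;A)=\partial M$, no appeal to Theorem \ref{T:7.8} is needed. (If instead one uses the ball hypothesis, then $CA^{n-1}$ over $M$ with constant lag $\leq R$ holds, and \ref{T:7.8}(iv)$\Rightarrow$(i) is the right citation.) Also note that the ``or any horoball'' clause follows formally from the ball clause: a horoball is the increasing union of balls whose radii tend to $\infty$, homology commutes with direct limits, and the exhausting balls eventually have radius $\geq R$.
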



\section{Openness theorems}\label{S:6.4}

When $E\subseteq \partial M$ we write ${\mathcal
R}_{E}:=\text{Hom}(G,\text{Isom}(M,E))$, the set of all isometric
actions of $G$ on $M$ which leave $E$ invariant. We endow the sets
$\text{Isom}(M,E)$ and ${\mathcal R}_{E}$ with the compact-open topology
\footnote{$G$ is of course discrete.}. The boundary $\del M$ carries the cone
topology.

In this section, when we discuss a particular action $\rho \in {\mathcal
R}_{E}$ we will write $_{\rho}M$ rather than $M$.  We choose a base point
$b\in M$.  The canonical control map $h^{\rho}: F\to f(_{\rho}M)$
takes the $\Z$-generator $gx$ to the singleton set $\{\rho (g)b\}\subseteq M$. 

\begin{thm}[$\text{Openness Theorem}$]\label{T:6.5} Let  $E$ be a
compact subset of $\partial M$, and let $\rho \in {\mathcal R}_{E}$
be such that $E\subseteq \Sigmacirc (_{\rho}M;A)$. There is a neighborhood
$N$ of $\rho$ in ${\mathcal R}_{E}$ such that for all $\rho '\in N,$
$E\subseteq \Sigmacirc (_{\rho '}M;A)$. Moreover, we can choose $N$
so that there is a uniform constant $\nu >0$ and a $G$-volley 
$\Phi :{\bf F}^{n}\to f {\bf F}^{n}$
inducing $\text{\rm id}_A$ such that for each $e\in E$ and $\rho '\in
N$ there is a selection $\varphi_{e,\rho '}$ from $\Phi$ with ${\rm
gsh}_{e}\varphi _{e,\rho '}\geq \nu$.  \end{thm}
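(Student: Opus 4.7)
The plan is to promote the finite certificate provided by Theorem~\ref{T:6.4}(iv) for $(E,\rho)$ into one that persists under small compact-open perturbations of $\rho$. Applying that theorem to the hypothesis $E\subseteq\Sigmacirc(_{\rho}M;A)$ yields a $G$-volley $\Phi:\mathbf{F}^{(n)}\to f\mathbf{F}^{(n)}$ inducing $\text{id}_{A}$, a uniform constant $\nu_{0}>0$, and for each $e\in E$ a chain-map selection $\varphi^{\rho}_{e}$ of $\Phi$ with $\text{\rm gsh}^{\rho}_{e}(\varphi^{\rho}_{e})\geq\nu_{0}$. Because $\mathbf{F}^{(n)}$ is finitely generated and each $\Phi(x)$ is a finite set of chains of finite support, a finite subset $S\subseteq G$ contains every group element occurring in the supports of $\Phi(X)$ or of $\partial X$. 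Setting $\nu:=\nu_{0}/2$, I will show that the same $\Phi$ admits chain-map selections of shift at least $\nu$ for every $e\in E$ and every $\rho'$ in a suitable neighbourhood $N$ of $\rho$; Theorem~\ref{T:6.4} then delivers $E\subseteq\Sigmacirc(_{\rho'}M;A)$.

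Two continuity facts drive the argument. First, since Busemann functions on $M$ are $1$-Lipschitz and $h^{\rho'}(z)\subseteq\{\rho'(s)b:s\in S\}$ for $z\in\Phi(x)$, the condition $d(\rho'(s)b,\rho(s)b)<\nu_{0}/4$ for all $s\in S$ is open in $\mathcal{R}_{E}$ and guarantees $|\text{\rm sh}^{\rho'}_{z,e'}(x)-\text{\rm sh}^{\rho}_{z,e'}(x)|<\nu_{0}/2$ uniformly in $(z,x,e')$. Second, isometry-invariance of Busemann functions combined with $\rho'$-invariance of $E$ reduces the shift of a selection $\varphi$ with $\varphi(gx)=gz$ at $y=gx\in Y$ to $\text{\rm sh}^{\rho'}_{z,\rho'(g^{-1})e}(x)$; the parameter $g\in G$ is absorbed into the endpoint $\rho'(g^{-1})e\in E$. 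So constructing $\varphi_{e,\rho'}$ globally reduces to making coherent, chain-map-compatible choices of elements $f_{x}(e')\in\Phi(x)$ parametrised by $(e',x)\in E\times X$.

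I then adapt the inductive construction in the proof of Theorem~\ref{T:6.4}(iv), arranging at each stage that the selection function $f_{x}:E\to\Phi(x)$ is locally constant in the cone topology on $E$: each acceptable value corresponds to a cone-open condition via continuity of the Busemann function on $M\times\partial M$, so by compactness of $E$ and finiteness of $\Phi(x)$ one can refine to a piecewise constant choice. The finitely many fibres of each $f_{x}$ are then clopen in the compact metrisable space $E$, hence mutually at some cone-distance $\delta>0$. Using joint continuity of $\mathrm{Isom}(M)\times\partial M\to\partial M$ together with compactness of $E$, I shrink $N$ further to ensure $d_{\mathrm{cone}}(\rho'(s^{-1})e,\rho(s^{-1})e)<\delta$ uniformly for $(s,e)\in S\times E$. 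For $\rho'\in N$, the formula $\varphi_{e,\rho'}(gx):=g\,f_{x}(\rho'(g^{-1})e)$ is a selection of $\Phi$; fibre preservation makes its $\rho'$-chain-map equation $\partial f_{x}(e')=\sum_{j}n_{j}g_{j}f_{x_{j}}(\rho'(g_{j}^{-1})e')$ coincide term-by-term with the $\rho$-equation $\partial f_{x}(e')=\sum_{j}n_{j}g_{j}f_{x_{j}}(\rho(g_{j}^{-1})e')$ that holds by construction, and the first continuity fact supplies $\text{\rm gsh}^{\rho'}_{e}(\varphi_{e,\rho'})\geq\nu$.

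The main obstacle is arranging the locally constant selections $f_{x}$ coherently through all dimensions while simultaneously respecting the chain-map equations that couple $f_{x}$ at dimension $k$ to the $f_{x_{j}}$ at dimension $k-1$ via $\partial$. The inductive framework of Theorem~\ref{T:6.4}(iv) is designed for exactly this bookkeeping: at each stage compactness of $E$ reduces matters to finitely many choices, the chain-map equations live in the $\rho$-independent chain complex $\mathbf{F}^{(n)}$, and the locally constant structure at lower dimensions propagates upward through the boundary operator because the latter involves only the finite set $S$.
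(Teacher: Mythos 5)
Your plan diverges from the paper's argument in a way that creates a genuine gap, located precisely in the ``locally constant $f_x$ with clopen fibres'' step.

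The difficulty is topological. You want, for each $x\in X$, a function $f_x\colon E\to\Phi(x)$ whose fibres are \emph{clopen} in $E$, so that their mutual cone-distance is bounded below by some $\delta>0$ and the chain-map equation for $\rho'$ reduces term-by-term to the one for $\rho$. But if $E$ is connected --- the motivating examples include $E=\partial M$ when $\partial M$ is a sphere, or $E$ the closure of a single orbit in such a boundary --- then a locally constant map into the finite set $\Phi(x)$ is \emph{constant}. A constant $f_x$ forces $\varphi_e(gx)=gf_x^0$, i.e.\ the same $G$-equivariant homomorphism for every $e\in E$; in the Euclidean translation case this would require a single chain map to push ${\bf F}^{(n)}$ towards every direction in $\partial M$ simultaneously, which is not available from $E\subseteq\Sigmacirc(_{\rho}M;A)$. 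The compactness argument you invoke only produces a finite \emph{open cover} of $E$ on which the valuation inequalities persist --- it does not and cannot in general be refined to a partition by clopen sets. Your appeal to ``the inductive framework of Theorem~\ref{T:6.4}(iv)'' does not help, because that construction never claims, and for connected $E$ cannot deliver, locally constant selections; the functions $e\mapsto c(e,x,p)$ produced there may jump on the overlaps of the cover.

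What the paper does instead avoids this entirely: it carries out a double induction in which the \emph{inductive hypothesis} already supplies $\rho'$-selections on ${\bf F}^{(n-1)}$ for all $\rho'$ in a neighbourhood, and at the inductive step one constructs the level-$n$ selection $\psi_{e,\rho'}$ directly from $(e,\rho')$-data via the formula $\psi_{e,\rho'}(gx)=g\,c(\rho'(g^{-1})e,x,g^{-1}\varphi^k_{e,\rho'}(g\partial x))$; here $c(\cdot,\cdot,\cdot)$ is chosen from a finite set using the uniform lag, and $\partial c(\cdot,x,p)=p$ forces the chain-map identity $\partial\psi_{e,\rho'}=\varphi^k_{e,\rho'}\partial$ \emph{by construction}, for each $(e,\rho')$ separately. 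The only continuity used is the joint continuity of $(e,\rho)\mapsto v_{e,\rho}(c)$ (Lemma~8.1 of \cite{BGe16}), applied to extend a finite open cover of $E\times\{\rho\}$ to a neighbourhood $E\times N$; no local constancy and no fibre-separation enter. The part of your argument controlling the \emph{shift} (Busemann $1$-Lipschitz estimates) is sound, but the mechanism you propose for preserving the \emph{chain-map property} under perturbation of $\rho$ fails for connected $E$, and you would need to replace it with the paper's inductive re-construction of $\psi_{e,\rho'}$.
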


Of course, by Theorem \ref{T:6.4} the same holds when $\Sigmacirc $
is replaced by $\Sigma ^n$.

\begin{proof}
The proof of Theorem \ref{T:6.5} is by induction on $n$. The case $n=0$ was proved in \cite{BGe16}
We will use the following ( Lemma 8.1 of \cite{BGe16}):

\begin{lemma}\label{continuous} For given $c\in F$, the valuation
$v_{e,\rho}(c)$ is (jointly) continuous in $(e,\rho )$.  \hfill$\square$
\end{lemma}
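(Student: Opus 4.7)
The plan is to reduce joint continuity of $v_{e,\rho}(c)$ to joint continuity of the Busemann function evaluated along a translate of the basepoint, and then invoke two standard continuity facts.

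First I would observe that because $c\in F$ has finite support, there is a finite set $\{y_1,\dots,y_m\}\subseteq Y$ with $y_i = g_i x_i$ such that
\begin{equation*}
v_{e,\rho}(c) \;=\; \min_{1\leq i\leq m}\,\beta_{e}\bigl(\rho(g_i)\,b\bigr),
\end{equation*}
where $\beta_e$ is the Busemann function at $e$ normalized by $\beta_e(b)=0$. Since the minimum of finitely many jointly continuous functions is jointly continuous, it suffices to verify that for each fixed $g\in G$ the map
\begin{equation*}
\Theta_g\colon E\times {\mathcal R}_E \longrightarrow {\mathbb R},\qquad (e,\rho)\longmapsto \beta_{e}\bigl(\rho(g)\,b\bigr)
\end{equation*}
is continuous.

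Next I would factor $\Theta_g$ as the composition of two maps, and treat each separately. The first is the evaluation map $\text{ev}_g\colon {\mathcal R}_E\to M$ sending $\rho\mapsto \rho(g)\,b$. Since ${\mathcal R}_E\subseteq \text{Hom}(G,\text{Isom}(M,E))$ carries the compact-open topology and the singleton $\{g\}\subseteq G$ (with $G$ discrete) is compact, $\text{ev}_g$ is continuous by the definition of that topology. The second is the Busemann map $B\colon \partial M\times M\to {\mathbb R}$, $(e,p)\mapsto \beta_e(p)$. So I would need the joint continuity of $B$ with respect to the cone topology on $\partial M$ and the metric topology on $M$.

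For that joint continuity of $B$ — which I expect to be the main point of the argument — I would use the standard representation $\beta_e(p) = \lim_{t\to\infty}\bigl(t - d(\gamma_e(t),p)\bigr)$ (with the paper's sign convention that $\beta_e(x)\to +\infty$ as $x\to e$), where $\gamma_e$ is the geodesic ray from $b$ to $e$. In a proper $CAT(0)$ space, two rays $\gamma_{e_n}$ and $\gamma_e$ emanating from $b$ with $e_n\to e$ in the cone topology converge uniformly on compact subsets of $[0,\infty)$, and the convergence of the Busemann functions is uniform on bounded subsets of $M$; this is the well-known continuity of the Busemann cocycle on $\partial M\times M$ (see, e.g., standard references on $CAT(0)$ boundaries). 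Hence if $(e_n,p_n)\to (e,p)$, pick $t$ large so that $t - d(\gamma_e(t),p)$ approximates $\beta_e(p)$ to within $\varepsilon/3$; for $n$ large, $d(\gamma_{e_n}(t),\gamma_e(t))<\varepsilon/3$ and $d(p_n,p)<\varepsilon/3$, and the triangle inequality yields $|\beta_{e_n}(p_n)-\beta_e(p)|<\varepsilon$.

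Combining the continuity of $\text{ev}_g$ with the joint continuity of $B$ gives continuity of $\Theta_g = B\circ(\mathrm{id}_E\times \text{ev}_g)$, and taking the minimum over the finitely many indices $i$ completes the proof. The one subtlety — and the step I would spell out carefully — is the uniform-on-bounded-sets convergence of Busemann functions as $e$ varies in the cone topology, everything else being routine.
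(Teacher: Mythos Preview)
The paper does not prove this lemma; it simply cites it as Lemma~8.1 of \cite{BGe16} and marks it with $\square$. So there is no in-paper proof to compare against.

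Your argument is the natural one and is correct in outline: reduce to a finite minimum, factor each term as $(e,\rho)\mapsto \beta_e(\rho(g)b)$, use that $\rho\mapsto\rho(g)b$ is continuous (compact-open topology with $\{g\}$ compact), and invoke joint continuity of $(e,p)\mapsto\beta_e(p)$ on $\partial M\times M$. One minor caution: your $\varepsilon/3$ sketch, as written, only gives the lower bound $\beta_{e_n}(p_n)>\beta_e(p)-\varepsilon$, since monotonicity yields $\beta_{e_n}(p_n)\geq t-d(\gamma_{e_n}(t),p_n)$ for \emph{every} $t$, but the reverse inequality requires knowing that the approximation $t-d(\gamma_{e_n}(t),p_n)\approx\beta_{e_n}(p_n)$ holds for a $t$ that does not escape to infinity with $n$. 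You do cover yourself by citing the standard fact that $e_n\to e$ in the cone topology implies $\beta_{e_n}\to\beta_e$ uniformly on bounded sets (equivalently, that the Busemann cocycle is continuous); that fact is exactly what is needed, and with it the joint continuity is immediate via the 1-Lipschitz property in $p$. If you want a self-contained sketch, replace the $\varepsilon/3$ estimate by: the family $\{\beta_e\}$ is 1-Lipschitz hence equicontinuous, so joint continuity follows from continuity in $e$ for fixed $p$, which is the cited uniform-on-bounded-sets convergence.
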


To keep notation simple, we prove the $n=1$
case of the Theorem in detail; the general inductive case proceeds in
the same way, and is left to the reader.

So, we assume that $E\subseteq {^{\circ} \Sigma}^{1}(_{\rho}M)$ and
for all $\rho '$ in a neighborhood $N_{0}(\rho)$ of $\rho$ that $E\subseteq
{^{\circ} \Sigma}^{0}(_{\rho '}M)$. By the previous sections we can
assume more:

\begin{enumerate}[(1)]

\item $E\subseteq \Sigma ^{1}(_{\rho}M)$ with uniform constant lag
$\lambda \geq 0$ --- see Remark 8.3 of \cite{BGe16}.

\item There is a $G$-volley $\Phi :F_{0}\to fF_{0}$ lifting $\text{id}_A$
and a number $\nu >0$ such that for every $e\in E$ and every $\rho '
\in N_{0}$ there is a selection $\varphi _{e,\rho '}$ from $\Phi$ with
$\text{gsh}_{e}(\varphi _{e,\rho '})>\nu$ --- see Theorem \ref{T:6.4}.
\end{enumerate}

For any positive integer $k$, $\text{gsh}_{e}(\varphi _{e,\rho '}^{k})\geq k\nu $.
We choose $k$ so that $k\nu \geq \lambda +||\partial \mid {\bF}^{(1)}|| + \delta$ where $\delta > 0$ is arbitrary.  

When $e\in E$ and $\rho ' \in N_{0}$ the endomorphisms $\varphi _{e,\rho '}^{k}$ are selections from the finite $G$-volley $\Phi^k : {\bF}^{(0)} \to f{\bF}^{(0)}$.

For each $x\in X_1$ define $\Pi(x) := \{g^{-1}\varphi ^{k} _{e,\rho '}(g\partial x)\mid 
g\in G, e\in E, \rho ' \in N_{0}\}$.  This is a finite set of cycles, hence of boundaries.  

We fix $x\in X_{1}$ and $p\in \Pi(x)$ for a moment. For each $e\in E$ we 
choose ${\bar c}(e)\in F_1$ such that $\partial({\bar c}(e)) = p$ and
\begin{equation*}
v_{e,\rho}({\bar c}(e)) > v_{e,\rho }(p)-\lambda 
\end{equation*}
Then there is a neighborhood $N(e,\rho )=N(e)\times N_{e}(\rho )$ such that for all $(e',\rho ')\in N(e,\rho )$
\begin{equation*}
v_{e',\rho '}({\bar c}(e)) > v_{e',\rho '}(p)-\lambda > v_{e',\rho '}(p)-k\nu + ||\partial\mid {\bF}^{(n)}||+\delta.
\end{equation*}
Since $E$ is compact, a finite set of neighborhoods $N(e_{i})$ covers
$E$. Write $N=(\bigcap _{i} N_{e_{i}})\cap N_0$, a neighborhood of $\rho $.

Still fixing $x$ and $p$, for each $e\in E$ we choose $i$ such that
$e\in N(e_{i})$, and we define $c(e)$ to be ${\bar c}(e_{i})$. Then

\begin {equation*}
v_{e,\rho '}  (c(e)) > v_{e,\rho '}  (p)-k\nu + ||\partial\mid {\bF}^{(n)}||+\delta
\end{equation*}

\noindent Recall that $\del c(e)=p$. Define 
$$\Psi (x):=\{c(e)\mid e\in E, x\in X, p\in \Pi (x)\}\in fF_1$$
and extend $\Psi $ to the associated canonical volley $F_{1}\to fF_{1}$.

Now we let $x$ and $p$ vary; we need to write $c(e,x,p)$ in place
of $c(e)$.  For $(e,\rho ')\in E\times N$ the additive homomorphism
$F_{1}\to F_{1}$ defined by $$\psi _{e, \rho '}(gx):=gc(\rho '(g^{-1})e,
x, g^{-1}\varphi ^{k}_{e,\rho '}(g\del x))$$ is a selection from the
volley $\Psi $. Moreover $\del \circ \psi _{e, \rho '}=\varphi _{e,
\rho '}^{k}\circ \del $, so this selection extends a previous chain map
selection from the volley $\Phi ^{k}$. A calculation shows that it has guaranteed shift $\geq \delta$.
\end{proof}

\begin{cor}\label{open2} Let $\rho $ be an isometric action on $M$ as above. If
$\Sigma ^{n}(_{\rho }M;A)=\partial M$ then there is
a neighborhood $N$ of $\rho$ such that $\Sigma ^{n}(_{\rho '}M;A)=\partial M$
for all $\rho '\in N$. 
\end{cor}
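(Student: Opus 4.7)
The plan is to apply the Openness Theorem \ref{T:6.5} with the particular choice $E=\partial M$ and then translate between $\Sigma^n$ and $\Sigmacirc$ via the Characterization Theorem \ref{T:6.4}. This is more of an assembly exercise than an independent proof: the technical content is already contained in Theorem \ref{T:6.5}.

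First, I would verify that the hypotheses of Theorem \ref{T:6.5} are met with $E = \partial M$. Since $M$ is a proper $CAT(0)$ space, $\partial M$ is compact in the cone topology. Every isometry of $M$ extends to a homeomorphism of the cone-topology boundary, so the boundary is automatically invariant under any isometric $G$-action; this gives $\text{Isom}(M,\partial M) = \text{Isom}(M)$ and hence $\mathcal{R}_{\partial M} = \text{Hom}(G,\text{Isom}(M))$, which contains $\rho$ and a neighborhood of it.

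Next, I would convert the hypothesis. Since $\partial M$ is closed and $G$-invariant, and since $\Sigma^n(_{\rho}M;A)=\partial M$ means $\partial M\subseteq \Sigma^n(_{\rho}M;A)$, Theorem \ref{T:6.4} (equivalence of (i) and (ii) for a closed $G$-invariant set) yields $\partial M\subseteq \Sigmacirc(_{\rho}M;A)$.

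Now I would apply Theorem \ref{T:6.5} to this configuration to obtain a neighborhood $N$ of $\rho$ in $\mathcal{R}_{\partial M}$ such that $\partial M\subseteq \Sigmacirc(_{\rho'}M;A)$ for all $\rho'\in N$. Finally, applying Theorem \ref{T:6.4} in the reverse direction (with the same closed $G$-invariant set $E=\partial M$, for each $\rho'\in N$) gives $\partial M\subseteq \Sigma^n(_{\rho'}M;A)$, which is the desired conclusion $\Sigma^n(_{\rho'}M;A)=\partial M$.

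There is essentially no obstacle; the only conceptual point requiring attention is the observation that $\mathcal{R}_{\partial M}$ coincides with the ambient space $\text{Hom}(G,\text{Isom}(M))$ appearing in the statement of the corollary (so that ``a neighborhood in $\mathcal{R}_{\partial M}$'' is the same as ``a neighborhood of $\rho$'' in the compact-open topology). Everything else is a direct invocation of the two main theorems of the section.
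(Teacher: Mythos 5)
Your proof is correct and is essentially the paper's intended argument: the corollary follows by taking $E=\partial M$ in Theorem~\ref{T:6.5} (noting $\mathcal{R}_{\partial M}=\mathrm{Hom}(G,\mathrm{Isom}(M))$ since every isometry induces a homeomorphism of the boundary) and passing between $\Sigma^n$ and $\Sigmacirc$ via Theorem~\ref{T:6.4}, exactly as the paper signals in the remark following the Openness Theorem.
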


Our other openness theorem, the second part of Theorem \ref{Theorem D}, is:

\begin{thm}[$\text{Tits Openness}$] $\Sigmacirc (M;A)$ is open in the Tits metric topology on $\del M$.
\end{thm}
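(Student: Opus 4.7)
The plan is to show that a single $G$-finitary chain endomorphism witnessing $e \in \Sigmacirc(M;A)$ continues to push toward every point of $\partial M$ that is close to $e$ in the Tits metric. The key inputs are Theorem \ref{T:6.4}, the $G$-invariance of the Tits metric, and a quantitative Busemann-function estimate on bounded subsets of $M$.

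Given $e \in \Sigmacirc(M;A)$, I would invoke Theorem \ref{T:6.4} to obtain a $G$-finitary chain map $\varphi\colon\bF^{(n)}\to\bF^{(n)}$ lifting $\mathrm{id}_A$ with $\mathrm{gsh}_e(\varphi)\geq \nu$ for some $\nu>0$, arising as a selection from a $G$-volley $\Phi$ determined by its restriction to the finite basis $X^{(n)}$. Because each $\Phi(x)$ is finite and every chain involved has finite support in $M$, there is a radius $R<\infty$ such that $h(x)\cup h(\eta)\subseteq B_R(b)$ for every $x\in X^{(n)}$ and every $\eta\in\Phi(x)$.

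The geometric core of the argument is a uniform modulus-of-continuity estimate for Busemann functions: there should be a function $\omega$ with $\omega(\delta)\to 0$ as $\delta\to 0$ such that, for all $p\in B_R(b)$ and all $e_1,e_2\in\partial M$ at Tits distance at most $\delta$,
\begin{equation*}
|\beta_{e_1}(p)-\beta_{e_2}(p)|\leq\omega(\delta).
\end{equation*}
This should follow from CAT(0) comparison: the Alexandrov angle at $b$ between rays to $e_1$ and $e_2$ is bounded above by the Tits distance, and Euclidean comparison then yields an explicit bound of order $R\,\delta$. I expect this to be the main obstacle in the proof — it is elementary but requires care, because the naive estimate $|d(p,\gamma_1(t))-d(p,\gamma_2(t))|\leq d(\gamma_1(t),\gamma_2(t))$ grows linearly in $t$ and so does not directly control the Busemann-function limit.

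With the estimate in hand, $G$-invariance of the Tits metric finishes the job. For any $e''$ at Tits distance $\leq\delta$ from $e$ and any cell $y=gx$, writing $\varphi(y)=g\eta_y$ with $\eta_y\in\Phi(x)$, the pair $(g^{-1}e,g^{-1}e'')$ is again at Tits distance $\leq\delta$ while $h(\eta_y)\cup h(x)\subseteq B_R(b)$; combining the estimate with Lemma \ref{L:2.3} yields
\begin{equation*}
v_{\gamma''}(\varphi(y))-v_{\gamma''}(y)\;=\;v_{g^{-1}\gamma''}(\eta_y)-v_{g^{-1}\gamma''}(x)\;\geq\;\nu-2\omega(\delta).
\end{equation*}
Choosing $\delta$ so small that $2\omega(\delta)<\nu$ makes $\mathrm{gsh}_{e''}(\varphi)>0$. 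Since $\varphi$ is the same $G$-finitary chain map lifting $\mathrm{id}_A$ as before, this places every such $e''$ in $\Sigmacirc(M;A)$, proving that the open Tits ball of radius $\delta$ about $e$ is contained in $\Sigmacirc(M;A)$.
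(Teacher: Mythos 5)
Your proposal reduces the theorem to the assertion that, for a fixed $G$-finitary chain map $\varphi$ of finite norm lifting $\mathrm{id}_A$, the set $\{e\in\partial M : \mathrm{gsh}_e(\varphi)>0\}$ contains a Tits-neighborhood of any $e$ at which $\mathrm{gsh}_e(\varphi)\geq\nu>0$. This is structurally identical to the paper's proof, which observes that $\Sigmacirc(M;A)$ is the union of the sets $\Sigma(\varphi):=\{e:\mathrm{gsh}_e(\varphi)>0\}$ and then cites Theorem~3.9 of \cite{BGe16} for the fact that each such $\Sigma(\varphi)$ is Tits-open. Your use of $G$-equivariance to reduce everything to a single ball $B_R(b)$ around the base point, and the final chain of inequalities deducing $\mathrm{gsh}_{e''}(\varphi)\geq\nu-2\omega(\delta)$, are correct.

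The gap — which you yourself flag as ``the main obstacle'' — is precisely the content that the paper outsources to \cite{BGe16}: the uniform Tits modulus of continuity for Busemann differences on bounded sets. You do not prove it, and the route you sketch is not adequate. Controlling the Alexandrov (or comparison) angle at $b$ by the Tits angle gives $d(\gamma_1(t),\gamma_2(t))\leq 2t\sin(\delta/2)$, a bound that grows linearly in $t$; as you correctly note, this does not control the Busemann limit $\lim_t(t-d(p,\gamma_i(t)))$. The estimate $|\beta_{e_1}(p)-\beta_{e_2}(p)|\leq\omega(\delta)$ for $p\in B_R(b)$ is genuinely delicate: writing $\beta_{e_i}(p)=-d(b,p)\cos\tilde\theta_i$ with $\tilde\theta_i=\lim_t\bar\angle_p(b,\gamma_i'(t))$ (rays from $p$), one needs $|\tilde\theta_1-\tilde\theta_2|$ controlled by the Tits distance, which is a triangle inequality for comparison angles ``at infinity'' rather than for Alexandrov angles, and it is neither an immediate consequence of ``Euclidean comparison'' at $b$ nor something your outline supplies. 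So the proposal's skeleton coincides with the paper's, but the quantitative CAT(0) lemma carrying all the weight is left unproven, and the hint you give for proving it does not close the gap. You would either need to cite Theorem~3.9 of \cite{BGe16} outright, or supply a careful proof of the Tits-Lipschitz estimate for Busemann differences at bounded pairs of points.
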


\begin{proof}
The proof of this is exactly the same as the corresponding proof
for ${{^{\circ} \Sigma ^{0}}}(M;A)$ in \cite{BGe16}. We briefly recall it here.
The set $\Sigmacirc (M;A)$ can be described as the union of subsets of the form
$$\Sigma (\varphi ):=\{e\mid \text{gsh}_{e}(\varphi)>0\}$$ where $\varphi
$ runs through all  $G$-finitary endomorphisms of  ${\bf F}^{n}$  which
commute with the augmentation $\epsilon $ and satisfy $\text {gsh}_{e}>0$
for some $e\in \Sigmacirc (M;A)$.  The norm of a $G$-finitary map is
always finite, so the theorem follows from Theorem 3.9 of \cite{BGe16}
which asserts that under these conditions  $\Sigma (\varphi )$ is an
open subset of $\del M$ in the Tits metric topology.  
\end{proof}

\section{Connections with Novikov homology}\label{novikov}

\subsection{The Novikov module}

Assume given: an isometric action of $G$ on $M$, an end point $e\in \partial
M$, and a base point $b\in M$.  As we have seen, this action extends 
to a topological action of $G$ on $\partial M$; let $G_e$ denote the 
subgroup of $G$ which fixes $e$.

From now on we allow any commutative ring $K$ as ground ring\footnote{In
fact all our work up to this point goes through for such a ground ring
$K$.} unless we restrict it explicitly.

The {\it Novikov module} $\widehat {KG}^e$ is the $(KG_e, KG)$-bimodule
defined as follows: As a set, it consists of all (possibly) infinite
sums $\sum_{g\in G, r_{g}\in K}r_{g}g$ such that, for any horoball $HB$ at $e$, all
but finitely many of the points $gb$ for which $r_{g}\in R$ is non-zero
lie in $HB$. This definition is independent of $b$.  The abelian group
structure is termwise addition.  The right action of $G$ is by termwise
right multiplication; note that $\widehat {KG}^e$ is preserved under right
multiplication by $g\in G$ because the effect is merely to change the
base point from $b$ to $gb$.  Left multiplication by $g\in G$ preserves
$\widehat {KG}^e$ if and only if $g\in G_e$. One thinks of $\widehat
{KG}^e$ as a sort of ``completion towards $e$'' of the group algebra
$KG$. It is a generalization of what is often called the ``Novikov ring ''; 
however, in the present generality there
is no obvious multiplication which would make $\widehat {KG}^e$ a ring.

\subsection{Novikov chains}

Starting with a controlled based free resolution ${\mathbf F}\thra A$
(over $M$) we consider the homology of the chain complex $\widehat
{KG}^e\otimes _{KG}{\mathbf F}$ of left $KG_e$-modules.  This is the {\it
Novikov homology of } $A$ {\it with respect to } $\rho $ {\it and } 
$e\in \partial M$.

To give this a more geometric interpretation we describe the chain complex
in a different way.  As before, $X_k$ denotes the given basis for $F_k$
and $Y_k=GX_k$ is the corresponding $K$-basis.  A {\it Novikov $k$-chain}
(with respect to $e$) is a (possibly) infinite $k$-chain of the form
$c=\sum_{y\in Y_k}r_{y}y$ such that

\begin{enumerate}[(i)]

\item for every horoball $HB$ at $e$ all but a finite subset of 
supp$_{Y}(c)$ lies over $HB$,
and
\item there is a finite subset $X_{k}(c)$ of $X_k$ such that all the
members of supp$_{Y}(c)$ are (left) $G$-translates of members
of $X_{k}(c)$; i.e. the $X$-support of $c$ is finite.
\end{enumerate}

When $X_k$ is finite the second condition is redundant.  Typically $X_k$
is finite for the values $k\leq n$ of interest, but the second condition
can be important in the next dimension $n+1$.

We write $C^{e}_k$ for the set of all such chains, with the obvious
left $G_e$-module structure.  Thus we get a chain complex $\mathbf C^e$
and we write $H^e_k$ for the corresponding homology.
The map ${\mathbf C^e}\to \widehat {KG}^e\otimes _{KG}{\mathbf F}$ which
rewrites $\sum_{y\in Y_k}r_{y}y$ as $\sum_{x\in X_k}(\sum_{g\in G}r_{g,
x}g)x$ and takes it to $\sum_{x\in X_k}(\sum_{g\in G}r_{g,x}g)\otimes x$
is an isomorphism of chain complexes.  Thus $H^{e}_{k}$ is isomorphic
to  $\text{Tor}_{k}(\widehat {KG}^{e}, A)$ as an $KG_e$-module, and is
therefore independent of the choice of resolution of $A$.

\subsection{Homological characterization of $\Sigmacirc (M;A)$}

\begin{thm}\label{T:10.1} Assume ${\mathbf F}^{(n)}$ is finitely 
generated over $KG$.  Let $e\in \partial M$.
$$e\in \Sigmacirc (M;A) \text{ if and only if } \text{ \rm 
Tor}_{k}(\widehat {KG}^{e'};A)=0 \text { for all } e'\in \text{ \rm 
cl}Ge \text{ and all } k\leq n.$$

\end{thm}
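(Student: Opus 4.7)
The plan is to establish both implications using the dynamical description of $\Sigmacirc(M;A)$ from Theorems \ref{T:6.1} and \ref{T:6.4} as a bridge: the forward direction will produce a Novikov null-homotopy through a telescoping series built from an iterated $G$-finitary push, while the reverse direction will, by induction on $n$, extract a $G$-finitary volley from Novikov null-homotopies through truncation and compactness.

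For $(\Rightarrow)$, I start from the hypothesis $e\in\Sigmacirc(M;A)$: Theorem \ref{T:6.1} gives $\text{cl}(Ge)\subseteq\Sigma^n(M;A)$ and Theorem \ref{T:6.4} upgrades this to a uniform $G$-volley $\Phi:\mathbf{F}^{(n)}\to f\mathbf{F}^{(n)}$ inducing $\text{id}_A$ together with chain-map selections $\varphi_{e'}$ satisfying $\text{gsh}_{e'}(\varphi_{e'})\geq\nu$ uniformly in $e'\in\text{cl}(Ge)$. I will extend each $\varphi_{e'}$ to all of $\mathbf{F}$ via the Finitary Comparison Theorem \ref{L:5.1} and invoke Proposition \ref{P:5.2} to fix a single degree-$1$ $G$-volley $\Sigma$ providing chain-homotopy selections $\sigma_{e'}:\varphi_{e'}\sim\text{id}$. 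Given a Novikov $k$-cycle $z$ with $k\leq n$, the formal series
\[
\tau_{e'}(z):=\sum_{j=0}^{\infty}\sigma_{e'}\,\varphi_{e'}^{\,j}(z)
\]
will be a genuine Novikov $(k+1)$-chain: the $G$-finitariness of $\Phi$ and $\Sigma$ confines its summands to a common finite $X$-support, and Lemma \ref{L:3.3} gives $v_\gamma(\sigma_{e'}\varphi_{e'}^{\,j}(z))\geq v_\gamma(z)+j\nu-\|\Sigma\|$, forcing their supports to escape every horoball. The telescoping identity $\partial\sigma_{e'}+\sigma_{e'}\partial=\varphi_{e'}-\text{id}$ applied term-by-term yields $\partial\tau_{e'}(z)=-z$, so $\text{Tor}_k(\widehat{KG}^{e'};A)=H_k(\mathbf{C}^{e'})=0$ for every $e'\in\text{cl}(Ge)$ and every $k\leq n$.

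For $(\Leftarrow)$, Theorem \ref{T:6.1} again lets me reduce to showing $\text{cl}(Ge)\subseteq\Sigma^n(M;A)$, which by Theorem \ref{T:6.4} amounts to constructing a uniform $G$-volley on $\mathbf{F}^{(n)}$ with selections pushing towards each $e'\in\text{cl}(Ge)$. I will proceed by induction on $n$. For the base case $n=0$, the vanishing of $\text{Tor}_0$ at each $e'$ gives, for every $x\in X_0$, a Novikov $1$-chain $\xi^{e',x}$ with $\partial\xi^{e',x}=x$; a truncation at a deep horoball yields $\varphi_{e'}(x):=x-\partial\xi_{\text{near}}=\partial\xi_{\text{far}}$, lifting $\text{id}_A$ with high valuation at $e'$, and Lemma \ref{continuous} together with compactness of $\text{cl}(Ge)$ packages finitely many such choices into a finite $G$-volley of uniform positive shift. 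For the inductive step I assume a uniform volley $\Phi'$ with selections $\varphi'_{e'}$ on $\mathbf{F}^{(n-1)}$ and, for each $x\in X_n$, form the finite cycle-set $\Pi(x):=\{g^{-1}(\varphi'_{e'})^{k}(g\partial x)\}$ as in the proof of Theorem \ref{T:6.4}(i)$\Rightarrow$(iv). Each $\pi\in\Pi(x)$ admits both an ordinary finite bound $c_0\in F_n$ (since $\mathbf{F}$ is a resolution) and a Novikov bound $c'$ with $\partial c'=\pi$ (from $\text{Tor}_{n-1}=0$); the difference $c_0-c'$ is then a Novikov $n$-cycle bounded by a Novikov $(n+1)$-chain $d$ (from $\text{Tor}_n=0$), and a suitable truncation of $d$ produces a finite $n$-chain bounding $\pi$ with high valuation. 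Compactness across $\text{cl}(Ge)$ once more packages these choices into the extended volley $\Psi$ on the $n$-skeleton.

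The main obstacle will be the inductive step of $(\Leftarrow)$: the vanishing of $\text{Tor}_{n-1}$ alone only furnishes an infinite Novikov bound of $\pi$, whose truncation produces a finite chain whose boundary disagrees with $\pi$ by a finite error, and it is precisely the vanishing of $\text{Tor}_n$ that allows that error to be absorbed through a second truncation at the level of Novikov $(n+1)$-chains. Carrying out these two successive truncations while preserving $G$-finitariness and maintaining a uniform positive shift across the compact orbit-closure $\text{cl}(Ge)$ is the delicate combinatorial part, and it follows the compactness template developed in the proofs of Theorems \ref{T:6.4}, \ref{T:7.8}, and \ref{T:6.5}.
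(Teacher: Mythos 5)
Your forward direction is essentially the paper's argument: the telescoping series $\tau_{e'}(z)=\sum_j \sigma_{e'}\varphi_{e'}^j(z)$ is a reindexing of the paper's $\bar\varphi^{e'}\sigma(z)$, and the finite-$X$-support and escape-from-horoballs observations are the right reasons it is a genuine Novikov chain. (Up to a sign convention for the homotopy, $\partial\tau_{e'}(z)=-z$ or $+z$ as appropriate.) The paper packages this same computation into two lemmas ($\Sigmacirc\subseteq\widetilde\Sigma^n$ and $\widetilde\Sigma^n\Rightarrow H^e_n=0$), introducing the notion of a Lipschitz deformation and the intermediate invariant $\widetilde\Sigma^n$; your route skips the definition but is doing the same thing.

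The reverse direction has a genuine gap in the inductive step. You write that $\operatorname{Tor}_{n-1}(\widehat{KG}^{e'};A)=0$ furnishes a Novikov bound $c'$ of $\pi$; this is true, but mere acyclicity gives no control over $v_\gamma(c')$, and that control is exactly what the conclusion needs. After the truncation $d=d_{\text{near}}+d_{\text{far}}$, your candidate chain is
\[
c'' \;=\; c_0 - \partial d_{\text{near}} \;=\; c' + \partial d_{\text{far}},
\]
so $v_\gamma(c'')\geq \min\{v_\gamma(c'),\,v_\gamma(\partial d_{\text{far}})\}$. Choosing the truncation point deeply makes $v_\gamma(\partial d_{\text{far}})$ as large as you like, but $v_\gamma(c')$ is an unconstrained constant, so $c''$ may sit at uncontrolled valuation however deep you truncate. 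No second truncation fixes this: the error $\partial c'_{\text{near}}$ lives at \emph{low} valuation (the near part is, by definition, the finitely many low-valuation terms), so truncating $c'$ first only moves the problem around. The paper resolves this precisely by \emph{not} sourcing $c'$ from Tor-vanishing: it takes $w=\bar\varphi^{e'}\sigma(\pi)$, where $\sigma$ is a Lipschitz deformation arising from the inductive volley, so that $v_\gamma(w)\geq v_\gamma(\pi)-\nu(v_\gamma(\pi))$ is controlled; only then does the single truncation of $u$ (the paper's $d$) deliver the constant lag. That is the content of the third lemma of Section \ref{novikov}, and it is the reason the intermediate invariant $\widetilde\Sigma^n$ appears at all. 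Your inductive hypothesis actually contains the ingredients to repair this --- from the uniform volley $\Phi'$ and the Finitary Comparison Theorem \ref{L:5.1} you can extract a $G$-finitary homotopy $\sigma'$, set $c':=\bar\varphi'_{e'}\sigma'_{e'}(\pi)$, and quote Lemma \ref{L:3.3} for the valuation bound --- but as written the proposal replaces this valuation-controlled construction with a mere existence statement from Tor, which does not suffice.
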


In proving this theorem we can replace $\text{ \rm Tor}_{k}(\widehat
{KG}^{e'};A)$ by $H^{e'}_{k}$.
Let $\varphi :{\mathbf F}^{(n)}\to {\mathbf F}^{(n)}$ be an $K$-chain
map which pushes ${\mathbf F}^{(n)}$ towards $e$.  A {\it Lipschitz
deformation} for $\varphi $ is a $K$-chain homotopy $\sigma :{\mathbf
F}^{(n)}\to {\mathbf F}$ between the identity map and $\varphi $ such that
there exists a function $\nu :{\mathbb R}\to {\mathbb R}$ satisfying

\begin{equation}\label{extra}
v_\gamma (\sigma (y))\geq v_\gamma (y)-\nu (v_\gamma (y))
\end{equation}

for every $y$. This suggests a new $\Sigma $-invariant, namely:

$${\widetilde \Sigma }^{n}(M;A)=\{e\in \partial M\mid \text { 
there is such a push and Lipschitz deformation }\}$$
By definition $\widetilde{\Sigma}^{-1}=\partial M$.

We note that the resolution $\mathbf F$ is a subcomplex of ${\mathbf
C}^e$.  The chain map $${\bar \varphi}^{e}:=1+\varphi +\varphi 
^{2}+\cdots :({\mathbf
C}^{e})^{(n)}\to ({\mathbf C}^{e})^{(n)}$$ is well defined.  The valuation
$v_{\gamma}$ on $\mathbf F$ extends to ${\mathbf C}^e$ in the obvious way,
hence also do such notions as ``guaranteed shift".

\begin{lemma}
$\Sigmacirc (M;A)\subseteq {\widetilde \Sigma}^{n}(M;A)$
\end{lemma}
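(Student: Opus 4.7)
The plan is to take a witness for $e \in \Sigmacirc(M;A)$ --- namely, a $G$-finitary chain map $\varphi:\mathbf{F}^{(n)}\to\mathbf{F}^{(n)}$ lifting $\mathrm{id}_A$ with $\mathrm{gsh}_e(\varphi)>0$ --- and manufacture from it a $K$-chain homotopy $\sigma:\mathrm{id}\sim\varphi$ verifying the Lipschitz inequality (\ref{extra}). The key observation is that a $G$-finitary map has a \emph{uniform} bound on how far it moves the $h$-support, which is much stronger than the Lipschitz condition, where $\nu$ is merely required to exist as a function of $v_\gamma(y)$.

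First, I would extend $\varphi$ from the $n$-skeleton to a $G$-finitary chain endomorphism of the full resolution $\mathbf{F}$ lifting $\mathrm{id}_A$. This uses the existence half of the Finitary Comparison Theorem \ref{L:5.1}, realized by building canonical $G$-volleys on the based free modules $F_k$ inductively for $k>n$. Next, both $\mathrm{id}_{\mathbf{F}}$ and the extended $\varphi$ are $G$-finitary lifts of $\mathrm{id}_A$, so the uniqueness half of Theorem \ref{L:5.1} produces a $G$-finitary chain homotopy $\sigma:\mathbf{F}\to\mathbf{F}$ satisfying $\partial\sigma+\sigma\partial=\varphi-\mathrm{id}$. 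Its restriction to $\mathbf{F}^{(n)}$ is a $K$-chain homotopy $\mathbf{F}^{(n)}\to\mathbf{F}^{(n+1)}\subseteq\mathbf{F}$ between $\mathrm{id}$ and $\varphi$ on the $n$-skeleton, as required in the definition of a Lipschitz deformation.

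Finally, since $\sigma|_{\mathbf{F}^{(n)}}$ is $G$-finitary and $\mathbf{F}^{(n)}$ is finitely generated, its norm is finite: the $G$-volley witnessing $\sigma$ has finite values on each element of the finite basis $X^{(n)}$, and $G$-equivariance propagates this to all of $Y^{(n)}$. Lemma \ref{L:3.2}(i) then yields
\[
v_\gamma(\sigma(y))\geq v_\gamma(y)-\|\sigma|_{\mathbf{F}^{(n)}}\|
\qquad\text{for every }y\in Y^{(n)},
\]
so the constant function $\nu\equiv\|\sigma|_{\mathbf{F}^{(n)}}\|$ certifies (\ref{extra}), and $\sigma|_{\mathbf{F}^{(n)}}$ is a Lipschitz deformation for $\varphi$.

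No real obstacle arises: the lemma is essentially the observation that the ``push plus $G$-finitary nullhomotopy'' data packaged into $\Sigmacirc$ forgets down to the ``push plus Lipschitz homotopy'' data defining $\widetilde{\Sigma}^n$, and the passage from ``$G$-finitary'' to ``Lipschitz'' is automatic because $G$-finitary maps out of finitely generated based free modules have finite norm. The one point that requires care is applying Theorem \ref{L:5.1} on the \emph{full} resolution rather than just the $n$-skeleton, which is why the extension step is carried out first.
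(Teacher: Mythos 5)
Your proof is correct and takes essentially the same approach as the paper's one-line proof: produce a $G$-finitary chain homotopy $\sigma:\mathrm{id}\sim\varphi$ on $\mathbf{F}^{(n)}$ via the Finitary Comparison Theorem \ref{L:5.1}, and observe that its finite norm (which, by Lemma \ref{L:3.2}(i), gives $v_\gamma(\sigma(y))\geq v_\gamma(y)-\|\sigma\|$) makes it a Lipschitz deformation with constant $\nu$. The only minor detour is the preliminary extension of $\varphi$ to the full resolution; this is unnecessary since the inductive construction in Proposition \ref{P:5.2} already yields the $G$-finitary homotopy skeleton by skeleton, but it is harmless and is a perfectly sound way to apply Theorem \ref{L:5.1} as stated.
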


\begin{proof}
If $\varphi $ is a $G$-finitary  push
of ${\mathbf F}^{(n)}$ towards $e$ then any $G$-finitary chain homotopy
between the identity map and $\varphi$ is a Lipschitz deformation for
$\varphi$.
\end{proof}

\begin{lemma} If $e\in {\widetilde \Sigma }^{n}(M;A)$ then $e\in
{\widetilde \Sigma }^{n-1}(M;A)$ and $H^{e}_{n}=0$.
\end{lemma}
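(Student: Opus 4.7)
The first claim is a matter of restriction. Given a push $\varphi:\mathbf{F}^{(n)}\to\mathbf{F}^{(n)}$ lifting $\mathrm{id}_A$ together with a Lipschitz deformation $\sigma:\mathbf{F}^{(n)}\to\mathbf{F}$, I would simply restrict both to $\mathbf{F}^{(n-1)}$. Since $\varphi$ preserves grading, $\varphi|_{\mathbf{F}^{(n-1)}}$ lands in $\mathbf{F}^{(n-1)}$, still lifts $\mathrm{id}_A$, and still pushes towards $e$ with the same guaranteed shift; the restricted homotopy $\sigma|_{\mathbf{F}^{(n-1)}}:\mathbf{F}^{(n-1)}\to\mathbf{F}$ inherits the Lipschitz estimate through the same function $\nu$. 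This exhibits $e\in\widetilde\Sigma^{n-1}(M;A)$.

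For the vanishing of $H^e_n$ my plan is a telescoping argument inside the Novikov chain complex $\mathbf{C}^e$, which already contains $\bar\varphi^e=1+\varphi+\varphi^2+\cdots$ as a chain map. The first step is to extend $\sigma$ termwise to a $K$-linear map $\hat\sigma:(\mathbf{C}^e)^{(n)}\to\mathbf{C}^e$; the Lipschitz inequality $v_\gamma(\sigma(y))\geq v_\gamma(y)-\nu(v_\gamma(y))$ is precisely what ensures that $\sum r_y\sigma(y)$ lands in Novikov chains whenever $\sum r_y y$ does. Next, given a Novikov $n$-cycle $z\in C^e_n$, I would iterate the homotopy identity $\varphi-\mathrm{id}=\partial\sigma+\sigma\partial$, using $\partial\varphi^i(z)=0$, to obtain
$$\varphi^{k+1}(z)-z=\partial\Bigl(\sum_{i=0}^{k}\sigma\varphi^i(z)\Bigr),\qquad k\geq 0.$$
Passing to the limit in $\mathbf{C}^e$, the left-hand side tends to $-z$ (since the positive guaranteed shift of $\varphi$ sends the support of $\varphi^{k+1}(z)$ into arbitrarily deep horoballs at $e$), and the right-hand side becomes $\partial\hat\sigma\bar\varphi^e(z)$. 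Therefore $z=-\partial\hat\sigma\bar\varphi^e(z)$ is the boundary of an element of $C^e_{n+1}$, so $H^e_n=0$.

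The technical crux will be the convergence verification for $\hat\sigma$ and for $\sum_i\sigma\varphi^i(z)$. The horoball condition is where the two hypotheses cooperate: Lipschitz control of $\sigma$ combined with the strictly positive $\mathrm{gsh}_e(\varphi)$ forces only finitely many summands to have support meeting the complement of any given horoball at $e$. The finite-$X$-support condition required of a Novikov chain is more delicate---it depends on how the cellular supports of $\sigma(y)$ vary with $y$---and will need to be extracted either from the finiteness built into ``Lipschitz deformation'' or from the $G$-equivariance structure inherited through $\varphi$ and $\bar\varphi^e$; this is the step where I would expect the argument to need the most care. Once both checks are in place, the telescoping identity delivers the desired bounding chain.
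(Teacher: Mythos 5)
Your argument is correct and is, at its core, the paper's telescoping argument. The restriction step for $e\in\widetilde\Sigma^{n-1}(M;A)$ is right and is so immediate that the paper's proof does not even record it. For $H^e_n=0$, the paper's one-line proof exhibits the bounding chain as $\bar\varphi^e\sigma(z)$; read literally this has a degree defect, since $\sigma(z)$ lives in $C^e_{n+1}$ where $\varphi$ (and hence $\bar\varphi^e$) is not defined. Your expression $\pm\partial\hat\sigma\bar\varphi^e(z)$ is the degree-correct form of the same identity, obtained by iterating $\partial\sigma+\sigma\partial=\varphi-\mathrm{id}$ on cycles and passing to the limit using $\mathrm{gsh}_e(\varphi)>0$; the sign is a matter of convention. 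You are also right that the Lipschitz estimate is exactly what extends $\sigma$ to a map $\hat\sigma$ on Novikov chains satisfying horoball condition (i). Finally, the one place you are more careful than the paper is the finite-$X$-support condition (ii) on the bounding $(n+1)$-chain: a Lipschitz deformation is only required to be $K$-linear, not $G$-equivariant or $G$-finitary, so it is not automatic that the images $\sigma(y)$, as $y$ ranges over the infinitely many $Y_n$-elements appearing in $\bar\varphi^e(z)$, collectively meet only finitely many $G$-orbits of $X_{n+1}$; the paper's terse proof passes over this point, and you are right to single it out as the step requiring genuine care rather than treating it as obvious.
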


\begin{proof} If $z$ is an $n$-cycle in $\mathbf C^e$ then a calculation gives
$z=\partial {\bar \varphi}^{e}\sigma (z)$.
\end{proof}

\begin{lemma} If $e\in {\widetilde \Sigma }^{n-1}(M;A)$ and $H^{e}_{n}=0$
then $e\in \Sigma^{n}(M;A)$.
\end{lemma}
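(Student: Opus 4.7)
The plan is to verify that $\bf F$ is controlled $(n-1)$-acyclic over $e$. By the hypothesis $e\in\widetilde\Sigma^{n-1}(M;A)$ there is a $K$-chain map $\varphi:{\bf F}^{(n-1)}\to{\bf F}^{(n-1)}$ lifting $\text{id}_A$ and pushing towards $e$ with some positive guaranteed shift $\mu'$, together with a Lipschitz chain homotopy $\sigma:\text{id}\sim\varphi$ whose cells satisfy $v_\gamma(\sigma(y))\geq v_\gamma(y)-\nu(v_\gamma(y))$. Set $\mu(s):=\inf_{t\geq s}(t-\nu(t))$: this is a non-decreasing function that the Lipschitz condition forces to tend to $+\infty$. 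I will show that for every $i$-cycle $z$ with $v_\gamma(z)\geq s$ and $-1\leq i\leq n-1$ there is a filling $c\in F_{i+1}$ with $v_\gamma(c)\geq\mu(s)$; this establishes $CA^{n-1}$ over $e$ with lag $\lambda(s)=s-\mu(s)$.

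For $i\leq n-2$ this is a direct adaptation of the proof of Proposition \ref{P:6.2}(i), replacing the $G$-finitary norm bound by the Lipschitz bound on $\sigma$. Pick any $c_0\in F_{i+1}\subseteq{\bf F}^{(n-1)}$ with $\partial c_0=z$ (available since $\bf F$ resolves $A$) and write $\tau_k:=\sigma(1+\varphi+\cdots+\varphi^k)$, so that $\varphi^{k+1}-\text{id}=\partial\tau_k+\tau_k\partial$ yields $z=\partial[\varphi^{k+1}(c_0)-\tau_k(z)]$. Iterated $\varphi$-pushing makes $v_\gamma(\varphi^{k+1}(c_0))\geq v_\gamma(c_0)+(k+1)\mu'\geq\mu(s)$ for large $k$, and the Lipschitz bound together with $\varphi$-pushing gives $v_\gamma(\tau_k(z))\geq\mu(v_\gamma(z))\geq\mu(s)$.

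The crux is $i=n-1$, where $\varphi$ is not defined on $F_n$ so the direct iteration fails; here $H^e_n=0$ is used. Fix an $(n-1)$-cycle $z$ with $v_\gamma(z)\geq s$ and any finite filling $c_0\in F_n$. Form the infinite sum
\[
\bar c\;:=\;-\sum_{k\geq 0}\sigma(\varphi^k(z)).
\]
Since $X_n$ is finite (giving Novikov condition (ii) for free), each cell $y\in Y_n$ appears in only finitely many $\sigma(\varphi^k(z))$ (because $v_\gamma(\sigma(\varphi^k(z)))\geq\mu(v_\gamma(z)+k\mu')\to+\infty$), and the same estimate yields Novikov condition (i); hence $\bar c$ is a bona fide Novikov $n$-chain in $({\bf C}^e)_n$. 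The telescoping identity $\partial\sigma(\varphi^k(z))=\varphi^{k+1}(z)-\varphi^k(z)$, evaluated termwise, gives $\partial\bar c=z$. Consequently $c_0-\bar c$ is an $n$-cycle in ${\bf C}^e$, and $H^e_n=0$ supplies $d\in({\bf C}^e)_{n+1}$ with $\partial d=c_0-\bar c$.

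To extract a finite filling in $\bf F$ I truncate $d$ at a valuation threshold $t$: split $d=d_{<t}+d_{\geq t}$ by partitioning the support. Novikov condition (i) makes $d_{<t}$ finite, hence an element of $F_{n+1}$; Novikov condition (ii) provides a finite $X$-support $X_{n+1}(d)$, so $B:=\max_{x\in X_{n+1}(d)}\|\partial x\|$ is finite. The chain $c^*:=c_0-\partial d_{<t}\in F_n$ satisfies $\partial c^*=z$ and rewrites as $c^*=\bar c+\partial d_{\geq t}$; using $v_\gamma(\bar c)\geq\mu(s)$ and $v_\gamma(\partial d_{\geq t})\geq t-B$, the choice $t\geq\mu(s)+B$ forces $v_\gamma(c^*)\geq\mu(s)$, completing the proof that $e\in\Sigma^n(M;A)$. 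The main obstacle is precisely this last case: one must make rigorous the formal Novikov sum $\bar c$ (where the standing hypothesis that ${\bf F}^{(n)}$ is finitely generated plays an essential role) and then convert the abstract Novikov filling $d$ obtained from $H^e_n=0$ into a genuine element of $F_{n+1}$ via truncation, a step where both Novikov conditions (i) and (ii) are indispensable.
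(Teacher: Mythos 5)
Your proof is correct and follows the same basic strategy as the paper's argument: form an infinite Novikov filling of $z$ by iterating the push, observe that the difference from a finite filling $c_0$ is an $n$-cycle in $\mathbf{C}^e$, invoke $H^e_n=0$ to bound it by a Novikov $(n+1)$-chain $d$, and then truncate $d$ at a valuation threshold to extract a genuine finite filling $c^*\in F_n$ with controlled valuation. Two points where your write-up differs from (and slightly tightens) the paper's. First, you treat the dimensions $i\leq n-2$ explicitly via the $\varphi^{k+1}(c_0)-\tau_k(z)$ argument, whereas the paper proves only the top case $i=n-1$ and leaves the lower cases to be supplied by the inductive chain of the surrounding lemmas (descending from $\widetilde\Sigma^{n-1}$ via the previous lemma and the present one with $n$ replaced by $n-1$). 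Second, you form the Novikov filling as $\bar c=-\sum_k\sigma(\varphi^k(z))$, applying $\sigma$ last; this is well-defined since each $\varphi^k(z)$ lies in $F_{n-1}$, in the domain of both $\varphi$ and $\sigma$. The paper instead writes $w=\bar\varphi^e\sigma(z)$, which read literally requires $\bar\varphi^e$ (hence $\varphi$) to act in degree $n$, something the $\widetilde\Sigma^{n-1}$ hypothesis does not directly supply; your ordering sidesteps that. Both versions have $\partial=z$ and the same valuation bound, so the substance is the same.

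One small caveat applying equally to both proofs: you assert that the Lipschitz condition forces $\mu(s)=\inf_{t\geq s}(t-\nu(t))\to+\infty$, but nothing in the stated definition of a Lipschitz deformation requires $t-\nu(t)\to\infty$; this growth is needed for the resulting lag to satisfy the $CA^{n-1}$ requirement that $s-\lambda(s)\to\infty$. The paper's proof (using lag $\nu(v_\gamma(z))$) relies on the same implicit hypothesis, so this is an issue with the definition's formulation rather than a gap in your argument.
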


\begin{proof} The case $n=0$ is clear, so we assume $n>0$.  Let $z\in
F_{n-1}$ be a cycle.  Define $w:={\bar \varphi}^{e}\sigma (z)\in 
C^{e}_n$ where $\sigma $ comes from the ${\widetilde \Sigma}$ 
hypothesis.  Then $\partial w=z$
and (see the inequality (\ref{extra})):
$$v_{\gamma}(w)\geq v_{\gamma}(z) -\nu (v_{\gamma}(z))$$
Since $\mathbf F$ is acyclic in dimension $n-1$, $z=\partial c$ for
some finite $n$-chain $c$.  The chain $w-c$ is a cycle in $C_{n}^{e}$,
and $H_{n}^{e}=0$, so there is a chain $u\in C^{e}_{n+1}$ with
$\partial u=w-c$. The chain $u$ is a finite $KG$-combination of members of
$X_{n+1}$, so there is a number $\lambda \geq 0$ such that $v(\partial
u)\geq v(u)-\lambda $.  Thus we can write  $u=u_{1}+u_{2}$ where
$u_1$ is finite and $$v_{\gamma}(\partial u_2)\geq v_{\gamma}(z) -\nu
(v_{\gamma}(z))$$
Define $c':=w-\partial u_2$.  Then $\partial c'=\partial w=z$, $c'$ is
a finite chain because $c'=\partial u_{1}+c$, and
$v_{\gamma}(c')\geq v_{\gamma}(z)-\nu (v_{\gamma}(z))$.

\end{proof}

\begin{cor}\label{C:10.1} $e\in \Sigmacirc (M;A)$ if and only if, for
all $e'\in \text {\rm cl}Ge, e'\in {\widetilde \Sigma }^{n-1}(M;A)\text{
and } H^{e'}_{n}=0$.
\end{cor}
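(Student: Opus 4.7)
The plan is to deduce the corollary by combining the three preceding lemmas with the Characterization Theorem \ref{T:6.1} and its companion Theorem \ref{T:5.5} (that $\Sigmacirc(M;A)$ contains the closure of each of its $G$-orbits). Once those four ingredients are in hand, the proof is essentially a syllogism: the substantive analytic content has already been absorbed into Lemmas $1$--$3$ (the inclusion $\Sigmacirc \subseteq \widetilde{\Sigma}^n$, the step-down from $\widetilde{\Sigma}^n$ to $\widetilde{\Sigma}^{n-1}$ together with $H^e_n = 0$, and the homological promotion from $\widetilde{\Sigma}^{n-1}$ with $H^e_n=0$ back to $\Sigma^n$).

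For the forward implication, I would start with $e \in \Sigmacirc(M;A)$. Theorem \ref{T:5.5} immediately upgrades this to $\mathrm{cl}(Ge) \subseteq \Sigmacirc(M;A)$, so pick an arbitrary $e' \in \mathrm{cl}(Ge)$. The first lemma of the section gives $e' \in \widetilde{\Sigma}^n(M;A)$, and the second lemma then yields both $e' \in \widetilde{\Sigma}^{n-1}(M;A)$ and $H^{e'}_n = 0$, which is exactly the right-hand condition.

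For the reverse implication, suppose that for every $e' \in \mathrm{cl}(Ge)$ one has $e' \in \widetilde{\Sigma}^{n-1}(M;A)$ together with $H^{e'}_n = 0$. Applying Lemma $3$ pointwise to each such $e'$ places $e'$ in $\Sigma^n(M;A)$, and therefore $\mathrm{cl}(Ge) \subseteq \Sigma^n(M;A)$. Theorem \ref{T:6.1} now concludes $e \in \Sigmacirc(M;A)$.

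The one matter to watch, rather than a real obstacle, is that the hypothesis and conclusion both quantify over $\mathrm{cl}(Ge)$ rather than over $e$ alone; this is precisely what allows the $G$-orbit-closure characterisation of $\Sigmacirc$ from Theorem \ref{T:6.1} to mesh with the pointwise homological criterion. No further computation with Novikov chains or with the chain homotopy $\sigma$ is required at this final step, since all of that was already carried out in the proofs of the three lemmas.
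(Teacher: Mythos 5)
Your proof is correct and follows essentially the same route as the paper's: Theorem \ref{T:5.5} to propagate membership in $\Sigmacirc$ to the orbit closure, the three lemmas of the section applied pointwise to each $e'\in\text{cl}(Ge)$, and the Characterization Theorem \ref{T:6.1} to return from $\text{cl}(Ge)\subseteq\Sigma^n(M;A)$ to $e\in\Sigmacirc(M;A)$. The only difference is presentational: the paper compresses the argument into a single cyclic chain of implications, while you separate the two directions, which makes the universal quantification over $\text{cl}(Ge)$ a little more transparent.
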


\begin{proof} Let $e'\in \text {\rm cl}Ge$. We use the Lemmas: $e\in 
\Sigmacirc (M;A)$ implies $e'\in
\Sigmacirc (M;A)$ (by Theorem \ref{T:5.5}), hence $e'\in 
{\widetilde \Sigma }^{n}(M;A)$.  This implies $e'\in {\widetilde 
\Sigma }^{n-1}(M;A) \text{ and } H^{e'}_{n}=0$; hence $e'\in 
\Sigma^{n}(M;A)$, which implies $e\in
\Sigmacirc (M;A)$ by the Characterization Theorem (Theorem \ref{T:6.1}).
\end{proof}

{\it Proof of Theorem \ref{T:10.1}:} It follows from
Corollary \ref{C:10.1} by induction, using the fact that $\text{ \rm
Tor}_{k}(\widehat {KG}^{e'};A)=H^{e'}_{k}$.   \hfill $\square$

\begin{Rem} If we define $\Sigma _{\text { Tor}}^{n}(M;A)$ to be 
the set of end points $e$ such that $H^{e}_{k}=0$ for all $k\leq n$, 
then the lemmas in this section establish the following containments:
$$\Sigmacirc (M;A)\subseteq {\widetilde \Sigma 
}^{n}(M;A)\subseteq \Sigma _{\text { Tor}}^{n}(M;A)\cap 
{\widetilde \Sigma }^{n-1}(M;A)
\subseteq {\Sigma }^{n}(M;A).$$
\end{Rem}

\subsection{Behavior of the dynamical invariant on exact sequences}
Let $A'\rightarrowtail A \twoheadrightarrow A''$ be a short exact 
sequence of finitely generated $KG$-modules.  For each $e\in \partial 
M$ there is an exact coefficient sequence

{\fontsize{8pt}{10}\selectfont
\begin{equation*}
\begin{CD} \hdots @>>>\text{ \rm Tor}_{k}(\widehat {KG}^{e};A') @>>> 
\text{ \rm Tor}_{k}(\widehat {KG}^{e};A) @>>> \text{ \rm 
Tor}_{k}(\widehat {KG}^{e};A'') @>\partial_*>>\text{ \rm 
Tor}_{k-1}(\widehat {KG}^{e};A') @>>> \hdots
\end{CD}
\end{equation*}}

This, together with Theorem \ref{T:10.1} gives:

\begin{thm} Let $A$ and $A'$ be finitely generated and let $e\in 
{\overset\circ\Sigma}{^{n+1}}(M;A'')$. Then
$e\in {\overset\circ\Sigma}{^{n}}(M;A')$ if and only if
$e\in {\overset\circ\Sigma}{^{n}}(M;A)$.
\hfill$\square$
\end{thm}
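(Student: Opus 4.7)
The plan is to reduce the claim to a direct application of Theorem \ref{T:10.1} together with the long exact sequence in Novikov Tor. The whole argument is structural; no new chain-level manipulation is required.

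First I would note that Theorem \ref{T:10.1} translates membership in $\Sigmacirc^{m}(M;B)$ into the vanishing statement
\[
\text{Tor}_{k}(\widehat{KG}^{e'},B)=0\quad\text{for all } e'\in \text{cl}(Ge)\text{ and all }k\leq m.
\]
Applying this with $B=A''$ and $m=n+1$, the hypothesis $e\in \Sigmacirc^{n+1}(M;A'')$ gives
\[
\text{Tor}_{k}(\widehat{KG}^{e'},A'')=0\quad\text{for all }e'\in \text{cl}(Ge)\text{ and all }k\leq n+1.
\]

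Next I would fix an arbitrary $e'\in \text{cl}(Ge)$ and feed $\widehat{KG}^{e'}$ into the long exact Tor-sequence associated with $A'\rat A\thra A''$. For each $k$ with $0\leq k\leq n$ the relevant excerpt is
\[
\text{Tor}_{k+1}(\widehat{KG}^{e'},A'')\to \text{Tor}_{k}(\widehat{KG}^{e'},A')\to \text{Tor}_{k}(\widehat{KG}^{e'},A)\to \text{Tor}_{k}(\widehat{KG}^{e'},A'').
\]
Since $k\leq n$ we have $k+1\leq n+1$, so both outer groups vanish by the displayed hypothesis, forcing
\[
\text{Tor}_{k}(\widehat{KG}^{e'},A')\cong \text{Tor}_{k}(\widehat{KG}^{e'},A)
\]
as $KG_{e'}$-modules for every $k\leq n$ and every $e'\in \text{cl}(Ge)$.

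In particular $\text{Tor}_{k}(\widehat{KG}^{e'},A')=0$ if and only if $\text{Tor}_{k}(\widehat{KG}^{e'},A)=0$ throughout the range $0\leq k\leq n$ and for every $e'\in \text{cl}(Ge)$. Invoking Theorem \ref{T:10.1} a second time, now in the reverse direction, this vanishing equivalence is exactly the equivalence $e\in \Sigmacirc^{n}(M;A')\Longleftrightarrow e\in \Sigmacirc^{n}(M;A)$.

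The only real obstacle is making sure that Theorem \ref{T:10.1} is legitimately applicable to the three modules $A'$, $A$, $A''$; that is, that each admits a free resolution whose $(n+1)$-skeleton (respectively $n$-skeleton) is finitely generated over $KG$. Since $A''$ is of type $FP_{n+1}$ (implicit in $e\in \Sigmacirc^{n+1}(M;A'')$) and $A$, $A'$ are finitely generated, a standard horseshoe-and-truncation argument on the three modules in the short exact sequence upgrades finite generation of $A$ and $A'$ to type $FP_n$, so the finitely generated $n$-skeleton hypothesis of Theorem \ref{T:10.1} is met for both of them. With that bookkeeping in place, the proof is complete.
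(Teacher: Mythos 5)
Your main argument is exactly what the paper does: apply the Novikov characterization (Theorem \ref{T:10.1}) to translate $\Sigmacirc$-membership into $\text{Tor}$-vanishing, then run the long exact $\text{Tor}$-sequence for $A'\rightarrowtail A\thra A''$ with coefficients $\widehat{KG}^{e'}$ over all $e'\in\text{cl}(Ge)$, using the hypothesis to kill the $A''$-terms in the relevant range, and translate back. That reduction, and your excerpt $\text{Tor}_{k+1}(\cdot,A'')\to\text{Tor}_k(\cdot,A')\to\text{Tor}_k(\cdot,A)\to\text{Tor}_k(\cdot,A'')$ with $k+1\leq n+1$, is correct and is precisely the paper's (very brief) proof.

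The gap is in your final bookkeeping paragraph. It is not true that ``$A$, $A'$ finitely generated and $A''$ of type $FP_{n+1}$'' upgrades $A$ and $A'$ to type $FP_n$ by any horseshoe argument. Counterexample: let $G$ be a free group of rank $2$, let $B$ be a finitely generated but not finitely presented $KG$-module, set $A''=K$ (the trivial module, which is $FP_\infty$), $A'=B$, $A=B\oplus K$. Then the split sequence $A'\rightarrowtail A\thra A''$ satisfies all your hypotheses yet $A'$ is not $FP_1$. What actually makes the theorem work is that the ``if and only if'' should be read with each side carrying its implicit $FP_n$ hypothesis on the relevant module: if $A'$ is $FP_n$ (as presupposed by $e\in\Sigmacirc^n(M;A')$), then since $A''$ is $FP_{n+1}$, in particular $FP_n$, the horseshoe lemma gives $A$ of type $FP_n$; conversely, if $A$ is $FP_n$ and $A''$ is $FP_{n+1}$, then the mapping-cone/long-exact-sequence version of the $FP$-lemma gives $A'$ of type $FP_n$. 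So the $FP_n$ hypothesis propagates \emph{across} each implication rather than being extracted from the ambient data up front. With that correction the rest of your proof stands and agrees with the paper.
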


Since $Tor$ commutes with direct sums, we have:

\begin{prop}  ${\overset\circ\Sigma}{^{n}}(M;A'\oplus
A'')={\overset\circ\Sigma}{^{n}}(M;A')\cap
{\overset\circ\Sigma}{^{n}}(M;A'')$ \hfill $\square$
\end{prop}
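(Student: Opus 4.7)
The plan is to deduce the equality directly from the homological characterization of the dynamical invariant given by Theorem \ref{T:10.1}, using only the standard fact that $\mathrm{Tor}$ commutes with finite direct sums in its second argument. No new geometric input is needed; the argument is the homological counterpart of the short exact sequence proposition just above, and I do not anticipate a real obstacle.

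First I would verify the trivial preliminary point that $\Sigmacirc(M; A' \oplus A'')$ is even defined, i.e.\ that $A' \oplus A''$ is of type $FP_n$ over $KG$. This follows from the hypothesis that $A'$ and $A''$ are, since $FP_n$ is closed under finite direct sums. Next I would apply Theorem \ref{T:10.1} to each of the three modules in question: for any $e \in \partial M$, membership in $\Sigmacirc(M; -)$ is equivalent to the vanishing of $\mathrm{Tor}_k(\widehat{KG}^{e'}, -)$ for every $e' \in \mathrm{cl}(Ge)$ and every $k \leq n$.

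The core step is then to observe the natural isomorphism
\begin{equation*}
\mathrm{Tor}_k(\widehat{KG}^{e'}, A' \oplus A'') \;\cong\; \mathrm{Tor}_k(\widehat{KG}^{e'}, A') \,\oplus\, \mathrm{Tor}_k(\widehat{KG}^{e'}, A''),
\end{equation*}
valid for every $e'$ and $k$. A direct sum of abelian groups vanishes if and only if each summand does, so for a fixed $e'$ and $k$ the vanishing of the left side is equivalent to the simultaneous vanishing of both summands on the right.

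Quantifying over all $e' \in \mathrm{cl}(Ge)$ and all $k \leq n$ and reapplying Theorem \ref{T:10.1} in the reverse direction to each summand, this equivalence becomes exactly
\begin{equation*}
e \in \Sigmacirc(M; A' \oplus A'') \iff e \in \Sigmacirc(M; A') \text{ and } e \in \Sigmacirc(M; A''),
\end{equation*}
which is the asserted equality of subsets of $\partial M$.
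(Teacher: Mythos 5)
Your argument is correct and is exactly the paper's proof: the paper's derivation is the one-line remark ``Since $\mathrm{Tor}$ commutes with direct sums'' placed immediately after Theorem \ref{T:10.1}, which is precisely the reduction you carried out in detail. The preliminary check that $A'\oplus A''$ is of type $FP_n$ is a reasonable inclusion, though the paper takes it for granted.
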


\section{Products}\label{S:10}

This section is about the behavior of the $\Sigma$-invariants with
respect to direct products of groups and tensor products of modules.
In particular, we prove Theorem \ref{productthm}. 
\vskip 5pt
The set-up is as follows: We are given
\begin{itemize}
\item ${\bF} \thra A$ and ${\bF'} \thra A'$, admissible free resolutions of  the $KG$-module $A$ and the $KH$-module $A'$ respectively, which are finitely generated in dimensions $\leq n$, and
\item isometric actions of groups $G$ and $H$ 
on proper $CAT(0)$ spaces $M$ and $M'$ respectively.
\end{itemize}
These define a resolution  ${\bF}\otimes _{K} {\bF'} \thra A\otimes _{K} A'$ of the
$G\times H$ module $A\otimes _{K} A'$ and an isometric action of $G\times H$ on the proper $CAT(0)$ space $M\times M'$. Again, this resolution is
finitely generated in dimensions $\leq n$.

We begin by generalizing a theorem of Meinert \cite{Geh}:

\begin{thm}\label{T:11.2}
$$ \Sigmacirc(M\times M';A\otimes _{K} A')^c \subseteq \bigcup^n_{p=0}
{^{\circ} \Sigma ^{p}(M;A)^c *
{^{\circ} \Sigma ^{n-p}}}(M ';A ')^c$$
\end{thm}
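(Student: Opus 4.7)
I would prove the contrapositive: given $e\in\partial(M\times M')$ not in $\bigcup_{p=0}^n {}^\circ\Sigma^p(M;A)^c \ast {}^\circ\Sigma^{n-p}(M';A')^c$, exhibit a $(G\times H)$-finitary chain endomorphism of $({\mathbf F}\otimes{\mathbf F}')^{(n)}$ lifting $\mathrm{id}_{A\otimes A'}$ that pushes towards $e$. The geometric input is the join decomposition $\partial(M\times M') = \partial M \ast \partial M'$: each $e$ corresponds to a triple $(e_1,e_2,\alpha)$ with $e_i\in\partial M_i$ and $\alpha\in[0,\pi/2]$, and the Busemann function satisfies $\beta_e(m,m') = \cos\alpha\,\beta_{e_1}(m) + \sin\alpha\,\beta_{e_2}(m')$. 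Hence on basis tensors $y\otimes y'$ of bidegree $(p,q)$, $v_e(y\otimes y') = \cos\alpha\,v_{e_1}(y) + \sin\alpha\,v_{e_2}(y')$, and shifts of tensor-product chain maps split additively.

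The hypothesis translates as follows. The pure boundary cases $\alpha\in\{0,\pi/2\}$ force $e$ to lie in ${}^\circ\Sigma^n$ of the relevant factor outright, so the tensor $\varphi\otimes\mathrm{id}$ (or $\mathrm{id}\otimes\varphi'$) for a push $\varphi$ of ${\mathbf F}^{(n)}$ towards the appropriate endpoint is immediately the desired push on the product, with shift equal to $\mathrm{sh}_\varphi$ (since $\cos 0 = 1$). In the mixed case $0<\alpha<\pi/2$, the hypothesis says that for each $p\in\{0,\dots,n\}$, either $e_1\in{}^\circ\Sigma^p(M;A)$ or $e_2\in{}^\circ\Sigma^{n-p}(M';A')$. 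Setting $p^*=\max\{p:e_1\in{}^\circ\Sigma^p(M;A)\}$ and $q^*$ the analogous maximum for $e_2$, this forces $p^*+q^*\ge n-1$, so every bidegree $(p,q)$ with $p+q\le n$ satisfies $p\le p^*$ or $q\le q^*$ (the doubly-bad case $p>p^*$, $q>q^*$ would force $p+q\ge p^*+q^*+2\ge n+1$).

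For the mixed case, I would pick $G$- and $H$-finitary pushes $\varphi:{\mathbf F}^{(p^*)}\to{\mathbf F}^{(p^*)}$ and $\varphi':{\mathbf F}'^{(q^*)}\to{\mathbf F}'^{(q^*)}$ with guaranteed shifts $\delta_1,\delta_2>0$, and extend each via the Finitary Comparison Theorem \ref{L:5.1} to $\hat\varphi:{\mathbf F}^{(n)}\to{\mathbf F}^{(n)}$ and $\hat\varphi':{\mathbf F}'^{(n)}\to{\mathbf F}'^{(n)}$ lifting the respective identities. The first candidate is $\hat\varphi^a\otimes\hat\varphi'^b$, which is $(G\times H)$-finitary and a chain map lifting $\mathrm{id}_{A\otimes A'}$. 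By the additive splitting of the valuation its guaranteed shift on bidegree $(p,q)$ is bounded below by $a\cos\alpha\,\delta_1+b\sin\alpha\,\delta_2$ in the both-low case ($p\le p^*,\,q\le q^*$), by $-a\cos\alpha\,\|\hat\varphi\|+b\sin\alpha\,\delta_2$ when $p>p^*$ (forcing $q\le q^*$), and by the symmetric expression when $q>q^*$; making all three bounds simultaneously positive amounts to choosing $a/b$ in an open interval that is nonempty precisely when $\|\hat\varphi\|\,\|\hat\varphi'\|<\delta_1\delta_2$.

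The main obstacle is that the ratio inequality $\|\hat\varphi\|\,\|\hat\varphi'\|<\delta_1\delta_2$ is never satisfiable on the nose, since the $1$-Lipschitz property of Busemann functions forces $\delta\le\|\hat\varphi\|$ and analogously on the primed side; so the naive iterated-tensor construction must be refined. In the spirit of Meinert's classical product theorem, my plan is to build the required push bidegree-by-bidegree rather than as a single global tensor iterate: on bidegrees with $p\le p^*$ use a modification of $\varphi\otimes\mathrm{id}$, on bidegrees with $q\le q^*$ (and $p>p^*$) use a modification of $\mathrm{id}\otimes\varphi'$, and glue these local pieces into a single $(G\times H)$-equivariant chain map by exploiting the $G$-volley formalism of the Finitary Comparison Theorem, which delivers a $(G\times H)$-volley on the total complex from which a compatible chain-map selection can be extracted. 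An alternative route is to recast everything homologically via Theorem \ref{novikov1} and to run a partial Künneth decomposition for $\mathrm{Tor}$ over $K[G\times H]$, exploiting the natural morphism $\widehat{KG}^{e_1}\otimes_K\widehat{KH}^{e_2}\to\widehat{K[G\times H]}^e$ coming from the additive splitting of the Busemann function. This gluing (or Künneth) step is the technical heart of the theorem.
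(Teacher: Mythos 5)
You set up the proof exactly as the paper does — contrapositive via the join decomposition $\partial(M\times M')=\partial M\ast\partial M'$, additivity of the Busemann function $\beta_e=\cos\alpha\,\beta_{e_1}+\sin\alpha\,\beta_{e_2}$ on basis tensors, and the dichotomy ``$p\le p^*$ or $q\le q^*$'' governing which factor supplies the push in each bidegree — and you correctly identify the crux: a generic finitary extension $\hat\varphi:{\mathbf F}^{(n)}\to{\mathbf F}^{(n)}$ of a push $\varphi:{\mathbf F}^{(p^*)}\to{\mathbf F}^{(p^*)}$ has loss $\lesssim a\|\hat\varphi\|$ in high degrees after $a$ iterations, and the resulting ratio constraint $\|\hat\varphi\|\,\|\hat\varphi'\|<\delta_1\delta_2$ is never satisfiable. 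But having diagnosed the problem you do not cure it. Both of your proposed repairs — the bidegree-by-bidegree gluing and the K\"unneth route through $\widehat{KG}^{e_1}\otimes_K\widehat{KH}^{e_2}\to\widehat{K[G\times H]}^{e}$ — are left as plans; you explicitly defer the ``technical heart'' without supplying it. As written, the proposal establishes the easy boundary cases $\alpha\in\{0,\pi/2\}$ and nothing more.

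The missing ingredient, supplied in the paper by Lemma~\ref{L:11.1}, is a much sharper extension of $\varphi$ than the Finitary Comparison Theorem gives you on its own. One takes a finitary homotopy $\sigma:\varphi\simeq\mathrm{id}$ on ${\mathbf F}^{(k)}$ (which exists by Theorem~\ref{L:5.1}) and extends $\varphi$ to $\psi:{\mathbf F}^{(n)}\to{\mathbf F}^{(n)}$ by setting $\psi_{k+1}(c)=c-\sigma\partial c$ and $\psi=\mathrm{id}$ in degrees $\ge k+2$; because $\psi$ is a perturbation of the \emph{identity} above degree $k$, and because composing $\sigma$ with iterates of the push $\varphi$ does not lose height, the iterates $\zeta:=\psi^{r+1}$ have guaranteed shift $\ge\mu$ on ${\mathbf F}^{(k)}$ for any prescribed $\mu$ while losing at most the \emph{fixed} amount $\nu=\|\partial\|+\|\sigma\|$ on all of ${\mathbf F}^{(n)}$, independently of $r$. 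That decoupling of ``arbitrarily large gain below $k$'' from ``bounded loss above $k$'' is precisely what makes $\zeta\otimes\zeta'$ a push on the tensor product: in each bidegree one factor contributes a gain of $\cos\alpha\,\mu$ (or $\sin\alpha\,\mu'$) which can be taken to dominate the bounded loss $\sin\alpha\,\nu'$ (or $\cos\alpha\,\nu$) from the other. In other words, the ``gluing'' you were looking for happens inside each tensor factor by grading the construction of $\zeta$ over the skeleta, not across the bidegrees of the product; with this lemma in hand the remainder of the argument is the straightforward computation you already outlined.
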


For the proof we need a lemma:

\begin{lemma}\label{L:11.1} Let $e\in {^\circ \Sigma 
}{^{k}}(M;A)$ where
$k\leq n$.  There exists $\nu\geq 0$ such that for any $\mu\geq 0$
there is a finitary chain map $\zeta :{\bF}^{(n)}\to {\bF}^{(n)}$ lifting
$\text {\rm id}_A$ such that $\text {\rm gsh}_{e}(\zeta |{\bF}^{(k)} 
)\geq \mu$, and

\begin{equation*}
v_{\gamma}(\zeta (c))\geq v_{\gamma }(c)-\nu.
\end{equation*}
for all $c\in {\bF}^{(n)}$.
\end{lemma}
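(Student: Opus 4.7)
The plan is to exploit a finitary chain homotopy between the identity and a chain map that pushes the $k$-skeleton toward $e$, iterate that chain map enough times to absorb any prescribed $\mu$, and extend to the $n$-skeleton by a telescoping construction whose valuation loss is bounded independently of the iteration count.

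Using $e \in {^\circ \Sigma}^{k}(M;A)$, I would first pick a $G$-finitary chain map $\varphi : {\bF}^{(k)} \to {\bF}^{(k)}$ lifting $\text{id}_A$ with $\text{gsh}_e(\varphi) = \delta > 0$, extend it by standard inductive filling (keeping $G$-finitary) to a $G$-finitary chain map $\tilde\varphi : {\bF} \to {\bF}$ with $\tilde\varphi \mid {\bF}^{(k)} = \varphi$, and invoke the Finitary Comparison Theorem (Theorem \ref{L:5.1}) to obtain a $G$-finitary chain homotopy $\sigma : {\bF} \to {\bF}$ between $\text{id}$ and $\tilde\varphi$. Since $F_k$ is finitely generated, the restriction $\sigma \mid F_k$ has finite norm.

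For $k_0 \geq 1$, the telescoping formula $\sigma^{(k_0)} := \sigma(1 + \tilde\varphi + \cdots + \tilde\varphi^{k_0-1})$ is a $G$-finitary chain homotopy from $\text{id}$ to $\tilde\varphi^{k_0}$. The crucial point --- and the main obstacle --- is that on $F_k$, where $\tilde\varphi = \varphi$ pushes toward $e$ by at least $\delta$, the positive shifts from iterating $\varphi$ absorb the norm loss from $\sigma$ to give
\[
v_\gamma(\sigma^{(k_0)}(c)) \geq \min_{0 \leq j < k_0}\bigl(v_\gamma(c) + j\delta - \|\sigma \mid F_k\|\bigr) \geq v_\gamma(c) - \|\sigma \mid F_k\|
\]
for every $c \in F_k$, a bound \emph{independent of} $k_0$.

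Given $\mu$, choose $k_0$ with $k_0 \delta \geq \mu$ and define $\zeta$ piecewise: put $\zeta := \varphi^{k_0}$ on ${\bF}^{(k)}$, $\zeta := \text{id} - \sigma^{(k_0)} \partial$ on $F_{k+1}$, and $\zeta := \text{id}$ on $F_i$ for $k+2 \leq i \leq n$. The chain-map property follows from $\partial \sigma^{(k_0)} + \sigma^{(k_0)} \partial = \text{id} - \tilde\varphi^{k_0}$ on ${\bF}^{(k)}$ at the transition $F_{k+1} \to F_k$, and from $\sigma^{(k_0)} \partial^2 = 0$ at $F_{k+2} \to F_{k+1}$; $\zeta$ is $G$-finitary as a combination of $G$-finitary pieces and lifts $\text{id}_A$ since $\varphi^{k_0}$ does. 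The valuation estimates are then immediate: on ${\bF}^{(k)}$, $v_\gamma(\zeta(c)) \geq v_\gamma(c) + k_0 \delta$, so $\text{gsh}_e(\zeta \mid {\bF}^{(k)}) \geq \mu$; on $F_{k+1}$ the telescoping bound applied to $\partial c$ yields $v_\gamma(\zeta(c)) \geq v_\gamma(c) - \|\partial \mid F_{k+1}\| - \|\sigma \mid F_k\|$; and on $F_i$ with $i \geq k+2$ the identity incurs no loss. Hence $\nu := \|\partial \mid F_{k+1}\| + \|\sigma \mid F_k\|$ works, depending only on the initial choices of $\varphi$ and $\sigma$ and not on $\mu$.
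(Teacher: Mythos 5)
Your argument is correct and matches the paper's in all essentials: both telescope the $G$-finitary homotopy $\sigma$ against iterates of $\varphi$ on $\bF^{(k)}$, both act by $\mathrm{id}-\sigma^{(k_0)}\partial$ in degree $k+1$ and by the identity above, and both arrive at the same uniform bound $\nu=\|\partial\|+\|\sigma\|$. The only cosmetic difference is that you assemble $\zeta$ piecewise and verify the chain condition by hand, whereas the paper first builds a single chain map $\psi$ on $\bF^{(n)}$ (with $\psi=\mathrm{id}-\sigma\partial$ in degree $k+1$, $\psi=\mathrm{id}$ above) and sets $\zeta:=\psi^{r+1}$; unwinding the iterate shows this is the same map as yours with $k_0=r+1$.
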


\begin{proof} Let $\varphi :{\bF}^{(k)}\to {\bF}^{(k)}$ be a $G$-finitary 
chain map inducing id$_A$ and pushing ${\bF}^{(k)}$  towards $e$.  By 
Lemma \ref{L:5.1} $\varphi $ and id are chain homotopic by a 
$G$-finitary chain homotopy $\sigma : {\mathbf F^{(k)}}\to {\mathbf 
F^{(k+1)}}$. Extend $\varphi $ to $\mathbf F^{(n)}$ as 
follows:  Define $\psi _{k+1}:{\mathbf F^{k+1}}\to {\mathbf F^{k+1}}$ 
by $\psi _{k+1}(c)=c-\sigma \partial c$, and define
$\psi (c)=c $ when $c$ has degree $\geq k+2$.  Then $\psi $ is a 
chain map, and if $\sigma $ is extended by defining it to be the zero 
map on chains of degree $\geq k+1$, then $\sigma $ is a $G$-finitary 
chain homotopy  between $\psi $ and id.

Let $r>0$ be such that $r\cdot \text {gsh}_e(\varphi ) >\mu $. We 
will show that $\zeta :=\psi ^{r+1}$ and $\nu :=||\partial 
||+||\sigma ||$ satisfy the requirements of the lemma.  Certainly, 
$\text {gsh}_{e}(\psi ^{r+1}|{\bF}^{(k)} )\geq \mu$.

Let $\gamma $ be a geodesic ray with $\gamma (\infty)=e$. Consider 
the chain homotopy
$$\tau :=\sigma (\text {id} +\psi +\cdots+\psi ^r)$$
between id and $\psi ^{r+1}$. Then for any $c\in {\bF}^{(n)}$ we have

\begin{equation*}
\begin{aligned}
v_\gamma(\tau(c)) &= v_{\gamma }\sigma (\text {id}+\psi +\cdots 
+\psi^{r-1}+\psi^{r})(c)\\
&\geq \min\{v_{\gamma }\sigma (\psi^p\sigma(c))\mid 0\leq p\leq k\}\\
&\geq \min\{v_{\gamma }(c) + p\cdot \epsilon -||\sigma ||\mid 0\leq 
p\leq k\}\\
&=v_\gamma(c)-||\sigma ||.
\end{aligned}
\end{equation*}

If $c$ has degree $k+1$ then $\psi ^{r+1}(c)=c-\tau \partial c$.  So

\begin{equation*}
\begin{aligned}
v_{\gamma }(\psi ^{r+1}(c)) &\geq \min\{v_{\gamma }(c), v_{\gamma }(\tau
\partial c)\}\\
&\geq \min\{v_\gamma(c),  v_{\gamma }(\partial c)-||\sigma ||\}\\
&\geq \min\{v_\gamma(c),  v_{\gamma }(c)-||\partial ||-||\sigma ||\}\\
&=v_{\gamma }(c)-||\partial ||-||\sigma ||.
\end{aligned}
\end{equation*}

And if $c$ has degree $>k+1$ then $\psi ^{r+1}(c)=c$.
\end{proof}

{\it Proof of Theorem \ref{T:11.2}:} By \cite [\S I.5.15]{BrHa99} 
there is a canonical identification of $\partial (M\times M')$ with the join $\partial M
*\partial M'$. Following \cite{BrHa99} page 266, if $e\in \partial M$ and $e'\in
\partial M'$, the $\theta $-point on the join line from $e$ to $e'$ is
denoted by $\text{ cos}\theta \  e+\text { sin}\theta \  e'$ where $0\leq
\theta\leq \frac{\pi}{2}$. Picking base points $b\in M$ and $b'\in M'$
let $\gamma ,\gamma '$ and $\gamma ''$ be the geodesic rays in $M, M'$
and $M\times M'$ determining $e, e'$, and $\text{ cos}\theta \  e+\text{
sin}\theta \  e'$. Then $\gamma ''(t)=(\gamma (t\text {cos}\theta),
\gamma '(t\text {sin}\theta))$.

Assuming $\text { cos}\theta \  e+\text { sin}\theta \  e'\notin 
\bigcup^n_{p=0}
{^{\circ}\Sigma}{^{p}}(M;A)^c *{^{\circ} \Sigma ^{n-p}}(M ';A 
')^c$, we will
show that  $$\text { cos}\theta \  e+\text{ sin}\theta \  e' 
\in  \Sigmacirc(M\times M';A\otimes _{K} A')$$.

{\it Case 1:} $0<\theta <\frac{\pi }{2}$ and
$e\in^{\circ}\Sigma^{n}(M;A)^c$. Let $p$ be the largest
integer such that $e\in^{\circ}\Sigma {^{p-1}}(M;A)$. Thus
$e\in ^{\circ}\Sigma {^{p}}(M;A)^c$, so $e'\in 
{^{\circ}\Sigma}{^{n-p}}(M ';A')$. Then $$v_{\gamma ''}(c\otimes c')=\text
{cos}\theta \  v_{\gamma }(c)+\text {sin}\theta \ v_{\gamma '}(c').$$
(For this one needs $\text{ supp}(c\otimes c')=\text {supp}(c)\times
\text {supp}(c')$ which is true because $\Z$ has no zero divisors.)

Let $\epsilon >0$ be fixed. Let $\nu $ and $\nu '$ be as in Lemma
\ref{L:11.1}.  Choose $\mu $ so that $\text {cos}\theta \  \mu -\text 
{sin}\theta \  \nu '>\epsilon$, and
choose $\mu '$ so that $\text {cos}\theta \  \mu ' -\text {sin}\theta 
\  \nu '>\epsilon$.  By Lemma \ref{L:11.1}
there are finitary chain maps $\zeta :{\bF}^{(n)}\to {\bF}^{(n)}$ lifting
$\text {\rm id}_A$ and  $\zeta ':{\bF'}^{(n)}\to {\bF'}^{(n)}$ lifting
$\text {\rm id}_{A'}$ such that 
$$v_{\gamma }(\zeta (c))\geq v_{\gamma }(c)+\mu \text { for all }c\in {\bF}^{(p-1)} \text { and}$$ 
$$v_{\gamma }(\zeta (c))\geq v_{\gamma }(c)-\nu \text { for all }c \in {\bF}^{(n)}$$

When $c\otimes c'$ has degree $\leq n$ and $c$ has degree $\leq p-1$ then, by
\cite [\S II.8.24]{BrHa99}, we have
\begin{equation*}
\begin{aligned}
v_{\gamma ''}(\zeta (c)\otimes \zeta '(c')) &=\text {cos}\theta 
\  v_{\gamma }(\zeta (c))+\text {sin}\theta \  v_{\gamma '}
(\zeta '( c'))\\
&\geq \text {cos}\theta [v_{\gamma }(c)+\mu]+\text {sin}\theta 
[v_{\gamma '}( c')-\nu ']\\
&=v_{\gamma ''}(c\otimes c')+\text {cos}\theta \ \mu -\text 
{sin}\theta \ \nu '\\
&>v_{\gamma ''}(c\otimes c')+\epsilon
\end{aligned}
\end{equation*}

When $c\otimes c'$ has degree $\leq n$ and $c$ has degree $\geq p$ then a similar discussion using $\zeta '$ gives
$$v_{\gamma ''}(\zeta (c)\otimes \zeta '(c'))>v_{\gamma ''}(c\otimes 
c')+\epsilon.$$

So $\text {\rm gsh}_{e}(\zeta \otimes \zeta ')\geq \epsilon$, and thus
$\text { cos}\theta \  e+\text { sin}\theta \  e'\in \Sigmacirc(M\times
M';A\otimes _{K} A')$.

{\it Case 2:} $0<\theta <\frac{\pi }{2}$ and $e\in
^{\circ} \Sigma {^{n}}(M;A)$.
If $c\otimes c'$ has degree $\leq n$, the above
argument again gives $$v_{\gamma ''}(\zeta (c)\otimes \zeta '(c'))
 >v_{\gamma ''}(c\otimes c')+\epsilon .$$

{\it Case 3:} If $\theta =0$ then $e\in 
{^{\circ} \Sigma}{^{n}}(M;A)$, and $\zeta '$ can be replaced by $\text {\rm 
id}_{\bF'}$ above.  The case $\theta=\frac{\pi }{2}$ is handled 
similarly.    \hfill $\square$

We turn to the opposite inclusion ``$\supseteq $'', starting with the
observation that it cannot hold generally in a situation where $A\neq
0\neq A'$ while $A\otimes  A'=0$. Therefore, from now on we will assume
that $K$ is a field.

\begin{thm}\label{T:11.3} Let $K$ be a field, $A$ a $KG$-module of type $FP_p$ and $A'$ a $KH$-module of type $FP_q$. If $\Sigma ^{0}(M;A)=\del M$ 
and $\Sigma ^{0}(M ';A')=\del M'$ then 
$$\Sigma ^{p}(M;A)^{c} *\Sigma ^{q}(M';A')^{c}\subseteq \Sigma ^{p+q}(M\times M';A\otimes _{K}A')^{c}$$ 
\end{thm}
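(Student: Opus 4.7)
The plan combines the Novikov-homological characterization of the dynamical invariant (Theorem \ref{T:10.1}) with a K\"unneth formula for Novikov homology, available because $K$ is a field. The hypotheses $\Sigma^0(M;A)=\partial M$ and $\Sigma^0(M';A')=\partial M'$ give bounded support of $A$ and $A'$, and hence of $A\otimes_K A'$, so $\Sigma^0(M\times M';A\otimes_K A')=\partial(M\times M')$; by Theorem \ref{T:6.4} these coincide with the dynamical invariants at degree $0$, and by Theorem \ref{T:10.1} all degree-zero Novikov homologies vanish.

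The two degenerate cases $\theta\in\{0,\pi/2\}$ of the join line are handled by a direct argument. For $\theta=0$, the join point $\bar e$ is $e\in\partial M$; given a persistent $(p-1)$-cycle $z$ for $\mathbf F$ over $e$ witnessing $e\notin\Sigma^p(M;A)$, the tensor $z\otimes y'_0$ with $\epsilon'(y'_0)\neq 0$ is a $(p-1)$-cycle in $\mathbf F\otimes\mathbf F'$ of valuation $v_\gamma(z)$ at $e\in\partial(M\times M')$. A $K$-linear functional $\phi':F'_0\to K$ with $\phi'(y'_0)=1$ that factors through $\epsilon'$ (so $\phi'\partial'=0$) then extracts, via $1\otimes\phi'$, a bounding chain for $z$ of the same valuation from any putative bounding chain of $z\otimes y'_0$, contradicting persistence. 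Hence $e\notin\Sigma^p(M\times M';A\otimes A')\supseteq\Sigma^{p+q}(M\times M';A\otimes A')$; the case $\theta=\pi/2$ is symmetric.

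For the generic case $0<\theta<\pi/2$, using ${}^\circ\Sigma^p\subseteq\Sigma^p$ and Theorem \ref{T:10.1}, from $e\notin\Sigma^p(M;A)$ I obtain $\tilde e\in\text{cl}(Ge)$ and a minimal $p^*\in\{1,\ldots,p\}$ with $H^{\tilde e}_{p^*}(\mathbf F)\neq 0$ and $H^{\tilde e}_k(\mathbf F)=0$ for $k<p^*$; analogously $\tilde{e'}\in\text{cl}(He')$ and $q^*\leq q$. Setting $\tilde{\bar e}:=\cos\theta\,\tilde e+\sin\theta\,\tilde{e'}$, the crux is the K\"unneth isomorphism
$$H^{\tilde{\bar e}}_n(\mathbf F\otimes\mathbf F')\;\cong\;\bigoplus_{i+j=n}H^{\tilde e}_i(\mathbf F)\otimes_K H^{\tilde{e'}}_j(\mathbf F'),$$
established via a chain-level quasi-isomorphism between the Novikov complex $\mathbf C^{\tilde{\bar e}}(\mathbf F\otimes\mathbf F')$ and the ordinary tensor $\mathbf C^{\tilde e}(\mathbf F)\otimes_K\mathbf C^{\tilde{e'}}(\mathbf F')$, using the Busemann identity $\beta_{\gamma''}(p,p')=\cos\theta\,\beta_\gamma(p)+\sin\theta\,\beta_{\gamma'}(p')$ from the proof of Theorem \ref{T:11.2} together with the algebraic K\"unneth theorem over $K$. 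This yields $H^{\tilde{\bar e}}_{p^*+q^*}\neq 0$ while all lower Novikov homologies vanish.

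To finish, the vanishing of Novikov homology in degrees $<p^*+q^*$ at every point of $\text{cl}((G\times H)\tilde{\bar e})$ (by K\"unneth applied along the closure) gives, via Theorem \ref{T:10.1}, $\tilde{\bar e}\in{}^\circ\Sigma^{p^*+q^*-1}(M\times M';A\otimes A')$; the Characterization Theorem (Theorem \ref{T:6.1}) then places the whole closure in $\Sigma^{p^*+q^*-1}$. Proposition \ref{T:7.12} produces a resolution with zero lag over $\tilde{\bar e}$ for the $CA^{p^*+q^*-1}$ property (should $\tilde{\bar e}\in\Sigma^{p^*+q^*}$), under which the non-trivial Novikov class $H^{\tilde{\bar e}}_{p^*+q^*}\neq 0$ produces a finite persistent cycle contradicting $CA^{p^*+q^*-1}$, giving $\tilde{\bar e}\notin\Sigma^{p^*+q^*}\supseteq\Sigma^{p+q}$; transferring back to $\bar e$ itself via the orbit closure and the persistence of the K\"unneth class under group translation yields $\bar e\notin\Sigma^{p+q}(M\times M';A\otimes_K A')$. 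The main obstacle is the K\"unneth step: the Novikov chain complex of the product is not literally a tensor of the factor Novikov complexes because of the infinite-support conditions, and proving the chain-level quasi-isomorphism requires delicate support analysis using the joined Busemann function; the K-field hypothesis is essential so that no Tor terms appear in the K\"unneth decomposition.
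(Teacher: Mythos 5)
Your proposal takes a genuinely different route from the paper: the paper proves the degenerate join cases ($\theta=0$ or $\theta=\pi/2$) directly by lifting a $K$-split $KG$-embedding $s:\mathbf F\rightarrowtail\mathbf F\otimes_K\mathbf F'$ and exploiting the fact that $\beta_e$ on $M\times M'$ at $e\in\partial M$ ignores the $M'$ coordinate, while the interior case $p\neq 0\neq q$ is deferred to the chain-level arguments of \cite{BGe10}. Your treatment of the degenerate cases is essentially a rephrasing of that (tensoring with $y_0'$ and retracting via $1\otimes\phi'$ is the same as the paper's $\pi\circ s=\mathrm{id}$), but for the interior case you propose to use the Novikov-homological characterization plus a K\"unneth formula. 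That is a fundamentally different strategy, and it has two serious gaps.

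\textbf{The K\"unneth step is unproved and is the crux.} You claim
$H^{\tilde{\bar e}}_n(\mathbf F\otimes\mathbf F')\cong\bigoplus_{i+j=n}H^{\tilde e}_i(\mathbf F)\otimes_K H^{\tilde{e'}}_j(\mathbf F')$
and acknowledge it is the main obstacle, but this is not a small hole --- it is the entire content of the interior case. The Novikov complex of the product, $\widehat{K(G\times H)}^{\tilde{\bar e}}\otimes_{K(G\times H)}(\mathbf F\otimes\mathbf F')$, is built from the Novikov module for the joined Busemann function $\cos\theta\,\beta+\sin\theta\,\beta'$, and the natural comparison map from $\mathbf C^{\tilde e}(\mathbf F)\otimes_K\mathbf C^{\tilde{e'}}(\mathbf F')$ is a strict inclusion but nowhere near surjective (elements such as $\sum_{n\geq 0}s^nt^{-n}$ with $\cos\theta>\sin\theta$ satisfy the joint Novikov condition but are not finite sums of products of one-sided Novikov chains). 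Proving that this inclusion is a quasi-isomorphism requires an argument that is not sketched and, as far as I can see, does not follow from any result in the paper; the hypothesis that $K$ is a field disposes of Tor-terms in the algebraic K\"unneth theorem but says nothing about the cokernel of this comparison map being acyclic.

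\textbf{Even granting K\"unneth, the final deduction is unsound.} The Novikov characterization (Theorem \ref{T:10.1}) controls $\Sigmacirc$, not $\Sigma$. From $H^{\tilde{\bar e}}_{p^*+q^*}\neq 0$ you get $\tilde{\bar e}\notin\Sigma^{p^*+q^*}_{\mathrm{Tor}}$, and the containments established in the paper (the Remark after Corollary \ref{C:10.1}) run $\Sigmacirc\subseteq\widetilde\Sigma^n\subseteq\Sigma_{\mathrm{Tor}}^n\cap\widetilde\Sigma^{n-1}\subseteq\Sigma^n$ --- i.e.\ nonvanishing of $H^e_n$ is an obstruction to $e\in\Sigmacirc^n$ (or $\widetilde\Sigma^n$), not to $e\in\Sigma^n$. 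Your attempt to bridge this via Proposition \ref{T:7.12} (which, incidentally, should be Proposition \ref{T:7.13}: \ref{T:7.12} requires $\Sigma^{n-1}=\partial M$, which you have not established for the product) produces a zero-lag resolution for $CA^{p^*+q^*-1}$ --- a statement about cycles in dimensions $\leq p^*+q^*-1$ --- whereas your nonzero Novikov class sits in degree $p^*+q^*$; no mechanism is given that converts that Novikov class into a persistent \emph{finite} cycle in a lower dimension. As a consequence, the best you can conclude from the K\"unneth data is $\bar e\notin\Sigmacirc^{p+q}(M\times M';A\otimes_K A')$, which is strictly weaker than the statement of Theorem \ref{T:11.3}. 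The paper avoids this trap precisely because it argues directly on finite chains to show failure of $CA^{p+q-1}$.
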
 

\begin{rems} 

(1) The statement that $\Sigma ^{0}(M;A)=\del M$ 
is equivalent to saying that the $G$-action on $M$ is cocompact and $A$ 
has bounded support; see Theorem 9.1 of \cite{BGe16}. When the $G$-action 
has discrete orbits, this reduces to cocompactness together with $A$ being 
finitely generated over the point stabilizer $G_b$ for some (equivalently, 
any) point $b\in M$. See also Theorem \ref{T:7.8} and Corollary \ref{T:7.9}. 
\vskip 10pt 
(2) In \cite{BGe10} we established the product formula for 
$\Sigma ^{n}(G\times H;K)$, i.e. the case where $M=G_{ab}\otimes {\R}$ and $M'=H_{ab}\otimes {\R}$ are Euclidean and $A=K=A'$, where the $0$-dimensional assumptions discussed in the previous remark are trivially satisfied. The proof of Theorem \ref{T:11.3} given below lifts the key arguments of \cite{BGe10} to the $CAT(0)$ case with modules $A, A'$, but this lifting only works when those assumptions hold.
\end{rems}
\vskip 5pt 

\begin{proof} ({\it of Theorem \ref{T:11.3}}) We use the projective
$K(G\times H)$-resolution $\epsilon \otimes {\epsilon '}:{\bF}\otimes
_{K} {\bF'} \thra A\otimes A'$, noting that if $p\neq 0\neq q$ then
the chain arguments used in the proof of Theorem 5.2 of \cite{BGe10}
away from $H_{0}({\bF}\otimes _{K} {\bF'})=A\otimes A'$ carry over {\it
mutatis mutandis}. Therefore, without loss of generality it only remains
to consider the case $q=0$. And, as we assume $\Sigma ^{0}(M ';A')=\del
M'$, we need only show \begin{equation*} \del M \cap \Sigma ^{p}(M\times
M';A\otimes A')\subseteq \Sigma ^{p} (M;A)\tag{**}
 \end{equation*}

\noindent For this we can ignore the $H$-action, choose a $KG$-embedding
$A\rightarrowtail A\otimes A'$ with a $K$-splitting, and lift it to
a $K$-split $KG$-embedding $s:{\bF}\rightarrowtail {\bF}\otimes _{K}
{\bF'}$ with corresponding  projection
$\pi :{\bF}\otimes_{K} {\bF'}\twoheadrightarrow {\bF}$;
we then have $\pi \circ
s=\text{id}_{\bF}$. The horoballs of $M\times M'$ at $e\in \del M\subseteq
\del (M\times M')$ are of the form $HB_{e}(M\times M')=HB_{e}(M)\times
M'$, and the Busemann function $\beta_{e}:M\times M'\to {\R}$ ignores
the $M'$ contribution. Hence the valuation $v_{e}:{\bF}\otimes_{K}
{\bF'}\to M\times M'\to {\R}$ restricts to the corresponding valuation
${\bF}\to {\R}$, and this implies the inclusion (**).  
\end{proof} 
\vskip 5pt
To complete the proof of Theorem \ref{productthm}, i.e. to prove $$
\Sigmacirc(M\times M';A\otimes _{K} A')^c = \bigcup^n_{p=0} {^{\circ}
\Sigma ^{p}(M;A)^c * {^{\circ} \Sigma ^{n-p}}}(M';A')^c,$$
 it only remains to replace $\Sigma $ by ${^\circ}\Sigma $ in Theorem \ref{T:11.3}.
Let 
$$\text{cos}\theta \ e_{0}+\text {sin}\theta \ e_{0}' \in 
{^{\circ}\Sigma} {^{p}}(M;A)^{c} * {^{\circ}\Sigma}{^{n-p}}(M';A')^{c}$$ 
First
assume $0<\theta <\frac{\pi }{2}$.  Then, by the Characterization
Theorem \ref{T:6.1}, $\text{ cl }Ge \cap \Sigma {^{p}}(M;A)^c\neq
\emptyset$ and $\text{ cl }He' \cap \Sigma {^{n-p}}(M ';A')^c\neq
\emptyset$.  Pick $e_{0}\in \text{cl }Ge \cap \ \Sigma {^{p}}(M;A)^c$
and $e'_{0}\in \text{cl }He' \cap \ \Sigma {^{n-p}}(M';A')^c$. Consider
$\text{cos}\theta \ e_{0}+\text {sin}\theta \ e_{0}'$. By Theorem
\ref{T:11.3} this lies in $\Sigma {^{n}}(M\times M'; A\otimes A')^c$.

We are to show $\text{cos}\theta \ e_{0}+\text{sin}\theta \ e_{0}' \in
\Sigmacirc (M\times M'; A\otimes A')^c$. Suppose not. Then $$\text
{ cl}[(G\times H)(\text{cos}\theta \ e_{0} + \text {sin}\theta \ 
e_{0}')]\subseteq \Sigma {^{n}}(M\times M'; A\otimes A'))$$ Since
$e_{0}\in \text{ cl }Ge$, $e_0$ is the limit elements of the form
$ge$. Similarly, $e_{0}'$ is the limit elements of the form $he'$. So
$\text{cos}\theta \ e_{0}+\text {sin}\theta \ e_{0}'$ lies in $\text {
cl}[(G\times H)(\text{cos}\theta \ e_{0}+\text {sin}\theta \ e_{0}')]\subseteq
\Sigma ^{n}(M\times M'; A\otimes A')$. This is a contradiction.

Obvious alterations of this argument cover the cases $\theta =0$ and $\theta =\frac{\pi}{2}$.

\bibliographystyle{amsalpha}
\bibliography{paper}{}

\end{document}